\newcommand{\N}{\mathbb{N}}
\newcommand{\Z}{\mathbb{Z}}
\newcommand{\R}{\mathbb{R}}
\newcommand{\E}{\mathbb{E}}
\newcommand{\supp}{\text{supp}}
\newtheorem{info}{}
\newtheorem{theorem}[info]{Theorem}
\newtheorem{corr}[info]{Corollary}
\newtheorem{defin}[info]{Definition}
\newtheorem{lem}[info]{Lemma}
\newtheorem{prop}[info]{Proposition}
\newtheorem{remark}[info]{Remark}
\numberwithin{info}{section}
\numberwithin{equation}{section}
\renewcommand{\[}{\begin{equation}}
	\renewcommand{\]}{\end{equation}}
\newcommand{\Lam}{\Lambda}
\newcommand{\al}{\alpha}
\newcommand{\be}{\beta}
\newcommand{\lam}{\lambda}
\newcommand{\Om}{\Omega}
\renewcommand{\P}{\mathbb{P}}
\renewcommand{\cal}{\mathcal}
\renewcommand{\frak}{\mathfrak}
\newcommand{\sq}{\sqrt}
\newcommand{\<}{\langle}
\renewcommand{\>}{\rangle}
\newcommand{\Sum}{\mathrm{\Sum}}
\newcommand{\To}{\Rightarrow}
\newcommand{\D}{\nabla}
\newcommand{\vol}{\mathrm{vol}}
\newcommand{\disteq}{\stackrel{d}{=}}
\renewcommand{\l}{\left}
\renewcommand{\r}{\right}
\renewcommand{\b}{\bm}
\newcommand{\tr}{\mathrm{tr}}
\newcommand{\ZNxi}{Z_{N,\mathrm{Sph}}}
\newcommand{\ZNxib}{Z_{N,\beta,\mathrm{Sph}}}
\begin{document}
	\title{The Free Energy of the Elastic Manifold}
	
	\author{Gérard Ben Arous, Pax Kivimae}
	
	\maketitle
	
	\begin{abstract}
		This is the first of a series of three papers about the Elastic Manifold model. This classical model proposes a rich picture due to the competition between the inherent disorder and the smoothing effect of elasticity. In this paper, we prove a Parisi formula, i.e. we compute the asymptotic quenched free energy and show it is given by the solution to a certain variational problem.
		
		This work comes after a long and distinguished line of work in the Physics literature, going back to the 1980’s (including the foundational work by Daniel Fisher \cite{fischerRG}, Marc M\'{e}zard and Giorgio Parisi \cite{mezardparisi,mezardparisi2}, and more recently by Yan Fyodorov and Pierre Le Doussal \cite{fyodorov-manifold,fyodorov-manifold-minimum}. Even though the mathematical study of Spin Glasses has seen deep progress in the recent years, after the celebrated work by Michel Talagrand \cite{talagrandIsingOG,talagrandOG}, the Elastic Manifold model has been studied from a mathematical perspective, only recently and at zero temperature. The annealed topological complexity has been computed, by the first author with Paul Bourgade and Benjamin McKenna \cite{gerardbenpaul,gerardbenpaulCompanionPaper}. Here we begin the study of this model at positive temperature by computing the quenched free energy. 
		
		We obtain our Parisi formula by first applying Laplace's method to reduce the question to a related new family of spherical Spin Glass models with an elastic interaction. The upper bound is then obtained through an interpolation argument initially developed by Francisco Guerra \cite{guerraOG} for the study of Spin Glasses. The lower bound follows by adapting the cavity method along the lines explored by Wei-Kuo Chen \cite{weikuo} and the multi-species synchronization method of Dmitry Panchenko \cite{panchenkoms}. In our next papers \cite{Paper2,Paper3} we will analyze the consequences of this Parisi formula.
	\end{abstract}
	
	\tableofcontents
	
\section{Introduction}

The study of disordered elastic systems is an area of long-standing importance, whose breadth contains a large number of well-studied models tied together by a rich set of new phenomena. The theory concerns the behavior of random manifolds, under an elastic self-interaction and subjected to a disordered potential. The behavior of the resulting manifold is thus shaped by a competition between an elastic self-interaction, which tends to smoothen and flatten it, and the disordered potential, which promotes rougher configurations. Typical examples involve random polymers, as well as the $d$-dimensional domain walls in the $(d+1)$-dimensional random field Ising model. For more background on these models, further examples, and an overview of their physical phenomenology, we suggest the review \cite{elasticReview}.

Our focus will be on the elastic manifold, a prototypical model in this class. The study of this model was initiated by Fischer \cite{fischerRG}, and extended in subsequent works \cite{balentsRG,balents1996large}, to be used as an effective model for the behavior of elastic surfaces.

Let us first describe the continuous model. The phase space of this model is the collection of functions $\b{u}:\Omega \to \R^N$, where $\Omega=[1,L]^d\subseteq \R^d$, and the Hamiltonian is given by:
\[\cal{H}(\b{u})=\frac{1}{2}\int_{\Omega} \left(\mu \|\b{u}(x)\|^2-t( \b{u}(x),\Delta \b{u}(x))\right)dx^d+\int_{\Omega}V(x,\b{u}(x))dx^d,\]
where $V(x,\b{u}(x))$ is the disorder potential, which is taken to be a centered Gaussian process on $\Omega\times \R^N$ with covariance chosen to be white in $x$ and isotropic in $u$, i.e.  given for $(x,u),(x',u')\in \Omega\times \R^N$ by
\[\E[V(x,u)V(x',u')]=\delta(x-x')B(\|u-u'\|^2),\]
where $\delta(x-x')$ denotes the Dirac $\delta$-function and $B$ is some fixed correlation function.
The model is parametrized  by two important real parameters: the mass $\mu$  and the elastic interaction strength $t$.

A deep understanding of this model came from the work by Mézard and Parisi \cite{mezardparisi}. They studied the behavior of this model in the high-dimensional (i.e., large $N$) limit and were able to compute the quenched free energy by employing the (beautiful but non rigorous) replica symmetry breaking (RSB) scheme classical in the study of spin glasses. Differentiating the free energy, they were then able to obtain results for a number of statistics of the model. This line of works includes many other important contributions \cite{engelOG,balents1996large,balentsRG}. 

In this paper, we compute the limiting quenched free energy of this model in a discrete variant, as proposed by the recent works by Fyodorov and Le Doussal \cite{fyodorov-manifold,fyodorov-manifold-minimum}. This limiting free energy is given in terms of a new variational formula. While the new variational formula given here is complex, it is also very general, applying to more general discrete surface models than the elastic manifold.

We leave the analysis of this functional to our companion paper \cite{Paper2}, where we show that we can considerably reduce the complexity of the variational formula proven here. Obtaining the results for from this simplification in the continuum limit, and in particular confirming physical predictions for certain scaling exponents, is left for our third companion paper \cite{Paper3}. Combined, we can confirm much of the celebrated picture obtained in the original work of \cite{mezardparisi}.

Finally, we obtain similar results for a related family of spherical models. This generalization is in fact required to show our results for the elastic manifold, but it is also an interesting variant per se, due to the non-trivial effect an external field term may have on the Gibbs measure in this case, the so-called pinning/depinning transition.

\subsection{Results on the Limiting Free Energy}

Here we give our main results, the first of which is a formula for the large-$N$ asymptotics of the quenched free energy of the discrete Elastic manifold model (Theorem \ref{theorem:intro:bad main euclidean}). As mentioned above, on our way to proving this result, we need to introduce a new spherical variant of the Elastic manifold model. Our computation of the asymptotic free energy for this model (Theorem \ref{theorem:intro:bad parisi spherical}), is our second main result and a major component in the proof of Theorem \ref{theorem:intro:bad main euclidean}.

Thus now we introduce both models, and state our results. We begin with the discrete elastic manifold model as studied by Fyodorov and Le Doussal \cite{fyodorov-manifold}. Fix integers $L\ge 1$ and $d\ge 0$, which will be referred to as the length and internal dimension, respectively, and positive numbers $\mu $ and $t $, referred to as the mass and interaction strength. We will denote by $\Om$ the graph $[[1,L]]^d\subseteq \Z^d$ understood in the periodic sense. Next, we let $V_N:\R^N\to \R$ be a centered Gaussian random field with isotropic covariance given for $u,v\in \R^N$ by
\[\E[V_N(u)V_N(v)]=N B\left(\|u-v\|^2_N\right).\]
where here $\|x\|^2_N=(x,x)_N$ where $(x,y)_N=N^{-1}\sum_{i=1}^{N}x_iy_i$ and $B:[0,\infty)\to [0,\infty)$ is some fixed function.

An old  result of Schoenberg \cite{schoenberg} gives a full description of the possible functions $B$.$B$ must admit a representation of the form
\[B(x)=c_0+\int_0^{\infty}\exp(-\lam^2 x)\nu(d\lam),\label{eqn:B-decomposition}\]
where $\nu$ is a finite non-negative measure on $(0,\infty)$ and $c_0\ge 0$. For convenience, we will assume that there is $\epsilon>0$ such that $\int_0^\infty \exp(\lam^2 \epsilon) \nu(d\lam)<\infty$. This implies, in particular, that $B$ extends to a smooth function on $(-\epsilon,\infty)$.

The model's Hamiltonian is then a function on point configurations $\b{u}\in (\R^N)^\Om$. Such configurations $\b{u}$ may be thought of as functions on the discrete graph $\Omega$, and for $x\in \Om$, we will use the notation $\b{u}(x)\in \R^N$ to denote the point such that $\b{u}(x)_i=\b{u}_{(i,x)}$. However, when $\b{f}$ is a function taking values in $A^{\Om}$, for some space $A$, we will use the notation $f_x(y):=\b{f}(y)(x)$. Finally, when $c\in \R$ we will use the notation $\b{c}\in \R^\Om$ to denote the constant function $\b{c}(x)=c$.

With these notations set, we define our Hamiltonian for $\b{u}\in (\R^N)^\Om$ by
\[\cal{H}_N(\b{u})=\frac{1}{2}\sum_{x,y\in \Om}(\mu I-t \Delta)_{xy}(\b{u}(x),\b{u}(y))+\sum_{x\in \Om}V_{N,x}(\b{u}(x))+\sq{N}h\sum_{x\in \Om}\b{u}_1(x),\label{eqn:def:main-model-D}\]
where here $(V_{N,x})_{x\in \Om}$ is a family of i.i.d copies of $V_N$, and $\Delta$ denotes the graph Laplacian of $\Omega$ (where we take the sign convention where $\Delta$ is negative semi-definite, following \cite{mezardparisi,fyodorov-manifold}). Then for any inverse temperature $\beta>0$, we define the associated partition function
\[Z_{N,\beta}=\int_{(\R^{N})^\Om}e^{-\beta\cal{H}_N(\b{u})}d\b{u},\]
where $d\b{u}$ is the standard Lebesgue measure on $(\R^N)^{\Om}$. Given this, we define the (normalized) free energy of the model as 
\[f_{N,\beta}:=|\Om|^{-1}N^{-1}\log Z_{N,\beta}\]

\subsubsection{Results for the Euclidean Model}

Our main result is the convergence of $f_{N,\beta}$ a.s. to a deterministic value given by a Parisi-type variational problem, which we now define, beginning with some notation. If one has $\b{u}\in \R^\Om$, the notation $\text{di}(\b{u})$ will refer to the diagonal matrix in $\R^{\Om\times \Om}$, such that $\text{di}(\b{u})_{xy}=\delta_{xy}\b{u}(x)$ for $x,y\in \Om$. When $D\in \R^{\Om\times \Om}$, we will use the shorthand $D+\b{u}:=D+\text{di}(\b{u})$. When $A$ and $B$ are matrices, we will use the notation $A>B$ to denote that is positive definite.

Now, we will need some functions associated to the discrete Laplacian $\Delta$. The existence of and claims made about these functions will be verified in Proposition \ref{prop:appendix:K and Lambda} below.
\begin{defin}
\label{defin:K and Lam for Delta}
Let $\b{K}^{-t\Delta}:(0,\infty)^{\Om}\to (0,\infty)^{\Om}$ be the function defined by the relation
\[[\left(\b{K}^{-t\Delta}(\b{u})-t\Delta\right)^{-1}]_{xx}=\b{u}(x) \text{ for } x\in \Om.\label{eqn:def:KtDelta}\]
Equivalently, $\b{K}^{-t\Delta}$ is the functional inverse of the function $\b{u}\mapsto ([\left(\b{u}-t\Delta\right)^{-1}]_{xx})_{x\in \Om}$. We also define for $\b{u}\in (0,\infty)^{\Om}$, the function
\[\Lambda^{-t\Delta}(\b{u})=\frac{1}{|\Om|}\left(\sum_{x\in \Om}K_x^{-t\Delta}(\b{u})\b{u}(x)-\log \det(\b{K}^{-t\Delta}(\b{u})-t\Delta)\right),\label{eqn:def:LambdaDelta}\]
This is the Legendre transform of the strictly concave function $\b{u}\mapsto \frac{1}{|\Om|}\log \det(\b{u}-t\Delta)$,so that in particular $|\Om|\D \Lambda^{-t\Delta}(\b{u})=\b{K}^{-t\Delta}(\b{u})$.
\end{defin}

Next, we specify the domain for the functional of our variational problem. For $S\subseteq \R$ we let $\cal{P}(S)$ denote the set of probability measures on $S$.

\begin{defin}
Fix $\b{q}\in (0,\infty)^{\Om}$ and let $q_{t}=\frac{1}{|\Om|}\sum_{x\in \Om}\b{q}(x)$. The set $\mathscr{Y}(\b{q})$ is defined as the set of pairs $(\zeta,\b{\Phi})$, where $\zeta\in \cal{P}([0,q_t])$ and $\b{\Phi}:[0,q_t]\to \prod_{x\in \Om}[0,\b{q}(x)]$ is a coordinate-wise non-decreasing function, which jointly satisfy the following conditions: For each $s\in [0,q_t]$ we have
\[\frac{1}{|\Om|}\sum_{x\in \Om}\b{q}(x)^{-1}\Phi_x(s)=q_t^{-1}s,\label{eq:condition for Psi}\]
and there there is some $0<q_*<q_t$ is such that $\zeta([0,q_*))=0$ and $\Phi_x(q_*)<\b{q}(x)$ for each $x\in \Om$.
\end{defin}

We are now ready to proceed to defining our functional. Given $(\zeta,\b{\Phi})\in\mathscr{Y}(\b{q})$, we define the continuous (coordinate-wise) non-increasing function $\b{\delta}:[0,q_t]\to [0,\infty)^{\Om}$ by \footnote{Regarding the existence $\Phi'_x$, note that as each $\Phi_x$ is non-decreasing, (\ref{eq:condition for Psi}) implies that each $\Phi_x$ is absolutely continuous. In particular, $\Phi_x'$ exists a.e. and is Lebesgue integrable.}
\[\delta_x(s)=\int_{s}^{q_t}\zeta([0,u])\Phi'_x(u)du.\] With these choices, we now define our functional:
\[ 
\begin{split}
	\cal{P}_{\beta,\b{q}}(\zeta,\b{\Phi})&=\frac{1}{2}\bigg(\log\left(\frac{2\pi}{\beta}\right)+\frac{\beta h^2}{\mu }+\Lambda^{-t\Delta}(\beta \b{\delta}(q_*))+\frac{1}{|\Om|}\sum_{x\in \Om}\bigg(-\mu \b{q}(x)\\ &+\int_{0}^{q_*} \beta K^{-t\Delta}_x(\beta\b{\delta}(q))\Phi'_x(q)dq-2\beta^2 \int_0^{q_t}\zeta([0,u])B'(2(q_x-\Phi_x(u)))\Phi'_x(u)du\bigg)\bigg).
\end{split}
\]

Our main result is then the following free energy computation.
\begin{theorem}
	\label{theorem:intro:bad main euclidean}
	We have a.s. that
	\[\lim_{N\to \infty}f_{N,\beta}= \lim_{N\to \infty}\E f_{N,\beta}=\sup_{\b{q}\in (0,\infty)^{\Om}}\left(\inf_{(\zeta,\b{\Phi})\in \mathscr{Y}(\b{q})}\cal{P}_{\beta,\b{q}}(\zeta,\b{\Phi})\right).\]

\end{theorem}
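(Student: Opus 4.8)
The plan is to prove Theorem~\ref{theorem:intro:bad main euclidean} in two stages, following the strategy outlined in the abstract. First, I would use Laplace's method (the Schoenberg representation \eqref{eqn:B-decomposition} is the key input here) to reduce the Euclidean partition function $Z_{N,\beta}$ to an integral over a family of \emph{spherical} elastic manifold models with prescribed self-overlap vector $\b{q}\in(0,\infty)^\Om$. Concretely, one slices the integral over $(\R^N)^\Om$ by the value of $\|u_x\|_N^2$ at each site $x$, writes the Gaussian potential $V_{N,x}$ as a mixture over the scale $\lambda$ via \eqref{eqn:B-decomposition}, and identifies the exponential rate of the radial Lebesgue/Jacobian factors (this is where $\log(2\pi/\beta)$, the $-\mu\b{q}(x)$ term, and the $\Lambda^{-t\Delta}$ term get their geometric origin — the latter coming precisely from integrating out the Gaussian quadratic form $(\mu I - t\Delta)$ restricted to spheres of radii $\b{q}(x)$, together with the external field contribution $\beta h^2/\mu$). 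Since the radial exponent is continuous in $\b{q}$ and the domain is effectively compact after a priori estimates, Laplace's method yields $\lim_N \E f_{N,\beta} = \sup_{\b{q}} \big(\text{(deterministic prefactor in }\b{q}) + \lim_N \E f^{\mathrm{Sph}}_{N,\beta,\b{q}}\big)$, reducing everything to the spherical free energy.

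Second, I would compute the spherical free energy $\lim_N \E f^{\mathrm{Sph}}_{N,\beta,\b{q}}$ via matching upper and lower bounds, giving the inner $\inf_{(\zeta,\b\Phi)}$ over $\mathscr{Y}(\b q)$. For the upper bound I would run Guerra's interpolation: build a coupled family interpolating between the spherical elastic model and a decoupled model consisting of a Ruelle-probability-cascade/Parisi term plus an independent Gaussian site term, where the cascade is parametrized by $\zeta$ and the per-site overlap structure is slaved to the master overlap through the synchronizing map $\b\Phi$ (this is exactly the role of condition \eqref{eq:condition for Psi}, which forces the weighted average of the coordinate overlaps to track the scalar overlap). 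Differentiating in the interpolation parameter and using Gaussian integration by parts, the derivative has a sign once one accounts for the elastic coupling via $\b{K}^{-t\Delta}$ and $\Lambda^{-t\Delta}$ — the functions $\delta_x(s)$ and the terms involving $B'(2(q_x-\Phi_x(u)))$ are precisely the remainder one is left to control, and concavity/monotonicity of $\Lambda^{-t\Delta}$ (Definition~\ref{defin:K and Lam for Delta}) delivers the inequality. This gives $\lim_N \E f^{\mathrm{Sph}} \le \inf_{(\zeta,\b\Phi)} (\text{functional})$.

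For the matching lower bound I would adapt the Aizenman--Sims--Starr / cavity method in the multi-species form developed by Panchenko, as used by Chen for the spherical mixed $p$-spin model. The new feature is the elastic coupling across sites $x\in\Om$, so the cavity computation must be done on $|\Om|$ copies of a sphere simultaneously, with the synchronization mechanism ensuring that the limiting Gibbs measure's overlap array is, up to the identification \eqref{eq:condition for Psi}, governed by a single scalar overlap distribution $\zeta$ together with the deterministic slaving functions $\b\Phi$ — this is Panchenko's multi-species synchronization applied in our setting. Running the cavity comparison for one added spin per site, the lower bound produces exactly $\cal P_{\beta,\b q}(\zeta,\b\Phi)$ for the $(\zeta,\b\Phi)$ extracted from the true Gibbs measure, hence $\lim_N \E f^{\mathrm{Sph}} \ge \inf_{(\zeta,\b\Phi)} (\text{functional})$. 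Combining the two bounds, inserting into the Laplace reduction, and upgrading from $\E f_{N,\beta}$ to almost-sure convergence by Gaussian concentration (Borell--TIS plus Borel--Cantelli, since $f_{N,\beta}$ is a Lipschitz function of the disorder with the right variance scaling), completes the proof.

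\textbf{Main obstacle.} The hardest part is the lower bound via the cavity method: establishing the synchronization of the $|\Om|$-site overlap array — i.e.\ that in the limit the coordinate overlaps are deterministic non-decreasing functions $\b\Phi$ of the scalar overlap — requires a genuinely multi-species argument in the presence of the non-diagonal elastic interaction $-t\Delta$, and controlling the elastic quadratic form under the cavity perturbation (tracking how $\b{K}^{-t\Delta}$ and $\Lambda^{-t\Delta}$ enter) is where the bulk of the technical work will lie. A secondary difficulty is making the Laplace-method reduction rigorous uniformly in $\b{q}$, which needs a priori localization estimates confining the relevant radii $\b{q}(x)$ to a compact set.
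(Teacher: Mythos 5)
Your overall strategy — slice $Z_{N,\beta}$ by site-by-site squared radii $\b q$, reduce each slice to a spherical multi-species model, compute that spherical free energy by Guerra interpolation (upper bound) and a Chen/Aizenman--Sims--Starr cavity argument with Panchenko synchronization (lower bound), and upgrade to a.s.\ convergence by concentration — is precisely what the paper does. But three ingredients are either missing or mislocated, and they are not cosmetic.

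First, ``Laplace's method'' cannot be applied as you state it: the fiberwise exponential rate is the random quantity $\log Z_N(\b q)$, and one cannot a priori replace it by $\E\log Z_N(\b q)$ and optimize over $\b q$. The paper's mechanism is a continuity estimate comparing $\log Z_N(\b q)$ with the contribution $\log Z_N^\epsilon(\b q)$ of a thin annular shell (Lemma~\ref{lem:epsilon of room around slices}), which, together with the compactness supplied by Lemma~\ref{lem:you can ignore big subsets}, reduces matters to a finite covering of thickened slices, on each of which Borell--TIS and Borel--Cantelli apply; your remark about a priori localization covers the compactness but not this thickening/covering device. Second, $\Lambda^{-t\Delta}$ and $\b K^{-t\Delta}$ are \emph{not} a Jacobian rate from the radial slicing: upon restriction to $S_N(\b q)$ the $-t\Delta$ quadratic form simply becomes the spherical interaction matrix $D_{\b q}=-t\,\text{di}(\sqrt{\b q})\Delta\,\text{di}(\sqrt{\b q})$ of Lemma~\ref{lem:outline:identification of A and P}; the $\Lambda^D$ and $\b K^D$ structures arise only later, inside the spherical Parisi computation, when the spherical constraint is traded for a Gaussian integral over $(\R^M)^\Om$ with a Lagrange multiplier $\b b$ (Lemma~\ref{lem:upper bound:replacement by b}). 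Third, and most structurally, the Guerra interpolation for a spherical model naturally produces the auxiliary functional $\cal A$ of Theorem~\ref{theorem:spherical parisi continuum}, not the Crisanti--Sommers-type functional $\cal B$ that feeds into $\cal P_{\beta,\b q}$; proving $\inf\cal A=\inf\cal B$ is a separate and substantial variational argument (Proposition~\ref{prop: easy Parisi=CS-minor} and all of Section~\ref{section:CS=Parisi}), analogous to the Talagrand/Bates--Sohn mismatch in the one-site and multi-species spherical settings, and your outline does not account for it.
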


\begin{remark}
	\label{remark:beginning point doesn't matter part}
	We note that for $s\in(q_*,q_t)$ we have 
	\[\delta_x(s)=\Phi_x(q_t)-\Phi_x(s)=\b{q}(x)-\Phi_x(s).\]
	Thus for $q_*'\in (q_*,q_t)$ we see that
	\[\frac{1}{|\Om|}\sum_{x\in \Om}\int_{q_*}^{q_*'} \beta K^{-t\Delta}_x(\beta \b{\delta}(q))\Phi'_x(q)dq=\Lambda^{-t\Delta}(\beta \b{\delta}(q_*))-\Lambda^{-t\Delta}(\beta \b{\delta}(q_*')).\]
	In particular, $\cal{P}_{\beta,\b{q}}(\zeta,\b{\Phi})$ is independent of the choice of $q_*$.
\end{remark}

\subsubsection{Results for the Spherical Model}

Before discussing the proof of Theorem \ref{theorem:intro:bad main euclidean}, we will introduce a similar result for a more general family of spherical models. This result is not only a major component in the proof of Theorem \ref{theorem:intro:bad main euclidean}, but also of independent interest due to the spherical models exhibiting a more non-trivial dependence on the external field.

The model here is similar to the above model, except that instead of taking an isotropic field on $\R^N$ in the presence of a quadratic potential, we instead consider isotropic fields restricted to the sphere. Such isotropic fields are more well known, as they form the family of spherical spin glass models, which has seen extensive study.

In particular, we fix, for each $N\ge 1$, a choice of centered Gaussian random field $H_N:S_N\to \R$ with isotropic covariance given for $\sigma,\tau\in S_N$ by
\[\E[H_N(\sigma)H_N(\tau)]=N\xi((\sigma,\tau)_N),\]
where $\xi:[-1,1]\to \R$ is some fixed function. As before, it is also a result of Schoenberg \cite{schoenbergSpheres} that for such a family to exist for all $N\ge 1$, it is necessary and sufficient that $\xi$ is an analytic function on $[-1,1]$ with positive coefficients. That is, there are coefficients $\beta_p\ge 0$ such that
\[\xi(x)=\sum_{p=0}^{\infty}\beta_p^2 x^p,\label{eqn:xi-decomposition}\]
and such that $\xi(1)<\infty$. We will assume as before that for sufficiently small $\epsilon>0$, that $\xi(1+\epsilon)<\infty$, so that $\xi$ extends to an analytic function on $(-1-\epsilon,1+\epsilon)$.

Let us now take some finite set $\Om$, and fix a positive semi-definite matrix $D\in \R^{\Om\times \Om}$. For each $x\in \Om$, fix a mixing function $\xi_x$ in (\ref{eqn:xi-decomposition}), and let $H_{N,x}$ be its associated Hamiltonian (chosen independently for each $x\in \Om$). Finally, we choose an external field vector $\b{h}\in \R^{\Om}$. We define the spherical version of our Hamiltonian by taking, for $\b{u}\in S_N^{\Om}$
\[\cal{H}_{N,\mathrm{Sph}}(\b{u})=\frac{1}{2}\sum_{x,y\in \Om} D_{xy}(\b{u}(x),\b{u}(y))+\sum_{x\in \Om}H_{N,x}(\b{u}(x))+\sqrt{N}\sum_{x\in \Om}\b{h}(x)\b{u}_1(x).\label{eqn:def:intro-spherical-model}\]
Associated with this, we may define the partition function
\[\ZNxib=\int_{S_N^{\Om}}e^{-\beta \cal{H}_{N,\mathrm{Sph}}(\b{u})}\omega(d\b{u}),\]
where here $\omega(d\b{u})$ the surface measure on $S_N^{\Om}$ induced by the inclusion $S_N^{\Om}\subseteq (\R^N)^{\Om}$. We similarly define the models normalized free energy as
\[f_{N,\beta,\mathrm{Sph}}=|\Om|^{-1}N^{-1}\log \ZNxib.\]

To give the Parisi-functional in this case, we will need a generalization of Definition \ref{defin:K and Lam for Delta}.

\begin{defin}
	\label{defin:K and Lam for general}
	Let $K^D:\{\b{u}\in \R^{\Om}:D+\b{u}>0\}\to (0,\infty)^{\Om}$ be the function defined by
	\[[\left(\b{u}+\b{K}^D(\b{u})\right)^{-1}]_{xx}=\b{u}(x) \text{ for } x\in \Om.\label{eqn:def:KD}\]
	We also define for $\b{u}\in \{\b{u}\in \R^{\Om}:D+\b{u}>0\}$, the function
	\[\Lambda^{D}(\b{u})=\frac{1}{|\Om|}\left(\sum_{x\in \Om}K_x^{D}(\b{u})\b{u}(x)-\log \det(D+\b{K}^{D}(\b{u}))\right).\label{eqn:def:Lambda}\]
\end{defin}

If we denote $\mathscr{Y}:=\mathscr{Y}(\b{1})$, then for $(\zeta,\b{\Phi})\in \mathscr{Y}$ and any $q_*$ such that $\zeta([0,q_*])=1$ and $\Phi_x(q_*)<1$ for all $x\in \Om$, we may define the functional
\[
\begin{split}
	\cal{B}_\beta(\zeta,\b{\Phi})=&\frac{1}{2}\bigg(\log\left(\frac{2\pi}{\beta}\right)+\Lambda^D(\beta\b{\delta}(q_*))+\frac{1}{|\Om|}\sum_{x\in \Om}\bigg(\int_{0}^{q_*}\beta K_x^D(\beta \b{\delta}(q))\Phi'_x(q)dq\\
	&+\int_0^{1}\beta^2\zeta([0,u])\xi_x'(\Phi_x(q))\Phi'_x(q)dq+\sum_{y\in \Om}\beta^2 [D+\b{K}^D(\beta\b{\delta}(0))^{-1}]_{xy}\b{h}(x)\b{h}(y)\bigg)\bigg).\label{eqn:def:hat-A}
\end{split}
\]

As in Remark \ref{remark:beginning point doesn't matter part}, it may be checked that this functional is independent of the choice of $q_*$. Our main result for this class is then the following computation for the quenched free energy.

\begin{theorem}
	\label{theorem:intro:bad parisi spherical}
	We have a.s. that
	\[\lim_{N\to \infty}f_{N,\beta,\mathrm{Sph}}=\lim_{N\to \infty}\E f_{N,\beta,\mathrm{Sph}}= \inf_{(\zeta, \b{\Phi})\in \mathscr{Y}}\cal{B}_\beta(\zeta,\b{\Phi}).\]
\end{theorem}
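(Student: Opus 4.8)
The plan is to establish the two matching bounds for the spherical free energy $f_{N,\beta,\mathrm{Sph}}$ separately, following the now-standard Guerra interpolation (upper bound) and Panchenko-style cavity/synchronization (lower bound) strategy, but carried out in the vector-valued/multi-site setting where each site $x\in\Om$ carries a spherical spin $\b{u}(x)\in S_N$ coupled through the deterministic matrix $D$. Before doing either bound, I would first record the \emph{self-averaging} of $f_{N,\beta,\mathrm{Sph}}$: since $\cal{H}_{N,\mathrm{Sph}}$ is a Gaussian field with bounded variance proportional to $N$ on the product of spheres, Gaussian concentration (Borell--TIS) gives $|\Om|^{-1}N^{-1}\log\ZNxib$ concentrates around its mean at rate $\exp(-cN)$, so by Borel--Cantelli the a.s.\ limit equals the limit of $\E f_{N,\beta,\mathrm{Sph}}$ provided the latter converges; this reduces everything to sandwiching $\liminf \E f_{N,\beta,\mathrm{Sph}}$ and $\limsup \E f_{N,\beta,\mathrm{Sph}}$ by $\inf_{(\zeta,\b\Phi)\in\mathscr{Y}}\cal{B}_\beta(\zeta,\b\Phi)$.

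For the \textbf{upper bound}, I would set up a Guerra--Talagrand interpolation in which, at interpolation time $t\in[0,1]$, one has a convex combination of the true coupled Hamiltonian and an auxiliary decoupled system: for each site $x$ an independent spherical spin glass with a \emph{site-dependent} Ruelle probability cascade driven by $\zeta$ and by the tilted mixing profile encoded through $\b\Phi$, together with a Gaussian external-field term whose strength is dictated by $\b\delta(0)$ and the matrix $\b{K}^D$. Differentiating $\frac{d}{dt}$ of the interpolated free energy and applying Gaussian integration by parts produces the usual quadratic remainder in the overlaps; the multi-site coupling via $D$ contributes a term controlled by the spectral decomposition of $D+\b{K}^D(\b\delta(\cdot))$, and here the functions $K^D,\Lambda^D$ from Definition \ref{defin:K and Lam for general} are exactly what is needed to diagonalize that quadratic form and force the derivative to have a sign. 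The constraint (\ref{eq:condition for Psi}) defining $\mathscr{Y}(\b1)$ guarantees the overlap paths across the $|\Om|$ coordinates can be consistently parametrized by a single scalar $s\in[0,1]$, so the cascade is genuinely one-parameter. Optimizing over $(\zeta,\b\Phi)\in\mathscr{Y}$ then yields $\limsup_N \E f_{N,\beta,\mathrm{Sph}}\le \inf_{\mathscr{Y}}\cal{B}_\beta$.

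For the \textbf{lower bound} I would run the Aizenman--Sims--Starr / cavity argument adapted to the spherical multi-site model, adding one spin coordinate at each of the $|\Om|$ sites and comparing $Z_{N+1}$ to $Z_N$. The essential input is \emph{synchronization} of the overlaps: a priori the limiting Gibbs measure lives on an array of $|\Om|$ overlap matrices $(R^x_{\ell\ell'})_{x\in\Om}$, and Panchenko's multi-species ultrametricity/synchronization theorem (valid once one has a suitable form of the Ghirlanda--Guerra identities, which one obtains by a small perturbation of $\xi_x$ that does not affect the free energy in the limit) forces these to all be monotone Lipschitz functions of a single overlap, i.e.\ $R^x = \Phi_x(R)$ for some non-decreasing $\Phi_x$ with $\Phi_x(1)=1$; the linear relation (\ref{eq:condition for Psi}) emerges from the spherical constraint $\|\b{u}(x)\|_N^2=1$ combined with the way the cavity coordinate is distributed. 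Given synchronization, the cavity computation collapses to the single-parameter Parisi-type expression, and a matching-lower-bound (Chen-type) argument over the choice of the cascade shows $\liminf_N \E f_{N,\beta,\mathrm{Sph}}\ge \inf_{\mathscr{Y}}\cal{B}_\beta$.

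The main obstacle I anticipate is the synchronization step together with correctly identifying the role of the deterministic elastic coupling $D$: in the pure multi-species spherical models of Panchenko the sites interact only through the mixture $\xi$, whereas here the quadratic term $\tfrac12\sum_{x,y}D_{xy}(\b{u}(x),\b{u}(y))$ and the external field $\b{h}$ directly couple different sites inside a single replica, and one must check that this coupling is ``soft'' enough (it is linear-quadratic in the spins, hence handled by Gaussian/Laplace asymptotics via the functions $K^D,\Lambda^D$) that it does not obstruct the Ghirlanda--Guerra identities or the ultrametric decomposition. A secondary technical point is justifying the spherical Laplace asymptotics uniformly — i.e.\ that the surface measure $\omega$ on $S_N^{\Om}$ in the presence of the $D$-quadratic form contributes precisely the $\Lambda^D$/$\log(2\pi/\beta)$ terms — which requires a careful saddle-point analysis of $\int_{S_N^{\Om}} e^{-\frac{\beta}{2}\sum D_{xy}(\b u(x),\b u(y))}\omega(d\b u)$ but is otherwise routine.
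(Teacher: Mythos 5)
There is a genuine gap. You propose to run Guerra interpolation for the upper bound and an Aizenman--Sims--Starr/synchronization cavity argument for the lower bound, and conclude directly that both match $\inf_{\mathscr{Y}}\cal{B}_\beta$. But these techniques do not naturally produce the Crisanti--Sommers-type integral functional $\cal{B}$: when you interpolate against a synchronized Ruelle probability cascade, the recursion that evaluates the RPC free energy produces iterated Gaussian integrals, and after a Laplace/large-deviations step on the product of spheres one obtains a functional built out of $\log\det(D+\b{b}-\b{d}^k)$ ratios and a Lagrange multiplier $\b{b}$ — this is exactly the auxiliary functional $\cal{A}(\vec{\b{q}},\vec{t},\b{b})$ in (\ref{eqn:def:A in terms of s}), not $\cal{B}$. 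The paper therefore proves the spherical result in two distinct steps that you have collapsed into one: first, Theorem \ref{theorem:spherical parisi continuum} establishes a ``New Parisi Formula'' $\lim_N\E f_{N,\mathrm{Sph}}=\inf_{\mathscr{Y}_{fin}}\cal{A}$ via precisely the interpolation and cavity machinery you sketch; second, the entire Section \ref{section:CS=Parisi} (Proposition \ref{prop: easy Parisi=CS-minor} and especially Proposition \ref{prop:Parisi=CS-minor}) proves $\inf\cal{A}=\inf\cal{B}$ by a delicate variational calculus argument — deriving and matching the critical equations of both functionals and, crucially, proving that minimizers actually exist and satisfy those equations (Lemma \ref{lemma:CS:minimzers exist}), which is nontrivial because the domain is neither compact nor open. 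This mirrors Talagrand's original proof of the one-site spherical Crisanti--Sommers formula, as the paper itself flags. Without an argument for $\inf\cal{A}=\inf\cal{B}$ your plan has no way to land on $\cal{B}$; claiming the interpolation derivative ``is dictated by $\b{\delta}(0)$ and $\b{K}^D$'' in a way that immediately gives $\cal{B}_\beta$ is where the missing content sits.

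A smaller technical objection: you describe the cavity step as ``adding one spin coordinate at each of the $|\Om|$ sites and comparing $Z_{N+1}$ to $Z_N$.'' For spherical models this does not work directly; one must add $M$ cavity coordinates, compare $Z_{N+M}$ to $Z_N$, and send $M\to\infty$ after $N\to\infty$ (Chen's extension of the ASS scheme, used in Section \ref{section:lowerbound:cavity}). The intermediate steps — disintegrating $S_{N+M}$ over $S_N\times A_{M,N}$, comparing $\gamma_{M,N}$ to the Gaussian measure $\gamma_M$, controlling the thickening of spheres — are not routine bookkeeping; they are what makes the cavity identity converge to the abstract RPC functional $\cal{A}_M$ that Proposition \ref{prop:rpc-intro:infimum identification of B in limit} then sends to $\cal{A}$. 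Your high-level description of synchronization via Ghirlanda--Guerra is correct and matches the paper, but these two omissions (the $\cal{A}$-vs-$\cal{B}$ equivalence and the $M$-cavity scheme) are the load-bearing parts of the proof.
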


\subsection{Outline of the Proof}

We now outline the major steps in our proofs, as well as the general structure of this paper. In Section \ref{section:non-transitive main theorem}, we give the proof of with Theorem \ref{theorem:intro:bad main euclidean}, assuming Theorem \ref{theorem:intro:bad parisi spherical}. To do this, we first reduce to the case of $\beta=1$ and $h=0$ by reparametrization. Then, we view the partition function $Z_{N,\beta}$ as an integral over the contributions of products of spheres of fixed radii using the coarea formula. The Euclidean model, restricted to a product of spheres, may then be related to an inhomogeneous spherical model by rescaling the radius. This model may then be treated by Theorem \ref{theorem:intro:bad parisi spherical}. In particular, we may understand the expected exponential order of this contribution by employing Theorem \ref{theorem:intro:bad parisi spherical}. This task is completed in Corollary \ref{corr:formula for spherical slice}, which essentially shows that the contribution is precisely 
\[\inf_{(\zeta,\b{\Phi})\in \mathscr{Y}(\b{q})}\cal{P}_{\beta,\b{q}}(\zeta,\b{\Phi}).\]
With this result, Theorem \ref{theorem:intro:bad main euclidean} heuristically follows by Laplace's method.
However, as each of these contributions are random, replacing them with their expected exponential order is non-trivial. Instead, we show a continuity estimate which shows that the surface-area contribution over a given product of spheres is not significantly changed by slightly thickening each sphere. This allows us to work with the sum of contributions over products of thickened spheres instead. With some simple decay estimates to reduce this to a finite sum, we may then control their fluctuations by standard Gaussian concentration inequalities. Taking the thickness to zero then gives Theorem \ref{theorem:intro:bad main euclidean}.

We now move on to discuss the proof of Theorem \ref{theorem:intro:bad parisi spherical}. The first step, is to show we may express the asymptotic free energy in terms of a different functional (see Theorem 
\ref{theorem:spherical parisi continuum}). This new functional is somewhat artificial, and even in the one-site case does not coincide with the functional obtained by Crisanti and Sommers \cite{crisantisommersOG}. Such auxiliary functionals however have appeared both in the rigorous computations of the free energy in the one-site case by Talagrand \cite{talagrandOG} and in the related multi-species spherical model by \cite{erik,erikCS}, though if it has a physical meaning is still unclear. Proving that these expressions are equivalent (i.e that Theorem \ref{theorem:spherical parisi continuum} implies Theorem \ref{theorem:intro:bad parisi spherical}) is the focus of Section 3.

Now we are left with Theorem \ref{theorem:spherical parisi continuum}. On a broad scale, the mechanical basis for our proof begins with a combination of the multi-species synchronization method introduced in the computation of free energy in the convex multi-species spin glass model by Panchenko \cite{panchenkoms}, supplemented by a number of apparatuses used in the computation of the free energy in the unipartite spherical spin glass models \cite{talagrandOG,talagrandGuerraPositivity,panchenko-Ultrametricity,weikuo}.

Specifically, we are able to obtain an upper bound by employing a Guerra-style interpolation argument, and our lower bound employs the extended cavity method, as introduced by Chen \cite{weikuo}, as well as the Aizenman-Sims-Star scheme \cite{ass1,ass2}. Both of these require us to first add a slight perturbation term to the Hamiltonian, small enough to not affect the free energy, but strong enough to force a number of strong properties for the Gibbs measure. For both the upper and lower bound, this allows us to replace the free energy with a difference defined in terms of certain ``synchronized" Ruelle probability cascades. This reduces the problem to instead computing asymptotics for a certain functional over such functionals, which is where effects of the confining potential $\mu I-t \Delta$ come into play, as computing such functionals on a non-product measure adds a number of technicalities. 

The bulk of this analysis is contained in Sections \ref{section:RPC-intro} and \ref{section:RPC-gg-iden}, which also recall the majority of the technological results and constructions we use from this theory.	Roughly, the first section, Section \ref{section:RPC-intro}, involves the study of Ruelle probability cascades and the interpretation of the Parisi functional in terms of them, as well as some further approximation and rigidity results. The second, Section \ref{section:RPC-gg-iden}, focuses more on the multi-species Ghirlanda-Guerra identities, their relation to ``synchronized" Ruelle probability cascades, and the perturbation of the Hamiltonian.

To the reader not versed in these methods, we suggest the essentially self-contained and incredibly well-written monograph of Panchenko \cite{panchenko}, which we will also use to set the majority of our notation. We will also crucially use the synchronization method of \cite{panchenkoms}. We will also reference often the work \cite{erik}, which is quite thoroughly written. Unfortunately, their multi-site model is essentially orthogonal to ours, so we can only use some technical lemmas. 

With these tools developed, we finally complete the proof of Theorem \ref{theorem:intro:bad parisi spherical}, which is the subject of Section \ref{section:lowerbound:cavity}. As discussed above, this is heavily reliant on the tools in Sections \ref{section:RPC-intro} and \ref{section:RPC-gg-iden}. However, with these tools, the upper bound follows immediately from a Guerra-style interpolation and an application of Talagrand's positivity principle \cite{talagrandGuerraPositivity}. The lower bound is more involved, as we need to use the extended cavity method of Chen \cite{weikuo}, combined with the multi-species synchronization approach of Panchenko \cite{panchenkoms}.

Finally, we provide three appendices. Appendix A provides some basic results about Gaussian processes used throughout the text. Appendix B provides some properties of the functionals $\cal{A}$ and $\cal{B}$. Appendix C provides some important technical results about the functions $\b{K}^{D}$ and $\Lambda^D$.

\subsection{Related Works}

We first review the results for the one-site case (i.e. when $|\Om|=1$), where much more is known. Beginning with the spherical model (\ref{eqn:def:intro-spherical-model}), an expression for the quenched free energy, which henceforth we will simply refer to as the free energy for simplicity, was derived on a physical basis by Crisanti and Sommers \cite{crisantisommersOG}, employing the replica-symmetry breaking scheme used by Parisi \cite{parisiOG} to treat the Ising spin glass model. A rigorous derivation of these formulas in the even case (i.e. when $\xi$ is an even function) was obtained by Talagrand \cite{talagrandOG, talagrandIsingOG}, cleverly employing a version of the interpolation method used by Guerra to obtain the upper bound in the Ising case \cite{guerraOG}. The general Ising case was later obtained by Panchenko \cite{panchenkoUnipartite}, with the general spherical case soon after being obtained by Chen \cite{weikuo}.

Moving onto the elastic model, an asymptotic formula for the free energy was essentially conjectured by M{\'e}zard and Parisi \cite{mezardparisi,mezardparisi2}, and was further studied in the one-site case by Engel \cite{engelOG}. The one-site computation was then made rigorous in the note of Klimovsky \cite{planeonesiteOG}, by employing the computation of the spherical case.

We also mention that a number of computations for the free energy of different multi-site models have recently been obtained. Many of these rely on the foundational work of Panchenko \cite{panchenkoms}, who computed the free energy of the multi-species Ising spin glass model in the convex case. Following this, Bates and Sohn \cite{erik,erikCS} recently computed the free energy for the multi-species spherical spin glass model, again in the convex case. Finally, we mention the work of Ko \cite{justin}, who computed the free energy in the spherical case with contained overlaps.

Moving back to the case of the elastic manifold, recent work of Xu and Zeng \cite{xuzengelasticmanifold} established a formula for the ground-state energy in the replica symmetric regime. These coincide with the general formula for the ground-state energy we obtain in our companion works \cite{Paper2, Paper3}. However, unlike the previously mentioned works, the method of \cite{xuzengelasticmanifold} does not depend on a Guerra-style interpolation. 

Instead, their result is based on the (annealed) complexity, which is roughly the logarithm of the expected number of critical points, possibly subject to some conditions. For the elastic manifold, the complexity was first computed by Fyodorov and Le Doussal \cite{fyodorov-manifold}. This builds on early computations of the complexity in the one-site case by Fyodorov \cite{Fyo04}, which was followed by Fyodorov and Williams who computed the complexity of only local minima \cite{FW07}. The rigor of the results coming from works have mostly been confirmed in the work of Bourgade, McKenna, and the first author \cite{gerardbenpaul}, using a general framework they developed to treat complexity computations in \cite{gerardbenpaulCompanionPaper}.

Typically, the complexity is positive, so that one expects an exponentially large number of critical points. However, typically when the system is subject to a suitably strong external field or confining potential, the complexity may vanish, and more so the expected number of critical points or local minima may tend to one. This possibility was first noticed by Cugliandolo and Dean \cite{topologytrivialFirst} in a simple case of the one-site spherical model, and has been coined "topological trivialization" by Fyodorov and Le Doussal \cite{topologytrivialization2} who studied the behavior of in more detail.

If one restricts the count of critical points to those whose energy is near a given value, then the computation of the ground-state energy amounts to refining the computation of the complexity to vanishing order. This approach was pioneered by Fyodorov in the case of pure one-site spherical models \cite{topology3}, and indeed is the method used by \cite{xuzengelasticmanifold} to obtain their formula. This approach has since been generalized to mixed one-site spherical models by Belius, \v{C}ern\'{y}, Nakajima and Schmidt \cite{topology4}, and finally to the case of multi-species spherical models by Huang and Selke \cite{topologyms}.

Returning more to related work on complexity, the work \cite{FW07} actually studies a more general model than (\ref{eqn:def:main-model-D}) where the confining term $\frac{\mu}{2}\|x\|^2$ is related by a more general term $NU\l(\frac{1}{2}\|x\|^2_N\r)$ where $U$ is an increasing and convex function. Another common generalization is to instead weaken the condition of isotropy of $V_N$ to assume that $V_N$ only possesses isotropic increments. These, and similar models, were studied extensively by Fyodorov et. al. in \cite{FS07, FB08,fyodorovepointinabox}, many of whose results were later confirmed and extended by Auffinger and Zeng \cite{tucazeng1,tucazeng2}.

Much more is known about the complexity of the spherical case. In particular, the complexity in this case was computed in works by Auffinger, \v{C}ern\'{y}, and one of the authors \cite{tucapure,tucamixed}. These results match those obtained earlier in physics literature \cite{complexityOG,complexityminimaOG}, for which they correspond to the zero-temperature case of a more general computation for the complexity of TAP states. 

It is at this point that one may ask if computations of the ground state energy can be obtained outside of the topological trivialization regime. For this, it is important to introduce the distinction between the annealed complexity, and the quenched complexity, which is given by the typical exponential order of the number of critical points rather than its average. In general, it is known that these two quantities cannot coincide in general. For example, they are known to differ at sufficiently low energy for $2$RSB mixed spherical spin glass models \cite{chensen}. 

It is a remarkable result then due to Subag \cite{subag} that in the case of the pure $p$-spin glass model, annealed complexity of local minima concentrates around its average in any level-set above the ground-state energy. This was later extended by Subag and Zeitouni \cite{subagofer} to general critical points in the pure model, under mild assumptions on $p$, and further to low-lying local minima for a certain class of $1$RSB mixed spin glass models by Subag, Zeitouni, and the first author in \cite{subag1RSB}. Not only does this lead to a computation of the ground-state energy, but such results have led to a more complete understanding of the geometry of the low-temperature Gibbs measure, as shown in follow-up works by Subag and Zeitouni \cite{pspin-second-application1, pspin-second-application2}. In addition, these have played an important role in studying the associated Langevin dynamics, as in the work of Gheissari and Jagannath \cite{pspin-second-application3}. In addition, the work of Jagannath and the first author \cite{aukoshgerardshattering} discusses further ramifications of this study to concepts such as shattering and metastability. Finally, we mention a new novel approach to the model’s Langevin dynamics by Gheissari and Jagannath and the first author \cite{boundingflows}.

However, outside of these special cases, such concentration results are scarce, with the only full extension being to the non-gradient generalization of the spherical pure $p$-spin model by the second \cite{kivimae-non-gradient}, some partial results on the spherical bipartite model \cite{kivimae-bipartite}, both by the second author, and a general result for the total local minima of the mixed spin glass model by Belius and Schmidt \cite{david}, though with no control on their energy values.

\subsection{Notation}

Here we will set some notation and conventions, beginning with the thermodynamic notation. With a choice of topological space $\Sigma$, a measurable function $H:\Sigma\to \R$, and positive Borel measure $m$, we will define the partition function associated to $(H,m)$ as
\[Z(H,m)=\int_{\Sigma}\exp(H(x))m(dx),\]
when the integral on the right is finite. This definition encompasses all the objects referred to as partition functions above. For example, the partition function $Z_N$ coincides with $Z(-\cal{H}_N,d\b{u})$. When $Z(H,m)$ is finite, we define the Gibbs measure associated to $(H,m)$ as the probability measure on $\Sigma$, $G_{(H,m)}$, given by
\[G_{(H,m)}(dx)=\frac{\exp(H(x))m(dx)}{Z(H,m)}.\]
For a measurable function $f:\Sigma \to \R$, we will often use the notation
\[\<f(x)\>_{H,m}=G_{(H,m)}(f):=\int_\Sigma f(x)G_{(H,m)}(dx).\]
Associated as well, for any $n\ge 1$, is the (replicated) Gibbs measure on $G_{(H,m)}^{\otimes n}$ on $\Sigma^{n}$. When one instead has a function $f:\Sigma^n\to \R$ we will also use the mildly abusive notation
\[\<f(x_1,\dots,x_n)\>_{H,m}=G_{(H,m)}^{\otimes n}(f)=\int_{\Sigma^n} f(x^1,\dots, x^n)\prod_{i=1}^nG_{(H,m)}(dx^i),\]
and in the special case of $n=2$ we will often write
\[\<f(x,x')\>_{H,m}=G_{(H,m)}^{\otimes 2}(f)=\int_{\Sigma^2}f(x,x')G_{(H,m)}(dx)G_{(H,m)}(dx').\]
In addition, we will often use the notation $\<*\>_{(H,m)}$ in place of $G_{(H,m)}$.

Next, we define some subsets that will be useful throughout the work. First, for $0\le q_1<q_2\le \infty$ we define the annulus
\[S_N(q_1,q_2)=\left\{u\in \R^N: q_1\le \|u\|^2_N\le q_2\right\},\]
and for $q>\epsilon>0$ we define the thickened sphere
\[S_N^{\epsilon}(q)=S_N(q,q+\epsilon).\]

Next, for $q>0$, let $S_N(q)\subseteq \R^N$ denote the sphere of radius $\sqrt{Nq}$, and for $\b{q}\in (0,\infty)^{\Om}$ let $S_N(\b{q})=\prod_{x\in \Om}S_N(\b{q}(x))\subseteq (\R^{N})^{\Om}$.
We define $S_N^{\epsilon}(\b{q})$ and $S_N(\b{q}_0,\b{q}_1)$ similarly.

\pagebreak 
	
	\section{From the Spherical to the Euclidean Model\label{section:non-transitive main theorem}}
	
	In this section we will prove Theorem \ref{theorem:intro:bad main euclidean}, our result for the Euclidean model, assuming our result for the spherical model, Theorem \ref{theorem:intro:bad parisi spherical}. The argument follows a few basic steps, beginning with two elementary reductions.
	
	We first reduce to the case of $\beta=1$ by absorbing the factor into the other model parameters. If we temporarily write the Parisi-function in terms of the auxiliary parameters $(h,\mu,t,B)$ as $\cal{P}_{\beta,q}(*;h,\mu,t,B)$, and similarly denote the partition function as $Z_{N,\beta}(h,\mu,t,B)$. Then it is easily checked that
	\[Z_{N,\beta}(h,\mu,t,B)=Z_{N,1}(\beta h,\beta\mu,\beta t,\beta^2 B),\; \cal{P}_{\beta,\b{q}}(\zeta,\b{\Phi};h,\mu,t,B)=\cal{P}_{1,\b{q}}(\zeta,\b{\Phi};\beta h,\beta\mu,\beta t, \beta^2 B).\]
	In particular, to establish Theorem \ref{theorem:intro:bad main euclidean}, it suffices to treat the case of $\beta=1$. A similar argument shows that it suffices to treat the case of $\beta=1$ to show Theorem \ref{theorem:intro:bad parisi spherical}. Thus we will assume that $\beta=1$ henceforth unless otherwise stated. We will also use the notations $Z_N:=Z_{N,1}$, $f_N:=f_{N,1}$, etc. for simplicity.
	
	The next step is to reduce to the case of $h=0$ and is also a fairly simple manipulation, involving taking a fixed translation of target Euclidean space in the model. This does not affect the distribution of the random component of the Hamiltonian, as it is isotropic. However, it changes the linear term through the deterministic quadratic piece, which may be used to absorb the external field term. In particular, we have the following lemma.
	\begin{lem}
		\label{lem:h doesn't matter specific}
		Let us use the notation $\cal{H}_{N,h}$ to indicate the dependence of the function $\cal{H}_N$ given by (\ref{eqn:def:main-model-D}) on the choice $h$. Then we have the following distributional equality of functions on $\R^N$
		\[\cal{H}_{N,h}(\b{u})\disteq \cal{H}_{N,0}\left(\b{u}+\sqrt{N}\frac{h}{\mu}\b{e}_{1}\right)-N|\Om|\frac{h^2}{2\mu},\label{eqn:h doesn't matter function shift}\]
		where $\b{e}_{1}\in (\R^{N})^{\Om}$ is the vector such that $[\b{e}_{1}(y)]_j=\delta_{j1}$ for $y\in \Om$ and $1\le j\le N$.\\
		In particular, if we use the notation $Z_N(h)$ to indicate the dependence of the partition function on $h$, we have the following distributional equality
		\[|\Om|^{-1}N^{-1}\log Z_N(h)\disteq |\Om|^{-1}N^{-1}\log Z_N(0)+\frac{h^2}{2\mu}. \label{eqn:h doesn't matter partition function}\]
	\end{lem}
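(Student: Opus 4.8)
The plan is to prove the distributional identity \eqref{eqn:h doesn't matter function shift} for the Hamiltonian directly, term by term, and then deduce \eqref{eqn:h doesn't matter partition function} by a change of variables in the integral defining $Z_N$. The key observation is that the shift $\b{u}\mapsto \b{u}+\sqrt{N}\frac{h}{\mu}\b{e}_1$ is a \emph{deterministic} translation of $(\R^N)^\Om$, so it preserves Lebesgue measure, and — crucially — it does not change the law of the disorder part $\sum_{x\in\Om}V_{N,x}(\b{u}(x))$, because each $V_{N,x}$ is an isotropic (hence translation-invariant in law) Gaussian field on $\R^N$; more precisely, for any fixed $\b{a}\in(\R^N)^\Om$ the field $\b{u}\mapsto \sum_x V_{N,x}(\b{u}(x)+\b{a}(x))$ has the same covariance, namely $\sum_x N B(\|\b{u}(x)-\b{v}(x)\|_N^2)$, as $\b{u}\mapsto\sum_x V_{N,x}(\b{u}(x))$, so the two are equal in distribution as random functions.

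First I would write out $\cal{H}_{N,0}(\b{u}+\sqrt N\tfrac h\mu\b{e}_1)$ using \eqref{eqn:def:main-model-D} with $h=0$, and expand the quadratic form $\tfrac12\sum_{x,y}(\mu I - t\Delta)_{xy}(\b{u}(x)+\sqrt N\tfrac h\mu e_1,\ \b{u}(y)+\sqrt N\tfrac h\mu e_1)$ by bilinearity into three pieces: the original quadratic form in $\b{u}$; a cross term $\sqrt N\tfrac h\mu\sum_{x,y}(\mu I - t\Delta)_{xy}(\b{u}(x),e_1) = \sqrt N h\sum_{x}\b{u}_1(x)$, where I use that $\Delta$ kills the constant vector $\b{1}$ (i.e. $\sum_y\Delta_{xy}=0$), so only the $\mu I$ part survives and contributes $\sqrt N h\sum_x\b{u}_1(x)$; and a constant term $\tfrac12\cdot N\tfrac{h^2}{\mu^2}\sum_{x,y}(\mu I - t\Delta)_{xy} = \tfrac12 N\tfrac{h^2}{\mu^2}\cdot\mu|\Om| = N|\Om|\tfrac{h^2}{2\mu}$, again because $\Delta\b{1}=0$ leaves only $\mu\sum_{x}1 = \mu|\Om|$. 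The cross term is exactly the linear term $\sqrt N h\sum_x\b{u}_1(x)$ appearing in $\cal{H}_{N,h}$, and the disorder term is unchanged in law by the isotropy argument above, so rearranging gives \eqref{eqn:h doesn't matter function shift}.

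For the second claim, substitute $\b{v}=\b{u}+\sqrt N\tfrac h\mu\b{e}_1$ in $Z_N(0)=\int e^{-\cal{H}_{N,0}(\b{v})}d\b{v}$; since the translation has unit Jacobian and $(\R^N)^\Om$ is translation invariant, combining with \eqref{eqn:h doesn't matter function shift} yields $Z_N(h)\disteq e^{N|\Om|\frac{h^2}{2\mu}}Z_N(0)$, and taking $|\Om|^{-1}N^{-1}\log$ gives \eqref{eqn:h doesn't matter partition function}. I do not expect a serious obstacle here: the only points requiring care are (i) justifying that the isotropic Gaussian field has a translation-invariant law — this is immediate from the stated covariance $\E[V_N(u)V_N(v)] = NB(\|u-v\|_N^2)$ depending only on $u-v$, and one should note the equality in \eqref{eqn:h doesn't matter function shift} is as \emph{processes} (joint law over $\b{u}$), which is what is needed for the integral identity — and (ii) the bookkeeping with $\Delta\b{1}=0$ in the cross and constant terms, which is routine but is the one place the specific structure of the confining operator $\mu I - t\Delta$ enters.
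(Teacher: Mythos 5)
Your proposal is correct and follows essentially the same route as the paper's proof: expand the quadratic form using the fact that $\b{1}$ is an eigenvector of $\mu I - t\Delta$ with eigenvalue $\mu$ (so $\sum_y(\mu I - t\Delta)_{xy} = \mu$), identify the resulting cross and constant terms, and invoke isotropy of $V_{N,x}$ to conclude the shifted disorder has the same law as a process. The only cosmetic difference is that the paper packages the shifted disorder into an auxiliary Hamiltonian $\widetilde{\cal{H}}_N$ before comparing, while you expand $\cal{H}_{N,0}$ at the shifted argument directly and collect terms; the content is identical, including your correct emphasis that the distributional equality must hold jointly over $\b{u}$ for the change-of-variables step in $Z_N$.
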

	\begin{proof}
		We observe that the constant vector $\b{1}\in \R^{\Om}$ is an eigenvector of $\mu I-t \Delta$ with eigenvalue $\mu$, so that
		\[\sum_{y\in \Om}(\mu I-t\Delta)_{xy}=\sum_{y\in \Om}(\mu I-t\Delta)_{xy}\b{1}(y)=\mu \b{1}(x)=\mu.\]
		In particular, it is easily verified that
		\[
		\begin{split}
			\frac{1}{2}\sum_{x,y\in \Om}(\mu I-t\Delta)_{xy}\left(\b{u}(x)+\sqrt{N}\frac{h}{\mu}\b{e}_{1},\b{u}(y)+\sqrt{N}\frac{h}{\mu}\b{e}_{1}\right)=\\
			\frac{1}{2}\sum_{x,y\in \Om}(\mu I-t\Delta )_{xy}(\b{u}(x),\b{u}(y))+\frac{1}{2}N|\Om|\frac{h^2}{\mu}+\sqrt{N}h\sum_{x\in \Om}\b{u}_1(x).
		\end{split}
		\]
		We recognize the latter two terms as the deterministic part of $\cal{H}_{N,h}(\b{u})$ so if we let
		\[\widetilde{\cal{H}}_N(\b{u})=\frac{1}{2}\sum_{x,y\in \Om}(\mu I-t\Delta)_{xy}(\b{u}(x),\b{u}(y))+\sum_{x\in \Om}V_{N,x}\l(\b{u}(x)-\sqrt{N}\frac{h}{\mu}\b{e}_{1}\r),\]
		then we see that
		\[\widetilde{\cal{H}}_{N}\l(\b{u}+\sqrt{N}\frac{h}{\mu}\b{e}_{1}\r)=N|\Om|\frac{h^2}{2\mu}+\cal{H}_{N,h}(\b{u}).\]
		Now, as each $V_{N,x}$ is isotropic, we see that as a function on $\b{u}$, $\widetilde{\cal{H}}_N(\b{u})$ coincides in law with $\cal{H}_{N,0}(\b{u})$, which completes the proof.
	\end{proof}
	
	\begin{remark}
		It is crucial to note that this manipulation is very specific to the Euclidean model in the case of a quadratic potential. Indeed, if one works either in the case of the spherical model, or if one were to take a non-quadratic confining potential, such a simplification need not hold. In particular, in this case one expects the external field to meaningfully affect the statics of the model, as is well known in the one-site case.
	\end{remark}
	
	With this Lemma, we see it suffices to show Theorem \ref{theorem:intro:bad main euclidean} in the case of $h=0$ and $\beta=1$. Next we analyze the contribution to the partition function $Z_N$ coming from a product of spheres of different radii. Rescaling, we may use Theorem \ref{theorem:intro:bad parisi spherical} to compute this, and then we may relate the result to $\cal{P}_{\b{q}}$, where the $\b{q}$ denotes the vector of squared-radii of the spheres.
	
	To begin, we establish the latter claim with the  following result relating $\cal{P}_{\b{q}}(\zeta,\b{\Phi})$ to $\cal{B}(\zeta',\b{\Phi}')$ for a certain choice of parameters $(\b{\xi},\b{h},\zeta',\b{\Phi}')$.
	
	\begin{lem}
		\label{lem:outline:identification of A and P}
		Fix $\b{q}\in (0,\infty)^{\Om}$. Let us choose $D_{\b{q}}=-t\text{di}(\sq{\b{q}}) \Delta\text{di}(\sq{\b{q}})$ and $\b{h}=\b{0}$. For $q>0$, let $B_q(r)=B(2q(1-r))$. It is easily verified that $B_q$ defines a mixing function. Now define, for each $x\in \Om$, $\b{B}_{\b{q}}(x)=B_{\b{q}(x)}$. Next consider $\cal{H}_{N,\mathrm{Sph}}$ as in (\ref{eqn:def:intro-spherical-model}) for this choice of $(\b{B}_{\b{q}},D_{\b{q}},\b{h})$, and denote the associated functional as $\cal{B}_{\b{q}}$. 
		\\
		Then for $(\zeta^{\b{q}},\b{\Phi}^{\b{q}})$ in the domain of $\cal{B}_{{\b{q}}}$, we may define $(\zeta,\b{\Phi})$ by $\Phi_x(s)=\b{q}(x)\Phi_x^{\b{q}}(q_t^{-1}s)$ and $\zeta^{\b{q}}([0,s])=\zeta([0,q_t^{-1}s])$. This gives a bijection between the domain of $\cal{B}_{{\b{q}}}$ and the domain of $\cal{P}_{\b{q}}$, and moreover we have
		\[\cal{P}_{\b{q}}(\zeta,\b{\Phi})=\frac{h^2}{2\mu}+\frac{1}{2|\Om|}\sum_{x\in \Om}\left(-\mu \b{q}(x)+\log(\b{q}(x))\right)+\cal{B}_{\b{q}}(\zeta^{\b{q}},\b{\Phi}^{\b{q}}).\]
	\end{lem}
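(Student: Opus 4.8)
This is a deterministic identity between two Parisi-type functionals, so the plan is simply to substitute the stated change of variables and match the two expressions piece by piece; there is no structural obstacle, only bookkeeping. The one genuinely algebraic input is the behaviour of $\b{K}^{D}$ and $\Lambda^{D}$ under conjugation of $D$ by a diagonal matrix. For any $\b{a}\in\R^{\Om}$ one has
\[D_{\b{q}}+\text{di}(\b{a})=\text{di}(\sq{\b{q}})\l(-t\Delta+\text{di}(\b{a}/\b{q})\r)\text{di}(\sq{\b{q}}),\]
with $\b{a}/\b{q}$ the coordinatewise quotient; inverting and reading off the $(x,x)$ entry shows, via the defining relations (\ref{eqn:def:KtDelta}) and (\ref{eqn:def:KD}), that for $\b{w}$ in the domain of $\b{K}^{D_{\b{q}}}$
\[K^{D_{\b{q}}}_x(\b{w})=\b{q}(x)\,K^{-t\Delta}_x(\b{q}\odot\b{w}),\qquad \Lambda^{D_{\b{q}}}(\b{w})=\Lambda^{-t\Delta}(\b{q}\odot\b{w})-\frac{1}{|\Om|}\sum_{x\in\Om}\log\b{q}(x),\]
where $\b{q}\odot\b{w}$ is the coordinatewise product; the second identity follows from the first together with the conjugation identity and $\log\det(\text{di}(\sq{\b{q}})M\,\text{di}(\sq{\b{q}}))=\log\det M+\sum_x\log\b{q}(x)$ (for any matrix $M$), noting that $K^{D_{\b{q}}}_x(\b{w})\b{w}(x)=K^{-t\Delta}_x(\b{q}\odot\b{w})(\b{q}\odot\b{w})(x)$ leaves the first piece of $\Lambda$ unchanged. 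These are exactly the scaling relations to be recorded in Proposition \ref{prop:appendix:K and Lambda}, which I would invoke. I would also dispatch here the elementary observations in the statement: $B_{\b{q}(x)}(r)=B(2\b{q}(x)(1-r))=c_0+\int_0^\infty e^{-2\lam^2\b{q}(x)}e^{2\lam^2\b{q}(x)r}\,\nu(d\lam)$ has nonnegative Taylor coefficients and extends past $1$ by the hypothesis on $\nu$, so is a legitimate mixing function, with derivative $-2\b{q}(x)\,B'(2\b{q}(x)(1-r))$.

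Next I would treat the change of variables in the overlap parameter, which rescales $[0,q_t]$ onto $[0,1]$ by $s\mapsto q_t^{-1}s$, sends $\Phi_x$ to $\b{q}(x)\Phi^{\b{q}}_x(q_t^{-1}\,\cdot\,)$, and pushes $\zeta$ forward under $s\mapsto q_t^{-1}s$. Under it, constraint (\ref{eq:condition for Psi}) for $\mathscr{Y}(\b{q})$ becomes $|\Om|^{-1}\sum_{x}\Phi^{\b{q}}_x(u)=u$, i.e.\ the defining constraint of $\mathscr{Y}=\mathscr{Y}(\b{1})$, while monotonicity and absolute continuity are preserved; the cutoffs correspond via $q_*=q_t q_*^{\b{q}}$, using $\Phi^{\b{q}}_x(q_*^{\b{q}})=\b{q}(x)^{-1}\Phi_x(q_*)<1$ and $\zeta^{\b{q}}([0,q_*^{\b{q}}])=\zeta([0,q_*])=1$. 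Thus the map is a bijection of the two domains. Differentiating gives $\Phi'_x(s)=\b{q}(x)q_t^{-1}(\Phi^{\b{q}}_x)'(q_t^{-1}s)$, and substituting $v=q_t^{-1}u$ in the definition of $\b{\delta}$ yields $\delta_x(s)=\b{q}(x)\,\delta^{\b{q}}_x(q_t^{-1}s)$, equivalently $\b{q}\odot\b{\delta}^{\b{q}}(u)=\b{\delta}(q_t u)$; in particular $\b{q}\odot\b{\delta}^{\b{q}}(q_*^{\b{q}})=\b{\delta}(q_*)$.

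With these in hand the rest is mechanical: substitute into $\cal{B}_{\b{q}}(\zeta^{\b{q}},\b{\Phi}^{\b{q}})$ and change variables $v=q_t u$ in each integral. In the $K$-integral the factor $\b{q}(x)$ from $K^{D_{\b{q}}}_x$ cancels the $\b{q}(x)^{-1}$ from $(\Phi^{\b{q}}_x)'$, the Jacobian $q_t$ cancels the $q_t^{-1}$ from $(\Phi^{\b{q}}_x)'$, and the limits pass from $[0,q_*^{\b{q}}]$ to $[0,q_*]$, giving exactly $\int_0^{q_*}K^{-t\Delta}_x(\b{\delta}(q))\Phi'_x(q)\,dq$. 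In the $\xi$-integral, $\xi_x'(\Phi^{\b{q}}_x(u))=-2\b{q}(x)B'(2(\b{q}(x)-\Phi_x(q_t u)))$ (using $\b{q}(x)\Phi^{\b{q}}_x(u)=\Phi_x(q_t u)$), and the same cancellations turn it into $-2\int_0^{q_t}\zeta([0,u])B'(2(\b{q}(x)-\Phi_x(u)))\Phi'_x(u)\,du$. The $\Lambda$-scaling relation converts $\Lambda^{D_{\b{q}}}(\b{\delta}^{\b{q}}(q_*^{\b{q}}))$ into $\Lambda^{-t\Delta}(\b{\delta}(q_*))-|\Om|^{-1}\sum_x\log\b{q}(x)$. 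Hence every integral and the $\log(2\pi)$ matches across the two functionals, and $\cal{P}_{\b{q}}(\zeta,\b{\Phi})-\cal{B}_{\b{q}}(\zeta^{\b{q}},\b{\Phi}^{\b{q}})$ equals $\tfrac12$ times the sum of the terms of $\cal{P}_{\b{q}}$ with no counterpart in $\cal{B}_{\b{q}}$: the term $h^2/\mu$ (on the spherical side $\b{h}=\b{0}$, so $\cal{B}_{\b{q}}$ has no field term), the summands $-\mu\b{q}(x)$, and the leftover $+|\Om|^{-1}\sum_x\log\b{q}(x)$ from the $\Lambda$-scaling. Keeping track of the $|\Om|^{-1}$ inside $\cal{P}_{\b{q}}$, this is precisely $\frac{h^2}{2\mu}+\frac{1}{2|\Om|}\sum_{x\in\Om}(-\mu\b{q}(x)+\log\b{q}(x))$, as claimed.

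The only real work is the bookkeeping of the three scalings acting at once — the $q_t$-rescaling of the overlap interval with its Jacobian, the coordinatewise $\b{q}(x)$-rescaling of the $\Phi_x$ with the compensating scalings of $K^{D_{\b{q}}}_x$ and $\xi_x'$, and the conjugation of $-t\Delta$ by $\text{di}(\sq{\b{q}})$ inside $\Lambda^{D_{\b{q}}}$ — and verifying that every $\b{q}(x)$ and $q_t$ factor cancels except for the three residual terms. A secondary point is the compatibility of the two cutoff conventions, handled by first invoking the $q_*$-independence of $\cal{B}_{\b{q}}$ and of $\cal{P}_{\b{q}}$ (Remark \ref{remark:beginning point doesn't matter part}) and then using $q_*=q_t q_*^{\b{q}}$, so that the lower limits of the $K$-integrals and the arguments $\b{\delta}(q_*)$, $\b{\delta}^{\b{q}}(q_*^{\b{q}})$ genuinely correspond.
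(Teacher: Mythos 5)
Your proof is correct and follows essentially the same route as the paper's: establish the scaling relations $K^{D_{\b{q}}}_x(\b{w})=\b{q}(x)K^{-t\Delta}_x(\b{q}\odot\b{w})$ and the corresponding $\Lambda$-identity, note $\b{q}\odot\b{\delta}^{\b{q}}(u)=\b{\delta}(q_t u)$, and change variables $v=q_t u$ in the two integrals to match terms. The only small improvements you add are (i) deriving the $K$-scaling honestly from the conjugation $D_{\b{q}}+\text{di}(\b{a})=\text{di}(\sq{\b{q}})(-t\Delta+\text{di}(\b{a}/\b{q}))\text{di}(\sq{\b{q}})$ rather than asserting it, (ii) verifying that $B_q$ is a valid mixing function, and (iii) carefully checking that the domain map respects constraint (\ref{eq:condition for Psi}) and the $q_*$-cutoffs — all of which the paper treats more tersely. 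One small citation slip: the scaling relations for $\b{K}^D$, $\Lambda^D$ under diagonal conjugation are not actually among the facts recorded in Proposition \ref{prop:appendix:K and Lambda}, so you cannot simply invoke it for them; but since you derive them directly anyway, this does not affect the validity of your argument.
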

	\begin{proof}
		Let $(\b{K}^{\b{q}},\Lambda^{\b{q}})$ denote the functions corresponding to $D_{\b{q}}$, while reserving the notation $(\b{K},\Lambda)$ for the choice $D=- t \Delta$. We first note that for $x\in \Om$
		\[K^{\b{q}}_{x}(u)=\b{q}(x)K_x(\b{q}\b{u}).\]
		From this, we obtain that
		\[
		\begin{split}
			\sum_{x\in \Om}\log(\b{q}(x))+|\Om|\Lambda^{\b{q}}(\b{q}^{-1}\b{u})=&\sum_{x\in \Om}\left(\log(\b{q}(x))+K_x^{\b{q}}(\b{q}^{-1}\b{u})\b{q}(x)^{-1}\b{u}(x)\right)\\
			&-\log \det(\b{K}^{\b{q}}(\b{q}^{-1}\b{u})-\text{di}(\sq{\b{q}})t\Delta\text{di}(\sq{\b{q}}))\\
			=&\sum_{x\in \Om}K_x(\b{u})\b{u}(x)-\log \det(\b{K}(\b{u})-t\Delta)\\
			=&|\Om|\Lam(\b{u}).
		\end{split}
		\]
		Next, if we denote by $\b{\delta}^{\b{q}}$ the function corresponding to $\cal{B}_{\b{q}}$, while using $\b{\delta}$ for the corresponding function for $\cal{P}_{\b{q}}$, we see that $\delta_x^{\b{q}}(u)=\b{q}(x)^{-1}\delta_x(q_tu)$. From this, we may conclude that
		\[\int_{0}^{s}K_x^{\b{q}}(\b{\delta}^{\b{q}}(u))(\Phi_x^{\b{q}})'(u)du=\int_{0}^{q_ts}K_x(\b{\delta}(u))\Phi'_x(u)du,\]
		and more simply that
		\[\int_0^{1}\zeta^{\b{q}}([0,u])\b{q}(x)B_{x,\b{q}(x)}'(\Phi_x^{\b{q}}(q))(\Phi_x^{\b{q}})'(q)dq=-2\int_0^{q_t}\zeta([0,u])B'(2(\b{q}(x)-\Phi_x(u)))\Phi'_x(u)du.\]
		Altogether this gives the final result by matching terms.
	\end{proof}
	
	Then we define the restriction of the partition function to $S_N(\b{q})$ by 
	\[Z_N(\b{q})=\int_{S_N(\b{q})}\exp(-\cal{H}_N(\b{u}))\omega(d\b{u}),\]
	where $\omega(d\b{u})$ is the surface measure induced by the inclusion $S_N(\b{q})\subseteq (\R^{N})^{\Om}$. We are then able to obtain the following result immediately from Theorem \ref{theorem:intro:bad parisi spherical} and Lemma \ref{lem:outline:identification of A and P}
	
	\begin{corr}
		\label{corr:formula for spherical slice}
		Assume that $h=0$. For any choice of $\b{q}\in (0,\infty)^{\Om}$, we have that
		\[\lim_{N\to \infty}|\Om|^{-1}N^{-1}\E \log Z_N(\b{q})=\inf_{(\zeta,\b{\Phi})\in \mathscr{Y}(\b{q})}\cal{P}_{\b{q}}(\zeta,\b{\Phi}).\]
	\end{corr}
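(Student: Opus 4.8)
The plan is to reduce the computation of $\E \log Z_N(\b{q})$ directly to Theorem \ref{theorem:intro:bad parisi spherical} via an explicit rescaling of each sphere, and then translate the resulting spherical Parisi formula $\inf \cal{B}_{\b{q}}$ back to $\inf \cal{P}_{\b{q}}$ using Lemma \ref{lem:outline:identification of A and P}. First I would set up the change of variables: for $\b{u}\in S_N(\b{q})$, write $\b{u}(x)=\sqrt{\b{q}(x)}\,\b{\sigma}(x)$ with $\b{\sigma}(x)\in S_N$ (the unit sphere $S_N(1)$). Under this map, the surface measure $\omega(d\b{u})$ on $S_N(\b{q})=\prod_x S_N(\b{q}(x))$ factors as $\prod_x (\sqrt{\b{q}(x)})^{N-1}$ times the surface measure $\omega(d\b{\sigma})$ on $S_N^{\Om}$; the Jacobian factor contributes $\exp\big(\tfrac{N-1}{2}\sum_x \log \b{q}(x)\big)$, which after dividing by $|\Om| N$ converges to $\tfrac{1}{2|\Om|}\sum_x \log \b{q}(x)$.

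Next I would rewrite the Hamiltonian $\cal{H}_N(\b{u})$ restricted to $S_N(\b{q})$ in the new variables. The quadratic term $\tfrac12\sum_{x,y}(\mu I - t\Delta)_{xy}(\b{u}(x),\b{u}(y))$ splits: the diagonal mass part $\tfrac{\mu}{2}\sum_x \|\b{u}(x)\|^2 = \tfrac{\mu}{2}\sum_x N\b{q}(x)$ is a deterministic constant on $S_N(\b{q})$, contributing $-\tfrac{\mu}{2|\Om|}\sum_x \b{q}(x)$ after normalization; the off-diagonal Laplacian part becomes $-\tfrac{t}{2}\sum_{x,y}\Delta_{xy}\sqrt{\b{q}(x)}\sqrt{\b{q}(y)}(\b{\sigma}(x),\b{\sigma}(y))$, which is exactly $\tfrac12\sum_{x,y}(D_{\b{q}})_{xy}(\b{\sigma}(x),\b{\sigma}(y))$ with $D_{\b{q}}=-t\,\text{di}(\sqrt{\b{q}})\Delta\,\text{di}(\sqrt{\b{q}})$ as in Lemma \ref{lem:outline:identification of A and P}. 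The disorder term: $V_{N,x}(\b{u}(x))=V_{N,x}(\sqrt{\b{q}(x)}\b{\sigma}(x))$ is, by isotropy, equal in law to a centered Gaussian field on $S_N$ with covariance $N B(\b{q}(x)\|\b{\sigma}-\b{\sigma}'\|_N^2) = N B(2\b{q}(x)(1-(\b{\sigma},\b{\sigma}')_N))$, i.e.\ precisely $H_{N,x}$ with mixing function $B_{\b{q}(x)}(r)=B(2\b{q}(x)(1-r))=\b{B}_{\b{q}}(x)$. Finally the external field term vanishes since we have assumed $h=0$. Hence $Z_N(\b{q})$ equals, up to the deterministic Jacobian and mass constants, the spherical partition function $\ZNxib$ for the parameters $(\b{B}_{\b{q}},D_{\b{q}},\b{h}=\b{0})$ at $\beta=1$. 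Since $D_{\b{q}}$ is positive semi-definite (as $-\Delta\ge 0$ and $\text{di}(\sqrt{\b{q}})$ is invertible), Theorem \ref{theorem:intro:bad parisi spherical} applies and gives $\lim_N |\Om|^{-1}N^{-1}\E\log \ZNxib = \inf_{(\zeta^{\b{q}},\b{\Phi}^{\b{q}})\in \mathscr{Y}}\cal{B}_{\b{q}}(\zeta^{\b{q}},\b{\Phi}^{\b{q}})$.

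Assembling: $\lim_N |\Om|^{-1}N^{-1}\E\log Z_N(\b{q}) = \tfrac{1}{2|\Om|}\sum_x\big(\log\b{q}(x) - \mu\b{q}(x)\big) + \inf_{(\zeta^{\b{q}},\b{\Phi}^{\b{q}})}\cal{B}_{\b{q}}(\zeta^{\b{q}},\b{\Phi}^{\b{q}})$. By the bijection in Lemma \ref{lem:outline:identification of A and P} between the domain of $\cal{B}_{\b{q}}$ (which is $\mathscr{Y}=\mathscr{Y}(\b{1})$) and the domain $\mathscr{Y}(\b{q})$ of $\cal{P}_{\b{q}}$, together with the identity $\cal{P}_{\b{q}}(\zeta,\b{\Phi}) = \tfrac{h^2}{2\mu} + \tfrac{1}{2|\Om|}\sum_x(-\mu\b{q}(x)+\log\b{q}(x)) + \cal{B}_{\b{q}}(\zeta^{\b{q}},\b{\Phi}^{\b{q}})$ — and recalling $h=0$ so the first term drops — taking the infimum over the domain on both sides yields exactly $\inf_{(\zeta,\b{\Phi})\in\mathscr{Y}(\b{q})}\cal{P}_{\b{q}}(\zeta,\b{\Phi})$, as claimed.

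The routine parts are the measure-theoretic bookkeeping (the $(N-1)$ versus $N$ in the Jacobian exponent washes out in the limit) and checking the covariance identity for the rescaled isotropic field. The main obstacle, such as it is, is verifying carefully that the rescaling map genuinely intertwines the two models at the level of joint laws of all the random fields simultaneously — one must be careful that the $|\Om|$ independent fields $(V_{N,x})$ each transform to the correct independent $H_{N,x}$ with the site-dependent mixing function $\b{B}_{\b{q}}(x)$ — and that $D_{\b{q}}$ indeed meets the positive-semidefiniteness hypothesis required to invoke Theorem \ref{theorem:intro:bad parisi spherical}; both are straightforward given the setup but deserve an explicit line. Everything else is a direct substitution into the already-established Lemma \ref{lem:outline:identification of A and P}.
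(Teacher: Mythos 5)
Your proof is correct and follows essentially the same route as the paper's: rescale each sphere via $\b{u}(x)=\sqrt{\b{q}(x)}\,\b{\sigma}(x)$, extract the Jacobian factor $\prod_x \b{q}(x)^{(N-1)/2}$ and the deterministic mass constant, identify the remaining integral (in law) with the spherical model at parameters $(\b{B}_{\b{q}},D_{\b{q}},\b{0})$, invoke Theorem~\ref{theorem:intro:bad parisi spherical}, and translate back through the bijection of Lemma~\ref{lem:outline:identification of A and P}. Your explicit remark that $D_{\b{q}}$ is positive semi-definite (hence within the hypotheses of the spherical theorem) is a small but worthwhile check that the paper leaves implicit.
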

	\begin{proof}
		We will freely use the notation of Lemma \ref{lem:outline:identification of A and P} in this proof. Observe that by a change of variables,
		\[Z_N(\b{q})=\left(\prod_{x\in \Om}\b{q}(x)^{(N-1)/2}\right)\int_{S_N^{\Om}}\exp(-\cal{H}_{N}(\b{q}\b{u})) \omega(d\b{u}).\label{eqn:ignore-18267}\]
		Recalling the form of $\cal{H}_N$ from (\ref{eqn:def:main-model-D}), we may write
		\[
		\begin{split}
			\cal{H}_{N}(\b{q}\b{u})=& \frac{1}{2}\sum_{x,y\in \Om}(\mu I-t \Delta)_{x,y}\sqrt{\b{q}(x)\b{q}(y)}(\b{u}(x),\b{u}(y))+\sum_{x\in \Om}V_{N,x}(\sq{\b{q}(x)}\b{u}(x))\\
			&=\frac{N\mu}{2} \sum_{x\in \Om}\b{q}(x)-\frac{t}{2}\sum_{x,y\in \Om} \Delta_{x,y}\sqrt{\b{q}(x)\b{q}(y)}(\b{u}(x),\b{u}(y))+\sum_{x\in \Om}V_{N,x}(\sq{\b{q}(x)}\b{u}(x)).
		\end{split}
		\]
		We note that restricted to $u\in S_N$, the function $V_{N,x}(\sq{\b{q}(x)}u)$ is an isotropic function with correlation function $N B(2q(1-(u,u')_N))$. In particular, we see that shifted Hamiltonian, $\cal{H}_{N}(\b{q}\b{u})-\frac{N\mu}{2} \sum_{x\in \Om}\b{q}(x)$, restricted to $S_N^{\Om}$ coincides in law with the spherical model (\ref{eqn:def:intro-spherical-model}) corresponding to choice of parameters $(\b{B}_{\b{q}},D_{\b{q}},\b{h})$. Thus by Theorem \ref{theorem:intro:bad parisi spherical} and (\ref{eqn:ignore-18267}) we see that
		\[\lim_{N\to \infty}|\Om|^{-1}N^{-1}\E \log Z_N(\b{q})=\frac{1}{2|\Om|}\sum_{x\in \Om}\left(-\mu \b{q}(x)+\log(\b{q}(x))\right)+\inf_{(\zeta,\b{\Phi})\in \mathscr{Y}}\cal{B}_{\b{q}}(\zeta,\b{\Phi}).\]
		Using the bijection of Lemma \ref{lem:outline:identification of A and P} then completes the proof.
	\end{proof}
	
	With these prerequisites established, Theorem \ref{theorem:intro:bad main euclidean} heuristically follows by using the co-area formula to express $Z_N$ as an integral over the contributions $Z_N(\b{q})=\exp(\log Z_N(\b{q}))$ and then applying Laplace's method and our asymptotic formula for $N^{-1}\E \log Z_N(\b{q})$. However, directly using this method is problematic as we would need to uniformly control fluctuations of $\log(Z_N(\b{q}))$ around $\E \log(Z_N(\b{q}))$, which is difficult to do pointwise.
	
	So our next step will be to modify Corollary \ref{corr:formula for spherical slice} to a version which will apply to a product of thin annuli. For this, define for $A\subseteq (\R^N)^{\Om}$ the integral
	\[Z_N(A)=\int_{A}\exp(-\cal{H}_N(\b{u}))d\b{u},\]
	and then also $Z_N^{\epsilon}(\b{q})=Z_N(S_N^{\epsilon}(\b{q}))$. Our next step will be to uniformly bound difference between $Z_N^{\epsilon}(\b{q})$ and $Z_N(\b{q})$ when $\b{q}\in (0,\infty)^{\Om}$ is fixed and $\epsilon>0$ is small.
	
	\begin{lem}
		\label{lem:epsilon of room around slices}
		Let us fix a small $0<m<1$. Than we have that
		\[\lim_{\epsilon\to 0}\limsup_{N\to \infty}\l(\sup_{\b{q}\in [m,m^{-1}]^{\Om}}\l(N^{-1}|\E \log Z_N^{\epsilon}(\b{q})-\E \log Z_N(\b{q})|\r)\r)=0.\]
	\end{lem}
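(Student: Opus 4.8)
Since $Z^\epsilon_N(\b q)$ is an integral over a thin product of annuli while $Z_N(\b q)$ is an integral over the product of spheres, the natural first step is to relate them by the coarea formula. Applying Fubini over the blocks of $\Om$ together with the coarea formula for $w\mapsto\|w\|_N^2$ on each block gives
\[
Z^\epsilon_N(\b q)=\int_{I^{\b q}_\epsilon}J_N(\b r)\,Z_N(\b r)\,d\b r,\qquad I^{\b q}_\epsilon:=\prod_{x\in\Om}[\b q(x),\b q(x)+\epsilon],\ \ J_N(\b r):=\prod_{x\in\Om}\frac{\sqrt N}{2\sqrt{\b r(x)}},
\]
so that $\epsilon^{|\Om|}\inf_{\b r\in I^{\b q}_\epsilon}J_N(\b r)Z_N(\b r)\le Z^\epsilon_N(\b q)\le\epsilon^{|\Om|}\sup_{\b r\in I^{\b q}_\epsilon}J_N(\b r)Z_N(\b r)$. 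Taking logarithms, dividing by $N$ and taking expectations, and noting that for fixed $\epsilon<1$ the quantities $N^{-1}|\Om||\log\epsilon|$ and $N^{-1}\sup_{\b r}|\log J_N(\b r)|=O_m(N^{-1}\log N)$ vanish as $N\to\infty$ uniformly over $\b q\in[m,m^{-1}]^\Om$, it suffices to prove the modulus-of-continuity bound
\[
\sup_{\b q\in[m,m^{-1}]^\Om}\E\Big[\sup_{\b r\in I^{\b q}_\epsilon}\big|\log Z_N(\b r)-\log Z_N(\b q)\big|\Big]\le C_m\,N\epsilon,
\]
with $C_m$ depending only on $m$ and the fixed data $(\mu,t,\Delta,B,\Om)$; for then $\sup_{\b q}N^{-1}|\E\log Z^\epsilon_N(\b q)-\E\log Z_N(\b q)|\le o_N(1)+C_m\epsilon$, and letting $N\to\infty$ and then $\epsilon\to0$ concludes.

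To control $\log Z_N(\b r)$ in $\b r$, I would use the rescaling $Z_N(\b r)=\big(\prod_x\b r(x)^{(N-1)/2}\big)\int_{S^\Om_N}e^{-\cal H_N(\b r\b u)}\omega(d\b u)$ together with the decomposition of $\cal H_N(\b r\b u)$ from the proof of Corollary \ref{corr:formula for spherical slice} (recalling $h=0$ here), which gives $\log Z_N(\b r)=\tfrac{N-1}{2}\sum_x\log\b r(x)-\tfrac{N\mu}{2}\sum_x\b r(x)+F_N(\b r)$ with $F_N(\b r):=\log\int_{S^\Om_N}e^{-\cal G_N(\b r,\b u)}\omega(d\b u)$ and $\cal G_N(\b r,\b u):=-\tfrac t2\sum_{x,y}\Delta_{xy}\sqrt{\b r(x)\b r(y)}(\b u(x),\b u(y))+\sum_x V_{N,x}(\sqrt{\b r(x)}\b u(x))$. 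On $I^{\b q}_\epsilon\subseteq[m,m^{-1}+1]^\Om$ the two deterministic prefactors are Lipschitz in $\b r$ with constants $O_m(N)$, hence change by $O_m(N\epsilon)$. For $F_N$, which is a.s.\ $C^1$ in $\b r$ on the (convex) box $I^{\b q}_\epsilon$, differentiating under the integral sign gives $|\partial_{\b r(y)}F_N(\b r)|\le\sup_{\b u\in S^\Om_N}|\partial_{\b r(y)}\cal G_N(\b r,\b u)|$. The Laplacian part of $\partial_{\b r(y)}\cal G_N$ is bounded by $O_m(N)$ using $|(\b u(x),\b u(x'))|\le N$ on $S^\Om_N$, while its random part equals $\tfrac{1}{2\sqrt{\b r(y)}}\langle\nabla V_{N,y}(\sqrt{\b r(y)}\b u(y)),\b u(y)\rangle$, which in absolute value is at most $\tfrac{\sqrt N}{2\sqrt m}\,\sup_{\|a\|_N^2\le m^{-1}+1}\|\nabla V_{N,y}(a)\|$ since $\|\b u(y)\|=\sqrt N$.

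Thus the only genuine input needed is the bound $\E\big[\sup_{\|a\|_N^2\le R}\|\nabla V_{N,y}(a)\|\big]\le C_{R,B}\sqrt N$, a standard estimate for isotropic Gaussian fields (the gradient is, up to an $O(1)$ factor, a Gaussian process of constant variance over an index set of dimension $O(N)$), which I would take from the Gaussian-process estimates of Appendix A. Given this, $\E\sup_{\b r\in I^{\b q}_\epsilon}|\partial_{\b r(y)}F_N(\b r)|\le C_m N$ for large $N$, uniformly in $\b q$; summing over $y\in\Om$ and applying the mean value inequality along the segment from $\b q$ to $\b r$ yields $\E\sup_{\b r\in I^{\b q}_\epsilon}|F_N(\b r)-F_N(\b q)|\le C_m N\epsilon$, and combining with the deterministic-prefactor estimate gives the modulus-of-continuity bound above. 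I expect this last Gaussian estimate — and in particular keeping its constant free of both $N$ and $\b q$ — to be the only real obstacle; the remainder is bookkeeping with the coarea formula and with the rescaling already used in Corollary \ref{corr:formula for spherical slice}.
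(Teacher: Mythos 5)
Your argument takes a genuinely different route from the paper, and the difference is worth spelling out. The paper's proof consists of a single application of Proposition~\ref{prop:Guerra-disintegration-new-appendix}, whose key ingredient is the Guerra interpolation bound of Corollary~\ref{corr:Guerra-bound}: one compares the free energy on $S_N^\epsilon(\b q)$ with the free energy on $S_N(\b q)$ by passing through the radially-normalized field $\b u\mapsto\b u_{\b q}$, and Corollary~\ref{corr:Guerra-bound} bounds the resulting difference of $\E\log Z$'s by the $L^\infty$ difference of the two \emph{covariance functions}, which is a purely deterministic Taylor estimate on $B$. That is precisely what lets the paper avoid any estimate on $\nabla V_{N,x}$. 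By contrast you bound $\partial_{\b r}F_N$ pathwise via the sup of $|\partial_{\b r}\cal G_N|$, which forces you to control $\E\big[\sup_{\|a\|^2_N\le R}\|\nabla V_{N,y}(a)\|\big]$.

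That gradient estimate is where your proposal has a genuine gap. You assert it can be taken ``from the Gaussian-process estimates of Appendix~A,'' but Appendix~A contains only Borell--TIS (Theorem~\ref{theorem:Borell-TIS}), the free-energy concentration statements (Proposition~\ref{prop:Borell-TIS-free-energy} and Corollary~\ref{cor:Borell-TIS-free-energy-noncompact}), the interpolation results (Proposition~\ref{prop:Guerra Gaussian integration} and Corollary~\ref{corr:Guerra-bound}), and Proposition~\ref{prop:Guerra-disintegration-new-appendix}. None of these give a bound on $\E\sup_a\|\nabla V_{N,y}(a)\|$; you would have to prove it yourself. The estimate is in fact true with the sharp rate $C\sqrt N$, but it is not entirely trivial: one must view $\sup_a\|\nabla V_{N,y}(a)\|$ as $\sup_{(a,w)}\langle\nabla V_{N,y}(a),w\rangle$, a centered Gaussian process of \emph{bounded} variance $-2B'(0)$, and run a chaining argument carefully enough that the $N$-dimensionality of the index set does not introduce a spurious $\sqrt{\log N}$ factor. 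This matters here: since you divide by $N$ and send $N\to\infty$ \emph{before} $\epsilon\to 0$, a bound of order $N\sqrt{\log N}\,\epsilon$ on $\E\sup_{\b r}|F_N(\b r)-F_N(\b q)|$ would not suffice. So your approach is sound in outline and would be a valid alternative if the gradient estimate is supplied, but as written it rests on a lemma the paper does not provide, and whose proof via Corollary~\ref{corr:Guerra-bound}-style covariance comparison (as the paper does) would be at least as much work as just using Proposition~\ref{prop:Guerra-disintegration-new-appendix} directly.
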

	\begin{proof}
		This will follow by applying Proposition \ref{prop:Guerra-disintegration-new-appendix}, as for fixed $\b{q}$, the difference of partitions functions is a special case of the difference considered in that lemma. Specifically, in this case when take $(S,\mu)$ to be the unit mass on a point (which we then remove from the notation), and have
		\[f_x(r^2,(r')^2,\rho)=B\left(r^2+(r')^2-2\rho\right), \;\;\; g(\b{u})=-\frac{t}{2}\sum_{x,y\in \Om}\Delta_{xy}(\b{u}(x),\b{u}(y)).\]
		We also assume that $\epsilon<m/2$. A direct computation shows that
		\[\|\D f_x(r^2,(r')^2,\rho)\|=2\sqrt{\left(r^2+(r')^2+1\right)}(-B')\left(r^2+(r')^2-2\rho\right),\]
		so that
		\[\|\|\D f_x\|\|_{L^\infty}\le 2\sqrt{2(\b{q}(x)+\epsilon)+1}|B'(0)|\le 2\sqrt{2m^{-1}+2\epsilon+1}|B'(0)|.\]
		Furthermore, by the Cauchy-Schwarz inequality we have that
		\[
		\|\D g(\b{u})\|^2=\sum_{x\in \Om}\bigg\|\sum_{y\in \Om}t\Delta_{xy}\b{u}(y)\bigg\|^2\le t^2\|\Delta\|_F^2|\Om|N\left(m^{-1}+\epsilon\right).\]
		Inputting these computations into Proposition \ref{prop:Guerra-disintegration-new-appendix} and taking limits then gives the desired result.
	\end{proof}
	
	Finally, we will need a result that will allow us to neglect the subset of $\b{q}$ where the minimum or maximum value of $\b{q}(x)$ is too small or large respectively. This shows that the contributions of all values of $\b{q}$ outside of a compact subset of $(0,\infty)^{\Om}$ are negligible.
	
	\begin{lem}
		\label{lem:you can ignore big subsets}
		Let us assume that $\b{h}=0$ and define the subsets
		\[
		\begin{split}
			A_{min}^N(q)&=\left\{\b{u}\in (\R^N)^{\Om}:\min_{x\in \Om}\|\b{u}(x)\|_N\le q\right\},\\
			A_{max}^N(q)&=\left\{\b{u}\in (\R^N)^{\Om}:\max_{x\in \Om}\|\b{u}(x)\|_N\ge q\right\}.
		\end{split}	
		\]
		Then we have that
		\[
		\limsup_{q\to 0}\limsup_{N\to\infty}N^{-1}\E\log Z_N(A_{min}^N(q))\le \lim_{q\to 0}\limsup_{N\to\infty}N^{-1}\log \E Z_N(A_{min}^N(q))=-\infty,
		\]
		\[
		\limsup_{q\to \infty}\limsup_{N\to\infty}N^{-1}\E \log Z_N(A_{max}^N(q))\le \lim_{q\to \infty}\limsup_{N\to\infty}N^{-1}\log \E Z_N(A_{max}^N(q))-\infty.
		\]
	\end{lem}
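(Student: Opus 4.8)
The plan is to reduce the two quenched bounds to their annealed counterparts by Jensen's inequality, and then to exploit the fact that after averaging out the disorder the annealed partition function over each ``bad'' set becomes an \emph{exact} Gaussian integral, whose exponential decay we can extract from standard $\chi^2$-tail estimates.

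For any measurable $A\subseteq(\R^N)^{\Om}$ one has $\E\log Z_N(A)\le\log\E Z_N(A)$ by Jensen, so it suffices to prove $\lim_{q\to0}\limsup_N N^{-1}\log\E Z_N(A_{min}^N(q))=-\infty$ together with the analogous statement for $A_{max}^N$; applying $\limsup_N$ and then $\limsup_q$ to $N^{-1}\E\log Z_N(A)\le N^{-1}\log\E Z_N(A)$ then yields both displayed chains. (The estimate $\E Z_N(A)<\infty$ obtained below also guarantees $Z_N(A)<\infty$ a.s., while $Z_N(A)>0$ a.s.\ since the integrand is positive and $A$ has positive Lebesgue measure, so the left-hand sides are well defined.) To compute $\E Z_N(A)$, write $M:=\mu I-t\Delta$, which is positive definite because $\mu>0$ and $-\Delta$ is positive semidefinite. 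Using Tonelli, independence of the fields $(V_{N,x})_{x\in\Om}$, and the Gaussian Laplace transform $\E e^{-V_{N,x}(v)}=e^{\frac N2 B(0)}$ (valid since $\E V_{N,x}(v)^2=NB(0)$), I would obtain
\[\begin{split}\E Z_N(A)&=e^{\frac{N|\Om|}{2}B(0)}\int_A\exp\Big(-\tfrac12\sum_{x,y\in\Om}M_{xy}(\b{u}(x),\b{u}(y))\Big)\,d\b{u}\\&=e^{\frac{N|\Om|}{2}B(0)}(2\pi)^{\frac{N|\Om|}{2}}\det(M)^{-\frac N2}\,\gamma_N(A),\end{split}\]
where $\gamma_N$ is the centered Gaussian probability measure on $(\R^N)^{\Om}\cong\R^{N|\Om|}$ with covariance $M^{-1}\otimes I_N$.

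It remains to bound $\gamma_N(A_{min}^N(q))$ and $\gamma_N(A_{max}^N(q))$. Under $\gamma_N$ the block $\b{u}(x)$ is $\mathcal{N}(0,(M^{-1})_{xx}I_N)$ with $(M^{-1})_{xx}>0$, so $\|\b{u}(x)\|_N^2$ has the law of $N^{-1}(M^{-1})_{xx}\chi^2_N$. A union bound over $\Om$ together with the Chernoff bounds $\P(\chi^2_N\le Na)\le e^{-\frac N2 I(a)}$ for $a<1$ and $\P(\chi^2_N\ge Nb)\le e^{-\frac N2 I(b)}$ for $b>1$, where $I(s)=s-1-\log s$, gives, for $q$ small (resp.\ large) enough,
\[\gamma_N\big(A_{min}^N(q)\big)\vee\gamma_N\big(A_{max}^N(q)\big)\le|\Om|\exp\Big(-\tfrac N2\min_{x\in\Om}I\big(q^2/(M^{-1})_{xx}\big)\Big).\]
Inserting this into the formula for $\E Z_N(A)$ yields $N^{-1}\log\E Z_N(A)\le C_{\mu,t,B,\Om}+o(1)-\tfrac12\min_{x\in\Om}I(q^2/(M^{-1})_{xx})$ with $C$ not depending on $q$; since $I(s)\to+\infty$ both as $s\to0^+$ and as $s\to+\infty$, letting $N\to\infty$ and then $q\to0$ (resp.\ $q\to\infty$) sends the right-hand side to $-\infty$. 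Combined with the Jensen reduction this proves the lemma.

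The only mildly subtle point is the $A_{min}^N$ case: at first glance shrinking one block toward $\b{0}$ seems free, since the Gaussian density appearing above is largest near the origin. The resolution is that $\gamma_N$ concentrates $\|\b{u}(x)\|_N^2$ around the \emph{positive} constant $(M^{-1})_{xx}$, and the small-deviation rate $I(s)=s-1-\log s$ diverges as $s\to0$ precisely because of the $-\log s$ term, so forcing any block to be of size $o(1)$ is exponentially costly, symmetric to forcing one to be large. Everything else is a routine Gaussian computation; in particular no property of $\Delta$ is used beyond positive semidefiniteness (hence positive definiteness of $M$).
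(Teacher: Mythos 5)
Your proof is correct and follows the same overall strategy as the paper: reduce to the annealed quantity via Jensen, compute $\E Z_N(A)$ exactly as a Gaussian integral, and show that the Gaussian mass of the bad set decays exponentially fast in $N$ with a rate that diverges in $q$. The only genuine difference lies in how that final Gaussian estimate is carried out. The paper first drops the $-t\Delta$ term, using $\Delta\le 0$ to decouple the sites into independent $\mathcal{N}(0,\mu^{-1}I_N)$ blocks, and then handles the one-dimensional radial integral by the co-area formula, Stirling's approximation, and Laplace's method, producing the rate function $r\mapsto\log r-\mu r^2/2$. You instead keep the full covariance $M^{-1}\otimes I_N$, observe that the marginal of each block $\b{u}(x)$ is $\mathcal{N}(0,(M^{-1})_{xx}I_N)$, and invoke a union bound together with the standard $\chi^2_N$ Chernoff bounds with rate $I(s)=s-1-\log s$. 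The two routes are essentially equivalent (the $\chi^2$ Cram\'er rate \emph{is} the Laplace-method rate for the radial integral, up to reparametrization), but yours avoids Stirling and the explicit sphere-volume computation and has the small aesthetic advantage of only using positive-definiteness of $M=\mu I-t\Delta$ rather than separately exploiting $\Delta\le 0$. One cosmetic remark: the paper states the annealed identity with constant $N|\Om|B(0)$ rather than your $\frac{N|\Om|}{2}B(0)$; your version is the correct one (the Gaussian Laplace transform gives $e^{\frac12\E V_{N,x}(v)^2}=e^{\frac N2 B(0)}$), though this $O(N)$, $q$-independent discrepancy is harmless for the lemma.
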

	\begin{proof}
		For any subset $A\subseteq (\R^N)^{\Om}$ of non-zero measure, we have by Jensen's inequality that
		\[\E \log Z_N(A)\le \log \E Z_N(A)=N|\Om|B(0)+\log Z_N^0(A),\]
		where $Z_N^0(A)$ the deterministic quantity
		\[Z_N^0(A)=\int_{A} \exp\left(-\frac{1}{2}\sum_{x,y\in \Om}(\mu I-t\Delta)_{xy}(\b{u}(x),\b{u}(y))\right)d\b{u}.\]
		In particular, it suffices to show the corresponding statements for $\log Z_N^0(A_{min}^N(q))$ and $\log Z_N^0(A_{max}^N(q))$.
		For this, let us denote for a subset $I\subseteq [0,\infty)$ and choice $x\in \Om$, the subset
		\[A_{x,N}(I)=\{\b{u}\in (\R^{N})^{\Om}:\|\b{u}(x)\|_N\in I\}.\]
		Combining this with the9 observations
		\[A_{min}^N(q)=\bigcup_{x\in \Om}A_{x,N}([0,q]),\;\;\; \;A_{max}^N(q)=\bigcup_{x\in \Om}A_{x,N}([q,\infty]),\]
		we see it suffices to fix $x\in \Om$ and show the corresponding claims for $\log Z_N^0(([0,q]))$ and $\log Z_N^0(A_{x,N}([q,\infty]))$. As $\Delta$ is negative semi-definite, we see that
		\[Z_N^0(A_{x,N}(I))\le \int_{A_{x,N}(I)} \exp\left(-\frac{\mu}{2}\sum_{x\in \Om}\|\b{u}(x)\|^2\right)d\b{u}.\]
		The right-hand side is easily simplified to
		\[\left(\frac{2\pi}{\mu}\right)^{N(|\Om|-1)/2}\vol(S^{N-1}(\sqrt{N}))\int_{I}r^{N-1}\exp\left(-N\frac{\mu r^2}{2}\right)dr.\]
		By Stirling's formula we have that
		\[\lim_{N\to \infty}N^{-1}\log(\vol(S^{N-1}(\sqrt{N})))=\log(\sq{2\pi e}).\]
		Finally, by Laplace's method we have that for open $I\subseteq (0,\infty)$ we have that
		\[\limsup_{N\to\infty}N^{-1}\log\left(\int_I r^{N-1}\exp\left(-N\frac{\mu r^2}{2}\right)dr\right)=\sup_{r\in I}\left(\log(r)-\frac{\mu r^2}{2}\right).\]
		In particular, the desired bounds follow from the fact that $\log(r)-\frac{\mu r^2}{2}\to -\infty$ when either $r\to 0$ or $r\to \infty$.
	\end{proof}
	\begin{remark}
		\label{remark:decline in restriction over q}
		This proof also shows a related statement, needed elsewhere. Namely, there is some small $c>0$, such that for any large enough $r>0$
		\[N^{-1}\log \E Z_N(A^N_{max}(r))\le -cr^2.\]
	\end{remark}
	
	With these prerequisites completed, we are ready to begin the proof of Theorem \ref{theorem:intro:bad main euclidean}. This will consist of using Lemma \ref{lem:you can ignore big subsets} to reduce our consideration to a compact subset of possible $\b{q}$, then choosing an $\epsilon>0$, we may cover this with a finite number of the $Z_N^\epsilon(\b{q})$. These may be understood by using Lemma \ref{lem:epsilon of room around slices} to relate them to $Z_N(\b{q})$, where we may use Corollary \ref{corr:free energy of formula with restricted radii}
	
	\begin{proof}[Proof of Theorem \ref{theorem:intro:bad main euclidean}]
		By Lemma \ref{lem:h doesn't matter specific}, it suffices to assume that $h=0$. Now fix $q\in (0,\infty)^{\Om}$. Then combining Corollary \ref{corr:free energy of formula with restricted radii} and Lemma \ref{lem:epsilon of room around slices} with Jensen's inequality we see that
		\[\inf_{(\zeta,\b{\Phi})\in \mathscr{Y}(\b{q})}\cal{P}_{\b{q}}(\zeta,\b{\Phi})=\lim_{\epsilon\to 0}\liminf_{N\to \infty}|\Om|^{-1}N^{-1}\E \log Z_N^{\epsilon}(\b{q})\le \liminf_{N\to \infty}|\Om|^{-1}N^{-1}\E \log Z_N.\]
		In particular, this establishes the lower bound in expectation
		\[\sup_{\b{q}\in (0,\infty)^{\Om}}\l(\inf_{(\zeta,\b{\Phi})\in \mathscr{Y}(\b{q})}\cal{P}_{\b{q}}(\zeta,\b{\Phi})\r)\le \liminf_{N\to \infty}|\Om|^{-1}N^{-1}\E \log Z_N.\]
		Before proceeding to the upper bound we prove a basic result about the concentration of $|\Om|^{-1}N^{-1}\log Z_N$. We note that by Jensen's inequality
		\[
		\log \E Z_N=
		\log \l(\int_{(\R^{N})}\exp\l(-\frac{1}{2}\sum_{x,y\in \Om}(\mu I-t\Delta)_{xy}(\b{u}(x),\b{u}(y))\r)d\b{u}\r)+N|\Om|B(0).
		\]
		As $\Delta$ is negative semi-definite, this easily shows that $\log \E Z_N$ is uniformly bounded above in $N$. Thus by Corollary \ref{cor:Borell-TIS-free-energy-noncompact} we have for any $\alpha>0$ that
		\[\P\left(\left||\Om|^{-1}N^{-1}\log Z_N-|\Om|^{-1}N^{-1}\E \log Z_N\right|>\alpha\right)\le 2\exp\left(-\frac{N|\Om|\alpha^2}{4B(0)}\right).\label{eqn:ignore-1736}\]
		Now we proceed to show the upper bound. Define  \[F^N_{m}:=\{\b{u}\in (\R^N)^{\Om}:\|\b{u}\|_N\in [m,m^{-1}]^{\Om}\}=\left(A^N_{min}(m)\cup A^N_{max}(m^{-1})\right)^{c}.\] By Lemma \ref{lem:you can ignore big subsets}, for sufficiently small $m>0$ we have that
		\[\limsup_{N\to \infty}|\Om|^{-1}N^{-1}\E \log Z_N=\limsup_{N\to \infty}|\Om|^{-1}N^{-1}\E \log Z_N(F^N_{m}).\label{eqn:ignore-1777}\]
		Now fix $\epsilon>0$. By Lemma \ref{lem:epsilon of room around slices} we may choose small enough $\delta>0$ such that
		\[\limsup_{N\to \infty}\sup_{\b{q}\in [\sqrt{m}/2,2\sqrt{m^{-1}}]^{\Om}}\bigg(|\Om|^{-1}N^{-1}|\E \log Z_N^{\delta}(\b{q})-\E \log Z_N(\b{q})|\bigg)\le \epsilon.\]
		
		Now as $[m,m^{-1}]^{\Om}$ is compact we may find $\b{q}_1,\cdots,\b{q}_{n-1},\b{q}_n\in [\sqrt{m}/2,2\sqrt{m^{-1}}]^{\Om}$ such that \[[m,m^{-1}]^{\Om}\subseteq \bigcup_{1\le i\le n}\prod_{x\in \Om}[\sqrt{\b{q}_i(x)},\sqrt{\b{q}_i(x)+\delta}],\]
		so that in particular, $F^N_{\delta}\subseteq\bigcup_{1\le i\le n}S_N^{\delta}(\b{q}_i)$. Let us denote the events
		\[\cal{E}_N^i=\{N^{-1}\log Z_N^{\delta}(\b{q}_i)\ge  N^{-1}\E \log Z_N^{\delta}(\b{q}_i)+\epsilon\},\;\;\; \cal{E}_N=\bigcup_{1\le i\le n}\left(\cal{E}_N^i\right)^c.\]
		We see that
		\[
		\begin{split}
			\limsup_{N\to \infty}\E[I_{\cal{E}_N}|\Om|^{-1}N^{-1}\log Z_N(F^N_{m})]\le \epsilon+\max_{1\le i\le n} \limsup_{N\to \infty}|\Om|^{-1}N^{-1}\E \log Z_N^{\delta}(\b{q}_i)\le \\
			2\epsilon+\max_{1\le i\le n} |\Om|^{-1}N^{-1}\limsup_{N\to \infty}\E \log Z_N(\b{q}_i)\le 2\epsilon+\sup_{\b{q}\in (0,\infty)^{\Om}}\left(\inf_{\zeta,\b{\Phi}\in \mathscr{Y}(\b{q})}\cal{P}_{\b{q}}(\zeta,\b{\Phi})\right).\label{eqn:ignore-1151}
		\end{split}
		\]
		We note that by Proposition \ref{prop:Borell-TIS-free-energy}, we have for each $i$, that 
		\[\P(\cal{E}_N^i)\le 2\exp\left(-\frac{N\epsilon^2}{4|\Om|B(0)}\right).\]
		Thus by the Borel-Cantelli Lemma we see that a.s. 
		\[\limsup_{N\to \infty}|\Om|^{-1}N^{-1}\log Z_N(F^N_{m})\le 2\epsilon+ \sup_{\b{q}\in (0,\infty)^{\Om}}\left(\inf_{\zeta,\b{\Phi}\in \mathscr{Y}(\b{q})}\cal{P}_{\b{q}}(\zeta,\b{\Phi})\right).\]
		Using (\ref{eqn:ignore-1736}), (\ref{eqn:ignore-1777}) and the observation that $|\Om|^{-1}N^{-1}\E \log Z_N\le |\Om|^{-1}N^{-1}\log \E Z_N$ is uniformly bounded in $N$, we see that
		\[\limsup_{N\to \infty}|\Om|^{-1}N^{-1}\E \log Z_N(F^N_{m})\le 2\epsilon+\sup_{\b{q}\in (0,\infty)^{\Om}}\left(\inf_{\zeta,\b{\Phi}\in \mathscr{Y}(\b{q})}\cal{P}_{\b{q}}(\zeta,\b{\Phi})\right).\]
		Thus we have established the desired convergence in expectation. The statement about a.s. convergence follows immediately from (\ref{eqn:ignore-1736}) and another application of the Borel-Cantelli Lemma.
	\end{proof}
	We complete this section by also noting two corollaries of this proof that will be useful in our companion works \cite{Paper2,Paper3}. The first is that this proof actually establishes a more general formula for the free energy of the partition function over sets of restricted radii in the case where $h=0$. In particular, our proof immediately gives the following result.
	\begin{corr}
		\label{corr:free energy of formula with restricted radii}
		Assume that $h=0$. Then for any open subset $U\subseteq [0,\infty)^{\Om}$ such that $U\cap (0,\infty)^{\Om}\neq \varnothing$, we have that
		\[
		\lim_{N\to \infty}|\Om|^{-1}N^{-1}\E \log Z_N(\{\b{u}\in (\R^{N})^{\Om}:\|\b{u}\|^2_N\in U\})=\sup_{\b{q}\in U}\left(\inf_{(\zeta,\b{\Phi})\in \mathscr{Y}(\b{q})}\cal{P}_{\b{q}}(\zeta,\b{\Phi})\right).
		\]
	\end{corr}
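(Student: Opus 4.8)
The plan is to observe that the proof of Theorem \ref{theorem:intro:bad main euclidean} already contains everything needed, once we track the role of the radius constraint. Write $Z_N(U) := Z_N(\{\b{u}\in(\R^N)^{\Om}:\|\b{u}\|^2_N\in U\})$. First I would establish the lower bound: for any fixed $\b{q}\in U\cap(0,\infty)^{\Om}$, openness of $U$ gives $\epsilon_0>0$ with $\prod_{x\in\Om}[\b{q}(x),\b{q}(x)+\epsilon)\subseteq U$ for all $\epsilon<\epsilon_0$, hence $Z_N^{\epsilon}(\b{q})\le Z_N(U)$. Applying Corollary \ref{corr:formula for spherical slice} together with Lemma \ref{lem:epsilon of room around slices} (and Jensen, exactly as in the proof of Theorem \ref{theorem:intro:bad main euclidean}) yields $\inf_{(\zeta,\b{\Phi})\in\mathscr{Y}(\b{q})}\cal{P}_{\b{q}}(\zeta,\b{\Phi})\le\liminf_{N\to\infty}|\Om|^{-1}N^{-1}\E\log Z_N(U)$, and taking the supremum over $\b{q}\in U\cap(0,\infty)^{\Om}$ (which equals the supremum over $\b{q}\in U$, since $\mathscr{Y}(\b{q})$ is only defined on $(0,\infty)^{\Om}$ and the supremum is unaffected by the boundary) gives one inequality.

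For the upper bound I would mimic the covering argument verbatim. Using Lemma \ref{lem:you can ignore big subsets}, the contribution to $Z_N(U)$ from radii outside $[m,m^{-1}]^{\Om}$ is negligible as $m\to 0$, so it suffices to bound the contribution from $U\cap[m,m^{-1}]^{\Om}$; fix $\epsilon>0$, choose $\delta>0$ via Lemma \ref{lem:epsilon of room around slices}, cover the compact set $\overline{U}\cap[m,m^{-1}]^{\Om}$ (intersected with a slight shrinking so that the thickened cubes stay inside $U$ — here openness of $U$ is used again) by finitely many products of thin annuli $S_N^{\delta}(\b{q}_i)$ with $\b{q}_i\in U$, apply the Borell–TIS concentration estimate (Proposition \ref{prop:Borell-TIS-free-energy}) to each $Z_N^{\delta}(\b{q}_i)$ together with Borel–Cantelli, and bound each $\E\log Z_N^{\delta}(\b{q}_i)$ by $\E\log Z_N(\b{q}_i)+N\epsilon$ via Lemma \ref{lem:epsilon of room around slices}, then by $N\inf_{\mathscr{Y}(\b{q}_i)}\cal{P}_{\b{q}_i}+o(N)$ via Corollary \ref{corr:formula for spherical slice}. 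Letting $\epsilon\to 0$, then $m\to 0$, gives $\limsup_{N}|\Om|^{-1}N^{-1}\E\log Z_N(U)\le\sup_{\b{q}\in U}\inf_{(\zeta,\b{\Phi})\in\mathscr{Y}(\b{q})}\cal{P}_{\b{q}}(\zeta,\b{\Phi})$, and the a.s. statement follows from the same concentration inequality as in Theorem \ref{theorem:intro:bad main euclidean}.

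The only genuinely new point over the proof already written — and thus the main thing to be careful about — is the interaction between the open set $U$ and the covering: when $U$ is a proper open subset we cannot cover $\overline{U}\cap[m,m^{-1}]^{\Om}$ by thickened cubes all of which lie inside $U$, because points near $\partial U$ need annuli poking outside $U$. The fix is to exhaust $U\cap[m,m^{-1}]^{\Om}$ from the inside: replace $U$ by $U_{\rho}=\{\b{q}:\mathrm{dist}(\b{q},U^c)>\rho\}$, carry out the covering inside $U_{\rho}$ for the upper bound (so all thickened cubes stay in $U$), and note $Z_N(U_\rho)\le Z_N(U)$ is the wrong direction — so instead one covers $\overline{U_\rho}\cap[m,m^{-1}]^\Om$, which for small enough $\delta$ relative to $\rho$ has all thickened cubes inside $U$, and observes that the leftover region $U\setminus U_\rho$ is handled by monotone convergence as $\rho\to 0$ since the supremum defining the limit is over $\b{q}\in U$ and $\sup_{\b{q}\in U_\rho}\to\sup_{\b{q}\in U}$ by lower semicontinuity of $\b{q}\mapsto\inf_{\mathscr{Y}(\b{q})}\cal{P}_{\b{q}}$ (or, more cheaply, one simply notes the lower bound already matches and the upper bound over $U_\rho$ combined with $Z_N(U\setminus\overline{U_\rho})$ being dominated via the same slice estimates suffices). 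This bookkeeping with $\rho$ and $\delta$ is routine but is the place where the argument differs from the one in the text.
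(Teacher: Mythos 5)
Your approach is the same as the paper's: the paper states that this corollary is ``immediate'' from the proof of Theorem \ref{theorem:intro:bad main euclidean}, and your proposal is exactly that, namely reruns the covering argument with the radius set restricted to $U$. The lower bound half is fine. You are also right to flag the one place where something genuinely needs to be checked --- when $U$ is a proper open set, a cover of $U\cap[m,m^{-1}]^{\Om}$ by thickened annuli $S_N^{\delta}(\b{q}_i)$ need not have all anchors $\b{q}_i$ lying in $U$, which is what the final inequality $\inf_{\mathscr{Y}(\b{q}_i)}\cal{P}_{\b{q}_i}\le\sup_{\b{q}\in U}\inf\cal{P}_{\b{q}}$ requires. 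The paper's ``immediately gives'' glosses over precisely this point, so noticing it is a feature of your writeup, not a bug.

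Where your proposal is a bit soft is in how the inner exhaustion actually closes the argument. After passing to $U_{\rho}$ you still have the slab $U\setminus U_{\rho}$, and the sentence about ``$Z_N(U\setminus\overline{U_\rho})$ being dominated via the same slice estimates'' does not by itself give $\inf\cal{P}_{\b{q}_i}\le\sup_{\b{q}\in U}\inf\cal{P}_{\b{q}}$ for anchors $\b{q}_i$ sitting in that slab; the slab also needs a cover, and those anchors are again at the mercy of whether they land in $U$. The cleanest way to seal this gap, in the spirit of the rest of the section, is to first observe that $\b{q}\mapsto\inf_{\mathscr{Y}(\b{q})}\cal{P}_{\b{q}}$ is continuous on $(0,\infty)^{\Om}$: by Corollary \ref{corr:formula for spherical slice} it is a uniform-on-compacts limit of $|\Om|^{-1}N^{-1}\E\log Z_N(\b{q})$, and the latter family is equicontinuous in $\b{q}$ by the change of variables in (\ref{eqn:ignore-18267}) together with the Guerra interpolation bound (Corollary \ref{corr:Guerra-bound}) applied to the rescaled covariances. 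Continuity gives $\sup_{\b{q}\in U}\inf\cal{P}_{\b{q}}=\sup_{\b{q}\in\overline{U}\cap(0,\infty)^{\Om}}\inf\cal{P}_{\b{q}}$, and now the compact set $\overline{U}\cap[m,m^{-1}]^{\Om}$ can be covered by finitely many thickened annuli anchored at points of $\overline{U}\cap(0,\infty)^{\Om}$ (center each annulus at a point of the set itself, then pass to a finite subcover), after which the rest of the argument is verbatim the proof of Theorem \ref{theorem:intro:bad main euclidean}. Your $U_\rho$ exhaustion can certainly be made to work as well, but it requires this same continuity input or an equivalent; as written it is not self-contained.
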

	This corollary combined with Lemma \ref{lem:you can ignore big subsets} also gives the following result.
	\begin{corr}
		\label{corr:outline:ignore-small-and-large-q}
		There is sufficiently small $m>0$ such that
		\[\sup_{\b{q}\in (0,\infty)^{\Om}}\left(\inf_{(\zeta,\b{\Phi})\in \mathscr{Y}(\b{q})}\cal{P}_{\b{q}}(\zeta,\b{\Phi})\right)=\sup_{\b{q}\in [m,m^{-1}]^{\Om}}\left(\inf_{(\zeta,\b{\Phi})\in \mathscr{Y}(\b{q})}\cal{P}_{\b{q}}(\zeta,\b{\Phi})\right).\]
	\end{corr}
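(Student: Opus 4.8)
The inequality ``$\ge$'' in Corollary~\ref{corr:outline:ignore-small-and-large-q} is immediate, since the right-hand side is a supremum over a subset of $(0,\infty)^{\Om}$; the plan for the reverse inequality is to derive it by chaining the free-energy identity of Corollary~\ref{corr:free energy of formula with restricted radii} with the ``no contribution from extreme radii'' estimate of Lemma~\ref{lem:you can ignore big subsets}. As a first reduction I would note that $\cal{P}_{\b{q}}(\zeta,\b{\Phi})$ depends on $h$ only through the additive constant $\frac{h^2}{2\mu}$, which is independent of $\b{q}$, $\zeta$ and $\b{\Phi}$, so changing $h$ shifts both suprema in the statement by the same amount; hence it suffices to treat $h=0$, which is exactly the hypothesis under which Corollary~\ref{corr:free energy of formula with restricted radii} and Lemma~\ref{lem:you can ignore big subsets} are available.

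With $h=0$, applying Corollary~\ref{corr:free energy of formula with restricted radii} to $U=(0,\infty)^{\Om}$ (for which $\{\b{u}:\|\b{u}\|^2_N\in U\}$ is $(\R^N)^{\Om}$ up to a null set) identifies $L:=\lim_{N\to\infty}|\Om|^{-1}N^{-1}\E\log Z_N$ with $\sup_{\b{q}\in(0,\infty)^{\Om}}\inf_{(\zeta,\b{\Phi})\in\mathscr{Y}(\b{q})}\cal{P}_{\b{q}}(\zeta,\b{\Phi})$, the left-hand side of the claim. On the other hand, fixing $m>0$ small enough for Lemma~\ref{lem:you can ignore big subsets} to apply, the same reasoning that produced $(\ref{eqn:ignore-1777})$ in the proof of Theorem~\ref{theorem:intro:bad main euclidean} gives $L=\limsup_{N\to\infty}|\Om|^{-1}N^{-1}\E\log Z_N(F^N_m)$, where $F^N_m=\{\b{u}:\|\b{u}\|_N\in[m,m^{-1}]^{\Om}\}$. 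Enlarging the closed slab $F^N_m$ slightly to the open slab $\{\b{u}:\|\b{u}\|^2_N\in U_m\}$ with $U_m:=(m^2/2,2m^{-2})^{\Om}$ (a bounded open box staying a positive distance from the coordinate hyperplanes), monotonicity of $A\mapsto Z_N(A)$ together with a second application of Corollary~\ref{corr:free energy of formula with restricted radii}, now to $U_m$, gives $L\le\sup_{\b{q}\in U_m}\inf_{(\zeta,\b{\Phi})}\cal{P}_{\b{q}}(\zeta,\b{\Phi})$. Since $U_m\subseteq[\tilde m,\tilde m^{-1}]^{\Om}$ with $\tilde m:=m^2/2$, this forces $L\le\sup_{\b{q}\in[\tilde m,\tilde m^{-1}]^{\Om}}\inf_{(\zeta,\b{\Phi})}\cal{P}_{\b{q}}(\zeta,\b{\Phi})\le L$, so every inequality is an equality and the corollary holds with $m$ replaced by $\tilde m$.

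I do not expect a genuine obstacle here: the statement is a soft consequence of results already in hand, and the only points needing care are bookkeeping — matching the closed slab $F^N_m$ against the open slabs to which Corollary~\ref{corr:free energy of formula with restricted radii} literally applies (handled by the slight enlargement $U_m$, or alternatively by establishing continuity of $\b{q}\mapsto\inf_{(\zeta,\b{\Phi})\in\mathscr{Y}(\b{q})}\cal{P}_{\b{q}}(\zeta,\b{\Phi})$), and tracking the harmless additive $h$-dependence. One could instead try to argue purely analytically that $\inf_{(\zeta,\b{\Phi})}\cal{P}_{\b{q}}(\zeta,\b{\Phi})\to-\infty$ as $\min_{x}\b{q}(x)\to 0$ or $\max_{x}\b{q}(x)\to\infty$, exploiting the explicit $-\frac12\bigl(\mu\b{q}(x)-\log\b{q}(x)\bigr)$ contribution exhibited in Lemma~\ref{lem:outline:identification of A and P}, but that would require a uniform control of the remaining term $\cal{B}_{\b{q}}$, so the probabilistic route above is the cleaner one.
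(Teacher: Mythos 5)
Your proof is correct and follows the route the paper intends: the paper offers only the one-line remark that Corollary~\ref{corr:free energy of formula with restricted radii} combined with Lemma~\ref{lem:you can ignore big subsets} yields the statement, and your argument is a precise realization of exactly that chain. The reduction to $h=0$ via the additive $\frac{h^2}{2\mu}$ term and the slight enlargement from the closed slab $F^N_m$ to an open box (so that Corollary~\ref{corr:free energy of formula with restricted radii} applies literally) are the right bookkeeping steps and are handled correctly.
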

	
	\pagebreak
	
	\section{A First Step to a Crisanti-Sommers Formula \label{section:CS=Parisi}}
	
	In this section, we introduce a new formula for the asymptotic free energy of the spherical variant of the elastic manifold, given by Theorem \ref{theorem:spherical parisi continuum}. The proof of this new formula will be the goal of the subsequent sections of this work. 
	
	The focus of this section is to prove that this alternate Parisi formula implies our Theorem \ref{theorem:intro:bad parisi spherical}, which generalizes to the elastic manifold context the well-know Crisanti-Sommers formula \cite{crisantisommersOG} about the one-site model (i.e spherical spin glasses). As we saw in Section \ref{section:non-transitive main theorem}, this implies our main result about the asymptotic free energy for the Euclidean model of the elastic manifold (i.e. Theorem \ref{theorem:intro:bad main euclidean}).
	
	In order to introduce this variant, we now need to introduce an alternative version of the Parisi functional, which will be denoted as $\cal{A}$. To begin, for a choice of $(\zeta,\b{\Phi})\in \mathscr{Y}$, we define the function $\b{d}:[0,1]\to [0,\infty)^{\Om}$ by
	\[\b{d}(u)=\int_u^1 \zeta([0,s]) \xi''_x(\Phi_x(s))\Phi'_x(s)ds.\]
	We then choose $\b{b}\in \R^{\Om}$ such that $D+\b{b}-\b{d}(0)>0$. With these choices, we define the functional
	\[
	\begin{split}
		\cal{A}(\zeta,\b{\Phi},\b{b})=\frac{1}{2|\Om|}\bigg[&|\Om|\log(2\pi)+\sum_{x\in \Om}\int_0^1 [(D+\b{b}-\b{d}(s))^{-1}]_{xx}\xi''_x(\Phi_x(s))\Phi_x'(s)ds\\ 
		&+\sum_{x\in \Om}\left(\b{b}(x)-\int_0^1 \zeta([0,s])\Phi_x(s)\xi_x''(\Phi_x(s))\Phi_x'(s)ds\right)\\
		&+\sum_{x,y\in \Om}[(D+\b{b}-\b{d}(0))^{-1}]_{xy}\l(\b{h}(x)\b{h}(y)+\delta_{xy}\xi_x'(0)\r)\bigg],
	\end{split}
	\label{eqn:def:A continuum}
	\]
	We then minimize over $\b{b}$ to define the functional
	\[\cal{A}(\zeta,\b{\Phi})=\inf_{\b{b}\in \R^{\Om }:\;D+\b{b}-\b{d}(0)>0}\cal{A}(\zeta,\b{\Phi},\b{b}).\label{eqn:def:A true def}\]
	
	While this functional is defined on all of $(\zeta,\b{\Phi})\in \mathscr{Y}$, for technical reasons, we restrict this functional a subset of finite measures,
	\[\mathscr{Y}_{fin}=\{(\zeta,\b{\Phi})\in \mathscr{Y}:\supp(\zeta) \text{ is a finite set}\}.\]
	We now give our new computation for the asymptotic free energy in terms of $\cal{A}$.
	
	\begin{theorem}[A New Parisi Formula]
		\label{theorem:spherical parisi continuum}
		\[
		\lim_{N\to \infty}|\Om|^{-1}N^{-1}\E \log Z_{N,\mathrm{Sph}}=\inf_{(\zeta,\b{\Phi})\in \mathscr{Y}_{fin}}\cal{A}(\zeta,\b{\Phi}).
		\]
	\end{theorem}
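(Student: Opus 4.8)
The plan is to split the proof into the two matching bounds, following the now-standard architecture for Parisi-type formulas but working throughout in the multi-species, non-product-measure setting dictated by the confining matrix $D$. Before either bound, I would first install a perturbation of the Hamiltonian: add a small mixed-$p$-spin term (independent across sites $x\in\Om$) with coupling parameters living in a suitable hypercube, chosen — à la Talagrand/Panchenko — to be $o(N)$ in the free energy yet strong enough to force the Ghirlanda--Guerra identities in the limit (in the multi-species form of \cite{panchenkoms}). On a generic choice of perturbation parameter, the overlap array then enjoys both the multi-species GG identities and Talagrand's positivity principle \cite{talagrandGuerraPositivity}, which in particular confines each site-overlap $R_{\ell\ell'}^x=(u^\ell(x),u^{\ell'}(x))_N$ to $[0,1]$ asymptotically, and — via the synchronization mechanism of \cite{panchenkoms} — forces the vector overlap $(R^x)_{x\in\Om}$ to concentrate on the graph of a coordinate-wise non-decreasing Lipschitz map, which is exactly the data encoded by $\b\Phi$ in $\mathscr{Y}$.

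For the upper bound I would run a Guerra-style interpolation. Fix $(\zeta,\b\Phi)\in\mathscr{Y}_{fin}$, so $\zeta$ is atomic and the interpolation can be set up over a finite Ruelle probability cascade indexed by the atoms of $\zeta$; the matrix parameter $\b b$ enters through the Gaussian field used to decouple the quadratic form $\tfrac12\sum_{x,y}D_{xy}(\b u(x),\b u(y))$ against the cascade. Differentiating the interpolating free energy in the interpolation time and using Gaussian integration by parts produces a remainder that is a sum over pairs of replicas of a quantity controlled by the sign of $D+\b b-\b d(s)$; Talagrand's positivity principle kills the unwanted sign, and the boundary terms at the two endpoints reproduce, respectively, $N^{-1}\E\log Z_{N,\mathrm{Sph}}$ and (after sending $N\to\infty$ and optimizing the cascade weights) the functional $\cal A(\zeta,\b\Phi,\b b)$. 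Taking the infimum over admissible $\b b$ and then over $(\zeta,\b\Phi)\in\mathscr{Y}_{fin}$ gives $\limsup_N |\Om|^{-1}N^{-1}\E\log Z_{N,\mathrm{Sph}}\le \inf \cal A$.

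For the lower bound I would use the Aizenman--Sims--Starr cavity scheme in the extended form of Chen \cite{weikuo}, combined with the multi-species synchronization of \cite{panchenkoms}: write $N^{-1}\E\log Z_{N+1,\mathrm{Sph}}-N^{-1}\E\log Z_{N,\mathrm{Sph}}$ as a difference of two cavity functionals, pass to the limit using the GG identities and ultrametricity to replace the asymptotic Gibbs measure by a synchronized Ruelle probability cascade, and then recognize the resulting cascade functional as $\cal A(\zeta,\b\Phi)$ for the $(\zeta,\b\Phi)$ read off from the limiting overlap structure (the minimization over $\b b$ appears because the spherical constraint on the cavity coordinate forces a saddle-point/Lagrange-multiplier computation, as in the unipartite spherical case). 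A density argument reduces the limiting $\zeta$ to the finitely-supported case, keeping us inside $\mathscr{Y}_{fin}$. The main obstacle, and where the bulk of Sections \ref{section:RPC-intro} and \ref{section:RPC-gg-iden} is spent, is the non-product nature of the reference measure: the confining term $D$ couples the sites, so evaluating the cavity and interpolation functionals on the synchronized cascade requires computing Gaussian functionals of correlated fields over a measure that is not a product over $x\in\Om$ — this is precisely what makes the $\b b$-variable and the $\log\det(D+\b K^D)$-type corrections (and hence the whole discrepancy with the naive one-site Crisanti--Sommers expression) appear, and controlling it rigorously is the crux of the argument.
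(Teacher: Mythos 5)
Your proposal follows essentially the same route as the paper: perturb the Hamiltonian to force the multi-species Ghirlanda--Guerra identities, synchronization, and Talagrand's positivity principle; prove the upper bound by a Guerra interpolation against a Ruelle probability cascade; prove the lower bound by the $M$-cavity Aizenman--Sims--Starr scheme in Chen's spherical form combined with Panchenko's synchronization; and obtain the parameter $\b{b}$ as a Lagrange multiplier when trading the spherical integral for a Gaussian one. Two details in your sketch need correcting for the argument to close. First, the perturbation is not a sum of site-independent mixed $p$-spin terms: in the multi-species setting one must take centered Gaussians $h_{N,p,q}$ whose covariance is a power of the weighted overlap $\b{R}^{\b{w}_q}_N(\b{u},\b{u}')=|\Om|^{-1}\sum_{x}\b{w}_q(x)(\b{u}(x),\b{u}'(x))_N$ with $\b{w}_q$ ranging over a dense subset of $[0,1]^{\Om}$, since this is exactly what drives Panchenko's synchronization theorem; genuinely independent per-site perturbations would only give site-wise Ghirlanda--Guerra identities and would not synchronize the overlap vector. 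Second, the remainder in the Guerra interpolation is not controlled by the sign of $D+\b{b}-\b{d}(s)$; after Gaussian integration by parts it is $\rho_x(u,v)=(v-u)\xi'_x(v)+\xi_x(u)-\xi_x(v)$, nonnegative on $[0,1]^2$ by convexity of $\xi_x$, and Talagrand's positivity principle supplies the confinement of $(\b{u}(x),\b{u}'(x))_N$ to $[-\epsilon,1]$ where the remainder is $\ge -O(\epsilon)$. The condition $D+\b{b}-\b{d}(s)>0$ is a domain constraint for $\cal{A}$, and enters only through the intermediate functionals $\cal{A}_M$: both bounds actually deliver $\cal{A}_M$ (a cascade free energy with spherical cavity coordinates), and a separate Laplace/Lagrange-multiplier step (Proposition \ref{prop:rpc-intro:infimum identification of B in limit}) is what turns $\lim_M \inf\cal{A}_M$ into $\inf_{\b{b}}\cal{A}(\cdot,\cdot,\b{b})$ and produces the $\log\det(D+\b{b})$ terms you mention.
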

	
	We mention that using the functional $\cal{A}$ is important in our Parisi formula, as $\cal{A}$ is what appears more naturally in our analysis of the free energy. However $\cal{B}$ is both more simple to analyze and also the functional which appears by applying the replica-trick. As discussed above, this matches a similar issue in the original proof of the even one-site case by Talagrand \cite{talagrandOG}. One may also see the work \cite{erikCS} which establishes a similar result in the case of multi-species spin glasses. 
	
	\begin{remark}
		We believe that the restriction to finite measures in Theorem \ref{theorem:spherical parisi continuum} should not affect the infimum of $\cal{A}$, or more specifically, that \[\inf_{(\zeta,\b{\Phi})\in \mathscr{Y}_{fin}}\cal{A}(\zeta,\b{\Phi})=\inf_{(\zeta,\b{\Phi})\in \mathscr{Y}}\cal{A}(\zeta,\b{\Phi}).\]
		However, as this result is not needed here, we have not pursued this issue for brevity. We do however, show the analogous result for the functional $\cal{B}$ in Appendix \ref{section:appendix:B appendix}.
	\end{remark}
	
	The proof of Theorem \ref{theorem:spherical parisi continuum} is the focus of all subsequent sections in this paper. The focus of the remainder of this section is to prove Theorem \ref{theorem:intro:bad parisi spherical} using Theorem \ref{theorem:spherical parisi continuum}. The key to this is the following result, which shows that these functionals have the same infimum over $\mathscr{Y}_{fin}$.
	
	\begin{prop}
		\label{prop: easy Parisi=CS-minor}
		We have that 
		\[\inf_{(\zeta,\b{\Phi})\in \mathscr{Y}_{fin}}\cal{B}(\zeta,\b{\Phi})=\inf_{(\zeta,\b{\Phi})\in \mathscr{Y}_{fin} }\cal{A}(\zeta,\b{\Phi}).\]
	\end{prop}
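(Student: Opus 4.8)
The plan is to exhibit a correspondence between the $\b{b}$ appearing in $\cal{A}$ and the role played by the functions $\b{K}^D$ and $\Lambda^D$ in $\cal{B}$, and then to match the two functionals term by term. The natural guess is that, for fixed $(\zeta,\b{\Phi})\in \mathscr{Y}_{fin}$ with associated functions $\b{d}$ (as in the definition of $\cal{A}$) and $\b{\delta}$ (as in the definition of $\cal{B}$), one should relate $\b{b}$ to $\b{\delta}$ via the Legendre-type duality encoded in Definition \ref{defin:K and Lam for general}. Concretely, since $\cal{A}(\zeta,\b{\Phi})$ is obtained by minimizing over $\b{b}$ with $D+\b{b}-\b{d}(0)>0$, the first step is to compute the stationarity condition $\D_{\b{b}}\cal{A}(\zeta,\b{\Phi},\b{b})=0$. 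Using that $\D_{\b{b}}\log\det(D+\b{b}-\b{d}(s))$ and $\D_{\b{b}}(D+\b{b}-\b{d}(s))^{-1}$ have the standard matrix-calculus forms, the critical-point equation should reduce to a relation of the form $[(D+\b{b}-\b{d}(0))^{-1}]_{xx}=$ (something expressible through $\b{\Phi}$, $\zeta$, $\b{h}$), which is exactly the defining relation \eqref{eqn:def:KD} for $\b{K}^D$ evaluated at an appropriate argument. This identifies the minimizing $\b{b}$ as $\b{b}^* = \b{K}^D(\beta\b{\delta}(0)) - D + \b{d}(0)$ (up to the precise bookkeeping of $\beta$, which we have set to $1$), so that $D+\b{b}^*-\b{d}(0) = \b{K}^D(\b{\delta}(0))$-ish; I would verify the exact form by a short computation.

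The second step is to plug this minimizer back in and simplify. Here the two identities from Definition \ref{defin:K and Lam for general} and Remark \ref{remark:beginning point doesn't matter part} are the workhorses: the term $\sum_x \b{b}(x)$ together with the $-\log\det$ piece (which appears implicitly once we express things via $\Lambda^D$) should combine into $\Lambda^D(\b{\delta}(0))$ or $\Lambda^D(\b{\delta}(q_*))$ after integrating by parts in $s$, and the integral $\int_0^1 [(D+\b{b}-\b{d}(s))^{-1}]_{xx}\xi_x''(\Phi_x(s))\Phi_x'(s)\,ds$ should become $\int_0^{q_*} K_x^D(\b{\delta}(q))\Phi_x'(q)\,dq$ plus the remaining $\Lambda^D$-difference from $q_*$ to $1$, using exactly the identity in Remark \ref{remark:beginning point doesn't matter part} that makes $\cal{B}$ independent of $q_*$. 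The $\xi_x'$ terms and the external-field terms $\b{h}(x)\b{h}(y)$ must also be tracked: the term $[(D+\b{b}-\b{d}(0))^{-1}]_{xy}(\b{h}(x)\b{h}(y)+\delta_{xy}\xi_x'(0))$ in $\cal{A}$ should split into the field contribution in $\cal{B}$ plus a piece that gets absorbed, via an integration by parts in $s$ of $\xi_x'(\Phi_x(s))$, into the $\int_0^1 \zeta([0,u])\xi_x'(\Phi_x(q))\Phi_x'(q)\,dq$ term of $\cal{B}$ (noting $\xi_x'(\Phi_x(1))=\xi_x'(1)$ is the full diagonal mixing and $\xi_x'(\Phi_x(0))=\xi_x'(0)$). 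Matching the constants $\tfrac12\log(2\pi)$ per site is immediate.

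The main subtlety — and the place I expect to spend the most care — is that the above shows $\cal{A}(\zeta,\b{\Phi}) \ge \cal{B}(\zeta,\b{\Phi})$ (or $\le$) only when the minimizer $\b{b}^*$ lies in the open region $D+\b{b}-\b{d}(0)>0$ and when the corresponding $\b{\delta}$ lies in the domain of $\b{K}^D$; conversely, given $(\zeta,\b{\Phi})\in\mathscr{Y}_{fin}$ one needs $\b{\delta}(q)$ to stay in the domain $\{\b{u}: D+\b{u}>0\}$ of $\b{K}^D$ along the whole range $q\in[0,q_*]$, which is where the hypothesis that $\zeta$ vanishes near $0$ and that $\Phi_x(q_*)<\b{q}(x)$ (here $\b{q}=\b{1}$) gets used to guarantee $\b{\delta}(q_*)>0$ coordinatewise and hence strict positive-definiteness of $D+\b{K}^D(\b{\delta}(q_*))$. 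So rather than proving a pointwise equality $\cal{A}(\zeta,\b{\Phi})=\cal{B}(\zeta,\b{\Phi})$ for every $(\zeta,\b{\Phi})$ (which may genuinely fail, or require extra care at the boundary), the cleaner route — and the one I would follow — is to prove only the two inequalities between the infima: for the direction $\inf \cal{B}\ge \inf\cal{A}$, take a near-minimizer of $\cal{B}$, read off $\b{b}$ from the $\b{K}^D$-relation, and check it is admissible for $\cal{A}$ with $\cal{A}(\zeta,\b{\Phi},\b{b})=\cal{B}(\zeta,\b{\Phi})$; for $\inf\cal{A}\ge\inf\cal{B}$, take a near-minimizer $(\zeta,\b{\Phi},\b{b})$ of $\cal{A}$ and observe that by convexity of $\b{b}\mapsto\cal{A}(\zeta,\b{\Phi},\b{b})$ the minimizer satisfies the stationarity equation, hence equals the value of $\cal{B}$ at an admissible $q_*$ (choosing $q_*$ with $\zeta([0,q_*])=1$, $\Phi_x(q_*)<1$, which exists since $\supp\zeta$ is finite and $(\zeta,\b{\Phi})\in\mathscr{Y}$). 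The finiteness of $\supp(\zeta)$ is convenient here precisely because it makes all the integrals-by-parts into finite sums and makes the choice of $q_*$ unproblematic. I would organize the writeup as: (i) the matrix-calculus computation of the critical-point equation for $\b{b}$; (ii) identification of $\b{b}^*$ with $\b{K}^D(\b{\delta}(\cdot))$; (iii) the integration-by-parts bookkeeping matching all terms, citing Remark \ref{remark:beginning point doesn't matter part} for $q_*$-independence; (iv) the admissibility/domain check closing both inequalities.
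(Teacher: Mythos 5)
Your central idea — identify the minimizing $\b{b}$ via the $\b{K}^D$-relation and then match the functionals term by term — captures part of the algebra underlying the paper's Lemma \ref{lem:CS-section:main-lemma}, but it rests on an equality that is genuinely false in the generality you assume. The identity $\cal{A}(\zeta,\b{\Phi},\b{b})=\cal{B}(\zeta,\b{\Phi})$ does \emph{not} follow from the $\b{b}$-stationarity of $\cal{A}$ alone. To carry out the term-by-term matching you sketch, one needs $[(D+\b{b}-\b{d}^l)^{-1}]_{xx}=\b{\delta}^l(x)$ at \emph{every} level $l$, and propagating this down from the top level requires, for each $1\le l<r$ and $x\in\Om$,
\[
m_l\big(\xi_x'(\b{s}^{l+1}(x))-\xi_x'(\b{s}^l(x))\big)=K_x(\b{\delta}^{l+1})-K_x(\b{\delta}^{l}),
\]
which is precisely the stationarity condition (\ref{eqn:lem:min-cs-1}) in the $\b{\Phi}$-direction. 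This is a nontrivial constraint on $(\zeta,\b{\Phi})$: a generic element of $\mathscr{Y}_{fin}$ does not satisfy it, so the pointwise equality (even after optimizing over $\b{b}$) fails. Your fallback of working with near-minimizers does not repair this, because a near-minimizer is not a critical point and the matching needs exact criticality, not approximate.

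What the paper actually does is a three-step first-order argument: (i) show that $\cal{A}$ and $\cal{B}$ agree at joint critical points (Lemma \ref{lem:CS-section:main-lemma}, using both (\ref{eqn:lem:min-cs-1}) and (\ref{eqn:lem:min-cs-b})); (ii) establish that minimizers exist for both functionals (Lemma \ref{lemma:CS:minimzers exist}); and (iii) prove that these minimizers are in fact interior critical points (Lemmas \ref{lem:CS-section:CS-min} and \ref{lem:CS-section:Parisi-min}). Step (iii) is the real difficulty — the domain has boundary (the ordering constraints on $\vec{\b{s}}$), and ruling out boundary minimizers requires a positivity argument that in turn needs the extra hypothesis $\xi_x'(0),\xi_x''(0)\neq 0$. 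That hypothesis is then removed by a density-in-$\b{\xi}$ argument via Proposition \ref{prop:overview:continuity of parisi in xi} and Corollary \ref{corr:overview:continuity of free energy in xi}. Your proposal has no analogue of step (iii), and the near-minimizer reduction does not circumvent it; you must argue at genuine minimizers and confront boundary criticality directly.
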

	
	Given this result, we may now easily obtain Theorem \ref{theorem:intro:bad parisi spherical} from Theorem \ref{theorem:spherical parisi continuum}.
	\begin{proof}[Proof of Theorem \ref{theorem:intro:bad parisi spherical}]
		By Theorem \ref{theorem:spherical parisi continuum} and Propositions \ref{prop: easy Parisi=CS-minor} and \ref{prop:A and B continuity} we have that
		\[\lim_{N\to \infty}|\Om|^{-1}N^{-1}\E \log Z_{N,\mathrm{Sph}}=\inf_{(\zeta,\b{\Phi})\in \mathscr{Y}_{fin}}\cal{A}(\zeta,\b{\Phi})=\inf_{(\zeta,\b{\Phi})\in \mathscr{Y}_{fin}}\cal{B}(\zeta,\b{\Phi})=\inf_{(\zeta,\b{\Phi})\in \mathscr{Y}}\cal{B}(\zeta,\b{\Phi}).\]
		The a.s. statement then follows from this, Proposition \ref{prop:Borell-TIS-free-energy} and the Borel-Cantelli lemma.
	\end{proof}
	To prepare for the proof of Proposition \ref{prop: easy Parisi=CS-minor}, we must study $\mathscr{Y}_{fin}$. To begin, let us choose $r\ge 1$, as well as a sequence of real numbers 
	\[0=m_0 < m_1<\cdots <m_{r-1}< m_r=1,\label{eqn:def:x-sequence-Tal}\]
	and a sequence $(\b{s}^k(x))_{k=1}^{r}$, for each $x\in \Om$, such that
	\[0=\b{s}^0(x)\le \b{s}^1(x)\le \cdots \le \b{s}^r(x)< \b{s}^{r+1}(x)=1\label{eqn:def:Q-sequence-Tal}.\]
	
	We will use the notations $\vec{m}=(m_i)_{0\le i\le r-1}$ and $\vec{\b{s}}=(\b{s}^i)_{1\le i\le r}$, and similarly for other such sequences. In the next remark, we summarize how we may associate a pair $(\zeta,\b{\Phi})\in \mathscr{Y}_{fin}$ to the pair $(\vec{\b{s}},\vec{m})$.
	
	\begin{remark}
		\label{remark:associated to measure: talagrand}
		Let us take sequences $\vec{m}$ and $\vec{\b{s}}$ as in (\ref{eqn:def:x-sequence-Tal}) and (\ref{eqn:def:Q-sequence-Tal}) of length parameter $r$. By defining 
		\[s^k=\frac{1}{|\Om|}\sum_{x\in \Om}\b{s}^k(x)\label{def:q-seq-norm-tal}\] 
		we obtain a sequence $\vec{s}:=(s^k)_{k=1}^{r}$ of the form
		\[0=s^0\le s^1\le \dots \le s^r< s^{r+1}=1.\]
		We may associate with $(\vec{m},\vec{s})$ a discrete probability measure on $[0,1]$, $\zeta$, by
		\[\zeta=\sum_{0\le k\le r}(m_k-m_{k-1})\delta_{s^k},\]
		where $\delta_s$ denotes the Dirac $\delta$-function supported at $s$. We also may associate a function $\b{\Phi}:[0,1]\to [0,1]^{\Om}$ by letting $\b{\Phi}(s^k)=\b{s}^k$, then extending the function to all of $[0,1]$ by linear interpolation. It is easily checked that this satisfies (\ref{eq:condition for Psi}).  Moreover, we see that $(\zeta,\b{\Phi})\in \mathscr{Y}_{fin}$, and more specifically that
		\[(\Phi_x)_*(\zeta)=\sum_{0\le k\le r}(m_k-m_{k-1})\delta_{\b{s}^k}.\]
	\end{remark}
	
	We note two things about the process given in Remark \ref{remark:associated to measure: talagrand}. The first is that when one has some $\b{s}^k=\b{s}^{k+1}$ for some $k$, the measure no longer depends on $m_k$, so that the process is not injective. However, restricted to the set of choices where all $\b{s}^k$ are distinct, the process is indeed injective. In particular, this construction overparameterized its image, though only mildly.
	
	The second and more important fact is that this process does not surject onto $\mathscr{Y}_{fin}$. Indeed, to obtain a surjection, we would have to include all possible $\b{\Phi}$ which take the values $\b{\Phi}(s^k)=\b{s}^k$, instead of simply taking them to be a linear interpolation. However, in Appendix \ref{section:appendix:B appendix}, it is checked that if two choices of $(\zeta,\b{\Phi}),(\zeta,\b{\Phi}')\in \mathscr{Y}$ are such that $\b{\Phi}(s)=\b{\Phi}'(s)$ for $s\in \supp(\zeta)$, then one has that $\cal{A}(\zeta,\b{\Phi})=\cal{A}(\zeta,\b{\Phi}')$ (\textbf{Proposition \ref{prop:general: for B, parisi doesn't depend away from support}}) and $\cal{B}(\zeta,\b{\Phi})=\cal{B}(\zeta,\b{\Phi}')$ (Proposition \ref{prop:general:parisi doesn't depend away from support}). So, as far as these functionals concerned, we can restrict to this class of finite measures.
	
	In particular, let us define $\cal{B}(\vec{\b{s}},\vec{m}):=\cal{B}(\zeta,\b{\Phi})$, where $(\zeta,\b{\Phi})\in \mathscr{Y}_{fin}$ is the pair associated to $(\vec{\b{s}},\vec{m})$ by Remark \ref{remark:associated to measure: talagrand}. We define $\cal{A}(\vec{\b{s}},\vec{m})$ identically. Then the above discussion demonstrates the following:
	\[
	\inf_{\vec{\b{s}},\vec{m}}\cal{B}(\vec{\b{s}},\vec{m})=\inf_{(\zeta,\b{\Phi})\in \mathscr{Y}_{fin}}\cal{B}(\zeta,\b{\Phi}) \text{ and } \inf_{\vec{\b{s}},\vec{m}}\cal{A}(\vec{\b{s}},\vec{m})=\inf_{(\zeta,\b{\Phi})\in \mathscr{Y}_{fin}}\cal{A}(\zeta,\b{\Phi}).
	\label{eqn:lem:finite to parametrized}
	\]
	
	Our next step will be to evaluate $\cal{B}(\vec{\b{s}},\vec{m})$ and $\cal{A}(\vec{\b{s}},\vec{m})$. To evaluate $\cal{B}$ we define a sequence for $x\in \Om$ and $1\le l\le r$ by
	\[\delta^{l}_x=\sum_{l\le k\le l}m_k(\b{s}^{k+1}(x)-\b{s}^k(x)),\;\; \delta^{r+1}_x=0.\]
	Routine computation then gives that
	\[
	\begin{split}
		\cal{B}(\vec{\b{s}},\vec{m})=\frac{1}{2|\Om|}\bigg[&|\Om|\log(2\pi)+|\Om|\Lambda^D(\b{\delta}^r)-\sum_{1\le k<r}\frac{|\Om|}{m_k}\bigg(\Lam^D(\b{\delta}^{k+1})-\Lam^D(\b{\delta}^{k})\bigg)\\
		&+\sum_{x\in \Om}\left(-K_x^D(\b{\delta}^1)\b{s}^1(x)+\sum_{1\le k<r}m_k(\xi_x(\b{s}^{k+1}(x))-\xi_x(\b{s}^k(x)))\right)\\
		&+\sum_{x,y\in \Om}[(D+\b{K}^D(\b{\delta}^1))^{-1}]_{xy}\b{h}(x)\b{h}(y) \bigg].\label{eqn:def:leg-hardcore}
	\end{split}
	\]
	
	Next we give an expression for $\cal{A}$. Given such choices we may inductively define for $x\in \Om$ and $1\le l\le r$
	\[\b{d}^{l}(x)=\sum_{l\le k\le r}m_{k}(\xi'_x(\b{s}^{k+1}(x))-\xi'_x(\b{s}^{k}(x))),\;\; \b{d}^{r+1}(x)=0.\]
	Finally let us take a choice of some $\b{b}\in \R^{\Om}$ such that $D+\b{b}-\b{d}^1>0$. Associated with these choices, we may compute that
	\[
	\begin{split}
		\cal{A}(\vec{\b{s}},\vec{m},\b{b})=\frac{1}{2|\Om|}\bigg[&|\Om|\log(2\pi)+\sum_{1\le k\le r}\frac{1}{m_k}\log \frac{\det(D+\b{b}-\b{d}^{k+1})}{\det(D+\b{b}-\b{d}^k)}-\log \det(D+\b{b})\\ 
		&+\sum_{x\in \Om}\left(\b{b}(x)-\sum_{1\le  k<r}m_k(\theta_x(\b{s}^{k+1}(x))-\theta_x(\b{s}^k(x)))\right)\\
		&+\sum_{x,y\in \Om}[(D+\b{b}-\b{d}^1)^{-1}]_{xy}\l(\b{h}(x)\b{h}(y)+\delta_{xy}\xi_x'(\b{s}^1(x))\r)\bigg],
	\end{split}
	\label{eqn:def:A in terms of s}
	\]
	where here $\theta_x(r)=\xi_x'(r)-\xi_x(r)+\xi_x(0)$. Then we finally have that
	\[\cal{A}(\vec{\b{s}},\vec{m})=\inf_{\b{b}\in \R^{\Om}: \; D+\b{b}-\b{d}^1>0}\cal{A}(\vec{\b{s}},\vec{m},\b{b}).\]
	
	The main technical result we use to establish Proposition \ref{prop: easy Parisi=CS-minor} is the following.
	
	\begin{prop}
		\label{prop:Parisi=CS-minor}
		Let us assume that for each $x\in \Om$, we have that $\xi''_x(0),\xi'_x(0)\neq 0$. Then for each $r\ge 1$ and fixed choice of $\vec{m}$, we have that
		\[\inf_{\vec{\b{s}}}\cal{B}(\vec{\b{s}},\vec{m})=\inf_{\vec{\b{s}}}\cal{A}(\vec{\b{s}},\vec{m}),\]
		where here $\vec{\b{s}}$ is taken over all choices of (\ref{eqn:def:Q-sequence-Tal}).
	\end{prop}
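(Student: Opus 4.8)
The proof establishes that the two infima coincide by exhibiting, in each direction, an explicit competitor obtained through a change of variables built on Legendre duality. The key identity, which bridges the objects entering $\cal{B}$ and those entering $\cal{A}$, is that (by Definition~\ref{defin:K and Lam for general} and the properties of $\b{K}^D,\Lambda^D$ recorded in Appendix~C)
\[|\Om|\,\Lambda^D(\b{v})=\inf_{\b{w}:\,D+\b{w}>0}\l(\sum_{x\in\Om}\b{w}(x)\b{v}(x)-\log\det(D+\b{w})\r),\]
the infimum being attained exactly at $\b{w}=\b{K}^D(\b{v})$, i.e. at the unique $\b{w}$ with $[(D+\b{w})^{-1}]_{xx}=\b{v}(x)$. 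One should think of $D+\b{b}-\b{d}^l$ in~(\ref{eqn:def:A in terms of s}) as playing, at each level $l$, the role of $D+\b{K}^D(\b{\delta}^l)$ in~(\ref{eqn:def:leg-hardcore}).

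To make this precise, given $(\vec{\b{s}}',\b{b})$ feasible for $\cal{A}$ (so $D+\b{b}-\b{d}^1>0$, hence $D+\b{b}-\b{d}^l>0$ for all $l$ since $\b{d}^l\le\b{d}^1$), I would set $\b{\delta}^l:=([(D+\b{b}-\b{d}^l)^{-1}]_{xx})_{x\in\Om}$ for $l=1,\dots,r$ and $\b{\delta}^{r+1}:=0$, so that $\b{K}^D(\b{\delta}^l)=\b{b}-\b{d}^l$ by construction. Since $\b{d}^l\ge\b{d}^{l+1}$ (because $\xi_x'$ is non-decreasing) and matrix inversion is operator-decreasing on positive definite matrices, one has $\b{\delta}^l\ge\b{\delta}^{l+1}$ coordinatewise, so defining $\b{s}^{l+1}(x)-\b{s}^{l}(x):=(\b{\delta}^l_x-\b{\delta}^{l+1}_x)/m_l$ and $\b{s}^{r+1}:=\b{1}$ produces a monotone sequence, which lies in the admissible class~(\ref{eqn:def:Q-sequence-Tal}) provided the resulting $\b{s}^1$ is non-negative. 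Conversely, starting from a $\cal{B}$-point $\vec{\b{s}}$ one recovers $(\vec{\b{s}}',\b{b})$ by solving the recursion $m_l\l(\xi_x'((\b{s}')^{l+1}(x))-\xi_x'((\b{s}')^{l}(x))\r)=K_x^D(\b{\delta}^{l+1})-K_x^D(\b{\delta}^l)$ for $l=1,\dots,r-1$ (its right side is non-negative because $\b{K}^D$ is coordinatewise non-increasing), then fixing $\b{b}$ from the top level. Solving the recursion requires inverting $\xi_x'$, and the hypotheses $\xi_x''(0)\neq0$ and $\xi_x'(0)\neq0$ are exactly what guarantee that $\xi_x'$ is a non-degenerate increasing bijection on $[0,1]$ and that the boundary step of the recursion is admissible, so that this change of variables is well-defined on the admissible classes.

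With the dictionary $\b{K}^D(\b{\delta}^l)=\b{b}-\b{d}^l$ in hand, I would then verify term by term that $\cal{B}(\vec{\b{s}},\vec{m})=\cal{A}(\vec{\b{s}}',\vec{m},\b{b})$. The central computation is a summation by parts: substituting $\Lambda^D(\b{\delta}^l)=\tfrac1{|\Om|}(\sum_x K_x^D(\b{\delta}^l)\b{\delta}^l_x-\log\det(D+\b{K}^D(\b{\delta}^l)))$ into the telescoping combination $|\Om|\Lambda^D(\b{\delta}^r)-\sum_{k<r}\tfrac{|\Om|}{m_k}(\Lambda^D(\b{\delta}^{k+1})-\Lambda^D(\b{\delta}^k))$ of~(\ref{eqn:def:leg-hardcore}) reassembles it, after reindexing, into the $\sum_k\tfrac1{m_k}\log\tfrac{\det(D+\b{b}-\b{d}^{k+1})}{\det(D+\b{b}-\b{d}^k)}$ telescope together with the single term $-\log\det(D+\b{b})$ of~(\ref{eqn:def:A in terms of s}); the $\sum_x K_x^D(\b{\delta}^l)\b{\delta}^l_x$ pieces together with the $\xi_x$-sums and the $-K_x^D(\b{\delta}^1)\b{s}^1(x)$ term of $\cal{B}$ recombine with the linear-in-$\b{b}$ term and the $\theta_x$-sums of $\cal{A}$ through the identity $\theta_x=\xi_x'-\xi_x+\xi_x(0)$; and the external-field term $[(D+\b{K}^D(\b{\delta}^1))^{-1}]_{xy}\b{h}(x)\b{h}(y)$ of $\cal{B}$ matches $[(D+\b{b}-\b{d}^1)^{-1}]_{xy}(\b{h}(x)\b{h}(y)+\delta_{xy}\xi_x'(\b{s}^1(x)))$ of $\cal{A}$, the extra diagonal $\delta_{xy}\xi_x'(\b{s}^1(x))$ being precisely the residue produced by the boundary step of the $\xi_x'$-recursion -- which is where $\xi_x'(0)\neq0$ is used. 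Since $\b{b}$ is an admissible choice in~(\ref{eqn:def:A true def}), this identity gives both $\inf_{\vec{\b{s}}}\cal{A}\le\inf_{\vec{\b{s}}}\cal{B}$ and $\inf_{\vec{\b{s}}}\cal{B}\le\inf_{\vec{\b{s}}}\cal{A}$; in the latter direction, should one prefer not to pass through the optimal $\b{b}$, one keeps $\Lambda^D$ exact at the levels where it carries a negative weight after the summation by parts (coefficients $\tfrac{|\Om|}{m_j}-\tfrac{|\Om|}{m_{j-1}}<0$ for $2\le j\le r-1$, and $|\Om|(1-\tfrac1{m_{r-1}})<0$ for $j=r$) and uses the Legendre inequality $|\Om|\Lambda^D(\b{\delta}^1)\le\sum_x(\b{b}(x)-\b{d}^1(x))\b{\delta}^1_x-\log\det(D+\b{b}-\b{d}^1)$ at the remaining bottom level, where the weight $\tfrac{|\Om|}{m_1}$ is positive.

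The main obstacle is the bookkeeping of the previous paragraph: carrying out the summation by parts so that the mixed-sign $\Lambda^D$-terms of $\cal{B}$ reassemble cleanly into the $\tfrac1{m_k}\log\det$-telescope plus $-\log\det(D+\b{b})$, while simultaneously tracking the $\xi_x$, $\xi_x'$, $\theta_x$ and external-field contributions, which for $|\Om|>1$ is genuinely intricate. A secondary point requiring care is to confirm that the change of variables respects the admissible classes~(\ref{eqn:def:Q-sequence-Tal}) in both directions -- in particular that the reconstructed sequences satisfy $\b{s}^1\ge\b{0}$ and $\b{s}^r<\b{1}$ (the latter is automatic since that level is unconstrained by the recursion, while the former, together with bijectivity of the $\xi_x'$-reparametrization, is exactly where the non-degeneracy hypotheses are needed) -- and that monotonicity of $\b{K}^D$ and of each $\xi_x'$ suffices to keep the intermediate sequences ordered. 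Finally, Proposition~\ref{prop: easy Parisi=CS-minor} is then obtained from this, from~(\ref{eqn:lem:finite to parametrized}), and from a short separate argument removing the assumptions $\xi_x''(0),\xi_x'(0)\neq0$ by perturbation.
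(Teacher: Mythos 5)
The proposal diverges from the paper's approach, and the core step contains a genuine gap. You propose an explicit ``change of variables'' sending an $\cal{A}$-point $(\vec{\b{s}}',\b{b})$ to a $\cal{B}$-point $\vec{\b{s}}$ via $\b{\delta}^l:=([(D+\b{b}-\b{d}^l)^{-1}]_{xx})_x$ and $\b{s}^{l+1}-\b{s}^l:=(\b{\delta}^l-\b{\delta}^{l+1})/m_l$, $\b{s}^{r+1}:=\b{1}$, and then assert the exact identity $\cal{B}(\vec{\b{s}},\vec{m})=\cal{A}(\vec{\b{s}}',\vec{m},\b{b})$ for every admissible input. This identity is false in general. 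The obstacle is that your constructed $\vec{\b{s}}$ is determined by the $\b{\delta}^l$'s alone, while $\vec{\b{s}}'$ enters $\cal{A}$ directly through the $\theta_x$-sums and the diagonal term $[(D+\b{b}-\b{d}^1)^{-1}]_{xx}\xi_x'((\b{s}')^1(x))$; likewise $\vec{\b{s}}$ enters $\cal{B}$ directly through the $\xi_x$-sums and the term $-K_x^D(\b{\delta}^1)\b{s}^1(x)$. Your dictionary $\b{K}^D(\b{\delta}^l)=\b{b}-\b{d}^l$ does make the $\log\det$ telescopes match, but the remaining linear-in-$\b{s}$, $\xi_x$, and $\theta_x$ pieces do not recombine unless $\b{s}^l=(\b{s}')^l$ for all $l$, and with $\b{s}^{r+1}=(\b{s}')^{r+1}=\b{1}$ this forces $\b{s}^r=(\b{s}')^r$ as well — which is precisely the first-order condition in $\b{b}$ from (\ref{eqn:lem:min-cs-b}) and does not hold at an arbitrary $\cal{A}$-point. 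A direct check in the case $r=1$, $|\Om|=1$, $D=0$, $\b{h}=0$ gives $\cal{A}(s',b)-\cal{B}(s^1)=\tfrac{1}{2}\l(2b+\xi'(s')/(b-d^1)-2-d^1\r)$ with $d^1=\xi'(1)-\xi'(s')$, which vanishes only on a codimension-one set of $(s',b)$, not identically. What the paper actually proves (Lemma \ref{lem:CS-section:main-lemma}) is a \emph{conditional} identity: the \emph{same} $(\vec{\b{s}},\b{b})$ plugged into both $\cal{A}$ and $\cal{B}$ gives the same value, \emph{provided} (\ref{eqn:lem:min-cs-1}) and (\ref{eqn:lem:min-cs-b}) hold. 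There is no free change of variables to avoid the critical-point equations.

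Your ``alternative'' fix -- using the Legendre inequality at some levels and exactness at others -- does not repair this. After the summation by parts, the $\Lambda^D(\b{\delta}^1)$ term in $\cal{B}$ carries coefficient $-\tfrac{1}{m_1}$ (negative, not positive), and the $\Lambda^D(\b{\delta}^k)$ terms for $2\le k\le r$ carry coefficients $\tfrac{1}{m_{k-1}}-\tfrac{1}{m_k}>0$ -- you have the signs reversed. And regardless of the signs, the Legendre inequality $|\Om|\Lambda^D(\b{\delta})\le\sum_x\b{w}(x)\b{\delta}(x)-\log\det(D+\b{w})$ is one-sided: with mixed-sign coefficients you cannot convert all the $\Lambda^D$ terms to the $\log\det$ form simultaneously with a single inequality direction, so this strategy cannot produce even a one-sided comparison, let alone equality.

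The second gap is that you omit the existence of minimizers. The paper's argument (via Lemmas \ref{lem:CS-section:CS-min}, \ref{lem:CS-section:Parisi-min}, \ref{lemma:CS:minimzers exist}) requires establishing that both $\vec{\b{s}}\mapsto\cal{B}(\vec{\b{s}},\vec{m})$ and $(\vec{\b{s}},\b{b})\mapsto\cal{A}(\vec{\b{s}},\vec{m},\b{b})$ attain their infima, and that at a minimizer the first-order conditions hold even in the presence of boundary constraints $0\le\b{s}^1\le\cdots\le\b{s}^r<1$. The paper flags this as ``quite difficult'' precisely because the shared domain is neither open nor compact -- for $\cal{B}$ one needs Lemma \ref{lem:CS-section:CS no points near 1} and Theorem \ref{theorem:appendix:K lemma hard} to rule out $\b{s}^r\to 1$, and for $\cal{A}$ one passes through the extended domain $\mathscr{Y}^0$ and the strict convexity of $\b{b}\mapsto\cal{A}(\zeta,\b{\Phi},\b{b})$ (Lemma \ref{lem:CS: minimizer in b exists}). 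This is also where the hypotheses $\xi_x'(0),\xi_x''(0)\neq 0$ are actually used -- not, as you suggest, to make $\xi_x'$ invertible for a change-of-variables map, but to rule out degenerate boundary behavior at the minimizer (e.g., to show $\tfrac{d}{d\b{s}^1(x)}\cal{A}<0$ when $\b{s}^1(x)=0$, contradicting minimality). Your proposal leaves this part unaddressed.
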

	
	An important difference between Propositions \ref{prop: easy Parisi=CS-minor} and \ref{prop:Parisi=CS-minor}, beyond the parametrization, is that we do not need to minimize over $\vec{m}$, let alone the choice of $r\ge 1$. In particular, Proposition \ref{prop:Parisi=CS-minor} gives a much stronger result on the minimization than Proposition \ref{prop: easy Parisi=CS-minor}, under only a slight restriction on $\b{\xi}$. For example, keeping $r\ge 1$ fixed, but taking the infimum over $\vec{m}$, we see that Proposition \ref{prop:Parisi=CS-minor} yields the following corollary. 
	
	\begin{corr}
		\label{corr:Parisi=CS-minor kRSB}
		Let us assume that for each $x\in \Om$, we have that $\xi''_x(0),\xi'_x(0)\neq 0$. For $r\ge 1$, let us define
		\[\mathscr{Y}_{r}:=\{(\zeta,\b{\Phi})\in \mathscr{Y}_{fin}:\supp(\b{\Phi}_*(\zeta)) \text{ consists of at-most } r\text{ points}\}.\]
		Then we have that
		\[\inf_{(\zeta,\b{\Phi})\in \mathscr{Y}_r}\cal{B}(\zeta,\b{\Phi})=\inf_{(\zeta,\b{\Phi})\in \mathscr{Y}_r}\cal{A}(\zeta,\b{\Phi}).\]
	\end{corr}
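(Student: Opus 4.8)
The plan is to reduce both sides to infima over the finite parametrizations $(\vec{\b{s}},\vec{m})$ of a fixed length $r$ introduced just before the statement, and then to apply Proposition~\ref{prop:Parisi=CS-minor} for each fixed $\vec{m}$ and take the infimum over $\vec{m}$.

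First I would identify $\mathscr{Y}_r$ with these parametrizations up to an equivalence that does not affect $\cal{A}$ or $\cal{B}$. Since $(\zeta,\b{\Phi})\in\mathscr{Y}$ satisfies (\ref{eq:condition for Psi}) with $\b{q}=\b{1}$, we have $\frac{1}{|\Om|}\sum_{x\in\Om}\Phi_x(s)=s$, so $\b{\Phi}$ is injective; hence $|\supp(\b{\Phi}_*(\zeta))|=|\supp(\zeta)|$ and $\mathscr{Y}_r=\{(\zeta,\b{\Phi})\in\mathscr{Y}_{fin}:|\supp(\zeta)|\le r\}$. Given such a pair, write $\supp(\zeta)=\{t_1<\dots<t_j\}$ with $j\le r$, and let $\b{\Phi}'$ be the piecewise-linear function agreeing with $\b{\Phi}$ at $t_1,\dots,t_j$ (and with the forced endpoint values $\b{\Phi}'(0)=\b{0}$, $\b{\Phi}'(1)=\b{1}$); it is routine to check $\b{\Phi}'$ is coordinate-wise non-decreasing and still satisfies (\ref{eq:condition for Psi}), and by Propositions~\ref{prop:general: for B, parisi doesn't depend away from support} and \ref{prop:general:parisi doesn't depend away from support} one has $\cal{A}(\zeta,\b{\Phi}')=\cal{A}(\zeta,\b{\Phi})$ and $\cal{B}(\zeta,\b{\Phi}')=\cal{B}(\zeta,\b{\Phi})$. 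Now $(\zeta,\b{\Phi}')$ is the pair associated by Remark~\ref{remark:associated to measure: talagrand} to a length-$j$ sequence, and inserting repeated values $\b{s}^k=\b{s}^{k+1}$ — which collapses the corresponding atoms and thus leaves the associated measure and function unchanged, while $\vec{m}$ stays strictly increasing — realizes it as the pair associated to a length-$r$ sequence $(\vec{\b{s}},\vec{m})$. Conversely, any length-$r$ sequence produces a measure with at most $r$ atoms, hence a pair in $\mathscr{Y}_r$. Therefore
\[\inf_{(\zeta,\b{\Phi})\in\mathscr{Y}_r}\cal{B}(\zeta,\b{\Phi})=\inf_{\vec{m}}\inf_{\vec{\b{s}}}\cal{B}(\vec{\b{s}},\vec{m}),\qquad \inf_{(\zeta,\b{\Phi})\in\mathscr{Y}_r}\cal{A}(\zeta,\b{\Phi})=\inf_{\vec{m}}\inf_{\vec{\b{s}}}\cal{A}(\vec{\b{s}},\vec{m}),\]
where the inner infima run over $\vec{\b{s}}$ of the form (\ref{eqn:def:Q-sequence-Tal}) and the outer over $\vec{m}$ of the form (\ref{eqn:def:x-sequence-Tal}), both of length $r$.

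Next, the standing hypothesis $\xi''_x(0),\xi'_x(0)\ne 0$ is exactly the hypothesis of Proposition~\ref{prop:Parisi=CS-minor}, which gives $\inf_{\vec{\b{s}}}\cal{B}(\vec{\b{s}},\vec{m})=\inf_{\vec{\b{s}}}\cal{A}(\vec{\b{s}},\vec{m})$ for every admissible $\vec{m}$ of length $r$. Taking the infimum over $\vec{m}$ on both sides and combining with the two displays above yields $\inf_{\mathscr{Y}_r}\cal{B}=\inf_{\mathscr{Y}_r}\cal{A}$, which is the claim.

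I expect the only genuinely delicate point to be the first step: the precise identification of $\mathscr{Y}_r$ with the length-$r$ parametrizations modulo the equivalence of Propositions~\ref{prop:general: for B, parisi doesn't depend away from support} and \ref{prop:general:parisi doesn't depend away from support}. Concretely, one must verify that a short sequence can always be padded to length $r$ without altering the associated $(\zeta,\b{\Phi})$ (hence the functional values), and that the non-degeneracy built into the definition of $\mathscr{Y}(\b{1})$ — existence of a $q_*$ with the stated properties — is automatic for sequences of this form (one may take $q_*=s^r<1$, since $\zeta([0,s^r])=1$ and $\b{\Phi}(s^r)=\b{s}^r<\b{1}$). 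Everything else is a formal consequence of Proposition~\ref{prop:Parisi=CS-minor}.
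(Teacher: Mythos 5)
Your proof is correct and takes the same route the paper intends: the paper derives the corollary in one line by minimizing Proposition~\ref{prop:Parisi=CS-minor} over $\vec{m}$, and you have simply supplied the implicit identification of $\mathscr{Y}_r$ with the length-$r$ parametrizations (via injectivity of $\b{\Phi}$ from (\ref{eq:condition for Psi}), the invariance of $\cal{A},\cal{B}$ under changes to $\b{\Phi}$ off $\supp(\zeta)$, and the atom-splitting padding argument). The details you flag as delicate are exactly the ones the paper elides, and your treatment of them is sound.
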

	
	In particular, using some final continuity results from the appendices, we may obtain Proposition \ref{prop: easy Parisi=CS-minor} from Proposition \ref{prop:Parisi=CS-minor}.
	
	\begin{proof}[Proof of Proposition \ref{prop: easy Parisi=CS-minor}]
		Minimization of Proposition \ref{prop:Parisi=CS-minor} over $\vec{m}$, combined with (\ref{eqn:lem:finite to parametrized}), demonstrates that if we have that $\xi'_x(0),\xi_x''(0)\neq 0$ for each $x\in \Om$ then
		\[\inf_{(\zeta,\b{\Phi})\in \mathscr{Y}_{fin}}\cal{B}(\zeta,\b{\Phi})=\inf_{(\zeta,\b{\Phi})\in \mathscr{Y}_{fin} }\cal{A}(\zeta,\b{\Phi}).\label{eqn:ignore-0434873}\]
		What is left is to show this is true if we remove the condition on $\b{\xi}$. However, by Proposition \ref{prop:overview:continuity of parisi in xi} the right hand side is continuous in $\b{\xi}$. Moreover, by applying Corollary \ref{corr:overview:continuity of free energy in xi} and Theorem \ref{theorem:spherical parisi continuum} we see that the same holds for $\cal{A}$. In particular, the general statement follows from (\ref{eqn:ignore-0434873}) and a density argument in $\b{\xi}$.
	\end{proof}
	
	The remainder of this section is devoted to proving Proposition \ref{prop:Parisi=CS-minor}. The proof follows the standard framework of variational calculus. We compute the derivatives of each functional in $\vec{\b{s}}$ (and for $\cal{A}$, also in $\b{b}$). Assuming these derivatives vanish for hold some $(\vec{\b{s}},\vec{m},\b{b})$, then we show that $\cal{A}(\vec{\b{s}},\vec{m},\b{b})=\cal{B}(\vec{\b{s}},\vec{m})$. After this, we verify that each functional indeed has a global minimizer for which all derivatives vanish. This fact is non-trivial, as the functionals' shared domain is neither compact nor open, and in fact turns out to be quite difficult.
	
	For simplicity, we fix $D$ and use the abusive notation $K:=K^D$ and $\Lambda:=\Lambda^D$ for this section alone. We begin with our result which compares the functionals, roughly assuming a set of minimization equations.
	
	\begin{lem}
		\label{lem:CS-section:main-lemma}
		Assume that $(\vec{\b{s}},\vec{m},\b{b})$ satisfies, for all $x\in \Om$ and $1\le l< r$,
		\[
		m_{l}\left(\xi'_x(\b{s}^{l+1}(x))-\xi'_x(\b{s}^l(x))\right)=K_x(\b{\delta}^{l+1})-K_x(\b{\delta}^{l}),\label{eqn:lem:min-cs-1}
		\]
		\[
		[(D+\b{b}-\b{d}^r)^{-1}]_{xx}=\b{\delta}^{r}(x)=1-\b{s}^r(x).\label{eqn:lem:min-cs-b}
		\]
		Then $\cal{A}(\vec{\b{s}},\vec{m},\b{b})=\cal{B}(\vec{\b{s}},\vec{m})$.
	\end{lem}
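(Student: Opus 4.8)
The plan is to verify the identity $\cal{A}(\vec{\b{s}},\vec{m},\b{b})=\cal{B}(\vec{\b{s}},\vec{m})$ by directly comparing the explicit formulas (\ref{eqn:def:leg-hardcore}) and (\ref{eqn:def:A in terms of s}), term by term, using the two hypotheses (\ref{eqn:lem:min-cs-1}) and (\ref{eqn:lem:min-cs-b}) as the bridge between the two sides. The essential point is that the hypotheses force a relation between the ``$\cal{B}$-side'' data $\b{\delta}^l$ (built from the $m_k$ and increments of $\b{s}$) and the ``$\cal{A}$-side'' data $\b{b}-\b{d}^l$: indeed (\ref{eqn:lem:min-cs-b}) says $[(D+\b{b}-\b{d}^r)^{-1}]_{xx}=\b{\delta}^r(x)$, which by the defining relation (\ref{eqn:def:KD}) of $\b{K}^D$ is exactly the statement $\b{b}-\b{d}^r = \b{K}^D(\b{\delta}^r)$, i.e. $D+\b{b}-\b{d}^r = D + \b{K}^D(\b{\delta}^r)$. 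The first task is then to upgrade this from level $r$ to all levels $l$: I claim (\ref{eqn:lem:min-cs-1}), together with the definitions $\b{d}^l(x)-\b{d}^{l+1}(x)=m_l(\xi'_x(\b{s}^{l+1}(x))-\xi'_x(\b{s}^l(x)))$ and $\b{K}^D(\b{\delta}^{l+1})-\b{K}^D(\b{\delta}^{l}) $ telescoping appropriately, yields $\b{b}-\b{d}^l=\b{K}^D(\b{\delta}^l)$ for every $1\le l\le r$, by downward induction on $l$ starting from $l=r$. (One must be slightly careful at the endpoint $l=r$ versus the convention $\b{\delta}^{r+1}=\b{d}^{r+1}=0$, but this is bookkeeping.)

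Once the key relation $D+\b{b}-\b{d}^l = D+\b{K}^D(\b{\delta}^l)$ is established for all $l$, the comparison proceeds by matching the three blocks of each functional. First, the $\log\det$ terms: in $\cal{A}$ one has $\sum_{1\le k\le r}\frac{1}{m_k}\log\frac{\det(D+\b{b}-\b{d}^{k+1})}{\det(D+\b{b}-\b{d}^k)}-\log\det(D+\b{b})$, and substituting $D+\b{b}-\b{d}^k = D+\b{K}^D(\b{\delta}^k)$ turns this (after recognizing $D+\b{b}-\b{d}^{r+1}=D+\b{b}$, since $\b{d}^{r+1}=0$, and reindexing) into exactly the combination $\Lambda^D(\b{\delta}^r)-\sum_{1\le k<r}\frac{1}{m_k}(\Lambda^D(\b{\delta}^{k+1})-\Lambda^D(\b{\delta}^k))$ plus the ``$\sum_x K^D_x(\b{\delta})\b{\delta}(x)$'' pieces that appear in the definition (\ref{eqn:def:Lambda}) of $\Lambda^D$ — these extra pieces are precisely what is needed to reconcile with the $\sum_x K^D_x(\b{\delta}^1)\b{s}^1(x)$ and $\b{b}(x)$ terms on the $\cal{A}$-side. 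Second, the linear-in-$\b{b}$ term $\sum_x \b{b}(x)$ gets rewritten via $\b{b}(x)=\b{d}^1(x)+K^D_x(\b{\delta}^1)$ and $\b{d}^1(x)=\sum_{1\le k\le r}m_k(\xi'_x(\b{s}^{k+1}(x))-\xi'_x(\b{s}^k(x)))$; combined with the $\theta_x$-terms (recall $\theta_x(r)=\xi'_x(r)-\xi_x(r)+\xi_x(0)$, so $\xi'_x$ pieces cancel against $\b{d}^1$ and leave $\xi_x$ pieces) this should reproduce the $\sum_{1\le k<r}m_k(\xi_x(\b{s}^{k+1}(x))-\xi_x(\b{s}^k(x)))$ term of $\cal{B}$. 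Third, the external-field and $\xi'_x(0)$ term: since $\b{s}^1(x)$ need not be zero, $\xi'_x(\b{s}^1(x))$ appears in $\cal{A}$ rather than $\xi'_x(0)$; but $[(D+\b{b}-\b{d}^1)^{-1}]_{xy}=[(D+\b{K}^D(\b{\delta}^1))^{-1}]_{xy}$ by the key relation, so the $\b{h}(x)\b{h}(y)$ part matches (\ref{eqn:def:leg-hardcore}) directly, while the diagonal $\xi'_x(\b{s}^1(x))$ contribution must be absorbed into the $\theta_x$/$\xi_x$ accounting from the previous block — this requires tracking the boundary term at $k=0$ carefully, using $\b{s}^0(x)=0$.

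I expect the main obstacle to be purely organizational rather than conceptual: the two functionals are written with different ``natural'' variables (Legendre-dual $\b{K}^D$/$\Lambda^D$ data versus the primal $\b{b},\b{d}^l$ data), and the matching of the $\log\det$ blocks hinges on correctly unpacking the definition of $\Lambda^D$ — in particular the identity $\Lambda^D(\b{\delta}^{k+1})-\Lambda^D(\b{\delta}^k) = \sum_x(K^D_x(\b{\delta}^{k+1})\b{\delta}^{k+1}(x)-K^D_x(\b{\delta}^k)\b{\delta}^k(x)) - \log\det(D+\b{K}^D(\b{\delta}^{k+1})) + \log\det(D+\b{K}^D(\b{\delta}^k))$ — and then seeing that the ``$K\cdot\delta$'' cross-terms telescope against each other and against $\sum_x K^D_x(\b{\delta}^1)\b{s}^1(x)$ once one writes $\b{\delta}^l(x) - \b{\delta}^{l+1}(x) = m_l(\b{s}^{l+1}(x)-\b{s}^l(x))$ and $1-\b{s}^r(x)=\b{\delta}^r(x)$. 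The correct strategy to avoid getting lost is to not expand everything at once but rather to compute the difference $\cal{A}(\vec{\b{s}},\vec{m},\b{b})-\cal{B}(\vec{\b{s}},\vec{m})$ block by block, showing each of the three blocks contributes zero after the substitutions; the hypothesis (\ref{eqn:lem:min-cs-1}) is used precisely to push the relation $\b{b}-\b{d}^l=\b{K}^D(\b{\delta}^l)$ down from $l=r$ to general $l$, which is what makes the $\log\det$ block collapse. No genuinely new idea beyond these substitutions and the definition of the Legendre transform should be needed.
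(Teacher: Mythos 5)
Your proposal takes essentially the same approach as the paper's proof: establish, by downward induction from $l=r$ using (\ref{eqn:lem:min-cs-1}) and (\ref{eqn:lem:min-cs-b}), that $\b{b}(x)-\b{d}^l(x)=K_x(\b{\delta}^l)$ (equivalently $[(D+\b{b}-\b{d}^l)^{-1}]_{xx}=\b{\delta}^l(x)$) for all $1\le l\le r$, then match the $\log\det$ block via the definition of $\Lambda^D$ and telescope the $K\cdot\b{\delta}$ and $\theta_x$/$\xi_x$ cross-terms, with the final boundary term cancelling by the level-$r$ relation. The organizational strategy you describe — compute $\cal{A}-\cal{B}$ block by block rather than expanding everything at once — is exactly how the paper carries it out.
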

	\begin{proof}
		We may write (\ref{eqn:lem:min-cs-1}) and (\ref{eqn:lem:min-cs-b}) as
		\[\b{d}^{l+1}(x)-\b{d}^{l}(x)=K_x(\b{\delta}^{l+1})-K_x(\b{\delta}^{l}) \text{  and  } \b{b}(x)-\b{d}^r(x)=K_x(\b{\delta}^k).\]
		From this and (\ref{eqn:lem:min-cs-b}), we inductively conclude, for each $1\le l \le r$ and $x\in \Om$ that
		\[\b{b}(x)-\b{d}^l(x)=K_x(\b{\delta}^l) \text{ and } [(D+\b{b}-\b{d}^l)^{-1}]_{xx}=\b{\delta}^l(x).\]
		Employing this and canceling the term involving only $\log \det(D+\b{b})$ we see that
		\[
		\begin{split}
			\sum_{1\le k\le r}\frac{1}{m_k}\log \frac{\det(D+\b{b}-\b{d}^{k+1})}{\det(D+\b{b}-\b{d}^k)}-\log \det(D+\b{b})=\\
			-\log \det(D+\b{K}(\b{\delta}^r))+\sum_{1\le k<r}\frac{1}{m_k}\bigg(\log \det(D+\b{K}(\b{\delta}^{k+1}))-\log \det(D+\b{K}(\b{\delta}^k))\bigg).\label{eqn:ignore-1334}
		\end{split}
		\]
		Using (\ref{eqn:lem:min-cs-1}) and noting that  $\b{\delta}^l(x)-\b{\delta}^{l+1}(x)=m_l(\b{s}^{l+1}(x)-\b{s}^l(x))$, we see that
		\[
		\begin{split}
			\sum_{1\le k<r}m_k(\b{s}^{k+1}(x)\xi_x'(\b{s}^{k+1}(x))-\b{s}^k(x)\xi_x'(\b{s}^k(x)))=\\
			\sum_{1\le k< r}(\b{\delta}^{k}(x)-\b{\delta}^{k+1}(x))\xi_x'(\b{s}^{k+1}(x))+\b{s}^k(x)m_k(\xi_x'(\b{s}^{k+1}(x))-\xi_x'(\b{s}^k(x)))=\\
			\sum_{1\le k<r}(\b{\delta}^{k}(x)-\b{\delta}^{k+1}(x))\xi_x'(\b{s}^{k+1}(x))+\sum_{1\le k<r}\b{s}^k(x)\left(-K_x(\b{\delta}^k)+K_x(\b{\delta}^{k+1})\right).\label{eqn:ignore-1011}
		\end{split}
		\]
		We may write the first summation on the right as
		\[
		\begin{split}
			&\b{\delta}^{1}(x)\xi'_x(\b{s}^1(x))-\b{\delta}^{r}(x)\xi'_x(\b{s}^r(x))+\sum_{1\le k<r}\b{\delta}^{k}(x)\left(\xi_x'(\b{s}^{k+1}(x))-\xi_x'(\b{s}^k(x))\right)=\\
			&\b{\delta}^{1}(x)\xi'_x(\b{s}^1(x))-\b{\delta}^{r}(x)\xi'_x(\b{s}^r(x))-\sum_{1\le k<r}\frac{\b{\delta}^{k}(x)}{m_k}\left(K_x(\b{\delta}^{k+1})-K_x(\b{\delta}^k)\right).
		\end{split}
		\]
		Similarly, we may write the second summation on the right as
		\[
		\begin{split}
			&\b{s}^r(x)K_x(\b{\delta}^r)-\b{s}^1(x)K_x(\b{\delta}^1)+\sum_{1\le k<r}K_x(\b{\delta}^{k+1})(-\b{s}^{k+1}(x)+\b{s}^k(x))=\\
			&\b{s}^r(x)K_x(\b{\delta}^r)-\b{s}^1(x)K_x(\b{\delta}^1)-\sum_{1\le k<r}\frac{K_x(\b{\delta}^{k+1})}{m_k}(\b{\delta}^{k+1}(x)-\b{\delta}^k(x)).
		\end{split}
		\]
		In particular we may write the quantity in (\ref{eqn:ignore-1011}) as
		\[
		\begin{split}
			\b{\delta}^{1}(x)\xi'_x(\b{s}^1(x))-\b{\delta}^{r}(x)\xi'_x(\b{s}^r(x))+\b{s}^r(x)K_x(\b{\delta}^r)-\b{s}^1(x)K_x(\b{\delta}^1)\\
			+\sum_{1\le k<r}\frac{1}{m_k}\left(\b{\delta}^{k}(x) K_x(\b{\delta}^k)-\b{\delta}^{k+1}(x)K_x(\b{\delta}^{k+1})\right).
		\end{split}
		\]
		Combining this with (\ref{eqn:ignore-1334}) and recalling the definition of $\Lambda$ (\ref{eqn:def:Lambda}) we see that,
		\[
		\begin{split}
			|\Om|\Lambda(\b{\delta}^r)&-|\Om|\sum_{1\le k<r}\frac{1}{m_k}\bigg(\Lam(\b{\delta}^{k+1})-\Lam(\b{\delta}^{k})\bigg)
			=\\
			&-\sum_{x\in \Om}\bigg(
			\sum_{1\le k<r}m_k(\b{s}^{k+1}(x)\xi_x'(\b{s}^{k+1}(x))-\b{s}^k(x)\xi_x'(\b{s}^k(x)))\bigg)\\
			&+\sum_{x\in \Om}\bigg(K_x(\b{\delta}^r)+\b{\delta}^{1}(x)\xi'_x(\b{s}^1(x))-\b{\delta}^{r}(x)\xi'_x(\b{s}^r(x))-\b{s}^1(x)K_x(\b{\delta}^1)\bigg)\\
			&+\sum_{1\le k\le r}\frac{1}{m_k}\log \frac{\det(D+\b{b}-\b{d}^{k+1})}{\det(D+\b{b}-\b{d}^k)}-\log \det(D+\b{b}).\label{eqn:ignore-1052}
		\end{split}
		\]
		In particular, canceling a common summation and recalling that $[(D+\b{b}-\b{d}^1)^{-1}]_{xx}=\b{\delta}^1(x)$, we see that
		\[2|\Om|\left(\cal{A}(\vec{\b{s}},\vec{m},\b{b})-\cal{B}(\vec{\b{s}},\vec{m})\right)=\sum_{x\in \Om}\bigg(\b{b}(x)+\b{\delta}^r(x)\xi_x'(\b{s}^r(x))-K_x(\b{\delta}^r)\bigg).\label{eqn:ignore-455}\]
		Using the inverse of (\ref{eqn:lem:min-cs-b}) we see that
		\[\b{b}(x)-(1-\b{s}^r(x))\xi'_x(\b{s}^r(x))=K_x(\b{\delta}^r),\]
		which shows all the terms on the right-hand of (\ref{eqn:ignore-455}) vanishes, completing the proof.
	\end{proof}
	
	While we described the equations in the above lemma as minimization equations, we now further elaborate on them. The first set of equations essentially emerge by setting all derivatives of either the map $\vec{\b{s}}\mapsto \cal{A}(\vec{s},\vec{m})$ or $\vec{\b{s}}\mapsto \cal{A}(\vec{s},\vec{m})$ to zero. The second set of equations follow by simultaneously setting the derivatives of the map $\b{b}\mapsto \cal{A}(\vec{s},\vec{m},\b{b})$ to zero.
	
	However, even if we assume each functional has a minimizer, it is not clear that these derivatives vanish, and it is still difficult to obtain the equations of Lemma \ref{lem:CS-section:main-lemma} directly from the derivatives. Our next lemma establishes these claims for $\cal{B}$.

	\begin{lem}
		\label{lem:CS-section:CS-min}
		Fix $r\ge 1$ and $\vec{m}$. Then if $\vec{\b{s}}$ is a minimizer of $\vec{\b{s}}\mapsto \cal{B}(\vec{\b{s}},\vec{m})$, for each $x\in \Om$ and $1\le l<r$, (\ref{eqn:lem:min-cs-1}) holds.
	\end{lem}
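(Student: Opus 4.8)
The plan is a first-order variational analysis of the explicit formula (\ref{eqn:def:leg-hardcore}): regard $\cal{B}(\vec{\b{s}},\vec{m})$ as a smooth function of the variables $\{\b{s}^l(x):1\le l\le r,\ x\in\Om\}$, subject only to the orderings (\ref{eqn:def:Q-sequence-Tal}) (with $\b{s}^0(x)=0$ and $\b{s}^{r+1}(x)=1$ held fixed, and $m_0=0$). First I would assemble the two differentiation rules this requires. One is the envelope/Legendre identity $\partial_{\b{u}(x)}\Lambda^D(\b{u})=|\Om|^{-1}K^D_x(\b{u})$ — the analogue for general $D$ of the relation recorded in Definition \ref{defin:K and Lam for general} (established in Appendix C). The other is the derivative of $\b{K}^D$ itself, obtained by implicit differentiation of its defining relation $[(D+\b{K}^D(\b{u}))^{-1}]_{xx}=\b{u}(x)$; this is needed only at the two places in (\ref{eqn:def:leg-hardcore}) where $\b{K}^D$ sits inside another function, namely the term $-K^D_x(\b{\delta}^1)\b{s}^1(x)$ and the external-field term $\sum_{y\in\Om}[(D+\b{K}^D(\b{\delta}^1))^{-1}]_{xy}\b{h}(x)\b{h}(y)$. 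With the elementary chain-rule facts $\partial_{\b{s}^l(x)}\delta^k_{x'}=0$ for $x'\ne x$, and $\partial_{\b{s}^l(x)}\delta^k_x$ equal to $m_{l-1}-m_l$ for $k<l$, to $-m_l$ for $k=l$, and to $0$ for $k>l$, one can then write $\partial_{\b{s}^l(x)}\cal{B}$ as a sum of ``clean'' terms built from $\xi'_x$ and from $K^D_x$ evaluated at the various $\b{\delta}^k$, plus a single ``awkward'' term coming from $\nabla\b{K}^D$ at $\b{\delta}^1$.

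The structural point I would exploit is that this awkward term is $\partial_{\b{s}^l(x)}\delta^1_x$ times a factor depending only on $(\b{\delta}^1,\b{s}^1,D,\b{h},x)$ and not on the level $l$, whereas every $\Lambda^D$-contribution to $\partial_{\b{s}^l(x)}\cal{B}$ involves $K^D_x$ only through its values, never its derivative. Consequently, for an internal level $l$ — one with $\b{s}^{l-1}(x)<\b{s}^l(x)<\b{s}^{l+1}(x)$, where $\partial_{\b{s}^l(x)}\cal{B}=0$ at the minimizer — dividing the stationarity relation by $\partial_{\b{s}^l(x)}\delta^1_x=m_{l-1}-m_l$ makes the awkward term level-independent, so subtracting the normalized relations at two consecutive internal levels cancels it, and what survives is exactly (\ref{eqn:lem:min-cs-1}). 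This is, not coincidentally, the same cancellation that makes $\cal{A}$ and $\cal{B}$ agree in Lemma \ref{lem:CS-section:main-lemma}.

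The real work is that the constraint set (\ref{eqn:def:Q-sequence-Tal}) is neither open nor compact, so the minimizer may sit on its boundary: one can have $\b{s}^1(x)=0$, or a maximal block of coincidences $\b{s}^l(x)=\cdots=\b{s}^{l+p}(x)$, and then the admissible perturbation directions give only one-sided inequalities, not equalities; moreover the end indices $l=1$ and $l=r-1$ carry extra boundary terms ($\b{s}^1(x)$ appears explicitly in $-K^D_x(\b{\delta}^1)\b{s}^1(x)$, and $\b{s}^r(x)$ governs $\b{\delta}^r$). To handle this cleanly I would reparametrize by the non-negative increments $t^l(x)=\b{s}^{l+1}(x)-\b{s}^l(x)$, so the per-site constraint collapses to the single affine equation $\sum_{l=0}^r t^l(x)=1$ and the admissible variations become transparent; the first-order conditions then read as a Lagrange-multiplier statement, and (\ref{eqn:lem:min-cs-1}) for each internal index of a coincidence block is obtained by differencing the multiplier equations across that block — inside a block both sides of (\ref{eqn:lem:min-cs-1}) reduce to the same quantity, using $m_{l-1}<m_l$ and the fact that the relevant $\b{\delta}^k$ agree in the $x$-coordinate there — while the block-endpoint and $\b{s}^1(x)=0$ cases are pinned down by combining the one-sided inequalities with the monotonicity of $\xi'_x$ on $[0,1]$ (from (\ref{eqn:xi-decomposition})) and the monotonicity of $K^D_x$ recorded in Appendix C. I expect this boundary bookkeeping, rather than the derivative computation, to be the main obstacle.
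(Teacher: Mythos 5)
Your plan reproduces the paper's overall strategy — differentiate the explicit expression for $\cal{B}(\vec{\b{s}},\vec{m})$, observe that after dividing the derivative at level $n$ by $m_n-m_{n-1}$ the only level-dependent pieces are $\xi'_x(\b{s}^n(x))$ and the various $K_x(\b{\delta}^k)$, so that subtracting consecutive stationarity relations gives exactly (\ref{eqn:lem:min-cs-1}); and that the real content is showing every one-sided derivative at a boundary minimizer actually vanishes. Your increment reparametrization $t^l(x)=\b{s}^{l+1}(x)-\b{s}^l(x)$ is a legitimate reorganization of the paper's chain-of-coincidences bookkeeping, so up to this point the two approaches are essentially the same.

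There is, however, a concrete gap in the step you rely on to discharge the boundary cases. You assert that ``inside a block both sides of (\ref{eqn:lem:min-cs-1}) reduce to the same quantity, using \dots the fact that the relevant $\b{\delta}^k$ agree in the $x$-coordinate there.'' This is not so: $K^D_x$ is defined by the \emph{global} implicit relation $[(D+\b{K}^D(\b{u}))^{-1}]_{zz}=\b{u}(z)$ for all $z\in\Om$, so $K_x(\b{\delta}^{l+1})-K_x(\b{\delta}^l)$ depends on the full vectors $\b{\delta}^l,\b{\delta}^{l+1}\in(0,\infty)^\Om$. If $\b{s}^l(x)=\b{s}^{l+1}(x)$ for the particular $x$ under consideration but $\b{s}^l(y)<\b{s}^{l+1}(y)$ for some $y\ne x$, then $\b{\delta}^l\ne\b{\delta}^{l+1}$ as vectors and $K_x(\b{\delta}^{l+1})-K_x(\b{\delta}^l)$ is in general nonzero; the right-hand side of (\ref{eqn:lem:min-cs-1}) does not vanish for free, and the normalized derivatives need not coincide inside the block. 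This is precisely why the lemma is nontrivial and must actually be \emph{proved} rather than observed. Relatedly, for the remaining boundary case $\b{s}^1(x)=0$, the paper's argument is a vector-level pairing: it assembles the full vector $\big(\tfrac{d}{d\b{s}^1(x)}\cal{B}\big)_{x\in\Om}\ge 0$ of one-sided inequalities, identifies in it the contribution $\sum_y\D_xK_y(\b{\delta}^1)\b{s}^1(y)$, and pairs against $\b{s}^1\ge0$ using the \emph{negative-definiteness} of the Jacobian $\D\b{K}^D$ (Proposition \ref{prop:appendix:K and Lambda}) as a quadratic form to force $\b{s}^1=0$ and hence collapse the inequalities to equalities. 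Your reference to ``monotonicity of $K^D_x$ recorded in Appendix C'' does not capture this coupled quadratic-form argument, and the lemma does not go through with coordinatewise monotonicity alone.
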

	\begin{proof}[Proof of Lemma \ref{lem:CS-section:CS-min}]
		We first rewrite
		\[
		\begin{split}
			2\cal{B}&(\vec{\b{s}},\vec{m})=\log(2\pi)+\frac{1}{|\Om|}\sum_{x\in \Om}\left(-K_x(\b{\delta}^1)\b{s}^1(x)+\xi_x(1)+\sum_{1\le k\le r}(m_k-m_{k-1})\xi_x(\b{s}^k(x))\right)\\
			&+\frac{1}{|\Om|}\sum_{x,y\in \Om}[(D+\b{K}(\b{\delta}^1))^{-1}]_{xy}\b{h}(x)\b{h}(y)-\frac{1}{m_1}\Lambda(\b{\delta}^1)+\sum_{2\le k\le r}\left(\frac{1}{m_{k-1}}-\frac{1}{m_k}\right) \Lambda(\b{\delta}^k).
		\end{split}
		\]
		We compute that 
		\[|\Om|\frac{d\Lambda(\b{\delta}^k)}{d\b{s}^n(x)}=K_x(\b{\delta}^k(x))\frac{d\b{\delta}^k(x)}{d\b{s}^n(x)}.\]
		We also note that
		\[\frac{d \b{\delta}^{l}(x)}{d\b{s}^n(x)}=0 \text{ if } l>n,\;\;\; \frac{d\b{\delta}^{l}(x)}{d\b{s}^n(x)}=-\left(m_{n-1}-m_n\right) \text{ if } l<n,\;\;\; \frac{d \b{\delta}^{n}(x)}{d\b{s}^n(x)}=-m_n.\]
		Using these formulas as well as the simple manipulation
		\[(m_n-m_{n-1})^{-1}m_n\left(\frac{1}{m_{n-1}}-\frac{1}{m_n}\right)=\frac{1}{m_{n-1}},\label{eqn:ignore-1449}\]
		we see that if we denote the derivative in the $x$-th coordinate by $\D_x$, than for $r\ge n>1$
		\[
		\begin{split}
			2|\Om|(m_n&-m_{n-1})^{-1}\frac{d}{d\b{s}^n(x)}\cal{B}(\vec{\b{s}},\vec{m})=\sum_{y\in \Om}\D_x K_y(\b{\delta}^1)\b{s}^1(y)-\xi'_x(\b{s}^n(x))\\
			&+\sum_{w,y,z\in \Om}[(D+\b{K}(\b{\delta}^1))^{-1}]_{wy}[(D+\b{K}(\b{\delta}^1))^{-1}]_{wz}\D_x K_w(\b{\delta}^1)\b{h}(y)\b{h}(z)\\
			&-\frac{1}{m_1}K_x(\b{\delta}^1)
			-\sum_{2\le k<n}\left(\frac{1}{m_{k-1}}-\frac{1}{m_k}\right) K_x(\b{\delta}^k)+\frac{1}{m_{n-1}}K_x(\b{\delta}^n).	\label{eqn:lemma-ignore-1}
		\end{split}
		\]
		Next, we obtain that 
		\[
		\begin{split}
			2|\Om|m^{-1}_{1}\frac{d}{d\b{s}^1(x)}\cal{B}(\vec{\b{s}},&\vec{m})=-\xi_x'(\b{s}^1(x))+\sum_{y\in \Om}\D_x K_y(\b{\delta}^1)\b{s}^1(y)\\
			&+\sum_{w,y,z\in \Om}[(D+\b{K}(\b{\delta}^1))^{-1}]_{wy}[(D+\b{K}(\b{\delta}^1))^{-1}]_{wz}\D_x K_w(\b{\delta}^1)\b{h}(y)\b{h}(z). \label{eqn:lemma-ignore-2}
		\end{split}
		\]
		Let us assume for the moment that both (\ref{eqn:lemma-ignore-1}) and (\ref{eqn:lemma-ignore-2}) vanish. Then equating these expressions and canceling like terms shows that for any $1\le n\le r$ and $x\in \Om$
		\[\xi'_x(\b{s}^n(x))=\xi_x'(\b{s}^1(x))-\frac{1}{m_1}K_x(\b{\delta}^1)-\sum_{2\le k<n}\left(\frac{1}{m_{k-1}}-\frac{1}{m_k}\right) K_x(\b{\delta}^k)+\frac{1}{m_{n-1}}K_x(\b{\delta}^n).\]	
		Subtraction of these relations from each other yields (\ref{eqn:lem:min-cs-1}).
		
		Next we show that if $\vec{\b{s}}$ is a minimizer of $\vec{\b{s}}\mapsto \cal{B}(\vec{\b{s}},\vec{m})$, then both expressions in (\ref{eqn:lemma-ignore-1}) and (\ref{eqn:lemma-ignore-2}) vanish, or equivalently that all derivatives in $\b{s}^k(x)$ vanish. If a minimizer satisfies $0<\b{s}^1(x)<\dots<\b{s}^{r-1}(x)<\b{s}^r(x)<1$ for each $x\in \Om$, this is obvious, with the problems occurring when there are equalities, so that we are on the boundary of functionals domain. On the other hand, if say $\b{s}^{n-1}(x)=\b{s}^n(x)$ for some $n>1$, we see that $\b{K}(\b{\delta}^n)=\b{K}(\b{\delta}^{n-1})$ so one can check that
		\[(m_{n-1}-m_n)^{-1}\frac{d}{d\b{s}^n(x)}\cal{B}(\vec{\b{s}},\vec{m})=(m_{n-2}-m_{n-1})^{-1}\frac{d}{d\b{s}^{n-1}(x)}\cal{B}(\vec{\b{s}},\vec{m}).\label{eqn:ignore-1641}\]
		Applying this inductively to a maximal chain of equalities $\b{s}^k(x)=\b{s}^{k+1}(x)=\dots =\b{s}^{l}(x)$, we see that the derivative in $\b{s}^k(x)$ coincides with the derivative in $\b{s}^l(x)$ up to a positive constant. In particular, this shows derivatives of all must vanish unless either $k=0$ or $l=r+1$. However if $l=r+1$ the minimizer must have that $\b{s}^{r+1}(x)=1$ and so is not in the domain, so we may assume that $k=0$ and $l<r+1$. Then (\ref{eqn:ignore-1641}) shows in particular that $\b{s}^1(x)=0$ and that $\frac{d}{d\b{s}^1(x)}\cal{B}(\vec{\b{s}},\vec{m})\ge 0$, and finally that to conclude all derivatives vanish it suffices to show that $\frac{d}{d\b{s}^1(x)}\cal{B}(\vec{\b{s}},\vec{m})=0$.
		
		Now note that the vector $(\frac{d}{d\b{s}^1(x)}\cal{B}(\vec{\b{s}},\vec{m}))_{x\in \Om}$ has all non-negative entries. On the other hand, note that the  first and last terms on the right-hand side of (\ref{eqn:lemma-ignore-2}) are strictly negative, so that
		moving these terms to the left-hand side, we conclude the second term is non-negative. In particular that $(\sum_{y\in \Om}\D_x K_y(\b{\delta}^1)\b{s}^1(y))_{x\in \Om}$ has non-negative entries. However the matrix $\D_x K_y(\b{\delta}^1)$ is negative-definite (see Proposition \ref{prop:appendix:K and Lambda}), so we conclude that all entries of $(\sum_{y\in \Om}\D_x K_y(\b{\delta}^1)\b{s}^1(y))_{x\in \Om}$ are infact zero. Again using (\ref{eqn:lemma-ignore-2}) we conclude that the entries of $(\frac{d}{d\b{s}^1(x)}\cal{B}(\vec{\b{s}},\vec{m}))_{x\in \Om}$ are non-positive, so combined with the non-negativity shown above, we have completed the proof that $\frac{d}{d\b{s}^1(x)}\cal{B}(\vec{\b{s}},\vec{m})=0$ for all $x\in \Om$.
	\end{proof}
	
	Next, we obtain a result for minimizers of $\cal{A}$.
	
	\begin{lem}
		\label{lem:CS-section:Parisi-min}
		Fix $r\ge 1$ and $\vec{m}$, and assume that $\xi'_x(0),\xi''_x(0)\neq 0$ for each $x\in \Om$. Then if $(\vec{\b{s}},\b{b})$ is a minimizer of the $(\vec{\b{s}},\b{b})\mapsto \cal{A}(\vec{\b{s}},\vec{m},\b{b})$ one has for each $x\in \Om$ and $1\le l\le r$, that (\ref{eqn:lem:min-cs-1}) and (\ref{eqn:lem:min-cs-b}) hold.
	\end{lem}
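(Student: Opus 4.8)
The plan is to follow the proof of Lemma~\ref{lem:CS-section:CS-min}, now differentiating the joint map $(\vec{\b{s}},\b{b})\mapsto\cal{A}(\vec{\b{s}},\vec{m},\b{b})$ of \eqref{eqn:def:A in terms of s}. Throughout I would write $M_l=D+\b{b}-\b{d}^l$, so that $M_1,\dots,M_{r+1}$ are the matrices occurring in \eqref{eqn:def:A in terms of s} (with $M_{r+1}=D+\b{b}$, since $\b{d}^{r+1}=\b{0}$). Because each $\xi'_x$ is non-decreasing, the sequence $\b{d}^1\ge\b{d}^2\ge\cdots\ge\b{d}^{r+1}=\b{0}$ is coordinatewise non-increasing, so $M_1\le M_2\le\cdots\le M_{r+1}$; in particular the constraint $D+\b{b}-\b{d}^1>0$ cutting out the $\b{b}$-domain is \emph{open} and already forces all $M_l>0$. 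I will also use that the hypothesis $\xi'_x(0),\xi''_x(0)\neq0$ forces $\xi'_x>0$ and $\xi''_x>0$ on all of $[0,1]$: since $\xi_x$ has non-negative power-series coefficients, $\xi'_x(t)\ge\xi'_x(0)>0$ and $\xi''_x(t)\ge\xi''_x(0)>0$ for $t\ge0$.

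First I would compute $\partial_{\b{b}(x)}\cal{A}$ and $\partial_{\b{s}^n(x)}\cal{A}$ for all $x\in\Om$ and $1\le n\le r$, using that $\partial_{\b{b}(x)}M_l$ and $\partial_{\b{s}^n(x)}M_l$ are diagonal matrices supported at the single site $x$ (the latter with coefficient an explicit non-negative multiple of $\xi''_x(\b{s}^n(x))$ that vanishes when $l>n$), together with $\D\log\det M=\Tr(M^{-1}\D M)$ and $\D M^{-1}=-M^{-1}(\D M)M^{-1}$. Since the $\b{b}$-part of the domain is open, at the minimizer every $\partial_{\b{b}(x)}\cal{A}$ vanishes. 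For the $\vec{\b{s}}$-directions the domain has boundary faces $\{\b{s}^k(x)=\b{s}^{k+1}(x)\}$ and $\{\b{s}^1(x)=0\}$ (the value $\b{s}^r(x)=1$ being excluded by the strict inequality in \eqref{eqn:def:Q-sequence-Tal}), and I would handle these as in Lemma~\ref{lem:CS-section:CS-min}: the face $\{\b{s}^1(x)=0\}$ is ruled out since the one-sided derivative $\partial_{\b{s}^1(x)}\cal{A}$ is \emph{strictly negative} there, its relevant factor being $-\xi''_x(0)$ times the $(x,x)$-entry of $M_1^{-1}\bigl((\b{h}(y)\b{h}(z))_{y,z}+\text{di}(\xi'_\cdot(\b{s}^1))\bigr)M_1^{-1}$, which is strictly positive because $\xi'_x(0)>0$ makes the inner matrix positive definite; and on a maximal flat stretch $\b{s}^k(x)=\cdots=\b{s}^l(x)$ not meeting the boundary the derivatives in $\b{s}^k(x),\dots,\b{s}^l(x)$ coincide up to positive factors (the analogue of the identity used around \eqref{eqn:ignore-1641}), hence all vanish. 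Thus at the minimizer $\partial_{\b{b}(x)}\cal{A}=\partial_{\b{s}^n(x)}\cal{A}=0$ for every $x$ and every $1\le n\le r$.

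It then remains to extract \eqref{eqn:lem:min-cs-1} and \eqref{eqn:lem:min-cs-b} from the vanishing of all these derivatives, and I would organize this around the claim that $[M_l^{-1}]_{xx}=\b{\delta}^l(x)$ for every $x\in\Om$ and $1\le l\le r$, equivalently $\b{b}-\b{d}^l=\b{K}^D(\b{\delta}^l)$ (using that $\b{K}^D$, being the inverse of the gradient of the strictly concave map $\b{w}\mapsto|\Om|^{-1}\log\det(D+\b{w})$, is injective). Granting the claim, \eqref{eqn:lem:min-cs-b} is its $l=r$ instance (recall $\b{\delta}^r(x)=1-\b{s}^r(x)$), and subtracting the identity at level $l$ from the one at level $l+1$ yields \eqref{eqn:lem:min-cs-1}, since $\b{d}^l-\b{d}^{l+1}=\text{di}\bigl(m_l(\xi'_\cdot(\b{s}^{l+1})-\xi'_\cdot(\b{s}^l))\bigr)$. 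To prove the claim, the $\b{b}$-equation, the top ($n=r$) $\vec{\b{s}}$-equation, and the $n=1$ equation (the last of which also encodes how $\b{b}$ and $\b{s}^1$ are linked through the external field) should combine to give the anchor $[M_r^{-1}]_{xx}=\b{\delta}^r(x)$; differencing the intermediate $\vec{\b{s}}$-equations and dividing out the common strictly positive factor $\xi''_x(\b{s}^n(x))$ telescopes to $[M_{l+1}^{-1}]_{xx}-[M_l^{-1}]_{xx}=\b{\delta}^{l+1}(x)-\b{\delta}^l(x)$, which together with the anchor propagates down to all $l$. The main obstacle is the boundary step — the shared domain is neither open nor compact, and, as flagged just before Lemma~\ref{lem:CS-section:main-lemma}, establishing the ``derivatives agree up to a positive factor on flat stretches'' statement and excluding the face $\{\b{s}^1(x)=0\}$ is the delicate part; the subsequent telescoping is routine in spirit but long, and one must keep track of the $\b{h}$-dependent terms and the $\xi'_x(\b{s}^1(x))$-term throughout.
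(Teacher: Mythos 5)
Your proposal is correct and follows essentially the same route as the paper's own proof: differentiate the explicit expression \eqref{eqn:def:A in terms of s} in $\b{b}$ and in each $\b{s}^n(x)$, use the openness of the $\b{b}$-domain together with a flat-stretch argument and the strict negativity of $\partial_{\b{s}^1(x)}\cal{A}$ at $\b{s}^1(x)=0$ (which is exactly where $\xi'_x(0),\xi''_x(0)\neq 0$ enter) to conclude that all first-order conditions hold at the minimizer, derive $[(D+\b{b}-\b{d}^l)^{-1}]_{xx}=\b{\delta}^l(x)$ for every $l$ by anchoring at $l=r$ and telescoping, and then obtain \eqref{eqn:lem:min-cs-b} and \eqref{eqn:lem:min-cs-1} by applying $\b{K}^D$ and differencing. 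The only cosmetic deviation is that the paper anchors with just the $\b{b}$-equation and the $n=r$ equation (the $n=1$ equation is consumed in forming the telescoping differences \eqref{eqn:ignore-804}), whereas you list all three; the substance is unchanged.
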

	\begin{proof}
		We first rewrite
		\[
		\begin{split}
			2|\Om|\cal{A}(\vec{\b{s}},\vec{m},\b{b})=&|\Om|\log(2\pi)-\frac{1}{m_1}\log \det(D+\b{b}-\b{d}^1)\\
			&+\sum_{2\le k\le r}\left(\frac{1}{m_{k-1}}-\frac{1}{m_k}\right)\log \det(D+\b{b}-\b{d}^{k})\\ 
			&+\sum_{x\in \Om}\left(\b{b}(x)+\theta_x(1)+\sum_{1\le  k\le r}(m_k-m_{k-1})\theta_x(\b{s}^k(x))\right)\\
			&+\sum_{x,y\in \Om}[(D+\b{b}-\b{d}^1)^{-1}]_{xy}\left(\b{h}(x)\b{h}(y)+\xi_x'(\b{s}^1(x))\right).
		\end{split}
		\]
		We note that
		\[\frac{d \b{d}^l(x)}{d\b{s}^n(x)}=0 \text{ if } l>n,\;\;\; \frac{d\b{d}^l(x)}{d\b{s}^n(x)}=\left(m_{n-1}-m_n\right) \xi_x''(\b{s}^n(x)) \text{ if } l<n,\;\;\; \frac{d \b{d}^n(x)}{d\b{s}^n(x)}=-m_n\xi_x''(\b{s}^n(x)).\]
		Then using that $\theta_x'(s)=s\xi''_x(s)$ and (\ref{eqn:ignore-1449}) we see that for $1<n\le r$
		\[
		\begin{split}
			\xi_x''(\b{s}^n(x))^{-1}&2|\Om|(m_n-m_{n-1})^{-1}\frac{d}{d\b{s}^n(x)}\cal{A}(\vec{\b{s}},\vec{m},\b{b})=\\
			&-\frac{1}{m_1}[(D+\b{b}-\b{d}^1)^{-1}]_{xx}+\sum_{2\le k<n}\left(\frac{1}{m_{k-1}}-\frac{1}{m_k}\right)[(D+\b{b}-\b{d}^{k})^{-1}]_{xx}\\
			&+\frac{1}{m_{n-1}}[(D+\b{b}-\b{d}^{n})^{-1}]_{xx}+\b{s}^n(x)\\
			&-\sum_{y,z\in \Om}[(D+\b{b}-\b{d}^1)^{-1}]_{yx}[(D+\b{b}-\b{d}^1)^{-1}]_{xz} (\xi_y'(\b{s}^1(y))\delta_{yz}+\b{h}(y)\b{h}(z)).\label{eqn:ignore-1939}
		\end{split}
		\]
		The $n=1$ case gives instead that
		\[
		\begin{split}
			\xi_x''(\b{s}^1(x))^{-1}2|\Om|m_{1}^{-1}\frac{d}{d\b{s}^1(x)}\cal{A}(\vec{\b{s}},\vec{m},\b{b})=-\sum_{y,z\in \Om}[(D+\b{b}-\b{d}^1)^{-1}]_{yx}[(D+\b{b}-\b{d}^1)^{-1}]_{xz}\\ \times
			(\xi_y'(\b{s}^1(y))\delta_{yz}+\b{h}(y)\b{h}(z))+\b{s}^1(x).
		\end{split}
		\]
		Let us assume for the moment that we have shown for each $1\le n\le r$ and $x\in \Om$
		\[
		\begin{split}
			0=&-\frac{1}{m_1}[(D+\b{b}-\b{d}^1)^{-1}]_{xx}
			+\sum_{2\le k<n}\left(\frac{1}{m_{k-1}}-\frac{1}{m_k}\right)[(D+\b{b}-\b{d}^{k})^{-1}]_{xx}
			\\
			&+\frac{1}{m_{n-1}}[(D+\b{b}-\b{d}^{n})^{-1}]_{xx}
			+\b{s}^n(x)\\
			&-\sum_{y,z\in \Om}[(D+\b{b}-\b{d}^1)^{-1}]_{yx}[(D+\b{b}-\b{d}^1)^{-1}]_{xz}(\xi_y'(\b{s}^1(y))\delta_{yz}+\b{h}(y)\b{h}(z)). \label{eqn:ignore-1929}
		\end{split}
		\] 
		Subtracting these equations from each other we obtain that for $r>n\ge 1$
		\[\b{s}^{n+1}(x)-\b{s}^n(x)=\frac{1}{m_n}\bigg([(D+\b{b}-\b{d}^{n})^{-1}]_{xx}-[(D+\b{b}-\b{d}^{n+1})^{-1}]_{xx}\bigg).\label{eqn:ignore-804}\]
		Minimization over $\b{b}(x)$ yields (\ref{eqn:ignore-b-min}), and subtracting and (\ref{eqn:ignore-1929}) with $n=r$ yields (\ref{eqn:lem:min-cs-b}). Combined with (\ref{eqn:ignore-804}) we obtain that for any $1\le k\le r$
		\[[(D+\b{b}-\b{d}^k)^{-1}]_{xx}=\b{\delta}^k(x).\]
		Applying the inverse function $\b{K}$ to this relation, and then taking differences, yields (\ref{eqn:lem:min-cs-1}).
		
		What is left is to verify that (\ref{eqn:ignore-1929}) holds. By an argument similar to that in Lemma \ref{lem:CS-section:CS-min}, it suffices to assume that $\b{s}^1(x)=0$ for some $x\in \Om$ and show that $\frac{d}{d\b{s}^1(x)}\cal{A}(\vec{\b{s}},\vec{m},\b{b})=0$. However, if $\b{s}^1(x)=0$, then we have that
		\[
		\begin{split}
			\xi_x''(0)^{-1}2|\Om|m_{1}^{-1}\frac{d}{d\b{s}^1(x)}\cal{A}(\vec{\b{s}},\vec{m},\b{b})&=\\
			-\sum_{y,z\in \Om}[(D+\b{b}-\b{d}^1)^{-1}]_{yx}[(D+\b{b}-\b{d}^1)^{-1}]_{xz}(\xi_y'(\b{s}^1(y))\delta_{yz}+\b{h}(y)\b{h}(z))&\le \\
			-[(D+\b{b}-\b{d}^1)^{-1}]_{xx}[(D+\b{b}-\b{d}^1)^{-1}]_{xx}\xi_x'(0)&<0.
		\end{split}
		\]
		This contradicts the minimization assumption.
	\end{proof}
	
	Now finally we give a lemma which establishes the existence of minimizers.
	
	\begin{lem}
		\label{lemma:CS:minimzers exist}
		Fix $\vec{m}$ as above, and assume that $\xi''_x(0)\neq 0$ and $\xi'_x(0)\neq 0$ for each $x\in \Om$. Then both $(\vec{\b{s}},\b{b})\to \cal{A}(\vec{\b{s}},\vec{m},\b{b})$ and $\vec{\b{s}}\to \cal{B}(\vec{\b{s}},\vec{m},\b{b})$ possess a minimizer.
	\end{lem}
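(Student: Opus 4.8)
\emph{Proof plan.} For both functionals the strategy is the same: prove a coercivity statement strong enough that minimizing sequences are pushed into a compact subset of the (non-compact, non-open) domain, then extract a convergent subsequence and pass to the limit by continuity. The difficulty, and the location of the non-compactness, are quite different in the two cases. For $\cal{B}$: since $m_l>0$ for $l\ge 1$, the map $\vec{\b{s}}\mapsto\cal{B}(\vec{\b{s}},\vec{m})$ of (\ref{eqn:def:leg-hardcore}) is continuous on the interior of the domain $\cal{D}=\prod_{x\in\Om}\{0\le\b{s}^1(x)\le\cdots\le\b{s}^r(x)<1\}$, and also on the faces where some $\b{s}^k(x)=\b{s}^{k+1}(x)$ with $k<r$, or $\b{s}^1(x)=0$, because on all of these $\b{\delta}^l(x)\ge\b{\delta}^r(x)=1-\b{s}^r(x)>0$, keeping the arguments of $\b{K}^D$ and $\Lambda^D$ inside their domains. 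Its closure $\bar{\cal{D}}=\prod_{x\in\Om}\{0\le\b{s}^1(x)\le\cdots\le\b{s}^r(x)\le1\}$ is compact, so the only thing that needs work is to show $\cal{B}(\vec{\b{s}},\vec{m})\to+\infty$ whenever $\b{s}^r(x_0)\to1$ for some $x_0$; granting this, $\cal{B}$ extends to a lower semicontinuous $(-\infty,+\infty]$-valued function on $\bar{\cal{D}}$ (value $+\infty$ on $\bar{\cal{D}}\setminus\cal{D}$), hence is bounded below and attains its infimum, necessarily at a point of $\cal{D}$.

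For the coercivity of $\cal{B}$, the plan is to use $|\Om|\D\Lambda^D=\b{K}^D$ (Proposition \ref{prop:appendix:K and Lambda}) to rewrite the $K^D$-integral $\sum_x\int_0^{q_*}K_x^D(\b{\delta}(q))\Phi_x'(q)\,dq$ in terms of the values $\Lambda^D(\b{\delta}^k)$, and then analyze the behaviour as $\b{\delta}^r(x_0)\to0$, passing to a subsequence along which each $\b{s}^k(x)$ converges in $[0,1]$ and each ratio $\b{\delta}^k(x_0)/\b{\delta}^r(x_0)$ converges in $[1,\infty]$. The divergent contributions are then of two kinds: a logarithmic term $c\log\b{\delta}^r(x_0)$ whose total coefficient, once the $\Lambda^D(\b{\delta}^k)$ are collected against the telescoping increments $\tfrac1{m_l}\bigl(\Lambda^D(\b{\delta}^l)-\Lambda^D(\b{\delta}^{l+1})\bigr)$, comes out \emph{negative}, precisely because $1/m_l>1$ for every $l\le r-1$ (so the term $\to+\infty$); or, in the case where the whole chain $\b{s}^1(x_0),\dots,\b{s}^r(x_0)\to1$ so that $\b{\delta}^1(x_0)\to0$ as well, the term $K_{x_0}^D(\b{\delta}^1)\b{s}^1(x_0)\sim1/\b{\delta}^1(x_0)$, which is of power order and dominates all logarithmic terms. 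Every other term in $\cal{B}$ stays bounded, so $\cal{B}\to+\infty$.

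For $\cal{A}$ the domain carries the extra open, unbounded factor $\{\b{b}\in\R^{\Om}:D+\b{b}-\b{d}^1>0\}$, but $\cal{A}(\vec{\b{s}},\vec{m},\b{b})$ does \emph{not} diverge as $\b{s}^r(x)\to1$. The first step is the elementary estimate obtained from (\ref{eqn:def:A in terms of s}) by using $\b{d}^k\le\b{d}^1$ (hence $D+\b{b}-\b{d}^k\ge D+\b{b}-\b{d}^1$ and $\log\det(D+\b{b}-\b{d}^k)\ge\log\det(D+\b{b}-\b{d}^1)$, which collapses the determinant terms), by positivity of the quadratic form $\sum_{x,y}[(D+\b{b}-\b{d}^1)^{-1}]_{xy}(\b{h}(x)\b{h}(y)+\delta_{xy}\xi_x'(\b{s}^1(x)))$, and by boundedness of the $\theta_x$ on $[0,1]$:
\[2|\Om|\,\cal{A}(\vec{\b{s}},\vec{m},\b{b})\ \ge\ |\Om|\log(2\pi)-\log\det(D+\b{b}-\b{d}^1)+\sum_{x\in\Om}\b{b}(x)-C,\]
with $C$ depending only on $D$ and on bounds for the $\theta_x$, uniformly in $\vec{\b{s}}\in\bar{\cal{D}}$. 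Applying Hadamard's inequality to $-\log\det(D+\b{b}-\b{d}^1)$ and then the bound $a-\log a\ge1$ to the positive diagonal entries $D_{xx}+\b{b}(x)-\b{d}^1(x)$ shows that the right-hand side is bounded below, and that it tends to $+\infty$ as $\b{b}$ leaves any compact subset of $\{D+\b{b}-\b{d}^1>0\}$ — either some $\b{b}(x)\to+\infty$ (the linear term dominates the logarithmic ones) or $D+\b{b}-\b{d}^1$ approaches the boundary of the positive cone ($-\log\det\to+\infty$) — uniformly in $\vec{\b{s}}$. Hence a minimizing sequence has its $\b{b}$-coordinates in a fixed compact subset of $\R^{\Om}$ on which $D+\b{b}-\b{d}^1\ge\epsilon I$ for some $\epsilon>0$, and its $\vec{\b{s}}$-coordinates in $\bar{\cal{D}}$; a subsequence converges to a point $(\vec{\b{s}}^*,\b{b}^*)$ with $\b{b}^*$ interior to $\{D+\b{b}-\b{d}^1(\vec{\b{s}}^*)>0\}$, which realizes $\inf\cal{A}$ by continuity, but a priori with possibly $\b{s}^{*,r}(x_0)=1$.

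It remains to rule out $\b{s}^{*,r}(x_0)=1$. Treating, if necessary, a maximal top chain $\b{s}^{k_0}(x_0)=\cdots=\b{s}^r(x_0)=1$ by the device used in Lemma \ref{lem:CS-section:CS-min}, assume $\b{s}^{*,r-1}(x_0)<1$. Since $\b{b}^*$ is an interior minimizer in $\b{b}$, the stationarity conditions $\partial_{\b{b}(x)}\cal{A}(\vec{\b{s}}^*,\vec{m},\b{b}^*)=0$ hold; feeding these into the $\b{s}^r(x_0)$-derivative formula (\ref{eqn:ignore-1939}) collapses the bracket there (exactly the manipulation in Lemmas \ref{lem:CS-section:main-lemma} and \ref{lem:CS-section:Parisi-min}), giving
\[\xi_{x_0}''(\b{s}^r(x_0))^{-1}\,\frac{2|\Om|}{\,m_r-m_{r-1}\,}\,\frac{\partial\cal{A}}{\partial\b{s}^r(x_0)}(\vec{\b{s}}^*,\vec{m},\b{b}^*)=\b{s}^r(x_0)-1+[(D+\b{b}^*-\b{d}^r)^{-1}]_{x_0x_0}.\]
At $\b{s}^r(x_0)=1$ the right side equals $[(D+\b{b}^*-\b{d}^r)^{-1}]_{x_0x_0}>0$, because $D+\b{b}^*-\b{d}^r\ge D+\b{b}^*-\b{d}^1>0$ has a positive-definite inverse; since $\xi_{x_0}''(1)>0$ (which follows from $\xi_{x_0}''(0)\neq0$ and the non-negativity of the coefficients of $\xi_{x_0}$) and $m_r-m_{r-1}=1-m_{r-1}>0$, this forces $\partial\cal{A}/\partial\b{s}^r(x_0)>0$, contradicting the first-order condition at a minimizer on the face $\{\b{s}^r(x_0)=1\}$ (decreasing $\b{s}^r(x_0)$ points into the domain). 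Hence $\b{s}^{*,r}(x)<1$ for all $x$, and $(\vec{\b{s}}^*,\b{b}^*)$ is the desired minimizer. I expect the genuine obstacles to be exactly the two boundary analyses: for $\cal{B}$, the coercivity as $\b{s}^r(x)\to1$, where the $\Lambda^D$-terms on their own drive $\cal{B}$ to $-\infty$ and one must see, through the Legendre identity, that the $K^D$-contributions win — which is precisely where the constraint $m_l<1$ is used — and for $\cal{A}$, the careful bookkeeping of the bracket collapse under $\b{b}$-stationarity, including the degenerate top-chain case.
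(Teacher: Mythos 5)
Your argument for $\cal{A}$ is essentially the paper's: pass to the compact extended domain where $\b{s}^r(x)=1$ is allowed, establish coercivity of $\b{b}\mapsto\cal{A}(\vec{\b{s}},\vec{m},\b{b})$ uniformly in $\vec{\b{s}}$ (the paper packages this as Lemma \ref{lem:CS: minimizer in b exists}), extract a minimizer, and then rule out $\b{s}^{*,r}(x_0)=1$ by feeding $\b{b}$-stationarity into the $\b{s}^r(x_0)$-derivative to produce the strictly positive right-hand side $[(D+\b{b}^*-\b{d}^r)^{-1}]_{x_0x_0}+\b{s}^r(x_0)-1$. Your explicit treatment of a maximal chain $\b{s}^{k_0}(x_0)=\cdots=\b{s}^r(x_0)=1$ is more careful than the paper's (the paper just asserts the first-order condition $\partial_{\b{s}^r(x_0)}\cal{A}\le0$ at the boundary), and it can indeed be closed: along such a chain $\b{d}^{k_0}(x_0)=\cdots=\b{d}^r(x_0)$, and after subtracting the $\b{b}$-stationarity equation one finds that \emph{every} $\partial_{\b{s}^n(x_0)}\cal{A}$ with $n\ge k_0$ equals the same positive quantity, so any admissible inward perturbation decreases $\cal{A}$.

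For $\cal{B}$, however, you take a genuinely different route. The paper does \emph{not} prove the coercivity $\cal{B}\to+\infty$ that you propose. Instead it restricts to the compact set $\mathscr{S}_\epsilon=\{\b{s}^r(x)\le 1-\epsilon\}$, finds the minimizer there, and then uses the first-order inequality $\partial_{\b{s}^r(x)}\cal{B}\le0$ (valid at a restricted minimizer) together with the rigidity Lemma \ref{lem:CS-section:CS no points near 1} --- the key inequality being $\D_xK_x^D(\b{u})\le-\epsilon\,\b{u}(x)^{-2}$ from Theorem \ref{theorem:appendix:K lemma hard} --- to conclude the restricted minimizer already lies in $\mathscr{S}_c$ for a fixed $c>0$ independent of $\epsilon$. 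This is a ``support control by the first-order condition'' argument that sidesteps having to estimate the value of $\cal{B}$ at all. Your coercivity strategy is valid in principle (and perhaps more elementary to state), but as written it has real gaps: (i) the $K^D$-integral cannot be fully ``collected against the telescoping increments of $\Lambda^D$''; the plateau of $\zeta$ on $[0,s^1]$ where $\zeta([0,\cdot])=m_0=0$ contributes the stand-alone term $\frac{1}{|\Om|}\sum_xK_x^D(\b{\delta}^1)\b{s}^1(x)$ (note the printed sign in (\ref{eqn:def:leg-hardcore}) appears to be a typo: deriving it directly from (\ref{eqn:def:hat-A}) gives a plus sign, which is crucial since that term must blow up to $+\infty$, not $-\infty$); (ii) when the $\b{\delta}^k(x_0)$ vanish at \emph{different} rates, your ``two kinds of divergence'' dichotomy does not by itself settle which term wins --- the net logarithmic coefficient over all $\Lambda^D$ terms is $+1$ (the wrong sign), and the positivity comes instead from Abel summation against the decreasing telescoping coefficients $1-1/m_{j-1}<0$ together with the monotonicity $\b{\delta}^1\ge\cdots\ge\b{\delta}^r$ and, in the extreme case $\b{\delta}^1(x_0)\to0$, the power-order $K^D$ term; and (iii) the asymptotic $K_{x_0}^D(\b{\delta}^1)\sim\b{\delta}^1(x_0)^{-1}$ is not free --- it needs the Schur-complement lower bound $K_x^D(\b{u})\ge\b{u}(x)^{-1}-D_{xx}$ from Appendix C, i.e.\ essentially the same technical input the paper deploys inside Lemma \ref{lem:CS-section:CS no points near 1}. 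In short, the endpoint is correct, but you would need to fill in this bookkeeping, whereas the paper's restricted-minimizer argument bypasses it entirely.
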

	
	Before proceeding to the proof of Lemma \ref{lemma:CS:minimzers exist} we complete the proof of Proposition \ref{prop:Parisi=CS-minor}.
	
	\begin{proof}[Proof of Proposition \ref{prop:Parisi=CS-minor}]
		Lemmas \ref{lem:CS-section:main-lemma}, \ref{lem:CS-section:Parisi-min}, and \ref{lemma:CS:minimzers exist} imply that 
		\[\inf_{\vec{\b{s}}}\cal{B}(\vec{\b{s}},\vec{m})\le \inf_{\vec{\b{s}},\b{b}}\cal{A}(\vec{\b{s}},\vec{m},\b{b})= \inf_{\vec{\b{s}}}\cal{A}(\vec{\b{s}},\vec{m}).\]
		For the reverse inequality, note that Lemmas \ref{lem:CS-section:CS-min} and \ref{lemma:CS:minimzers exist} implies that there is $\vec{\b{s}}^*$ such $\inf_{\vec{\b{s}}}\cal{B}(\vec{\b{s}},\vec{m})=\cal{B}(\vec{\b{s}}^*,\vec{m})$ and such that (\ref{eqn:lem:min-cs-1}) holds. However, observe that once $(\vec{\b{s}}^*,\vec{m})$ is fixed, one may apply $\b{K}$ to find a unique $\b{b}^*$ such that $(\vec{\b{s}}^*,\vec{m},\b{b}^*)$ satisfies (\ref{eqn:lem:min-cs-1}). Thus by Lemma \ref{lem:CS-section:main-lemma}
		\[\inf_{\vec{\b{s}}}\cal{B}(\vec{\b{s}},\vec{m})=\cal{B}(\vec{\b{s}}^*,\vec{m})=\cal{A}(\vec{\b{s}}^*,\vec{m},\b{b}^*)\ge \inf_{\vec{\b{s}}} \cal{A}(\vec{\b{s}},\vec{m}).\]
	\end{proof}
	
	All that is left is to prove Lemma \ref{lemma:CS:minimzers exist}. We begin with a technical result we will use to show the existence of minimizers for $\cal{B}$.
	\begin{lem}
		\label{lem:CS-section:CS no points near 1}
		For a choice of $(\zeta,\b{\Phi})\in \mathscr{Y}$, and $x\in \Om$, we define the function
		\[
		\begin{split}
			\cal{G}_x(q):=&\sum_{x\in \Om}\left(\int_0^q \D_y K_x(\b{\delta}(u))\Phi'_x(u)du\right)+\xi'_x(\Phi_x(q))\\
			&+\sum_{y,z\in \Om}[(D+\b{K}(\b{\delta}(0)))^{-1}]_{yx}[(D+\b{K}(\b{\delta}(0)))^{-1}]_{zx}\b{h}(y)\b{h}(z).
		\end{split}
		\]
		Then there exists some small $c>0$, such that if we denote  $q_M=\sup (\supp (\zeta))$ and assume that $(\zeta,\b{\Phi})$ is such that $\cal{G}_x(q_M)\ge 0$ for each $x\in \Om$, then we have that
		\[\supp(\b{\Phi}_*(\zeta))\in [0,1-c]^{\Om}.\]
	\end{lem}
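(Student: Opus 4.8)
It suffices to produce a constant $c>0$, depending only on $(D,\b{\xi},\b{h})$, so that $\Phi_{x_0}(q_M)>1-c$ for some $x_0\in\Om$ already forces $\cal{G}_{x_0}(q_M)<0$; the conclusion then follows on taking the smallest such $c$ over the finitely many $x_0\in\Om$ and noting that the coordinate-wise supremum of $\supp(\b{\Phi}_*(\zeta))$ is $(\Phi_x(q_M))_{x\in\Om}$. We use throughout that $(\zeta,\b{\Phi})\in\mathscr{Y}=\mathscr{Y}(\b{1})$: evaluating (\ref{eq:condition for Psi}) at $s=1$ gives $\sum_{x\in\Om}\Phi_x(1)=|\Om|$ with each $\Phi_x(1)\le 1$, hence $\Phi_x(1)=1$ for all $x$. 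Write $\cal{G}_{x_0}(q_M)=A+B+C$, where $A=\sum_{y\in\Om}\int_0^{q_M}\D_yK_{x_0}(\b{\delta}(u))\Phi'_{x_0}(u)\,du$, $B=\xi'_{x_0}(\Phi_{x_0}(q_M))$, and $C$ is the external-field term.

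\textbf{The terms $B$ and $C$ are bounded.} One has $C=\big([(D+\b{K}(\b{\delta}(0)))^{-1}\b{h}]_{x_0}\big)^2\ge 0$ by symmetry of $(D+\b{K}(\b{\delta}(0)))^{-1}$. This matrix is positive definite and, by (\ref{eqn:def:KD}), has $x$-th diagonal entry $\b{\delta}_x(0)=\int_0^1\zeta([0,s])\Phi'_x(s)\,ds\le\Phi_x(1)=1$, so its largest eigenvalue is at most its trace, which is $\le|\Om|$; therefore $0\le C\le|\Om|^2\|\b{h}\|^2$. Also $0\le B\le\xi'_{x_0}(1)<\infty$ since $\xi_{x_0}$ extends analytically past $1$. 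Hence $B+C\le C_0$ for a constant $C_0=C_0(D,\b{\xi},\b{h})$ independent of $(\zeta,\b{\Phi})$ and $x_0$.

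\textbf{The term $A$ diverges.} The mechanism is that $\b{\delta}_{x_0}$ is forced small wherever $\Phi_{x_0}$ is near $1$: since $\zeta([0,s])\le 1$,
\[
\b{\delta}_{x_0}(u)=\int_u^1\zeta([0,s])\Phi'_{x_0}(s)\,ds\ \le\ \int_u^1\Phi'_{x_0}(s)\,ds\ =\ 1-\Phi_{x_0}(u),\qquad 0\le u\le q_M,
\]
while also $\b{\delta}_{x_0}(u)\ge 1-\Phi_{x_0}(q_M)$ on $[0,q_M]$. As $\b{\delta}_{x_0}(u)\to 0$ the value $K_{x_0}(\b{\delta}(u))$ blows up — from (\ref{eqn:def:KD}), $K_{x_0}(\b{v})\ge \b{v}_{x_0}^{-1}-D_{x_0x_0}$ — and so does the combination of its second derivatives entering $A$. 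The input I would take from Proposition \ref{prop:appendix:K and Lambda} is a quantitative one-sided control of $\sum_{y}\D_yK_{x_0}(\b{v})$ as $\b{v}_{x_0}\downarrow 0$ along the relevant paths $\b{v}=\b{\delta}(u)$ (whose coordinates lie in $(0,1]$), of the form $\sum_{y}\D_yK_{x_0}(\b{v})\le-\kappa(\b{v}_{x_0})$ with $\kappa$ non-increasing and $\int_0^1\kappa(t)\,dt=\infty$. Granting this bound on (a neighbourhood of $q_M$ in) the integration variable, the change of variables $v=\Phi_{x_0}(u)$ — legitimate since $\Phi_{x_0}$ is non-decreasing and absolutely continuous with $\Phi_{x_0}(0)=0$ — and monotonicity of $\kappa$ give
\[
A\ \le\ -\int_0^{q_M}\kappa\big(1-\Phi_{x_0}(u)\big)\,\Phi'_{x_0}(u)\,du\ =\ -\int_{1-\Phi_{x_0}(q_M)}^{1}\kappa(t)\,dt,
\]
which tends to $-\infty$ as $\Phi_{x_0}(q_M)\uparrow 1$ because $\int_0^1\kappa=\infty$. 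Choosing $c$ with $\int_{c}^{1}\kappa(t)\,dt>C_0$, the hypothesis $\Phi_{x_0}(q_M)>1-c$ then forces $A<-C_0\le-(B+C)$, i.e. $\cal{G}_{x_0}(q_M)<0$, contradicting $\cal{G}_{x_0}(q_M)\ge 0$. This proves the lemma.

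\textbf{Where the difficulty lies.} The only non-routine step is the quantitative boundary blow-up for $\b{K}^D$ used to bound $A$. It is genuinely delicate — as the authors note, such facts ``turn out to be quite difficult'' — because $\sum_y\D_yK_{x_0}(\b{v})$ need not be one-signed on $(0,1]^\Om$ and the relevant region is not bounded away from the boundary of the domain of $K^D$ (several coordinates of $\b{\delta}(u)$ may be small simultaneously). The natural route is to localise near $q_M$, where $\b{\delta}_{x_0}(u)\le 1-\Phi_{x_0}(u)$ is controllably small, and to use the explicit Hessian identity $\D\b{K}^D=-(M^{\circ 2})^{-1}$ with $M=(D+\b{K}^D(\b{u}))^{-1}$ together with the Cauchy--Schwarz bound $M_{xy}^2\le M_{xx}M_{yy}$, which forces the $x_0$-row of $M^{\circ 2}$ to be $O(\b{v}_{x_0})$ and hence the corresponding row of $(M^{\circ 2})^{-1}$ to be of order $\b{v}_{x_0}^{-2}$ — the non-integrable rate needed. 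Carrying this through with the required uniformity, and controlling any positive contribution to $A$ coming from $u$ away from $q_M$, is the real content; I would isolate it as a lemma in Appendix C.
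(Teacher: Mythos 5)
Your proposal is conceptually correct and follows essentially the same route as the paper's, with one harmless repackaging: you change variables to $v=\Phi_{x_0}(u)$ to express $A$ as $-\int_{1-\Phi_{x_0}(q_M)}^{1}\kappa(t)\,dt$, whereas the paper simply evaluates $-\epsilon\int_0^{q_M}(1-\Phi_x(u))^{-2}\Phi'_x(u)\,du=\epsilon\bigl(1-(1-\Phi_x(q_M))^{-1}\bigr)$ directly; these are the same computation. Your bounds on $B=\xi'_{x_0}(\Phi_{x_0}(q_M))\le\xi'_{x_0}(1)$ and $C\le|\Om|^2\|\b{h}\|^2$ (via the diagonal of $(D+\b{K}^D(\b{\delta}(0)))^{-1}$ being $\b{\delta}(0)\le\b{1}$) are exactly what the paper uses.

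The one substantive issue is the attribution and formulation of the key estimate. You say you would take the input from Proposition \ref{prop:appendix:K and Lambda}, but that proposition only records the basic convex-duality properties of $\Lambda^D,\b{K}^D$; the blow-up bound you actually need is Theorem \ref{theorem:appendix:K lemma hard}, which gives $\D_{x}\b{K}^D_x(\b{u})\le-\epsilon\,\b{u}(x)^{-2}$ and $|\D_x\b{K}^D_y(\b{u})|\le\epsilon^{-1}$ for $y\neq x$, \emph{uniformly} for $\b{u}\in(0,K)^{\Om}$. Since $\b{\delta}(u)\in(0,1]^{\Om}$ for all $u$, you can take $\kappa(t)=\epsilon t^{-2}-(|\Om|-1)\epsilon^{-1}$ and there is no need to localize near $q_M$ or to ``control positive contributions away from $q_M$''; the off-diagonal contribution is globally bounded by $(|\Om|-1)\epsilon^{-1}$. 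Your sketch of \emph{why} Theorem \ref{theorem:appendix:K lemma hard} should hold — via $\D\b{K}^D=-\bigl(((D+\b{K}^D)^{-1})^{\odot 2}\bigr)^{-1}$, Cauchy–Schwarz for the Hadamard square, and the relation $[(D+\b{K}^D(\b{u}))^{-1}]_{xx}=\b{u}(x)$ — is the right idea and matches the appendix argument (carried out there through Lemma \ref{lem:appendix: technical lemma}); as you suspected, the paper isolates this as a technical result in Appendix C.
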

	\begin{proof}
		For the moment, let us choose some $(\zeta,\b{\Phi})$ in $\mathscr{Y}$. We observe the trivial identity
		\[\delta_x(q)=\int_q^1\zeta([0,u]) \Phi'_x(u)du\le \int_q^1\Phi'_x(u)du=1-\Phi_x(q)\le 1.\]
		By applying Theorem \ref{theorem:appendix:K lemma hard}, there is $\epsilon:=\epsilon(D)>0$ such that
		\[\begin{split}
			\sum_{y\in \Om}\int_0^{q_M}\D_yK_x(\b{\delta}(q))\Phi'_x(q)dq\le -\epsilon\int_0^{q_M}\delta_x(q)^{-2}\Phi'_x(q)dq+\epsilon^{-1}\sum_{y\in \Om\setminus\{x\}}\int_0^{q_M}d\Phi_y(q)\le\\
			-\epsilon\int_0^{q_M}(1-\Phi_x(q))^{-2}\Phi'_x(q)dq+\epsilon^{-1}(|\Om|-1)=\epsilon\left(1-\frac{1}{1-\Phi_x(q_M)}\right)+\epsilon^{-1}(|\Om|-1).
		\end{split}\]
		We also note that
		\[
		\begin{split}
			\sum_{y,z\in \Om}[(D+\b{K}(\b{\delta}(0)))^{-1}]_{yx}[(D+\b{K}(\b{\delta}(0)))^{-1}]_{zx}\b{h}(y)\b{h}(z)\le \\
			\left(s_1((D+\b{K}^D(\b{\delta}(0)))^{-1})\right)^2\|\b{h}\|^2\le 
			\left(|\Om|\tr((D+\b{K}^D(\b{\delta}(0)))^{-1})\right)^2\|\b{h}\|^2=\\
			\left(\sum_{x\in \Om}\delta_x(0)\right)^2\|\b{h}\|^2\le |\Om|^2\|\b{h}\|^2.
		\end{split}
		\]
		In particular,
		\[\cal{G}_x(q_M)\le \epsilon\left(1-\frac{1}{1-\Phi_x(q_M)}\right)+\xi'_x(1)+\epsilon^{-1}(|\Om|-1)+|\Om|^2\|\b{h}\|^2.\]
		We note that the final term on the right-hand side grows to $-\infty$ as $\Phi_x(q_M)\to 1$, so it is clear there is some fixed $c_x:=c_x(D,\xi_x'(1),\|\b{h}\|)>0$ so that if $\cal{G}_x(q_M)\ge 0$ we must have that $\Phi_x(q_M)< 1-c_x$. Taking $c=\min_{x\in \Om}c_x$ completes the proof.
	\end{proof}
	
	With this, we can now prove the existence of minimizers for $\cal{B}$.
	\begin{proof}[Proof of Lemma \ref{lemma:CS:minimzers exist} for $\cal{B}$]
		Let us denote, for $0<\epsilon<1$, the space of sequences $\vec{\b{s}}$ such that 
		\[0=\b{s}^0(x)\le \b{s}^1(x)\le \cdots \le \b{s}^r(x)\le 1-\epsilon,\]
		as $\mathscr{S}_\epsilon$. It is clear that $\mathscr{S}_\epsilon$ is compact, so that $\vec{\b{s}}\mapsto \cal{B}(\vec{\b{s}},\vec{m})$ possesses a minimizer over $\mathscr{S}_\epsilon$. As all possible $\vec{\b{s}}$ lie in some $\mathscr{S}_{\epsilon}$, we see that if choose some small $c>0$, and show that for any $0<\epsilon<c$, the minimizer of $\cal{A}(\vec{\b{s}},\vec{m})$ over $\mathscr{S}_{\epsilon}$ actually lies in $\mathscr{S}_{c}$, then
		\[\inf_{\vec{\b{s}}\in \mathscr{S}_c}\cal{B}(\vec{\b{s}},\vec{m})=\inf_{\vec{\b{s}}}\cal{B}(\vec{\b{s}},\vec{m}),\]
		so in particular, the minimizer of $\cal{B}(\vec{\b{s}},\vec{m})$ over $\mathscr{S}_{c}$ is actually a global minimizer. We will prove this claim for $c>0$ satisfying Lemma \ref{lem:CS-section:CS no points near 1}. By this Lemma, we need only show that any minimizer over $\mathscr{S}_{\epsilon}$, say $(\vec{\b{s}},\vec{m})$, satisfies $\cal{G}_x(s^r)\ge 0$ for all $x\in \Om$ if $0<\epsilon<c$.
		
		However, by routine computation and (\ref{eqn:lemma-ignore-1}) we see that
		\[-\cal{G}_x(s^r)=2|\Om|(1-m_{r-1})^{-1}\frac{d}{d\b{s}^r(x)}\cal{B}(\vec{\b{s}},\vec{m}).\]
		However, employing the work around (\ref{eqn:ignore-1641}), we see that minimization over $\mathscr{S}_{\epsilon}$ implies that $\frac{d}{d\b{s}^r(x)}\cal{B}(\vec{\b{s}},\vec{m})\le 0$. Thus the previous relation immediately implies that $\cal{G}_x(s^r)\ge 0$, as desired.
	\end{proof}
	
	Now we focus on the minimizers of $\cal{A}$. To demonstrate the existence of minimizers of $\cal{A}$, it is convenient here, and crucial in subsequent sections, to mildly extend the domain of the functional. The key is that while the functional $\cal{B}$ requires us to avoid pairs $(\zeta,\b{\Phi})$ such that $(\Phi_x)_*(\zeta)$ is supported at $1$ in order to be well-defined, the functional $\cal{A}$ needs no such restriction. 
	
	In particular, let us define the following extension of $\mathscr{Y}$. Let $\zeta$ be a probability measure on $[0,1]$, and let $\b{\Phi}:[0,1]\to [0,1]^{\Om}$ be a non-decreasing function such that (\ref{eq:condition for Psi}) holds with $\b{q}=\b{1}$. We denote the set of such pairs as $\mathscr{Y}^0$. We observe that $\mathscr{Y}\subseteq \mathscr{Y}^0$, with the difference being that $\mathscr{Y}^0$ places no restraints on the support. The expression for $\cal{A}$ given in (\ref{eqn:def:A continuum}) immediately extends $\cal{A}$ to the set $\mathscr{Y}^0$. We define $\mathscr{Y}^0_{fin}$ as the subset of $\mathscr{Y}^0$ such that $\b{\Phi}_*(\zeta)$ has finite support.
	
	In generality, we are now ready to study the minimization of $\cal{A}$ in terms of $\b{b}$.
	
	\begin{lem}
		\label{lem:CS: minimizer in b exists}
		Let us fix a choice of $(\zeta,\b{\Phi})\in \mathscr{Y}^0$, and assume that $\xi''_x(1)\neq 0$ for each $x\in \Om$. Then the function $\b{b}\mapsto \cal{A}(\zeta,\b{\Phi},\b{b})$ is strictly convex and possesses a unique minimizer on its domain if either $\min_{x} d_x(0)>0$ or $\xi'_x(0)\neq 0$ for all $x\in \Om$.
		
		Moreover, the minimizing $\b{b}$ given as the unique solution to the critical equation
		\[
		\begin{split}
			&-\frac{1}{m_1}[(D+\b{b}-\b{d}^1)^{-1}]_{xx}
			+\sum_{2\le k\le r}\left(\frac{1}{m_{k-1}}-\frac{1}{m_k}\right)[(D+\b{b}-\b{d}^{k})^{-1}]_{xx}\\
			&-\sum_{y,z\in \Om}[(D+\b{b}-\b{d}^1)^{-1}]_{yx}[(D+\b{b}-\b{d}^1)^{-1}]_{xz}(\xi_y'(\b{s}^1(y))\delta_{yz}+\b{h}(y)\b{h}(z))+1=0.\label{eqn:ignore-b-min}
		\end{split}
		\]
	\end{lem}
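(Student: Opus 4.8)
\textit{The plan} is to show that, for fixed $(\zeta,\b\Phi)\in\mathscr{Y}^0$, the map $\b b\mapsto\cal A(\zeta,\b\Phi,\b b)$ is a smooth, strictly convex, coercive function on the open convex domain $\cal D:=\{\b b\in\R^{\Om}:D+\b b-\b d(0)>0\}$; then a unique minimizer exists in $\cal D$ and is characterized by its vanishing gradient, which I identify with \eqref{eqn:ignore-b-min}. It is convenient to use the manifestly convex form of the $\b b$-dependent part of $\cal A$: writing the $\b b$-independent part of \eqref{eqn:def:A continuum} as a finite constant $C_0$,
\begin{equation*}
2|\Om|\,\cal A(\zeta,\b\Phi,\b b)=C_0+G_1(\b b)+\sum_{x\in\Om}\b b(x)+G_2(\b b),
\end{equation*}
where $G_1(\b b)=\sum_{x}\int_0^1[(D+\b b-\b d(s))^{-1}]_{xx}\,\xi_x''(\Phi_x(s))\Phi_x'(s)\,ds$ and $G_2(\b b)=\sum_{x,y}[(D+\b b-\b d(0))^{-1}]_{xy}(\b h(x)\b h(y)+\delta_{xy}\xi_x'(0))$ (in the piecewise-linear parametrization of Remark~\ref{remark:associated to measure: talagrand} this agrees, after Abel summation, with the $\log\det$ form \eqref{eqn:def:A in terms of s}, with $\xi_x'(\b s^1(x))$ in place of $\xi_x'(0)$). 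Since $\b b\mapsto D+\b b-\b c$ is affine, $M\mapsto v^{\top}M^{-1}v$ is convex on the cone of positive-definite matrices, the weights $\xi_x''(\Phi_x(s))\Phi_x'(s)$ and $\xi_x'(0)=\beta_{1,x}^2$ are nonnegative, and $\sum_{x,y}[(D+\b b-\b d(0))^{-1}]_{xy}\b h(x)\b h(y)=\b h^{\top}(D+\b b-\b d(0))^{-1}\b h$, both $G_1$ and $G_2$ are convex, so $\cal A(\zeta,\b\Phi,\cdot)$ is convex. Note also that $\cal D$ is open, and since each diagonal entry of a positive-definite matrix is positive, $\b b(x)>\b d_x(0)-D_{xx}$ on $\cal D$, so $\b b$ is bounded below coordinatewise there.

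\textit{Strict convexity.} Fix $\b b_0\neq\b b_1$ in $\cal D$, put $a=\b b_1-\b b_0$ and $\phi(t)=\cal A(\zeta,\b\Phi,\b b_0+ta)$; it suffices that $\phi''>0$ on $[0,1]$. Keeping only the $x^{*}$-summand of $G_1$ (for any $x^{*}$ with $a_{x^{*}}\neq0$) and discarding the nonnegative Hessian of $G_2$, a direct computation gives
\begin{equation*}
\phi''(t)\ \ge\ \frac1{|\Om|}\int_0^1\bigl\|M_t(s)^{-1/2}\,\diag(a)\,M_t(s)^{-1}e_{x^{*}}\bigr\|^2\,\xi_{x^{*}}''(\Phi_{x^{*}}(s))\,\Phi_{x^{*}}'(s)\,ds,\qquad M_t(s):=D+\b b_0+ta-\b d(s).
\end{equation*}
The integrand is nonnegative, and it is strictly positive whenever $\Phi_{x^{*}}'(s)>0$: the vector $\diag(a)M_t(s)^{-1}e_{x^{*}}$ has $x^{*}$-entry $a_{x^{*}}[M_t(s)^{-1}]_{x^{*}x^{*}}\neq0$; moreover $\Phi_{x^{*}}'(s)>0$ forces $\Phi_{x^{*}}(s)>0$, since $\Phi_{x^{*}}(0)=0$ by \eqref{eq:condition for Psi} at $s=0$ and $\Phi_{x^{*}}$ is nondecreasing (indeed $|\Om|$-Lipschitz, again by \eqref{eq:condition for Psi}); and then $\xi_{x^{*}}''(\Phi_{x^{*}}(s))>0$ because $\xi_{x^{*}}''(1)\neq0$ forces $\beta_{p,x^{*}}\neq0$ for some $p\ge2$, hence $\xi_{x^{*}}''(r)>0$ for every $r>0$. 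As $\Phi_{x^{*}}$ increases from $0$ to $1$, the set $\{s:\Phi_{x^{*}}'(s)>0\}$ has positive measure, so $\phi''(t)>0$. In particular any minimizer is unique.

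\textit{Existence.} I show $\cal A(\zeta,\b\Phi,\cdot)$ is coercive on $\cal D$, i.e. each sublevel set $\{\b b\in\cal D:\cal A(\zeta,\b\Phi,\b b)\le c\}$ is compact. Boundedness is immediate from $G_1,G_2\ge0$: this gives $2|\Om|\cal A\ge C_0+\sum_x\b b(x)$, which with the lower bounds $\b b(x)>\b d_x(0)-D_{xx}$ confines each $\b b(x)$ to a bounded interval on a sublevel set. The essential point, which I expect to be the main obstacle, is that $\cal A(\zeta,\b\Phi,\b b)\to+\infty$ as $\b b$ approaches any point of $\partial\cal D$, so the sublevel sets are closed in $\R^{\Om}$; this is where the two alternative hypotheses enter. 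If $\xi_x'(0)\neq0$ for all $x$, then $G_2(\b b)\ge(\min_x\xi_x'(0))\,\Tr\bigl((D+\b b-\b d(0))^{-1}\bigr)\ge(\min_x\xi_x'(0))/\lambda_{\min}(D+\b b-\b d(0))\to+\infty$, since the least eigenvalue of $D+\b b-\b d(0)$ tends to $0$ along the boundary. If instead $\min_x d_x(0)>0$, I integrate $\frac{d}{ds}\log\det(D+\b b-\b d(s))=\sum_x[(D+\b b-\b d(s))^{-1}]_{xx}\,\zeta([0,s])\,\xi_x''(\Phi_x(s))\Phi_x'(s)$ over $s\in[0,1]$ (using $-\frac{d}{ds}\b d(s)=\zeta([0,s])\bigl(\xi_x''(\Phi_x(s))\Phi_x'(s)\bigr)_x$ and $\b d(1)=0$) to obtain
\begin{equation*}
\int_0^1\sum_{x}[(D+\b b-\b d(s))^{-1}]_{xx}\,\zeta([0,s])\,\xi_x''(\Phi_x(s))\Phi_x'(s)\,ds=\log\det(D+\b b)-\log\det(D+\b b-\b d(0)).
\end{equation*}
As $\zeta([0,s])\le1$, the left side is at most $G_1(\b b)$, so $2|\Om|\cal A(\zeta,\b\Phi,\b b)\ge C_0+\sum_x\b b(x)+\log\det(D+\b b)-\log\det(D+\b b-\b d(0))$. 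When $\b b\to\b b^{*}\in\partial\cal D$ we have $-\log\det(D+\b b-\b d(0))\to+\infty$, while $\sum_x\b b(x)$ stays bounded and $\log\det(D+\b b)\to\log\det(D+\b b^{*})$ is finite because $D+\b b^{*}=(D+\b b^{*}-\b d(0))+\diag(\b d(0))$ is positive definite — precisely the place where $\min_x d_x(0)>0$ is used. Hence $\cal A\to+\infty$ at $\partial\cal D$, the sublevel sets are compact, and a minimizer $\b b^{*}$ exists in $\cal D$, unique by strict convexity.

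\textit{The critical equation.} Since $\cal A(\zeta,\b\Phi,\cdot)$ is smooth and convex on the open set $\cal D$, its unique minimizer is the unique zero of $\D_{\b b}\cal A$. Differentiating the $\log\det$ form \eqref{eqn:def:A in terms of s} in $\b b(x)$ via $\partial_{\b b(x)}\log\det(D+\b b-\b c)=[(D+\b b-\b c)^{-1}]_{xx}$ and $\partial_{\b b(x)}[(D+\b b-\b c)^{-1}]_{yz}=-[(D+\b b-\b c)^{-1}]_{yx}[(D+\b b-\b c)^{-1}]_{xz}$, then telescoping the $\log\det$ terms (the $\b d^{r+1}=0$, $m_r=1$ contribution cancelling the stray $-\log\det(D+\b b)$ term), yields exactly \eqref{eqn:ignore-b-min}.
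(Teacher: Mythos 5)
Your proof is correct and follows essentially the same route as the paper: convexity of the diagonal entries of $(D+\b b-\b d(s))^{-1}$, the two coercivity arguments (trace bound on $G_2$ when $\xi_x'(0)\neq 0$; the $\log\det$ telescoping bound on $G_1$ together with $\min_x d_x(0)>0$), and differentiation of \eqref{eqn:def:A in terms of s} for the critical equation. The one place you go beyond the paper is the explicit Hessian computation for strict convexity — the paper only asserts it as "clear" — and your extra care there (using $\Phi_{x^*}(0)=0$, $\xi_{x^*}''>0$ on $(0,1]$, and a positive-measure set where $\Phi_{x^*}'>0$) is a genuine, if small, improvement in rigor.
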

	
	\begin{proof}
		To begin we analyze the terms in the continuum expression (\ref{eqn:def:A continuum}) for $\cal{A}$. We note all terms on the right-hand side are clearly convex, while the second term in the first line is clearly strictly convex as $\xi''_x(r)>0$ for all $r>0$ by the assumption that $\xi''_x(1)\neq 0$. This establishes strict convexity, and so to establish the existence of a unique minimizer, we only need to show that we may restrict the infimum to a compact subset of the domain
		\[\{\b{b}\in \R^{\Om}:D+\b{b}-\b{d}(0)>0\}.\]
		
		For this, note that it is clear that the first and third lines on the right-hand side of (\ref{eqn:def:A continuum}) are non-negative. Moreover, as $\zeta([0,s])\le 1$ and $\Phi_x(s)\le 1$, we have that
		\[
		\int_0^1 \zeta([0,s])\Phi_x(s)\xi_x''(\Phi_x(s))\Phi_x'(s)ds\le \int_0^1 \xi_x''(\Phi_x(s))\Phi_x'(s)ds= \xi_x'(1)-\xi'(0)\le \xi_x'(1).
		\]
		In particular, we obtain the lower bound
		\[
		\cal{A}(\zeta,\b{\Phi},\b{b})\ge \frac{1}{2|\Om|}\left(\sum_{x\in \Om}\b{b}(x)-\sum_{x\in \Om}\xi_x'(1)\right).
		\]
		Noting that we must have that $\b{b}(x)>-s_{1}(D)$, this bound makes it clear that for some large $C>0$ we may restrict the infimum to the subset 
		\[\{\b{b}\in [-s_1(D),C]^{\Om}:D+\b{b}-\b{d}(0)>0\}.\]
		Next, we note that
		\[
		\begin{split}
			\sum_{x\in \Om}\int_0^1 [(D+\b{b}-\b{d}(s))^{-1}]_{xx}\xi''_x(\Phi_x(s))\Phi_x'(s)ds&\ge\\ \sum_{x\in \Om}\int_0^1 [(D+\b{b}-\b{d}(s))^{-1}]_{xx}\zeta([0,s])\xi''_x(\Phi_x(s))\Phi_x'(s)ds& =\\
			-\sum_{x\in \Om}\int_0^1 [(D+\b{b}-\b{d}(s))^{-1}]_{xx}\b{d}'(s)ds &= \\
			\log \det(D+\b{b})-\log\det(D+\b{b}-\b{d}(0)).&
		\end{split}
		\]
		Using this and previous observations, we see that
		\[\cal{A}(\zeta,\b{\Phi},\b{b})\ge \frac{1}{2|\Om|}\left(\log \det(D+\b{b})-\log\det(D+\b{b}-\b{d}(0))-|\Om|s_1(D)-\sum_{x\in \Om}\xi_x'(1)\right).
		\]
		If we then assume that $\epsilon:=\min_{x\in \Om}d_x(0)>0$, then it is clear that $\log \det(D+\b{b})\ge |\Om|\log \epsilon$. In particular, up to a constant $C>0$, we have that
		\[\cal{A}(\zeta,\b{\Phi},\b{b})\ge -C-\frac{1}{2|\Om|}\log\det(D+\b{b}-\b{d}(0)).
		\]
		This clearly shows that for some small $c>0$, we may further restrict the infimum to the domain
		\[\{\b{b}\in [-s_1(D),C]^{\Om}:D+\b{b}-\b{d}(0)\ge cI\},\]
		which completes the proof in this case. If we instead assume that $\xi'_x(0)>0$ for all $x\in \Om$, then we instead use the third term as a lower bound to obtain that
		\[
		\cal{A}(\zeta,\b{\Phi},\b{b})\ge -C+\frac{1}{2|\Om|}\sum_{x\in \Om}[(D+\b{b}-\b{d}(0))^{-1}]_{xx}\xi_x'(0).
		\]
		This term again allows us to restrict the domain of our infimum to 
		\[\{\b{b}\in [-s_1(D),C]^{\Om}:D+\b{b}-\b{d}(0)\ge cI\},\]
		showing a unique minimizer exists in either case. Lastly, the final claim then follows by a routine computation of $2|\Om|\frac{d}{d\b{b}(x)}\cal{A}(\vec{\b{s}},\vec{m},\b{b})$.
	\end{proof}
	
	We are now ready to prove the second half of Lemma \ref{lemma:CS:minimzers exist} and complete the section.
	
	\begin{proof}[Proof of Lemma \ref{lemma:CS:minimzers exist} for $\cal{A}$]
		Observe that if we weaken the restriction on $\vec{\b{s}}$ to simply
		\[0=\b{s}^0(x)\le \b{s}^1(x)\le \cdots \le \b{s}^r(x)\le  \b{s}^{r+1}(x)=1.\label{eqn:CS:s-sequence general}\]
		then the process of Remark \ref{remark:associated to measure: talagrand} still associates to a pair $(\vec{\b{s}},\vec{m})$, a measure $(\zeta,\b{\Phi})\in \mathscr{Y}^0$. Note that with this extensions, the set of possible $\vec{\b{s}}$ is now compact. Thus using Lemma \ref{lem:CS: minimizer in b exists}, and its bounds, it easy to show that the map $\vec{\b{s}}\mapsto \cal{A}(\vec{\b{s}},\vec{m})$ is continous and thus have a minimizer, and using Lemma \ref{lem:CS: minimizer in b exists} again we may conclude the existence of a minimizing pair $(\vec{\b{s}},\b{b})$. This only differs from what we want to show in that $\vec{\b{s}}$ may satisfy the weakened requirement (\ref{eqn:CS:s-sequence general}), so to complete the proof we only need to show that our minimizer satisfies $\b{s}^r(x)<1$ for all $x\in \Om$.
		
		Now note the computations of the derivatives from Lemma \ref{lem:CS-section:Parisi-min} still hold. In particular, combining (\ref{eqn:ignore-b-min}) with (\ref{eqn:lemma-ignore-1}) in the case of $n=r$, we conclude that
		\[2|\Om|(1-m_{r-1})^{-1}\frac{d}{d\b{s}^r(x)}\cal{A}(\vec{\b{s}},\vec{m},\b{b})=\sum_{x\in \Om}[(D+\b{b}-\b{d}^r)^{-1}]_{xx}+\b{s}^r(x)-1.\label{eqn:ignore-fninfg}\]
		If $\b{s}^r(x)=1$, the minimization condition clearly implies that 
		\[\frac{d}{d\b{s}^r(x)}\cal{A}(\vec{\b{s}},\vec{m},\b{b})\le 0.\]
		However, in this case (\ref{eqn:ignore-fninfg}) implies that
		\[2|\Om|(1-m_{r-1})^{-1}\frac{d}{d\b{s}^r(x)}\cal{A}(\vec{\b{s}},\vec{m},\b{b})=\sum_{x\in \Om}[(D+\b{b}-\b{d}^r)^{-1}]_{xx}>0,\]
		which gives a contradiction.
	\end{proof}
	
	\pagebreak
	
	\section{A Useful Approximation of the Parisi Functional\label{section:RPC-intro}}
	
	In this section, we will develop some results needed to establish Theorem \ref{theorem:spherical parisi continuum}. The most important result is Proposition \ref{prop:rpc-intro:infimum identification of B in limit}, which essentially shows that the  functional $\cal{A}$ can be expressed as a limit of certain functionals $\cal{A}_M$ as $M\to \infty$. This is important in our proof, as for example, the upper bound we use to obtain Theorem \ref{theorem:spherical parisi continuum} is of the form
	\[\E f_{N,\mathrm{Sph}}\le \inf_{(\zeta,\b{\Phi})}\cal{A}_N(\zeta,\b{\Phi})+o(1),\]
	and so only becomes tight as $N\to \infty$.
	
	However, the functionals $\cal{A}_M$ are complicated, and require us to recall a number of important objects and constructions from the study of unipartite spin glass models, and in particular the family of Ruelle probability cascades (RPCs). These objects were originally introduced by Ruelle \cite{ruelleOG}, and will play a key role in our derivation. As such, this section also serves to set notation and recall some basic properties. We will try to follow the notation of the book by Panchenko \cite{panchenko} closely, and reference this book for our required results on these objects when possible.
	
	Thus we have broken up this section into three parts. First, we introduce a modification of the class of finite measures used in the previous section (see Remarks \ref{remark:associated to measure} and \ref{remark:associated measure and functions}), which is an important technicality. Next, we recall the RPCs, their associated random measure, and some basic properties. Finally, we introduce some auxiliary functions, and evaluate them on the random measures coming from an RPC to define the functional $\cal{A}_M$. We finish the subsection by introducing and proving Proposition \ref{prop:rpc-intro:infimum identification of B in limit}, the main result of this section, which essentially shows that $\lim_{M\to \infty}\inf \cal{A}_M=\inf \cal{A}$.
	
	\subsection{A Reparameterization of the Set of Finite Measures}
	
	Here we address an important, though tedious, change to the way we use sequences to describe finite pairs $(\zeta,\b{\Phi})$. The need for a second discretization procedure is common, and matches the difference between the procedure of \cite{talagrandOG} and \cite{panchenko}.
	
	In particular, recall that we considered pairs $(\vec{\b{s}},\vec{m})$ which obeyed the inequalities (\ref{eqn:def:Q-sequence-Tal}) and (\ref{eqn:def:x-sequence-Tal}). Via Remark \ref{remark:associated to measure: talagrand}, we may associate with this a pair $(\zeta,\b{\Phi})\in \mathscr{Y}_{fin}$. This parametrization follows the one used in the original work on a one-site case by Talagrand \cite{talagrandOG}.
	
	However, when working with RPCs, it is convenient to restrict ourselves to measures whose support contain both $0$ and $1$ (see  Section 3.4, and especially Remark 4.1, of \cite{panchenko} for a discussion of this). 
	
	However, there are a number of reasons we could not just use this modification in the previous section. The first issue is that while functional $\cal{A}$ may be defined for such measures (see the discussion immediately prior to Lemma \ref{lem:CS: minimizer in b exists}), the functional $\cal{B}$ cannot. The second issue is that as we never minimized over the weights of the measure, only the locations of points in the support, in the previous section. So forcing $0$ (let alone $1$) to be included in the support of the measure would break our argument, and require us to minimize over the weights.
	
	With this explained, we now give our parametrization. To start, take $r\ge 1$, as well as a choice of
	\[0=t_{-1}<t_0 <\cdots < t_r=1.\label{eqn:def:x-sequence-Pan}\]
	For each $x\in \Om$, we additionally choose a sequence $(\b{q}^k(x))_{k=1}^{r}$ such that
	\[0=\b{q}^0(x)\le \b{q}^1(x)\le \cdots \le\b{q}^{r}(x)=1\label{eqn:def:Q-sequence-Pan}.\]
	We use the notation $(\vec{\b{q}},\vec{t})$ as before. We will also be careful to only use the notation similar to $(\vec{\b{q}},\vec{t})$ henceforth, to limit confusion with pairs $(\vec{\b{s}},\vec{m})$ which instead follow (\ref{eqn:def:x-sequence-Tal}) and (\ref{eqn:def:Q-sequence-Tal})
	
	We now describe how to construct a pair $(\zeta,\b{\Phi})\in \mathscr{Y}^0_{fin}$ from these sequences. For convenience, we will split this into two remarks. First, note that we associate a sequence $\vec{q}$ from $\vec{\b{q}}$ by taking
	\[q^k=\frac{1}{|\Om|}\sum_{x\in \Om}\b{q}^k(x).\label{def:q-seq-norm}\]
	This sequence satisfies
	\[0=q^0\le q^1\le \cdots \le q^r=1,\label{eqn:def:q-sequence-Pan-av}\]
	Given the $(\vec{q},\vec{t})$ pair, we may associate a probability measure on $[0,1]$.
	
	\begin{remark}
		\label{remark:associated to measure}
		Let us take sequences $\vec{t}$ and $\vec{q}$ as in (\ref{eqn:def:x-sequence-Pan}) and (\ref{eqn:def:q-sequence-Pan-av}) of length parameter $r$. We may associate to this a discrete probability measure on $\zeta$ on $[0,1]$ given by
		\[\zeta=\sum_{0\le k\le r}(t_k-t_{k-1})\delta_{q^k},\]
		where $\delta_q$ denotes the Dirac $\delta$-function supported at $q$. We note that conversely, any probability measure on $[0,1]$, whose support contains $\{0,1\}$, and consists of $(r+1)$-points or less, may be obtained from such a pair $(\vec{t},\vec{q})$.
		
		Moreover, if a probability measure is such that its support contains $\{0,1\}$ and it has at most $(r+1)$-points in its support, then there is a such pair $(\vec{t},\vec{q})$ which generates it. If it has exactly $(r+1)$-points in its support, this representation is unique.
	\end{remark}
	
	Now that we have associated generated a measure $\zeta$ given $(\vec{t},\vec{q})$, we will generate the pair $(\zeta,\b{\Phi})$ from the additional information in $\vec{\b{q}}$.
	
	\begin{remark}
		\label{remark:associated measure and functions}
		Let us take sequences $\vec{t}$ and $\vec{\b{q}}$ as in (\ref{eqn:def:x-sequence-Pan}) and (\ref{eqn:def:Q-sequence-Pan}). To the data $(\vec{t},\vec{q})$ we may take the measure $\zeta$ associated by Remark \ref{remark:associated to measure}. We define functions $\b{\Phi}:[0,1]\to [0,1]^{\Om}$ by fixing the values $\b{\Phi}(q^k)=\b{q}^k$, and otherwise by linear interpolation. For each $x\in \Om$, we observe that the push-forward measures $(\Phi_x)_*(\zeta)$ are now also measures on $[0,1]$, which are of the form of Remark \ref{remark:associated to measure}, except associated to the pair of sequences $(\vec{t},\vec{\b{q}(x)})$. Moreover, we have that
		\[(\b{\Phi})_*(\zeta)=\sum_{0\le k\le r}(t_k-t_{k-1})\delta_{\b{q}^k}.\]
		Finally, we observe that $(\zeta,\b{\Phi})\in \mathscr{Y}_0$.
	\end{remark}
	
	Now with our mechanism to generate pairs $(\zeta,\b{\Phi})\in \mathscr{Y}^0_{fin}$ from pairs $(\vec{\b{q}},\vec{t})$, our next task will be to give the computation of $\cal{A}(\vec{\b{q}},\vec{t})$.
	
	We define for $x\in \Om$ and $1\le l\le r$
	\[\b{d}^{l}(x)=\sum_{l\le k<r}t_{k}(\xi'_x(\b{q}^{k+1}(x))-\xi'_x(\b{q}^{k}(x))),\;\; \b{d}^{r}(x)=0.\]
	Next, we take a choice of some $\b{b}\in \R^{\Om}$, such that $D+\b{b}-\b{d}^1>0$. Associated with these choices we have that
	\[
	\begin{split}
		\cal{A}(\vec{\b{q}},\vec{t},\b{b})=\frac{1}{2|\Om|}\bigg(&|\Om|\log(2\pi)+\sum_{0\le k<r}\frac{1}{t_k}\log \frac{\det(D+\b{b}-\b{d}^{k+1})}{\det(D+\b{b}-\b{d}^k)}-\log \det(D+\b{b})\\ 
		&+\sum_{x\in \Om}\l(\b{b}(x)-\sum_{0\le  k< r}t_k(\theta_x(\b{q}^{k+1}(x))-\theta_x(\b{q}^k(x)))\r)\\
		&+\sum_{x,y\in \Om}[(D+\b{b}-\b{d}^1)^{-1}]_{xy}(\b{h}(x)\b{h}(y)+\delta_{xy}\xi_x'(0))\bigg),
	\end{split}
	\]
	and then further define
	\[\cal{A}(\vec{\b{q}},\vec{t})=\inf_{\b{b}:D+\b{b}-\b{d}^1>0}\cal{A}(\vec{\b{q}},\vec{t},\b{b}).\]
	
	For comparison, the evaluation of $\cal{A}(\vec{\b{s}},\vec{m},\b{b})$ is given in (\ref{eqn:def:A in terms of s}). Now while the measures arising from both conventions lead to disjoint compose disjoint subsets of $\mathscr{Y}^0_{fin}$, they are both dense (at least up to the equivalence given in Proposition \ref{prop:general: for B, parisi doesn't depend away from support}). As such, one may easily obtain the following result.
	\begin{lem}
		\label{lem: all minimums are equivalent}
		\[\inf_{\vec{\b{q}},\vec{t}} \cal{A}(\vec{\b{q}},\vec{t})=\inf_{\vec{\b{s}},\vec{m}} \cal{A}(\vec{\b{s}},\vec{m})=\inf_{(\zeta,\b{\Phi})\in \mathscr{Y}_{fin}} \cal{A}(\zeta,\b{\Phi})=\inf_{(\zeta,\b{\Phi})\in \mathscr{Y}_{fin}^0} \cal{A}(\zeta,\b{\Phi}).\]
	\end{lem}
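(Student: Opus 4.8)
The plan is to realise all four quantities as infima of $\cal{A}$ over subsets of $\mathscr{Y}^0_{fin}$ that are either nested or mutually dense, and to compare them through set inclusions together with a single density-and-continuity step. The second equality, $\inf_{\vec{\b{s}},\vec{m}}\cal{A}(\vec{\b{s}},\vec{m})=\inf_{(\zeta,\b{\Phi})\in\mathscr{Y}_{fin}}\cal{A}(\zeta,\b{\Phi})$, is exactly (\ref{eqn:lem:finite to parametrized}), so only the two outer quantities need attention. For the easy direction I would record two inclusions: the pairs produced from a sequence pair $(\vec{\b{q}},\vec{t})$ via Remarks \ref{remark:associated to measure} and \ref{remark:associated measure and functions} all lie in $\mathscr{Y}^0_{fin}$, and a finite support for $\zeta$ forces one for $\b{\Phi}_*(\zeta)$, so $\mathscr{Y}_{fin}\subseteq\mathscr{Y}^0_{fin}$; both give $\inf_{\vec{\b{q}},\vec{t}}\cal{A}\ge\inf_{\mathscr{Y}^0_{fin}}\cal{A}$ and $\inf_{\mathscr{Y}_{fin}}\cal{A}\ge\inf_{\mathscr{Y}^0_{fin}}\cal{A}$. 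Everything then reduces to the two reverse inequalities, i.e. to showing that every $(\zeta,\b{\Phi})\in\mathscr{Y}^0_{fin}$ can, for any $\e>0$, be matched within $\e$ in $\cal{A}$-value both by a Panchenko pair $(\vec{\b{q}},\vec{t})$ and by an element of $\mathscr{Y}_{fin}$.

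To do this I would fix $(\zeta,\b{\Phi})\in\mathscr{Y}^0_{fin}$ and perform a chain of harmless modifications. First, since $\b{\Phi}_*(\zeta)$ has finite support, any continuous part of $\zeta$ lives on an interval on which all coordinates $\Phi_x$ are constant, and every $s$-integral in the expression (\ref{eqn:def:A continuum}) for $\cal{A}$ (including the one defining $\b{d}(0)$) carries a factor $\Phi_x'(s)$; hence collapsing each continuous part of $\zeta$ to an atom at the right endpoint of its interval changes no term, and $\zeta$ may be assumed discrete. Second, by Proposition \ref{prop:general: for B, parisi doesn't depend away from support}, whose argument applies verbatim on $\mathscr{Y}^0$ since (\ref{eqn:def:A continuum}) defines $\cal{A}$ there, we may replace $\b{\Phi}$ off $\supp(\zeta)$ by the piecewise-linear interpolant taking the forced values $\b{0}$ at $0$ and $\b{1}$ at $1$, again without changing $\cal{A}$. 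At this stage $(\zeta,\b{\Phi})$ is, up to relabelling its atoms, precisely of the form generated by a Panchenko pair except that $\supp(\zeta)$ need not contain $\{0,1\}$; and it already lies in $\mathscr{Y}_{fin}$ as soon as $\zeta$ charges no neighbourhood of $0$, because the constraint (\ref{eq:condition for Psi}) with $\b{q}=\b{1}$ forces $\Phi_x(s)\le|\Om|s<1$ for $s<1/|\Om|$, so the sole possible obstruction to membership in $\mathscr{Y}$ is an atom of $\zeta$ too near $0$.

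Finally I would remove the remaining defects by a small perturbation, using a different one for each target family: pushing $\zeta$ forward under $s\mapsto(1-\delta)s+\delta$ to land in $\mathscr{Y}_{fin}$, and passing to $(1-2\delta)\zeta+\delta\delta_0+\delta\delta_1$ (a discrete measure with $0,1$ in its support, and with $\b{\Phi}$ still piecewise linear across the new breakpoints) to land in the Panchenko family. In either case $s\mapsto\zeta_\delta([0,s])$ converges to $s\mapsto\zeta([0,s])$ off the countable set of atoms of $\zeta$ as $\delta\to0$, so by dominated convergence every integral in (\ref{eqn:def:A continuum}) converges; combined with the uniform bound $d_x(0)\le\xi_x'(1)$ (from $\int_0^1\xi_x''(\Phi_x(s))\Phi_x'(s)\,ds=\xi_x'(1)-\xi_x'(0)$) and the fact that the minimization over $\b{b}$ may be confined to a fixed compact set as in Lemma \ref{lem:CS: minimizer in b exists}, this yields $\cal{A}(\zeta_\delta,\b{\Phi})\to\cal{A}(\zeta,\b{\Phi})$. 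Choosing $\delta$ small enough then produces the required approximants and closes the argument. The main obstacle is exactly this last continuity claim: $\cal{A}$ is an infimum over $\b{b}$ subject to $D+\b{b}-\b{d}(0)>0$, and allowing $\b{\Phi}$ to attain the value $1$ places one at the boundary of the region over which this minimization runs, so the continuity of $\cal{A}$ (and the stability of its infimum) there is not automatic; I would deduce it from Lemma \ref{lem:CS: minimizer in b exists} and the continuity estimates collected in Appendix \ref{section:appendix:B appendix} (cf. Proposition \ref{prop:A and B continuity}) rather than reprove them.
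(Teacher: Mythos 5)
The paper itself offers no proof of this lemma; it only asserts it following a one-sentence justification ("both parametrizations are dense in $\mathscr{Y}^0_{fin}$ up to the equivalence of Proposition~\ref{prop:general: for B, parisi doesn't depend away from support}, so one may easily obtain the result"). Your proposal is essentially a fully fleshed-out version of that same sketch: you use Proposition~\ref{prop:general: for B, parisi doesn't depend away from support} (extended to $\mathscr{Y}^0$) to reduce any $(\zeta,\b{\Phi})\in\mathscr{Y}^0_{fin}$ to a piecewise-linear normal form, then cheap set-inclusions for one direction and a small perturbation for the other. The structure is sound. A few remarks. First, the collapsing step is clean because every $\zeta$-dependent term in (\ref{eqn:def:A continuum}) carries a factor $\Phi_x'(s)$, and the level sets of $\b{\Phi}$ on which the continuous mass of $\zeta$ must sit have $\Phi_x'=0$. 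Second, the extension of Proposition~\ref{prop:general: for B, parisi doesn't depend away from support} to $\mathscr{Y}^0$ does go through: the only obstruction to leaving $\mathscr{Y}$ is $\Phi_x(s)$ hitting $1$ before $s=1$, and unlike $\cal{B}$ (which requires $\b{\delta}(s)>0$) the functional $\cal{A}$ only needs $D+\b{b}-\b{d}(s)>0$, which is monotone in $s$ and therefore implied by membership of $\b{b}$ in the domain of the infimum. Third, and most importantly, you worry about full continuity of $\cal{A}$ in $\zeta$; in fact you only need \emph{upper semicontinuity} along your $\delta$-family. For the inequality $\cal{A}(\zeta,\b{\Phi})\ge\inf_{\text{target}}\cal{A}$ it suffices to pick, for each $\e>0$, a $\b{b}$ with $D+\b{b}-\b{d}(0)\ge 2cI$ for some $c>0$ and $\cal{A}(\zeta,\b{\Phi},\b{b})\le\cal{A}(\zeta,\b{\Phi})+\e$, note that for small $\delta$ one still has $D+\b{b}-\b{d}_\delta(0)\ge cI$ (since $\b{d}_\delta(0)\to\b{d}(0)$ by dominated convergence), conclude $\cal{A}(\zeta_\delta,\b{\Phi},\b{b})\to\cal{A}(\zeta,\b{\Phi},\b{b})$ by dominated convergence with the uniform resolvent bound $c^{-1}$, and use $\cal{A}(\zeta_\delta,\b{\Phi})\le\cal{A}(\zeta_\delta,\b{\Phi},\b{b})$. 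This avoids the appeal to Lemma~\ref{lem:CS: minimizer in b exists} (and hence to the hypotheses $\xi_x'(0),\xi_x''(1)\neq0$ that it imposes), which is worth doing since the present lemma should hold without those hypotheses. With that adjustment the argument is complete and correct.
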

	
	\subsection{Properties of Ruelle Probability Cascades}
	
	We will now recall some basic information about RPCs and their associated measures. For a precise construction of these objects and derivation of the stated results, see Chapter 2 of \cite{panchenko}, whose notation we try to closely follow. To begin, take $r\ge 1$, and fix a choice of sequence $\vec{t}$ as in (\ref{eqn:def:x-sequence-Pan}). Given this, there is an associated random probability measure on $\N^r$, whose weights we will denote $(v_{\alpha})_{\alpha\in \N^r}$, known as the associated RPC.
	
	Now fix an infinite dimensional separable real Hilbert space $H$. Given an additional sequence
	there is a further random measure on $H$, which is a.s. supported on some set of non-random collection vectors $(h_{\alpha})_{\alpha\in \N^r}$. These vectors have a key property.
	For $\alpha,\beta\in \N^r$, we will denote by $\alpha \wedge \beta$ the index of the least common ancestor: 
	\[\al\wedge \be=\inf \l(\{r\}\cup \{i:\al_i\neq \be_i\}\r).\]
	Then the vectors are such that $(h_{\al},h_{\be})_H=q^{\al \wedge \be}$. $G$ is then given by the random probability measure, $G$, supported on the points $h_{\alpha}\in H$, with weights $G(\{h_{\alpha}\})=v_{\alpha}$.
	
	We will now begin recalling the properties of $G$ we will require, beginning first with how to recover some of the data $(\vec{t},\vec{q})$ from it. Let us denote the probability measure associated with our fixed $(\vec{t},\vec{q})$ via Remark \ref{remark:associated to measure} as $\zeta$. The first property we need (see (2.82) of \cite{panchenko}) is that the law of $(v,v')_H$, where $(v,v')\in H^{\times 2}$ is sampled from $\E G^{\otimes 2}$, is given by $\zeta$. That is for any continuous function $f:\R\to \R$
	\[\E \<f((v,v')_H)\>=\int f(q)\zeta(dq),\label{eqn:average is zeta}\]
	where $\<*\>$ denotes the average of $(v,v')$ under $G^{\otimes 2}$.
	
	The next property we will need is the existence of some centered Gaussian processes defined on $\{h_{\alpha}\}_{\alpha\in \N^r}$. Let $\phi:[0,1]\to \R$ be any non-decreasing function. Then there is a centered Gaussian process, $g_{\phi}$, defined on the points $\{h_{\alpha}\}$, such that for $\al,\be\in \N^r$,
	\[\E [g_{\phi}(h_{\alpha})g_{\phi}(h_{\beta})]=\phi(q^{\al\wedge \be}).\label{eqn:property of RPC:gaussians}\]
	
	\begin{remark}
		\label{remark:alpha-h alpha summation}
		We observe that if we denote by $\<f(v)\>$ the average over $v$ sampled from $G$, then defining $f(\alpha):=f(h_\alpha)$, we have that
		\[\<f(v)\>=\sum_{\alpha\in \N^r} v_{\alpha}f(\alpha).\]	
		We at times employ the latter method of writing this average when we believe one is more clear.
	\end{remark}
	
	We now state the final property we need for $G$. Let us fix some random variable $\omega$ taking values in some Borel space $T$ (for example $\R^n$), as well as some measurable function $F:T^r\to \R$. Let us furthermore fix a family of i.i.d copies of $\omega$, $(\omega_{\beta})_{\beta\in \mathscr{A}}$. We will use the abuse of notation by letting, for $\alpha\in \N^r$,  
	\[F(h_{\alpha})=F(\alpha):=F((\omega_{\beta})_{\beta\in p(\alpha)}).\] 
	We will need to evaluate expressions of the form
	\[\E \log \<\exp F(v)\>=\E \log \left(\sum_{\alpha\in \N^r}v_{\alpha}\exp(F(\alpha))\right).\]
	The final result we need is a recursive formula to compute this. Define $F_r=F$, and for $0\le k<r$, we define $F_k:T^{k}\to \R$ by the iterative relation
	\[F_k(z_0,\dots, z_k)=\frac{1}{t_k}\log \E \exp t_k F_{k+1}(z_0,\dots, z_k, \omega).\label{eqn:ignore-1293923}\]
	It then follows from Theorem 2.9 of \cite{panchenko} that 
	\[\E \log \left(\sum_{\alpha\in \N^r}v_{\alpha}\exp F(\alpha)\right)=F_0.\label{eqn:property of RPC:recursion}\]
	
	Finally, we will need to recall an important invariance-property which RPC's satisfy, which follows as a special case of Theorem 2.11 of \cite{panchenko}. Namely, with $F$ as above and any $f:[-1,1]\to \R$ we have that
	\[\E \left[\frac{\sum_{\alpha,\alpha'\in \N^r}f(q^{\alpha \wedge \alpha '})v_\alpha v_{\alpha'} \exp(F(\alpha)+F(\alpha'))}{\left(\sum_{\alpha\in \N^r}v_\alpha \exp(F(\alpha))\right)^2}\right]=\int f(q)\zeta(dq).\label{eqn:invariance of RPC}\]
	Note that when $F=0$ this coincides with (\ref{eqn:average is zeta}) above.
	
	\subsection{An Approximate Parisi Functional \label{subsection:Abstract Functional}}
	
	We now define a family of functionals defined on a certain class of random probability measures. We will primarily evaluate these functionals on the Gibbs measure associated to our Hamiltonians (or more precisely, their perturbed variants) as well as on a family of RPCs which we will show approximate our Gibbs measure. For now however, it will be advantageous to adopt a general framework.
	
	Let $\mu$ be a random probability measure on $H$. We will assume that a.s. the support of $\mu$ lies in a fixed subset of the unit ball of $H$. Next, let $\b{R}:H\times H\to [-1,1]^{\Om}$ be some symmetric continuous function such that a.s. $\b{R}(u,u)(x)=1$ for any $x\in \Om$ and $u\in H$ sampled from $\mu$.
	
	Now first assume that, a.s. in $\mu$, there is a sequence, $(Z_x)_{x\in \Om}$, of independent centered Gaussian processes defined on a set containing the support of $\mu$, such that for $u,u'\in H$,
	\[\E Z_x(u)Z_x(u')=\xi_x(\b{R}_x(u,u')).\label{def:RPC:Z_x}\]
	Moreover, assume there is an additional independent centered Gaussian process, $Y$, also defined on a set containing the support of $\mu$, such that
	\[\E Y(u)Y(u')=\sum_{x\in \Om}\theta_x(\b{R}_x(u,u')).\label{def:RPC:Y}\]
	Both of these processes are assumed to be independent of $\mu$.
	
	Now fix $M\ge 1$, and let $m$ be a finite (deterministic) positive measure on $S_M$, and let $\vec{Z}_x(u)=(Z_x^i(u))_{1\le i\le M}$ be a sequence of i.i.d copies of $Z_x(u)$. We consider
	\[\Gamma^M_{1,m,\b{\xi}}(\mu,\b{R})=|\Om|^{-1}M^{-1}\E \log \left\<\int_{S_M^{\Om}}\exp \left(\sum_{x\in \Om}(\vec{Z}_x(U),\b{v})\right)m(d\b{v})\right \>_{\mu},\]
	where here $U$ is sampled from $\mu$, and $\<*\>_{\mu}=\E[*|\mu,\vec{Z}_x(U)]$ denotes the expectation taken only in $U$. Similarly we introduce,
	\[\Gamma^M_{2,m,\b{\xi}}(\mu,\b{R})=|\Om|^{-1}M^{-1}\E \log \<\exp( \sqrt{M} Y(U))\>_{\mu},\]
	and finally our functional
	\[\Gamma^M_{m,\b{\xi}}(\mu,\b{R})=\Gamma^M_{1,m,\b{\xi}}(\mu,\b{R})-\Gamma^M_{2,m,\b{\xi}}(\mu,\b{R}).\]
	
	We need a result that shows we may approximate $\Gamma^M_{m,\b{\xi}}$ in terms of a continuous function of  a number of i.i.d samples (i.e. replicas) from $\mu$.  This result is an easy modification of Theorem 1.3 of \cite{panchenko} (see also Proposition 2.6 of \cite{erik}) and its proof is omitted.
	\begin{lem}
		\label{lem:perturbation:approximation of Parisi formula}
		Choose $\epsilon>0$ and $M\ge 1$. Then we may find $\ell\ge 1$ and a function $F_{\epsilon}:([-1,1]^{\Om})^{\ell\times \ell}\to \R$ such that for all $(\mu,\b{R})$ in the domain of $\Gamma^M_{m,\b{\xi}}$,
		\[|\Gamma^M_{m,\b{\xi}}(\mu,\b{R})-\E \<F_{\epsilon}([\b{R}(U_i,U_j)]_{1\le i,j\le \ell})\>_{\mu}|\le \epsilon,\]
		where $U_1,\dots U_\ell$ are i.i.d samples from $\mu$, and $\<*\>_{\mu}$ denotes the expectation taken only with respect to these.
	\end{lem}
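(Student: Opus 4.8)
The plan is to treat $\Gamma^M_{1,m,\b{\xi}}$ and $\Gamma^M_{2,m,\b{\xi}}$ separately, in each case following the template of Theorem~1.3 of \cite{panchenko} (compare Proposition~2.6 of \cite{erik}); the only genuinely new features are the species index $\Om$ and, for $\Gamma^M_1$, the extra integration against $m$, neither of which affects the structure of the argument. For the fixed $M$ in the statement, write $Z_2 := \<\exp(\sqrt{M}\,Y(U))\>_\mu$ and $Z_1 := \<\int_{S_M^{\Om}}\exp(\sum_{x\in \Om}(\vec{Z}_x(U),\b{v}))\,m(d\b{v})\>_\mu$, so that $\Gamma^M_{i,m,\b{\xi}}(\mu,\b{R}) = (M|\Om|)^{-1}\E \log Z_i$. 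The crucial observation is that the integer moments of $Z_i$ are explicit functions of a finite corner of the overlap array: integrating out the (mean-zero) Gaussian processes first, one obtains, for $U_1,\dots,U_k$ i.i.d.\ from $\mu$,
\[
\E Z_2^k = \E\left\<\exp\left(\frac{M}{2}\sum_{i,j\le k}\sum_{x\in\Om}\theta_x\big(\b{R}_x(U_i,U_j)\big)\right)\right\>_\mu,
\]
and likewise $\E Z_1^k = \E\<g_k([\b{R}(U_i,U_j)]_{i,j\le k})\>_\mu$, where $g_k$ is the deterministic function produced by the Gaussian integral followed by the $k$-fold integration against $m$. Since all overlaps lie in $[-1,1]$ and $\xi_x(1),\theta_x(1)<\infty$, these moments are bounded by a constant depending only on $k,M,\b{\xi},\b{\theta},m$, uniformly in $(\mu,\b{R})$.

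Next I would record the uniform a priori control on $Z_i$. Jensen's inequality, using that the Gaussian fields are centered and independent of $\mu$, bounds $\E\log Z_i$ below by the logarithm of the mass of the reference measure, and Jensen in the other direction together with the moment bound above gives a matching upper bound; in particular $(M|\Om|)^{-1}\E\log Z_i$ is bounded above and below by constants independent of $(\mu,\b{R})$. Gaussian concentration (the Borell--TIS inequality, cf.\ Corollary~\ref{cor:Borell-TIS-free-energy-noncompact}) bounds $\mathrm{Var}(\log Z_i)$ by the supremum of the pointwise variances of the relevant Gaussian Hamiltonian, i.e.\ by $M\sum_x\xi_x(1)$ resp.\ $M\sum_x\theta_x(1)$, independently of $(\mu,\b{R})$. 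Combining the mean and variance bounds, for every $\eta>0$ there is a compact interval $[a_\eta,b_\eta]\subset(0,\infty)$, depending on $M,\b{\xi},\b{\theta},m,\eta$ but not on $(\mu,\b{R})$, with $\P(Z_i\in[a_\eta,b_\eta])\ge 1-\eta$ for $i=1,2$.

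Given $\epsilon>0$ I would then choose $\eta$ small, approximate $t\mapsto (M|\Om|)^{-1}\log t$ uniformly on $[a_\eta,b_\eta]$ to within $\epsilon/4$ by a polynomial $P_i$ (Weierstrass), and estimate $\E\big[(M|\Om|)^{-1}\log Z_i - P_i(Z_i)\big]$ by splitting according to whether $Z_i\in[a_\eta,b_\eta]$. On the good event the integrand is at most $\epsilon/4$ in absolute value; on the complementary event its contribution is controlled by Cauchy--Schwarz, using the uniform second-moment bound on $\log Z_i$ (from the mean/variance control) for the first term and the uniform moment bounds on $Z_i$ for $P_i(Z_i)$, and is therefore $o_\eta(1)$ uniformly in $(\mu,\b{R})$. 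Hence $|\Gamma^M_{i,m,\b{\xi}}(\mu,\b{R}) - \E P_i(Z_i)|\le \epsilon/2$ once $\eta$ is small enough. Expanding $\E P_i(Z_i)=\sum_k p_{i,k}\E Z_i^k$ and inserting the moment identities from the first step rewrites $\E P_i(Z_i)$ exactly as $\E\<F_{i,\epsilon}([\b{R}(U_j,U_{j'})]_{j,j'\le\ell_i})\>_\mu$ for $\ell_i=\deg P_i$ and $F_{i,\epsilon}$ a fixed polynomial in the entries of the array, in particular bounded and continuous on $([-1,1]^{\Om})^{\ell_i\times\ell_i}$. Taking $\ell=\max(\ell_1,\ell_2)$ and $F_{\epsilon}=F_{1,\epsilon}-F_{2,\epsilon}$, regarded as functions on the $\ell\times\ell$ array, yields the claimed bound $|\Gamma^M_{m,\b{\xi}}(\mu,\b{R})-\E\<F_\epsilon([\b{R}(U_i,U_j)]_{i,j\le\ell})\>_\mu|\le\epsilon$.

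The main obstacle is exactly the phrase ``for all $(\mu,\b{R})$ in the domain of $\Gamma^M_{m,\b{\xi}}$'': every estimate above must be uniform over the entire admissible class of pairs, which is why one passes through Gaussian concentration and Jensen rather than any compactness argument in $\mu$. Because $M$ is fixed in the statement and all the constants (the interval $[a_\eta,b_\eta]$, the moment bounds, the concentration bounds) depend only on the fixed data $M,\b{\xi},\b{\theta},m,\Om$, the polynomial-approximation step takes place on a fixed compact interval and the argument goes through with constants uniform in $(\mu,\b{R})$; the adaptation from the one-site setting of \cite{panchenko,erik} is then purely bookkeeping in the index $\Om$ and in the auxiliary measure $m$.
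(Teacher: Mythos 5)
The plan follows the right template, and the moment identities $\E Z_i^k = \E\<g_k([\b{R}(U_i,U_j)]_{i,j\le k})\>_\mu$ with $g_k$ continuous on $([-1,1]^{\Om})^{k\times k}$ are correctly set up. The gap is in the step where you pass from the good event $\{Z_i\in[a_\eta,b_\eta]\}$ to the final bound, and it is not cosmetic.

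The issue is the order of the quantifiers in "on the complementary event its contribution \dots is therefore $o_\eta(1)$". The polynomial $P_i = P_{i,\eta}$ is chosen to approximate $(M|\Om|)^{-1}\log$ to accuracy $\epsilon/4$ on $[a_\eta,b_\eta]$, an interval that grows as $\eta\downarrow 0$; therefore both the degree and the size of the coefficients of $P_{i,\eta}$ necessarily grow with $1/\eta$ (near the left endpoint, $\log$ has Lipschitz constant $a_\eta^{-1}\to\infty$, so by Jackson's theorem the required degree blows up). Meanwhile, $Z_i$ is a log-normal-type variable whose moments grow like $\E Z_i^k \asymp \exp(cMk^2)$. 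Consequently
\[
\E\bigl[|P_{i,\eta}(Z_i)|\,\mathbf{1}_{Z_i\notin[a_\eta,b_\eta]}\bigr]\le \bigl(\E P_{i,\eta}(Z_i)^2\bigr)^{1/2}\eta^{1/2}
\]
involves $\E Z_i^{2\deg P_{i,\eta}}$, which is double-exponentially large in $K_\eta\asymp\sqrt{\log(1/\eta)}$, and this dominates the $\eta^{1/2}$ factor. So the Cauchy--Schwarz bound diverges rather than tending to zero. (The first half of your bad-event estimate, $\E[|\log Z_i|\mathbf{1}_{bad}]\to 0$, is fine, because $\log Z_i$ has uniformly sub-Gaussian tails; it is specifically the $P_{i,\eta}(Z_i)$ contribution that breaks.)

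The resolution, both in Panchenko's Theorem~1.3 and in the argument this lemma is modeled on, is to truncate the exponential \emph{inside} the partition function before taking the logarithm, rather than truncating the logarithm of an unbounded $Z_i$. Replace $\exp$ by $\exp_a(x)=\min(\max(\exp(x),e^{-a}),e^a)$ in the integrand; by Cauchy--Schwarz the resulting $Z_i^a$ then lies \emph{deterministically} in a compact interval of the form $[e^{-aM|\Om|},e^{aM|\Om|}]$ (times $m(S_M^\Om)^{|\Om|}$), so a Weierstrass polynomial approximating $\log$ on that fixed compact interval works everywhere $Z_i^a$ takes values, with no bad event to control. One then separately shows that replacing $\exp$ by $\exp_a$ changes $\E\log Z_i$ by $o_a(1)$ uniformly in $(\mu,\b{R})$, using exactly the Jensen and Borell--TIS bounds you already derived. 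Finally, the conditional expectation of the truncated integrand given the overlaps is still a continuous function of the overlap array (it is the expectation of a bounded continuous function of a Gaussian vector whose covariance is determined by the overlaps), so the moment-to-overlap translation you wrote down still applies, just without a closed-form for $g_k$. With this modification, your argument closes.
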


	With all of these preliminaries established, we will now evaluate the above functional on the class of RPCs, on a certain class of functions. We will then relate this sequence of functionals to the Parisi functional $\cal{A}$ considered above.
	
	Let $(\zeta,\b{\Phi})$ be the pair associated to $(\vec{t},\vec{\b{q}})$ by Remark \ref{remark:associated measure and functions}. Next we define $\b{R}:H\times H\to [-1,1]^{\Om}$ by
	\[\b{R}^{\b{\Phi}}(v,v')=\b{\Phi}(\min(\max((v,v')_H,0),1)).\label{eqn:rpc-1:R^Phi}\]
	Let us further let the measure $m$ on $S_M^{\Om}$ being given
	\[\omega_{M,D,\b{h}}(d\b{u})=\exp\left(-\frac{1}{2}\sum_{x,y\in \Om}D_{xy}(\b{u}(x),\b{u}(y))+\sq{M}\sum_{x\in \Om}\b{h}(x) \b{u}_1(x)\right)\omega(d\b{u}).\label{def:omega-measure-D}\]
	Let $G_{\zeta}$ be the RPC considered in the previous subsection. Then for any $M\ge 1$, we define
	\[\cal{A}_M(\vec{\b{q}},\vec{t})=\Gamma^M_{\omega_{M,D,\b{h}},\b{\xi}}(G_{\zeta},\b{R}^{\b{\Phi}}).\label{def:RPC:B-Gamma}\]
	
	Now that we have finally introduced $\cal{A}_M$, we may state the main result of this section.
	
	\begin{prop}
		\label{prop:rpc-intro:infimum identification of B in limit}
		\[\lim_{M\to \infty}\inf_{\vec{\b{q}},\vec{m}}\cal{A}_M(\vec{\b{q}},\vec{t})=\inf_{\vec{\b{q}},\vec{m}}\cal{A}(\vec{\b{q}},\vec{t}).\]
	\end{prop}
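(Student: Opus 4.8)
The plan is to show the two infima agree by proving a matched pair of inequalities, the harder of which is that $\liminf_{M\to\infty}\inf\cal{A}_M \ge \inf\cal{A}$, with the reverse direction $\limsup_{M\to\infty}\inf\cal{A}_M\le\inf\cal{A}$ following from an explicit evaluation of $\cal{A}_M$ on a fixed pair $(\vec{\b q},\vec t)$ and a direct computation of its $M\to\infty$ limit. For the latter, I would fix $(\vec{\b q},\vec t)$, unwind the definition \eqref{def:RPC:B-Gamma}: $\cal{A}_M(\vec{\b q},\vec t)=\Gamma^M_{\omega_{M,D,\b h},\b\xi}(G_\zeta,\b R^{\b\Phi})$ splits as $\Gamma^M_1-\Gamma^M_2$. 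Each of these is an RPC average of a log-partition function, so I would apply the recursive formula \eqref{eqn:property of RPC:recursion} to reduce both pieces to an $r$-fold iteration of the operators $F_k\mapsto \frac1{t_k}\log\E\exp(t_k F_{k+1})$. The $\Gamma^M_1$ term involves the Gaussian field $\vec Z_x$ with covariance $\xi_x(\b R^{\b\Phi}_x(v,v'))$ integrated against the measure $\omega_{M,D,\b h}$ on $S_M^\Om$; because $\b R^{\b\Phi}$ takes the finitely many values $\b q^k$ along the cascade tree, the Gaussian increments decouple level by level, and at each level one is left with a Gaussian integral over a product of spheres against a quadratic-plus-linear weight, whose $M\to\infty$ asymptotics is governed by a Laplace/Varadhan computation producing exactly the $\Lambda^D$-type and $K^D$-type terms and the $(D+\b b-\b d^1)^{-1}$ term appearing in the formula \eqref{eqn:def:A continuum} for $\cal{A}$; the optimization over the sphere radius becomes the optimization over $\b b$. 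The $\Gamma^M_2$ term is the corresponding computation for the process $Y$ with covariance $\sum_x\theta_x(\b R^{\b\Phi}_x)$, which is an exactly solvable one-dimensional RPC average (no integral over spheres), contributing the $\theta_x$-terms. Matching the outcome against \eqref{eqn:def:A in terms of s} (the formula for $\cal A(\vec{\b q},\vec t)$ in this parametrization, together with Lemma \ref{lem: all minimums are equivalent}) gives $\lim_{M\to\infty}\cal{A}_M(\vec{\b q},\vec t)=\cal{A}(\vec{\b q},\vec t)$ for each fixed $(\vec{\b q},\vec t)$, hence $\limsup_M\inf\cal A_M\le\inf\cal A$.

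For the reverse inequality I would argue by a compactness-plus-continuity scheme. Fix $\epsilon>0$ and $M$; by Lemma \ref{lem:perturbation:approximation of Parisi formula} there are $\ell\ge 1$ and a continuous $F_\epsilon:([-1,1]^\Om)^{\ell\times\ell}\to\R$ with $|\cal{A}_M(\vec{\b q},\vec t)-\E\<F_\epsilon([\b R^{\b\Phi}(U_i,U_j)])\>_{G_\zeta}|\le\epsilon$ uniformly. By the invariance property \eqref{eqn:invariance of RPC} and \eqref{eqn:average is zeta}, the law of the overlap array $[\b R^{\b\Phi}(U_i,U_j)]_{i,j}$ under $\E G_\zeta^{\otimes\ell}$ is determined entirely by $(\zeta,\b\Phi)$ — indeed it is the array obtained from an ultrametric sampling from $\zeta$ composed with $\b\Phi$ — so $\E\<F_\epsilon\>_{G_\zeta}$ is a continuous functional of $(\zeta,\b\Phi)$ in a suitable weak topology. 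Taking a minimizing sequence $(\vec{\b q}^{(n)},\vec t^{(n)})$ for $\inf\cal{A}_M$, the associated pairs $(\zeta^{(n)},\b\Phi^{(n)})$ live in a compact set (probability measures on $[0,1]$ and monotone $[0,1]$-valued functions, with the constraint \eqref{eq:condition for Psi}), so a subsequence converges to some $(\zeta^\infty,\b\Phi^\infty)\in\mathscr{Y}^0$; combined with the $M\to\infty$ evaluation above this passes to the limit and exhibits $\inf\cal{A}\le\liminf_M\inf\cal{A}_M+\epsilon$. Letting $\epsilon\to0$ closes the loop.

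The main obstacle I expect is the uniform control needed in the $M\to\infty$ limit of the first inequality: the Laplace-type asymptotics for $\Gamma^M_1$ must be carried out inside the nested logarithms of the RPC recursion \eqref{eqn:ignore-1293923}, which requires $M$-uniform upper and lower bounds on the inner sphere integral $\int_{S_M^\Om}\exp(\sum_x(\vec Z_x(U),\b v))\,\omega_{M,D,\b h}(d\b v)$ and its iterated exponential moments, before one can interchange $\lim_M$ with the $r$ applications of $F_k\mapsto\frac1{t_k}\log\E\exp(t_kF_{k+1})$. This is exactly the place where the confining quadratic form $D$ (here playing the role of $\mu I - t\Delta$) enters non-trivially, since it is what makes those sphere integrals finite and forces the correct $\Lambda^D$, $K^D$ terms; the concentration inequalities of Appendix A (Proposition \ref{prop:Borell-TIS-free-energy} and its corollaries) together with Gaussian interpolation bounds should supply the needed uniformity, but assembling them cleanly through the cascade recursion is the technical heart of the argument.
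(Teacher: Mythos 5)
Your overall two-inequality strategy, and your treatment of the ``easy'' direction, closely mirrors the paper: the paper first proves pointwise convergence $\lim_{M\to\infty}\cal{A}_M(\vec{\b q},\vec t)=\cal{A}(\vec{\b q},\vec t)$ (Lemma~\ref{lem:rpc-intro:identification of B in limit}, via Lemma~\ref{lem:upper bound:replacement by b}), exactly by introducing a Lagrange multiplier $\b b$ for the spherical constraint, replacing the sphere by a thin annulus (Lemma~\ref{lem:thickening of WN}), passing to the full Gaussian integral, and running the RPC recursion (\ref{eqn:property of RPC:recursion}). Your intuition that the $M\to\infty$ Laplace asymptotics of the spherical integrals produces the $\Lambda^D$, $K^D$ and $(D+\b b-\b d^1)^{-1}$ terms, and that the radius optimization becomes the $\b b$-optimization, is precisely what happens.

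However, your ``reverse'' inequality has a genuine gap, and it is exactly where the argument becomes delicate. You invoke Lemma~\ref{lem:perturbation:approximation of Parisi formula} to get continuity of $(\zeta,\b\Phi)\mapsto \cal{A}_M(\vec{\b q},\vec t)$, take a minimizing sequence for fixed $M$, extract a weakly convergent subsequence, and claim to ``pass to the limit'' using the $M\to\infty$ pointwise convergence. But Lemma~\ref{lem:perturbation:approximation of Parisi formula} only provides a continuity modulus that \emph{depends on $M$}: the replica number $\ell$ and the approximating function $F_\epsilon$ are chosen after $M$. To interchange the two limits (the $n\to\infty$ minimizing sequence and the $M\to\infty$ pointwise limit), you need a modulus of continuity that is uniform in $M$ — i.e.\ genuine equicontinuity of the family $\{\cal{A}_M\}_{M\ge1}$ — so that pointwise convergence upgrades to uniform convergence on the (pre)compact space of parameters. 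This is what the paper's Lemma~\ref{lem:lowerbound:lipz-cont for Parisi} supplies: a Guerra-interpolation argument, combined with the RPC invariance identity~(\ref{eqn:invariance of RPC}), shows that
\[
|\cal{A}_M(\vec{\b q}_0,\vec t_0)-\cal{A}_M(\vec{\b q}_1,\vec t_1)|\le \sum_{x\in\Om}\frac{\xi''_x(1)}{|\Om|}\int_0^1\bigl|((\Phi^0_x)_*\zeta_0)([0,q])-((\Phi^1_x)_*\zeta_1)([0,q])\bigr|\,dq,
\]
with a Lipschitz constant that is independent of $M$. Without this estimate (or an equivalent uniform-in-$M$ bound), the step ``combined with the $M\to\infty$ evaluation above this passes to the limit'' is not justified, and your reverse inequality does not follow. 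The fix is to replace your reliance on the abstract approximation Lemma~\ref{lem:perturbation:approximation of Parisi formula} with this quantitative, $M$-uniform Lipschitz bound, after which the standard equicontinuity-plus-pointwise-convergence argument on the Wasserstein-compact parameter space closes the proof.
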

	
	To show this we first show a point-wise form of this convergence. Morally, this proof consists of showing that we may replace the integration over $S_M$ in $\cal{A}_M$ with integration over $\R^M$ as $M\to \infty$ by introducing a Lagrange multiplier (which is the $\b{b}$ introduced in the definition of $\cal{A}$). Then we may simply apply the inductive formula (\ref{eqn:ignore-1293923}) to simplify this integration. 
	
	\begin{lem}
		\label{lem:rpc-intro:identification of B in limit}
		We have that
		\[\lim_{M\to \infty}\cal{A}_M(\vec{\b{q}},\vec{t})=\cal{A}(\vec{\b{q}},\vec{t}).\]
	\end{lem}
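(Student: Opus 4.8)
The plan is to evaluate $\cal{A}_M(\vec{\b{q}},\vec{t})=\Gamma^M_{\omega_{M,D,\b{h}},\b{\xi}}(G_{\zeta},\b{R}^{\b{\Phi}})$ explicitly using the recursive formula (\ref{eqn:property of RPC:recursion}) for RPC averages, and show the limit as $M\to\infty$ matches the explicit formula for $\cal{A}(\vec{\b{q}},\vec{t},\b{b})$ minimized over $\b{b}$. First I would handle the two pieces $\Gamma^M_1$ and $\Gamma^M_2$ separately. For $\Gamma^M_2$, the Gaussian process $Y$ on the RPC points has covariance $\sum_x\theta_x(\b{R}^{\b{\Phi}}_x(v,v'))=\sum_x\theta_x(\Phi_x(q^{\al\wedge\be}))$, so one applies the recursion (\ref{eqn:ignore-1293923}) directly: at each level $k$ the conditional variance increment is $\sum_x t_k(\theta_x(\b{q}^{k+1}(x))-\theta_x(\b{q}^k(x)))$ (scaled by $M$), and a standard computation of Gaussian exponential moments telescopes to give $\Gamma^M_2 = \frac{1}{2|\Om|}\sum_{x\in\Om}\sum_{0\le k<r}t_k(\theta_x(\b{q}^{k+1}(x))-\theta_x(\b{q}^k(x)))$ exactly, independent of $M$. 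This reproduces the $\theta$-term in $\cal{A}(\vec{\b{q}},\vec{t},\b{b})$.

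The main work is $\Gamma^M_1$, which is $|\Om|^{-1}M^{-1}\E\log\<\int_{S_M^\Om}\exp(\sum_x(\vec Z_x(U),\b v))\,\omega_{M,D,\b h}(d\b v)\>_{G_\zeta}$. The inner integral over the product of spheres $S_M^\Om$ is the obstacle: it is not a Gaussian integral. The strategy, as the text hints, is to replace $S_M^\Om$ by $(\R^M)^\Om$ at exponential cost $O(1)$ controlled by a Lagrange multiplier $\b b$. Concretely, write $\int_{S_M^\Om}(\cdots)\omega(d\b v)$ and insert, for each $x$, the identity (up to subexponential factors) relating the sphere integral to $\int_{(\R^M)^\Om}\exp(-\frac12\sum_x\b b(x)\|\b v(x)\|^2)(\cdots)d\b v$ evaluated at the saddle-point $\b b$; equivalently, one uses the Laplace/Varadhan argument that $M^{-1}\log\int_{S_M}e^{M G(\b v/\sqrt M)}\omega(d\b v)$ and $\inf_{b>0}\big(\frac b2 + M^{-1}\log\int_{\R^M}e^{MG-\frac{b}{2}\|\b v\|^2}d\b v\big)$ agree in the limit. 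Once the domain is $(\R^M)^\Om$ with the quadratic form $D+\b b$ in the exponent, the integral over $\b v$ becomes Gaussian: it produces $-\frac{1}{2M}\log\det(D+\b b) + \frac{1}{2M}\sum_{x,y}[(D+\b b)^{-1}]_{xy}(\sum_i Z_x^i(U)Z_y^i(U)) + (\text{external field term})$. The key point is that $\sum_{i=1}^M Z_x^i(U)Z_y^i(U)$ — a sum of $M$ i.i.d.\ products — concentrates, but one must be careful: after the Gaussian integration the remaining $\log\<\cdots\>_{G_\zeta}$ is an RPC average of an exponential whose exponent involves the quadratic form $\frac12\sum_{x,y}[(D+\b b)^{-1}]_{xy}\<Z_x(U),Z_y(U)\>$; one then applies the RPC recursion (\ref{eqn:ignore-1293923}) level by level. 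At each level the increment of the process $\vec Z_x$ has covariance $\xi_x(\b{R}^{\b\Phi}_x) = \xi_x(\Phi_x(q^{\al\wedge\be}))$; using $\b d^l(x)=\sum_{l\le k<r}t_k(\xi'_x(\b q^{k+1}(x))-\xi'_x(\b q^k(x)))$ and a Gaussian moment computation, the recursion telescopes into $\sum_{0\le k<r}\frac{1}{t_k}\log\frac{\det(D+\b b-\b d^{k+1})}{\det(D+\b b-\b d^k)}$ together with the $\xi'_x(0)$ and $\b h$ terms localized at the root level. I expect the identification of which terms land at which level of the recursion, and verifying the telescoping of the $\log\det$ increments, to be the most delicate bookkeeping.

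Finally I would assemble: $\Gamma^M_1 \to \frac{1}{2|\Om|}\big(|\Om|\log(2\pi) + \sum_{0\le k<r}\frac1{t_k}\log\frac{\det(D+\b b-\b d^{k+1})}{\det(D+\b b-\b d^k)} - \log\det(D+\b b) + \sum_x\b b(x) + \sum_{x,y}[(D+\b b-\b d^1)^{-1}]_{xy}(\b h(x)\b h(y)+\delta_{xy}\xi'_x(0))\big)$ where $\b b$ is the saddle point, and $\Gamma^M_2$ contributes exactly the $\theta$-term; subtracting gives $\inf_{\b b: D+\b b-\b d^1>0}\cal{A}(\vec{\b q},\vec t,\b b)=\cal{A}(\vec{\b q},\vec t)$. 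The Lagrange-multiplier/Varadhan step — justifying that the optimal $\b b$ for the $\R^M$ relaxation is precisely the one entering $\cal{A}(\vec{\b q},\vec t,\b b)$, and that the sphere constraint costs nothing asymptotically — is the main obstacle, together with uniform-in-$M$ control of the concentration needed to pass the limit inside $\log\<\cdot\>$; the convexity in $\b b$ established in Lemma \ref{lem:CS: minimizer in b exists} should make the saddle-point identification clean. Continuity and finiteness of all the $\xi_x,\theta_x$ pieces (from the analyticity of $\xi$ on $(-1-\e,1+\e)$) guarantee the Gaussian moment computations are legitimate.
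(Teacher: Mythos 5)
Your high-level plan is the same as the paper's: evaluate $\Gamma^M_2$ exactly by the RPC recursion (\ref{eqn:property of RPC:recursion}), getting the $\theta$-term formula (\ref{eqn:ignore-713}) independent of $M$; introduce a Lagrange multiplier $\b{b}$ to relax $S_M$ to $(\R^M)^\Om$ so that the inner $\b{v}$-integral becomes Gaussian; apply the RPC recursion level by level to reach the explicit formula (\ref{eqn:def:upper bound:explicity-y-def}) with its telescoping $\log\det$ terms; and finally recover $\cal{A}(\vec{\b{q}},\vec{t})=\inf_{\b{b}}\cal{A}(\vec{\b{q}},\vec{t},\b{b})$ by optimizing over $\b{b}$. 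You correctly identify that the $\log\det$ bookkeeping in the recursion is manageable and that the real difficulty is the Lagrange-multiplier/sphere-constraint step.

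However, there is a genuine gap in how you propose to close that step. The upper bound $\limsup_M\Gamma_1^M\le\inf_{\b{b}}W(\b{b})$ is easy — for any fixed $\b{b}$ one writes $\frac{1}{2}\b{b}(x)=\frac{1}{2}\b{b}(x)\|\b{u}(x)\|_M^2$ on the sphere, thickens slightly (this requires a quantitative estimate like Proposition \ref{prop:Guerra-disintegration-new-appendix}, which you don't mention), and then extends the integral from the thin annulus to all of $(\R^M)^\Om$. The lower bound $\liminf_M\Gamma_1^M\ge W(\b{b}^*)$ is the delicate direction, and your appeal to ``Laplace/Varadhan'' plus ``uniform-in-$M$ concentration'' is not a concrete argument here: the quantity is $\E\log\langle\cdot\rangle_{G_\zeta}$ of a Gaussian-weighted integral, not a Laplace-type integral with a deterministic rate function, and pushing large-deviation bounds through the $\E\log$ of a random RPC average is not something Varadhan's lemma does out of the box. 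The paper instead uses a direct perturbation argument: it partitions $(\R^M)^\Om$ into the annulus $S_M(1-\epsilon,1+\epsilon)^\Om$ and sets $V_x^\pm$ where a single coordinate's squared norm exceeds $1+\epsilon$ or falls below $1-\epsilon$; on $V_x^+$, say, it replaces $\b{b}^*(x)$ by $\b{b}^*(x)-t$, gaining a factor $e^{-tM(1+\epsilon)/2}$ while paying $Y^{\b{b}_t}(\b{h})$, and then uses the criticality condition $\partial_{\b{b}(x)}W(\b{b}^*)=0$ (from Lemma \ref{lem:CS: minimizer in b exists}) to see that the resulting bound $W(\b{b}_t)-\frac{t\epsilon}{2|\Om|}$ is strictly below $W(\b{b}^*)$ for small $t>0$. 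This is the mechanism you would need to supply; the convexity of $W$ alone is not enough, and concentration is not actually used at this point in the paper.
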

	
	We will now prepare for the proof of Lemma \ref{lem:rpc-intro:identification of B in limit}. As such, we will now fix the choice of $(\omega_{M,D,\b{h}},\b{\xi},G_{\zeta},\b{R}^{\b{\Phi}})$, and denote $\Gamma^M:=\Gamma^M_{\omega_{M,D,\b{h}},\b{\xi}}(G_{\zeta},\b{R}^{\b{\Phi}})$ and similarly for $\Gamma^M_i$. We may evaluate the term $\Gamma^M_2$ quite simply through the usage of (\ref{eqn:property of RPC:recursion}) to obtain
	\[
	\begin{split}
		\Gamma^M_{2}&=|\Om|^{-1}M^{-1}\E \log\left(\sum_{\al\in \N^r}v_\al e^{\sq{M}Y(\alpha)}\right)\\
		&=\frac{1}{2|\Om|}\sum_{x\in \Om}\sum_{0\le k<r}t_k(\theta_x(\b{q}^{k+1}(x))-\theta_x(\b{q}^{k}(x))).\label{eqn:ignore-713}
	\end{split}
	\]
	To obtain the term $\Gamma^M_{1}$, we will employ a large-derivation argument to replace these spherical integrals with a series of Gaussian integrals, which are $M$-independent, and to which we may effectively employ (\ref{eqn:property of RPC:recursion}).
	
	To begin, for $\b{b}\in \R^{\Om}$, we define for $\b{v}\in \R^{\Om}$, the function
	\[
	\begin{split}
		Y^{\b{b}}(\b{v})=|\Om|^{-1}\E \log \bigg(\sum_{\al\in \N^r}v_\al\int_{\R^\Om} \exp \bigg(&\sum_{x\in \Om}(Z_x(\alpha)+\b{v}(x))\b{u}(x)\\
		&-\frac{1}{2}\sum_{x,x'\in \Om}(D_{xx'}+\b{b}(x)\delta_{xx'})\b{u}(x)\b{u}(x')\bigg)d\b{u}\bigg),\label{eqn:def:Yb-first}
	\end{split}
	\]
	when such an expression exists. We may obtain an formula for this expression by again employing (\ref{eqn:property of RPC:recursion}), similarly to the proof of Lemma 3.1 in \cite{panchenko} (see also the proof of Corollary 3.1 of \cite{justin}). 
	Namely, let us form, for each $1\le k\le r$, independent centered Gaussian vectors $\b{z}^k\in \R^{\Om}$, with covariance
	\[\E[\b{z}^k(x)\b{z}^k(x')]=\delta_{x,x'}\left(\xi_x'(\b{q}^k(x))-\xi'_x(\b{q}^{k-1}(x))\right),\;\;\;\E[\b{z}^0(x)\b{z}^0(x')]=\delta_{x,x'}\xi_x'(0).\]
	Let us then define for $\b{v}\in \R^{\Om}$
	\[
	\begin{split}
		Y_r^{\b{b}}(\b{v})&=|\Om|^{-1}\log\int_{\R^{\Om}} \exp \left(\sum_{x\in \Om}\b{v}(x)\b{u}(x)-\frac{1}{2}\sum_{x,x'\in \Om}(D_{xx'}+\b{b}(x)\delta_{xx'})\b{u}(x)\b{u}(x')\right)d\b{u}\\
		&=\frac{1}{2}\log(2\pi)-\frac{1}{2|\Om|}\log \det(D+\b{b})+\frac{1}{2|\Om|}\sum_{x,x'\in \Om}[(D+\b{b})^{-1}]_{xx'}\b{v}(x)\b{v}(x').
	\end{split}	
	\]
	Then if we recursively define
	\[Y_\ell^{\b{b}}(\b{v})=\frac{1}{t_\ell}\log \E \exp(t_\ell Y_{\ell+1}^{\b{b}}(\b{v}+\b{z}^{\ell+1})),\]
	then we have that $Y^{\b{b}}(\b{v})=Y^{\b{b}}_0(\b{v})$. Evaluating this recursion can also be done exactly, as the first term, as well as each recurrence, is simply a Gaussian integral. In particular, employing Lemma 3.4 of \cite{justin} recursively (see also their Corollary 3.1) we obtain that
	\[
	\begin{split}
		Y^{\b{b}}(\b{v})=\frac{1}{2|\Om|}&\bigg(|\Om|\log(2\pi)-\log \det(D+\b{b})+\sum_{0\le k\le r-1}\frac{1}{t_k}\log\frac{\det(D+\b{b}-\b{d}^{k+1})}{\det(D+\b{b}-\b{d}^{k})}\\
		&+\sum_{x,y\in \Om}[(D+\b{b}+\b{d}^1)^{-1}]_{xy}\left(\b{v}(x)\b{v}(y)+\delta_{x,y}\xi'_x(0)\right)\bigg).\label{eqn:def:upper bound:explicity-y-def}
	\end{split}
	\]
	Finally, let us define the function
	\[W(\b{b})=Y^{\b{b}}(\b{h})+\frac{1}{2|\Om|}\sum_{x\in \Om}\b{b}(x).\]
	It is clear then that this is defined if and only if $D+\b{b}+\b{d}^1>0$. By using (\ref{eqn:ignore-713}), we see that Lemma \ref{lem:rpc-intro:identification of B in limit} equivalent to the following result.
	\begin{lem}
		\label{lem:upper bound:replacement by b}
		We have that
		\[\lim_{M\to \infty}\Gamma^M_1=\inf_{\b{b}}W(\b{b}),\]
		where here $\b{b}$ ranges over the set  $\{\b{b}\in \R^{\Om}:D+\b{b}+\b{d}^1>0\}$.
	\end{lem}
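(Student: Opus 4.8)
The plan is to establish two matching bounds: for every $\b{b}$ in the domain of $W$ one has $\limsup_{M\to\infty}\Gamma^M_1\le W(\b{b})$, and for the minimizer $\b{b}^*$ of $W$ one has $\liminf_{M\to\infty}\Gamma^M_1\ge W(\b{b}^*)$; since $\inf_{\b{b}}W(\b{b})=W(\b{b}^*)$, together these give the lemma. Throughout write $\Phi_M(U,\b{v})=\sum_{x\in\Om}(\vec{Z}_x(U),\b{v}(x))-\tfrac12\sum_{x,y\in\Om}D_{xy}(\b{v}(x),\b{v}(y))+\sqrt{M}\sum_{x\in\Om}\b{h}(x)\b{v}_1(x)$, so that $\Gamma^M_1=\tfrac1{M|\Om|}\E\log\langle\int_{S_M^{\Om}}e^{\Phi_M(U,\b{v})}\omega(d\b{v})\rangle$, the angle brackets denoting the average over $U\sim G_\zeta$.

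First I would identify $W(\b{b})$ as the Lagrange-multiplier form of this spherical integral. For admissible $\b{b}$ the Gaussian integral $\int_{(\R^M)^{\Om}}\exp(\Phi_M(U,\b{v})-\tfrac12\sum_x\b{b}(x)\|\b{v}(x)\|^2)d\b{v}$ factorizes over the $M$ coordinates of each copy of $\R^M$, with one coordinate carrying the external field $\sqrt{M}\b{h}$ and the other $M-1$ not. Since the cascade increments defining $\vec{Z}_x$ are independent across these $M$ replicas, applying the recursion (\ref{eqn:property of RPC:recursion}) to the sum $\sum_{i=1}^M\log I_i$ of the coordinate-wise Gaussian integrals $I_i$ makes it split additively over $i$, so $\E\log\langle\prod_i I_i\rangle=(M-1)|\Om|\,Y^{\b{b}}(\b{0})+|\Om|\,Y^{\b{b}}(\sqrt{M}\b{h})$; as $Y^{\b{b}}$ is quadratic in its argument by (\ref{eqn:def:upper bound:explicity-y-def}), dividing by $M|\Om|$ gives exactly $Y^{\b{b}}(\b{h})$, independent of $M$. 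Hence $W(\b{b})=Y^{\b{b}}(\b{h})+\tfrac1{2|\Om|}\sum_x\b{b}(x)=\tfrac1{M|\Om|}\E\log\langle e^{\frac M2\sum_x\b{b}(x)}\int_{(\R^M)^{\Om}}\exp(\Phi_M-\tfrac12\sum_x\b{b}(x)\|\b{v}(x)\|^2)d\b{v}\rangle$. (This also gives $W(\b{b})=\cal{A}(\vec{\b{q}},\vec{t},\b{b})+\Gamma^M_2$, which via (\ref{eqn:ignore-713}) confirms that Lemma \ref{lem:upper bound:replacement by b} implies Lemma \ref{lem:rpc-intro:identification of B in limit}.)

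For the upper bound, fix admissible $\b{b}$. On $S_M^{\Om}$ the factor $\exp(\tfrac M2\sum_x\b{b}(x)-\tfrac12\sum_x\b{b}(x)\|\b{v}(x)\|^2)$ equals $1$, so $\int_{S_M^{\Om}}e^{\Phi_M}\omega=e^{\frac M2\sum_x\b{b}(x)}\int_{S_M^{\Om}}\exp(\Phi_M-\tfrac12\sum_x\b{b}(x)\|\b{v}(x)\|^2)\omega$. Disintegrating the Gaussian integral over $(\R^M)^{\Om}$ in the squared radii $\b{\rho}=(\|\b{v}(x)\|^2/M)_x$ and keeping only the thin annulus $\b{\rho}\in[1,1+\eta]^{\Om}$, a uniform Lipschitz estimate on $\Phi_M$ — valid on the event $\{\|\vec{Z}_x(U)\|\le C\sqrt{M}\ \forall x\}$, whose rare complement is absorbed by a standard truncation using Gaussian tails and the decay of the RPC weights — shows that $\Phi_M(U,\sqrt{\b{\rho}}\,\b{w})$ differs from $\Phi_M(U,\b{w})$ by at most $C(\eta)M$, uniformly over $\b{w}\in S_M^{\Om}$, with $C(\eta)\to0$. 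Using also $\rho_x^{M/2-1}\ge1$ on $[1,1+\eta]$, one gets $\int_{S_M^{\Om}}e^{\Phi_M}\omega\le e^{o(M)+C(\eta)M}e^{\frac M2\sum_x\b{b}(x)}\int_{(\R^M)^{\Om}}\exp(\Phi_M-\tfrac12\sum_x\b{b}(x)\|\b{v}(x)\|^2)d\b{v}$, the $o(M)$ being the (for fixed $\eta$, negative) surface-versus-volume normalization. Applying $\tfrac1{M|\Om|}\E\log\langle\,\cdot\,\rangle$, letting $M\to\infty$ and then $\eta\to0$, and invoking the first step, yields $\limsup_{M\to\infty}\Gamma^M_1\le W(\b{b})$; the infimum over $\b{b}$ completes this half.

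For the lower bound, let $\b{b}^*$ be the minimizer of $W$, which exists and is unique by the strict-convexity/coercivity argument of Lemma \ref{lem:CS: minimizer in b exists} applied to $\b{b}\mapsto W(\b{b})=\cal{A}(\vec{\b{q}},\vec{t},\b{b})+\Gamma^M_2$. Differentiating the identity for $W(\b{b})$ above, the critical equation $\D W(\b{b}^*)=0$ says that, for each $x$, $\E$ of the tilted-over-$U$ average of $\langle\|\b{v}(x)\|^2_M\rangle_{\nu}$ equals $1$, where $\nu=\nu_{M,U}$ is the Gaussian measure on $(\R^M)^{\Om}$ proportional to $\exp(\Phi_M(U,\b{v})-\tfrac12\sum_x\b{b}^*(x)\|\b{v}(x)\|^2)d\b{v}$. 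Since $\nu$ is genuinely Gaussian given $(U,\vec{Z})$, $\|\b{v}(x)\|^2_M$ concentrates about its $\nu$-mean with exponentially small tails, and — this is the delicate point — the critical equation together with a law-of-large-numbers estimate for $\|\vec{Z}_x(U)\|^2$, made uniform over the atoms carrying most of the Gibbs mass by a truncation exploiting the summability of the RPC weights $(v_\alpha)$, pins that mean to $1$; hence for any $\delta>0$ and $M$ large, $\nu(\exists x:\ \|\b{v}(x)\|^2_M\notin[1-\delta,1+\delta])\le e^{-cM}$. Running the disintegration inequality of the upper bound in reverse (now with $\rho_x^{M/2-1}\le(1+\delta)^{M/2}$ and the same Lipschitz estimate) gives $e^{\frac M2\sum_x\b{b}^*(x)}\int_{(\R^M)^{\Om}}\exp(\Phi_M-\tfrac12\sum_x\b{b}^*(x)\|\b{v}(x)\|^2)d\b{v}\le e^{o(M)+C(\delta)M}\int_{S_M^{\Om}}e^{\Phi_M}\omega$, whence $W(\b{b}^*)\le\Gamma^M_1+o(1)+C(\delta)/|\Om|$; letting $M\to\infty$ then $\delta\to0$ finishes. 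The principal difficulty — and the reason this step is more than bookkeeping — is precisely this passage from an averaged statement (the critical equation) to an almost-sure, atom-wise concentration of $\|\b{v}(x)\|^2_M$ near $1$, which also underlies making the regularity estimates in both directions cost only $o(1)$ after $\tfrac1{M|\Om|}\E\log\langle\,\cdot\,\rangle$.
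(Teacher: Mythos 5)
Your upper bound is essentially the paper's: the paper first relates $\Gamma^M_1$ to an annulus-restricted version $\Gamma^M_{1,\epsilon}(\b{b})$ via Lemma \ref{lem:thickening of WN} (which is a quantitative form of your coarea/Lipschitz argument, carried out by Proposition \ref{prop:Guerra-disintegration-new-appendix}), and then uses the trivial inclusion $S_M(1-\epsilon,1+\epsilon)^\Om\subset(\R^M)^\Om$ to get $\Gamma^M_{1,\epsilon}(\b{b})\le Y^{\b{b}}(\b{h})$. So that half is fine, just phrased differently.

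Your lower bound, however, takes a genuinely different route from the paper, and it has a real gap at exactly the place you flag as ``delicate.'' The paper's argument never tries to show that $\|\b{v}(x)\|^2_M$ concentrates near $1$ under the tilted measure. Instead it partitions $(\R^M)^{\Om}$ into the annulus and the bad sets $V_x^{\pm}$, and shows $Y^{\b{b}^*,M}(V_x^{\pm})$ is strictly smaller than $Y^{\b{b}^*}(\b{h})$ by perturbing $\b{b}^*\mapsto\b{b}_t$ in the $x$-th coordinate and observing that the $t$-derivative at $t=0$ is $-\epsilon/(2|\Om|)<0$, using only $\partial_{\b{b}(x)}W(\b{b}^*)=0$. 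This works entirely at the level of free energies, where the RPC recursion (\ref{eqn:property of RPC:recursion}) silently handles the hierarchical reweighting.

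Your version instead tries to deduce from $\partial_{\b{b}(x)}W(\b{b}^*)=0$ an atom-wise concentration statement: that $\langle\|\b{v}(x)\|^2_M\rangle_{\nu_{M,U}}\approx 1$ for $U=h_\alpha$ carrying most of the mass. This does not follow. The critical equation reads, after differentiating under the integral sign,
\[
\E\left[\frac{\sum_\alpha v_\alpha e^{F(\alpha)}\langle\|\b{v}(x)\|^2_M\rangle_{\nu_\alpha}}{\sum_\alpha v_\alpha e^{F(\alpha)}}\right]=1,
\]
where $F(\alpha)=\log\int_{(\R^M)^{\Om}}\exp(\cdots)d\b{v}$ is itself a \emph{quadratic} function of $\vec{Z}(\alpha)$. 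The weight $e^{F(\alpha)}$ therefore Gaussian-tilts the law of $\vec{Z}(\alpha)$ in a way that changes its covariance by an $O(1)$ amount, not $o(1)$. Consequently, under the Gibbs weights $v_\alpha e^{F(\alpha)}$, the quantity $\|\vec{Z}_x(\alpha)\|^2_M$ does \emph{not} converge to the untilted LLN limit $\xi'_x(1)$, and $\langle\|\b{v}(x)\|^2_M\rangle_{\nu_\alpha}$ concentrates to a different deterministic value than it would under the unweighted cascade. (Moreover, the tilt acts at every level of the cascade tree and produces a different effective covariance at each level; this is exactly why the explicit critical equation (\ref{eqn:ignore-b-min}) involves all the $\b{d}^k$'s and the weights $1/m_k$, rather than reducing to the untilted formula one gets from $\langle\|\b{v}(x)\|^2_M\rangle_{\nu_\alpha}=1$.) Your proposed truncation ``exploiting the summability of the RPC weights $(v_\alpha)$'' is also aimed at the wrong target: because $F(\alpha)$ has fluctuations of order $\sqrt{M}$, the tilted Gibbs mass can concentrate on atoms with small $v_\alpha$, so controlling only the atoms with large $v_\alpha$ does not control the atoms that actually matter. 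So the passage from the averaged critical equation to a per-atom near-$1$ radius statement is not available by the route you describe; the paper's Lagrange-multiplier derivative argument is precisely the device that sidesteps having to understand this tilted concentration at all.
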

	
	To show this result we will first define a version of $\Gamma^M_1$ which is defined on a thickened version of $S_N$, and which incorporates the choice of $\b{b}$:
	\[
	\begin{split}
		\Gamma^M_{1,\epsilon}(\b{b})=M^{-1}|\Om|^{-1}\E \log\bigg(\sum_{\al\in \N^r}v_\al &\int_{S_M(1-\epsilon,1+\epsilon)^{\Om}} \exp \bigg(\sum_{x\in \Om}(\vec{Z}_x(\alpha),\b{u}(x))\\
		&-\frac{1}{2}\sum_{x,x'\in \Om}(D_{xx'}+\b{b}(x)\delta_{xx'})(\b{u}(x),\b{u}(x'))\bigg)d\b{u}\bigg).
	\end{split}
	\]
	
	To compare this to $\Gamma^M_{1}$ we will need the following result.
	
	\begin{lem}
		\label{lem:thickening of WN}
		For any choice $\b{b}\in \R^{\Om}$ in the domain of $W(\b{b})$, we have that
		\[\lim_{\epsilon\to 0}\limsup_{M\to \infty}\bigg|\Gamma^M_{1,\epsilon}(\b{b})-\Gamma^M_1-\frac{1}{2|\Om|}\sum_{x\in \Om}\b{b}(x)\bigg|=0.\]
	\end{lem}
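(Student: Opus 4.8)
The plan is to pass from the Lebesgue integral over the product of thin annuli $S_M(1-\epsilon,1+\epsilon)^\Om$ to a radially-weighted integral over products of exact spheres $S_M(\b{q})$, $\b{q}\in[1-\epsilon,1+\epsilon]^\Om$, via polar coordinates. Writing $\b{u}(x)=\sqrt{\b{q}(x)}\,\b{w}(x)$ with $\b{w}(x)\in S_M$ and $\b{q}(x)=\|\b{u}(x)\|_M^2\in[1-\epsilon,1+\epsilon]$, the Lebesgue measure factors as $\big[\prod_{x}\tfrac{\sqrt M}{2}\b{q}(x)^{(M-2)/2}\big]\,d\b{q}\,\omega(d\b{w})$, so that the annulus partition function $\sum_{\alpha}v_\alpha\int_{S_M(1-\epsilon,1+\epsilon)^\Om}(\cdots)\,d\b{u}$ equals an integral over $\b{q}$ of this radial Jacobian times an inner integral $\Xi_\alpha(\b{q})=\int_{(S_M)^\Om}\exp(\cdots)\,\omega(d\b{w})$ over products of spheres. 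The radial factor integrates to $C_M(\epsilon):=\prod_{x}\big((1+\epsilon)^{M/2}-(1-\epsilon)^{M/2}\big)/\sqrt M$, with $\tfrac{1}{M|\Om|}\log C_M(\epsilon)\to\tfrac12\log(1+\epsilon)=O(\epsilon)$; and on the exact sphere $S_M(\b{1})$ the diagonal term $-\tfrac12\sum_x\b{b}(x)(\b{w}(x),\b{w}(x))=-\tfrac M2\sum_x\b{b}(x)$ is constant, which after $\tfrac1{M|\Om|}\log$ produces precisely the advertised shift $-\tfrac1{2|\Om|}\sum_x\b{b}(x)$. Thus $\Xi_\alpha(\b{1})=e^{-\tfrac M2\sum_x\b{b}(x)}A_\alpha$, where $A_\alpha$ denotes the $\alpha$-summand of the partition function defining $\Gamma^M_1$.

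Next I would bound the ratio $\Xi_\alpha(\b{q})/\Xi_\alpha(\b{1})$ uniformly over $\b{q}\in[1-\epsilon,1+\epsilon]^\Om$. In the rescaled variables the $\b{q}$-dependence enters only through $\sqrt{\b{q}(x)}$ multiplying the random linear term $(\vec{Z}_x(\alpha),\b{w}(x))$ and through $\sqrt{\b{q}(x)\b{q}(x')}$ multiplying $(\b{w}(x),\b{w}(x'))$ (and $\b{w}_1(x)$ in the $\b{h}$-field term). Since $|\sqrt{\b{q}(x)\b{q}(x')}-1|\le\epsilon$, $|(\b{w}(x),\b{w}(x'))|\le M$, $|\b{w}_1(x)|\le\sqrt M$, the deterministic terms change by at most $C\epsilon M$ with $C$ depending only on $D,\b{b},\b{h}$; and since $|\sqrt{\b{q}(x)}-1|\le\epsilon$, the random linear term changes by at most $\epsilon\sum_x\|\vec{Z}_x(\alpha)\|\,\|\b{w}(x)\|=\epsilon\sqrt M\sum_x\|\vec{Z}_x(\alpha)\|=:E_\alpha$ by Cauchy--Schwarz. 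Consequently, $\sum_\alpha v_\alpha\int_{S_M(1-\epsilon,1+\epsilon)^\Om}(\cdots)\,d\b{u}$ lies between $C_M(\epsilon)\,e^{-\frac M2\sum_x\b{b}(x)}\,e^{-C\epsilon M}\sum_\alpha v_\alpha e^{-E_\alpha}A_\alpha$ and the same expression with $+C\epsilon M$ and $+E_\alpha$. Applying $\tfrac1{M|\Om|}\E\log$ pathwise, it remains to compare $\tfrac1{M|\Om|}\E\log\sum_\alpha v_\alpha e^{\pm E_\alpha}A_\alpha$ with $\Gamma^M_1$.

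The main obstacle is that, since there are countably many $\alpha$, $\sup_\alpha\|\vec{Z}_x(\alpha)\|=\infty$ a.s., so $E_\alpha$ is not uniformly small. I would handle this by Hölder with exponent $p=1+\delta$: $\log\sum_\alpha v_\alpha e^{E_\alpha}A_\alpha\le\tfrac1p\log\sum_\alpha v_\alpha A_\alpha+\tfrac1{p'}\log\sum_\alpha v_\alpha A_\alpha e^{p'E_\alpha}$; after taking $\E$, the last term is bounded by Jensen (concavity of $\log$), and since the RPC weights $(v_\alpha)$ are independent of the Gaussian processes and $\sum_\alpha\E v_\alpha=\E\sum_\alpha v_\alpha=1$, it collapses to a single $\E[A_0 e^{p'E_0}]$; completing the Gaussian square (the extra $\|\vec{Z}_x(0)\|$ only shifts the means of the relevant Gaussians) yields $\E[A_0 e^{p'E_0}]\le e^{O(\epsilon M/\delta)+O(\epsilon^2 M/\delta^2)}\,\E A_0$. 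This leaves only a term proportional to $\tfrac{\delta}{1+\delta}$ times the gap between the annealed quantity $\tfrac1{M|\Om|}\log\E A_0$ and the quenched $\Gamma^M_1$, both of which are $O(1)$ uniformly in $M$ (from the standard annealed bound, using that $D$ is positive semidefinite). The lower bound is symmetric, rearranging the same Hölder inequality in the other direction. Assembling the estimates, $\big|\Gamma^M_{1,\epsilon}(\b{b})-\Gamma^M_1-\tfrac1{2|\Om|}\sum_x\b{b}(x)\big|$ is, up to an error that vanishes first as $M\to\infty$ and then as $\delta\to0$, at most $\tfrac12\log(1+\epsilon)+O(\epsilon)$; since $\delta>0$ is arbitrary (taking $\delta=\sqrt\epsilon$ in the lower bound), letting $\epsilon\to0$ completes the proof.
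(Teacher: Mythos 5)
Your proof is correct in substance but takes a genuinely different route at the decisive step. Both arguments share the same skeleton: normalize the annulus to a product of spheres (the map $\b u\mapsto\b u_{\b q}$ inside Proposition \ref{prop:Guerra-disintegration-new-appendix} is exactly your polar decomposition), peel off the radial Jacobian ($\tfrac12\log(1+\epsilon)+o(1)$ after $\tfrac1{M|\Om|}\log$; this is the paper's $III$ term via the co-area formula), extract the $\b b$-shift on the sphere, and bound the variation of the remaining integrand across the thin annulus, with Taylor's theorem giving $O(\epsilon)$ for the deterministic pieces. Where the two diverge is the Gaussian linear term. The paper handles it at the level of covariance functions: Corollary \ref{corr:Guerra-bound}, a Guerra-type interpolation inequality, bounds the difference of log-partition-functions by $\sup|C_V-C_W|=O(\epsilon M)$ in one line, which completely sidesteps the fact that $\sup_\alpha\|\vec Z_x(\alpha)\|=\infty$ a.s.\ over the countably many RPC leaves. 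You bound the Gaussian term pathwise instead, landing on the error $E_\alpha=\epsilon\sqrt M\sum_x\|\vec Z_x(\alpha)\|$ which is unbounded over $\alpha$, and then tame it via H\"older with exponent $1+\delta$, independence of the RPC weights from the leaf-wise Gaussians together with $\E\sum_\alpha v_\alpha=1$, and the uniform-in-$M$ $O(1)$ gap between $\tfrac1{M|\Om|}\log\E A_0$ and $\Gamma^M_1$, choosing $\delta=\sqrt\epsilon$ at the end. (Your phrase ``completing the Gaussian square'' should really be ``Lipschitz concentration of $\|\vec Z_x(0)\|$'' — the norm is not linear — but the claimed estimate $\E[A_0e^{p'E_0}]\le e^{O(\epsilon M/\delta)+O(\epsilon^2M/\delta^2)}\E A_0$ does hold by Cauchy--Schwarz and the sub-Gaussian tail of a $\chi$-variable.) The H\"older/annealed machinery does work and your route is self-contained in that it avoids the appendix proposition, but it is precisely the extra technical work that the covariance-level comparison buys for free, so the paper's argument is materially shorter.

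One remark on bookkeeping: your own computation gives $\Gamma^M_{1,\epsilon}(\b b)\approx\Gamma^M_1-\tfrac1{2|\Om|}\sum_x\b b(x)$ (the diagonal quadratic contributes $-\tfrac M2\sum_x\b b(x)$ on the exact sphere), so for the displayed absolute value to vanish the sign in front of $\tfrac1{2|\Om|}\sum_x\b b(x)$ should be $+$ rather than $-$. That is the version that is actually used in the proof of Lemma \ref{lem:upper bound:replacement by b} (one needs $\Gamma^M_1\lesssim Y^{\b b}(\b h)+\tfrac1{2|\Om|}\sum_x\b b(x)=W(\b b)$, not $Y^{\b b}(\b h)-\tfrac1{2|\Om|}\sum_x\b b(x)$), and it is the one your argument establishes; the statement of Lemma \ref{lem:thickening of WN} appears to carry a sign slip which your proof implicitly corrects.
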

	\begin{proof}
		This will essentially be an application of Proposition \ref{prop:Guerra-disintegration-new-appendix}, which we will use the notation of. First note it is clear that this lemma applies when $(S,\mu)$ consists of a random Borel measure $\mu$ on some space $S$, which is a.s. finite. We note that $S_M(1-\epsilon,1+\epsilon)^{\Om}=S_M^{2\epsilon}(\b{1}-\b{\epsilon})$, and take the choice $(S,\mu)=(\N^r,(v_\alpha)_{\alpha\in \N^r})$. We take the centered Gaussian functions $V_x(\b{u},\alpha):=(\vec{Z}_x(\alpha),\b{u}(x))$, which on $(S_M(1-\epsilon,1+\epsilon)^{\Om})^\Om\times \N^r$ have the covariance structure
		\[\E[(\vec{Z}_x(\alpha),\b{u}(x))(\vec{Z}_x(\alpha'),\b{u}'(x))]=\xi'(\b{q}^{\alpha\wedge\alpha'}(x))(\b{u}(x),\b{u}'(x)).\]
		Finally, we take the fixed function
		\[g(\b{u})=-\frac{1}{2}\sum_{x,y\in \Om}(D_{xy}+\b{b}(x)\delta_{xy})(\b{u}(x),\b{u}(y))+\sq{M}\sum_{x\in \Om}\b{h}(x)\b{u}_1(x).\label{eqn:def:upper bound:g-def}\]
		Then noting that for $\b{u}\in S_M^{\Om}$, we may write 
		\[\frac{1}{2}\sum_{x\in \Om}\b{b}(x)=\frac{1}{2}\sum_{x\in \Om}\b{b}(x)\|\b{u}(x)\|^2_M,\]
		we may incorporate this into the integral of $\Gamma^M_1$ to conclude that 
		\[|\Om|\bigg|\Gamma^M_{1,\epsilon}(\b{b})-\Gamma^M_1-\frac{1}{2}\sum_{x\in \Om}\b{b}(x)\bigg|,\]
		coincides with the difference considered in Proposition \ref{prop:Guerra-disintegration-new-appendix} with $N=M$ and the above choices, so we are left to control the terms in this Lemma's upper bound. For this, note that the first term is given by $\sqrt{3}(2\epsilon)\sum_{x\in \Om}\sup_{\alpha,\alpha'}\b{q}^{\alpha\wedge\alpha'}(x)$, which is negligible, and similarly for the terms in the second line, so we are only left to bound $M^{-1/2}\|\|\D g\|\|_{L^\infty}\sqrt{|\Om|(2\epsilon)}$.
		For this, note that using the Cauchy-Schwarz inequality repeatedly we see that
		\[
		\begin{split}
			\|\D g(\b{u})\|^2 &\le \sum_{x\in \Om}\left(\bigg\|\sum_{y\in \Om}\left(D_{xy}+\b{b}(y)\delta_{xy}\right)\b{u}(y)\bigg\|^2+M|\b{h}(x)|^2\right)  \\
			&\le \left(\sum_{x,y\in \Om}D_{xy}^2+\sum_{x\in \Om}\b{b}(y)^2\right)\left(\sum_{y\in \Om}\|\b{u}(y)\|^2\right)\le \left(\|D\|_F^2+\|\b{b}\|^2\right)M|\Om|,
		\end{split}
		\]
		which shows that the remaining term is negligible in the limit as well.
	\end{proof}
	
	We note that by extending the integral of $\Gamma^M_{1,\epsilon}$ from $S_M(1-\epsilon,1+\epsilon)^\Om$ to $(\R^M)^{\Om}$, we obtain that $\Gamma^M_{1,\epsilon}(\b{b})\le Y^{\b{b}}(\b{h})$, which with the previous lemma establishes the upper bound of Lemma \ref{lem:upper bound:replacement by b}. To demonstrate the lower bound we will need the following result.
	
	\begin{proof}[Proof of Lemma \ref{lem:upper bound:replacement by b}]
		As noted above, the upper bound follows from Lemma \ref{lem:thickening of WN}, so if we denote by $\b{b}^*$ the minimizer obtained from Lemma \ref{lem:CS: minimizer in b exists}, it suffices to show that
		\[\limsup_{M\to \infty}\Gamma_1^M\ge W(\b{b}^*).\]
		It is clear that $W(\b{b})$ is differentiable, so for each $x\in \Om$, we have that $\partial_{\b{b}(x)}W(\b{b}^*(x))=0$. Now if we define, for subsets $A\subseteq (\R^M)^{\Om}$,
		\[Y^{\b{b},M}(A)=M^{-1}|\Om|^{-1}\E \log \bigg(\sum_{\al\in \N^r}v_\al\int_{A} \exp \bigg(\sum_{x\in \Om}(\vec{Z}_x(\alpha),\b{u}(x))
		+g_{\b{b}}(\b{u})\bigg)d\b{u}\bigg),\]
		with $g_{\b{b}}(\b{u})=g(\b{u})$ is the function defined in (\ref{eqn:def:upper bound:g-def}), though including the explicit $\b{b}$ dependence. We note that $Y^{\b{b},M}((\R^{M})^{\Om})=Y^{\b{b}}(\b{h})$. Moreover, we see that $\Gamma^{M}_{1,\epsilon}(\b{b})=Y^{\b{b},M}(S_M(1-\epsilon,1+\epsilon)^{\Om})$, so again using Lemma \ref{lem:thickening of WN} we see that it suffices to show that
		\[\liminf_{M\to \infty}Y^{\b{b}^*,M}((S_M(1-\epsilon,1+\epsilon))^{\Om})\ge \lim_{M\to\infty}Y^{\b{b}^*,M}((\R^{M})^{\Om})=Y^{\b{b}^*}(\b{h}).\]
		For this, we partition the set $(\R^{M})^{\Om}$ as
		\[(\R^{M})^{\Om}=(S_M(1-\epsilon,1+\epsilon))^{\Om}\cup \bigcup_{x\in \Om}V_x^-\cup V_x^+,\]
		where here
		\[V_x^+=\{\b{u}\in (\R^{M})^{\Om}:\|\b{u}(x)\|^2_M\ge 1+\epsilon\},\;\;V_x^-=\{\b{u}\in (\R^{M})^{\Om}:\|\b{u}(x)\|^2_M\le 1-\epsilon\}.\]
		It suffices to show that for each $x\in \Om$ and choice of sign $\pm$,
		\[\limsup_{M\to \infty}Y^{\b{b}^*,M}(V_x^{\pm})<Y^{\b{b}^*}(\b{h}).\label{eqn:ignore-1236}\]
		We will show this in the case of $V_x^+$, with the case of $V_x^-$ being similar. We first define $\b{b}_t$ by letting $\b{b}_t(x)=\b{b}^*-t$ and $\b{b}_t(y)=\b{b}^*(y)$ for $y\in \Om\setminus \{x\}$, and note that
		\[g_{\b{b}_t}(\b{u})-g_{\b{b}^*}(\b{u})=\frac{t}{2}\|\b{u}(x)\|^2.\]
		Then using the general integral inequality
		\[\log\int_X e^{f(x)+g(x)}\mu(dx)\le \log\int_X e^{f(x)}\mu(dx)+\sup_{x\in X}g(x),\]
		the observation that $\|\b{u}(x)\|^2_M\ge 1+\epsilon$ on $V^+_x$, we see that for $t>0$
		\[Y^{\b{b}^*,M}(V^+_x)\le  Y^{\b{b}_t}(V^+_x)-\frac{t}{2|\Om|}(1+\epsilon)\le Y^{\b{b}_t}(\b{h})-\frac{t}{2|\Om|}(1+\epsilon).\label{eqn:ignore-1244}\]
		Using that $\b{b}^*$ is a critical point, we see that
		\[\frac{d}{dt}\left(Y^{\b{b}_t}(\b{h})-\frac{t}{2|\Om|}(1+\epsilon)\right)\bigg|_{t=0}=-\partial_{\b{b}(x)}W(\b{b}^*)-\frac{\epsilon}{2|\Om|}=-\frac{\epsilon}{2|\Om|}<0.\]
		Thus in particular, for sufficiently small $t>0$, the right-hand side of (\ref{eqn:ignore-1244}) is decreasing. As this coincides with $Y^{\b{b}^*}(\b{h})$ when $t=0$, the inequality (\ref{eqn:ignore-1244}) shows that (\ref{eqn:ignore-1236}).
	\end{proof}
	Given this pointwise convergence result, we now prove a uniform continuity result for $\cal{A}_M(\vec{\b{q}},\vec{t})$ in terms of $(G_{\zeta},\b{R}^{\b{\Phi}})$. For this, we will write \[\Gamma^M(\zeta,\b{\Phi}):=\Gamma^M_{\omega_{M,D,\b{h}},\b{\xi}}(G_{\zeta},\b{R}^{\b{\Phi}}),\]
	and use a similar notation for $\Gamma^M_i(\zeta,\b{\Phi})$.
	
	\begin{lem}
		\label{lem:lowerbound:lipz-cont for Parisi}
		Let $(\vec{\b{q}}_0,\vec{t}_0)$ and $(\vec{\b{q}}_1,\vec{t_1})$ be two pairs of sequences as in (\ref{eqn:def:x-sequence-Pan}) and (\ref{eqn:def:Q-sequence-Pan}), with associated measures and functions $(\zeta_0,\b{\Phi}^0)$ and $(\zeta_1,\b{\Phi}^1)$ given by Remark \ref{remark:associated measure and functions}. Then we have that
		\[|\cal{A}_M(\vec{\b{q}_0},\vec{t}_0)-\cal{A}_M(\vec{\b{q}_1},\vec{t_1})|\le \sum_{x\in \Om}\frac{\xi''_x(1)}{|\Om|}\int_0^1|((\Phi^0_x)_*\zeta_0)([0,q])-((\Phi^1_x)_*\zeta_1)([0,q])|dq.\]
	\end{lem}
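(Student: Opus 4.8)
The plan is to run a Gaussian (Guerra-type) interpolation between the two pieces of data, after first reducing both to a common Ruelle-probability-cascade parametrization. I would begin by observing that, as is manifest from the recursion (\ref{eqn:property of RPC:recursion}) and from the explicit evaluation (\ref{eqn:ignore-713}), the quantity $\cal{A}_M=\Gamma^M_1-\Gamma^M_2$ depends on the pair $(\zeta,\b{\Phi})$ only through the weight sequence $\vec{t}$ of $\zeta$ together with the ``level kernels'' $(\xi_x(\Phi_x(q^k)))_{0\le k\le r}$ for each $x\in\Om$ (recall $\theta_x$ is determined by $\xi_x$); in particular it does not see the overlap values $q^k$ of the cascade themselves. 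Given $(\vec{\b{q}}_0,\vec{t}_0)$ and $(\vec{\b{q}}_1,\vec{t}_1)$, passing to the common refinement of $\vec{t}_0$ and $\vec{t}_1$ and re-indexing (repeated entries in $\vec{\b{q}}$ are harmless, since a repeated level kernel corresponds to a removable cascade level, as is used already around Proposition \ref{prop:general: for B, parisi doesn't depend away from support}), we may assume both pairs carry the same $\vec{t}$. Writing $a^k_x=\Phi^0_x(q^k_0)$ and $b^k_x=\Phi^1_x(q^k_1)$, for each $x$ these are precisely the ``quantiles'' of $(\Phi^0_x)_*\zeta_0$ and $(\Phi^1_x)_*\zeta_1$ relative to the common weights $(t_k-t_{k-1})_k$; both are non-decreasing in $k$, with $a^0_x=b^0_x=0$ and $a^r_x=b^r_x=1$.

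Now I would interpolate linearly, setting $c^k_x(s)=(1-s)a^k_x+s\,b^k_x$ for $s\in[0,1]$ (again non-decreasing in $k$ with the same endpoints), and letting $\cal{A}_M(s)$ be the functional built from $\vec{t}$ and the level kernels $(\xi_x(c^k_x(s)))_{k,x}$, so that $\cal{A}_M(0)=\cal{A}_M(\vec{\b{q}}_0,\vec{t}_0)$ and $\cal{A}_M(1)=\cal{A}_M(\vec{\b{q}}_1,\vec{t}_1)$; it then suffices to bound $\int_0^1|\frac{d}{ds}\cal{A}_M(s)|\,ds$. Differentiating the explicit formula (\ref{eqn:ignore-713}) for $\Gamma^M_2$ and using $\theta'_x(r)=r\,\xi''_x(r)$ — which is non-negative and non-decreasing on $[0,1]$, hence $\le\xi''_x(1)$ there — together with the telescoping in $k$, yields $|\frac{d}{ds}\Gamma^M_2(s)|\le\frac{1}{2|\Om|}\sum_{x}\xi''_x(1)\sum_k(t_k-t_{k-1})|b^k_x-a^k_x|$. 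For $\Gamma^M_1$ I would apply Gaussian integration by parts in the fields $\vec{Z}_x$: the diagonal (self-overlap) contributions vanish because $c^r_x(s)\equiv1$, leaving an expression of the form $-\frac{1}{2|\Om|}\E\langle\sum_x\xi'_x(c^{\alpha\wedge\alpha'}_x(s))(b^{\alpha\wedge\alpha'}_x-a^{\alpha\wedge\alpha'}_x)R_x\rangle$, where $R_x=(\b{v}(x),\b{v}'(x))_M$ is the overlap of two replicas of the spherical variable and $\langle\cdot\rangle$ is the two-replica average under the tilted cascade Gibbs measure. The key points are that $|R_x|\le1$ since the $\b{v}$'s lie on $S_M$, and that the $\N^r$-marginal of this Gibbs measure is again a cascade Gibbs measure, so that the invariance property (\ref{eqn:invariance of RPC}) governs the law of $q^{\alpha\wedge\alpha'}$. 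Conditioning on $\alpha\wedge\alpha'$, using $|R_x|\le1$, and a summation by parts over the cascade levels (whose boundary terms vanish at $k=0$ and $k=r$) can then be shown to convert the $\xi'_x$-weighted increments into a bound of the same shape, $|\frac{d}{ds}\Gamma^M_1(s)|\le\frac{1}{2|\Om|}\sum_x\xi''_x(1)\sum_k(t_k-t_{k-1})|b^k_x-a^k_x|$.

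Adding the two estimates and integrating over $s$ (using that $c^k_x$ is affine in $s$, so $\int_0^1|b^k_x-a^k_x|\,ds=|b^k_x-a^k_x|$) gives $|\cal{A}_M(\vec{\b{q}}_0,\vec{t}_0)-\cal{A}_M(\vec{\b{q}}_1,\vec{t}_1)|\le\frac{1}{|\Om|}\sum_x\xi''_x(1)\sum_k(t_k-t_{k-1})|a^k_x-b^k_x|$. The proof is then completed by the elementary identity $\sum_k(t_k-t_{k-1})|a^k_x-b^k_x|=\int_0^1\big|((\Phi^0_x)_*\zeta_0)([0,q])-((\Phi^1_x)_*\zeta_1)([0,q])\big|\,dq$: the right-hand side is the $L^1$ distance between the cumulative distribution functions of two finitely supported probability measures on $[0,1]$ sharing the same weight sequence, which by the comonotone (quantile) coupling equals the left-hand side — this being the one-dimensional Wasserstein-$1$ distance. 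The main obstacle is the $\Gamma^M_1$ step: making the reduction to a common cascade parametrization fully precise, and carrying out the integration-by-parts and summation-by-parts so that the spherical constraint $R_x\in[-1,1]$ and the vanishing boundary terms combine to yield exactly the constant $\xi''_x(1)/|\Om|$ rather than a cruder one involving $\xi'_x(1)$; by contrast, the $\Gamma^M_2$ computation and the Wasserstein identity are routine.
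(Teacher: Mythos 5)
Your proposal follows essentially the same route as the paper's proof: reduce to a common $\vec t$ by refinement (using that $\cal{A}_M$ only sees $(\zeta,\b{\Phi})$), then a Gaussian interpolation plus integration by parts, then the RPC invariance \eqref{eqn:invariance of RPC} to convert the overlap average $\E\langle|\cdot^{\alpha\wedge\alpha'}|\rangle$ into $\sum_k(t_k-t_{k-1})|\cdot^k|$, and finally the comonotone/Wasserstein identity to rewrite this as the $L^1$-distance of CDFs. The paper does the interpolation as $Z^t_x=\sqrt t\,Z^1_x+\sqrt{1-t}\,Z^0_x$ and invokes Proposition \ref{prop:Guerra Gaussian integration} directly, whereas you interpolate linearly in the overlap parameters $c^k_x(s)=(1-s)a^k_x+sb^k_x$; both work, and yours has the mild advantage that each intermediate object is itself a genuine cascade (so the invariance applies at every $s$ without further comment), at the cost of having to justify differentiability of the free energy in $s$ rather than just reading it off Proposition \ref{prop:Guerra Gaussian integration}.

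One piece of your $\Gamma^M_1$ step is off, though it does not affect the final bound. Since $(\vec Z_x(\alpha),\b{v}(x))$ has covariance $\xi'_x(\b{q}^{\alpha\wedge\alpha'}(x))\,(\b{v}(x),\b{v}'(x))$ (cf.\ the covariance displayed in the proof of Lemma \ref{lem:thickening of WN}), differentiating $\xi'_x(c^{\alpha\wedge\alpha'}_x(s))$ in $s$ by the chain rule gives $\xi''_x(c^{\alpha\wedge\alpha'}_x(s))\,(b^{\alpha\wedge\alpha'}_x-a^{\alpha\wedge\alpha'}_x)$, not the $\xi'_x$ you wrote. With the correct $\xi''_x$, the bound $\xi''_x(c^{\alpha\wedge\alpha'}_x(s))\le\xi''_x(1)$ is immediate (monotonicity of $\xi''_x$ on $[0,1]$), and there is nothing further to do --- the ``summation by parts over cascade levels'' you flag as the main obstacle is not needed and, with $\xi'_x$-weighted increments, would not in general produce the constant $\xi''_x(1)$ at all (e.g.\ for $\xi_x(r)=r$ one has $\xi'_x\equiv1$ but $\xi''_x\equiv0$). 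So the step you worried about is actually the easy one once the chain rule is applied correctly; the rest of the argument, including the Wasserstein identity that the paper leaves implicit, is sound.
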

	\begin{proof}
		This will follow if we show that for $i=1,2$
		\[|\Gamma_i^M(\zeta_0,\b{\Phi}^0)-\Gamma_i^M(\zeta_1,\b{\Phi}^1)|\le \sum_{x\in \Om}\frac{\xi''_x(1)}{2|\Om|}\int_0^1|((\Phi^0_x)_*\zeta_0)([0,q])-((\Phi^1_x)_*\zeta_1)([0,q])|dq.\label{eqn:ignore-1926}\]
		We show the case of $i=1$, with the case of $i=2$ following a similar argument. 
		
		We know by Lemma \ref{lem:perturbation:approximation of Parisi formula} and Theorem \ref{theorem:perturbation:gg gives RPC} that $\cal{A}_M(\vec{\b{q}},\vec{t})$ only depends on the measure and functions, $(\zeta,\b{\Phi})$, associated to $(\vec{\b{q}},\vec{t})$. In particular, if we have that, for fixed $\ell$ and every $x\in \Om$, the equality
		\[0=\b{q}^{0}(x)\le \dots \le \b{q}^\ell(x)=\b{q}^{\ell+1}(x)\le \dots\le \b{q}^r(x)=1,\]
		then removing the choices of $(t_{\ell},\b{q}^{\ell})$ to obtain a sequence now of length $(r-1)$ does not affect the value of $\cal{A}_M(\vec{\b{q}},\vec{t})$. In particular, by potentially adding such redundancies to both $(\vec{\b{q}_0},\vec{t}_0)$ and $(\vec{\b{q}_1},\vec{t_1})$ are sequences of the same length with $\vec{t_0}=\vec{t_1}$.
		
		In particular, the RPCs corresponding to $\zeta_0$ and $\zeta_1$ may be chosen to have the same weights $(v_\alpha)_{\alpha\in \N^r}$, only being supported on different points in $H$. Using Remark \ref{remark:alpha-h alpha summation}, both may be realized as the free energy of a Gaussian process on $\N^r\times S_M^{\Om}$. We can interpolate these process by defined $0\le t\le 1$, $x\in \Om$, and $\alpha\in \N^r$
		\[Z_x^t(\alpha)=\sqrt{t}Z_x^1(\alpha)+\sqrt{1-t}Z_x^0(\alpha),\]
		with $Z_x^i(\alpha)$ being the choice of $Z_x(\alpha)$ corresponding to $(\vec{\b{q}}_i,\vec{t}_i)$. Then if we denote by $\<*\>_t$ the Gibbs measure on $(\alpha,\b{u})\in \N^r\times S_N^{\Om}$ associated to $(\sum_{x\in \Om}(\vec{Z}_x^t,\b{u}(x)),\omega_{M,D,\b{h}}(d\b{u})v_\alpha)$, we see by applying Proposition \ref{prop:Guerra Gaussian integration} that
		\[\Gamma_1^M(\zeta_0,\b{\Phi}^0)-\Gamma_1^M(\zeta_1,\b{\Phi}^1)=\frac{-1}{2|\Om|}  \sum_{x\in \Om}\E \<\left(\xi_x'(\b{q}_0^{\alpha\wedge\alpha'}(x))-\xi_x'(\b{q}_1^{\alpha\wedge\alpha'}(x))\right)\left(\b{u}(x),\b{u}'(x)\right)_M\>_t.\label{eqn:ignore-1919}\]
		If we use that $\|\b{u}(x)\|_M=\|\b{u}'(x)\|_M=\sq{M}$, we obtain that
		\[
		\begin{split}
			&\l|\sum_{x\in \Om}\l( \xi_x'(\b{q}_0^{\alpha\wedge\beta}(x))(\b{u}(x),\b{u}'(x))- \xi_x'(\b{q}_1^{\alpha\wedge\alpha'}(x))(\b{u}(x),\b{u}'(x))\r)\r|\le \\
			& M\sum_{x\in \Om}| \xi_x'(\b{q}_0^{\alpha\wedge\alpha'}(x))-\xi_x'(\b{q}_1^{\alpha\wedge\alpha'}(x))|\le M\sum_{x\in \Om} \xi_x''(1)|\b{q}_0^{\alpha\wedge\alpha'}(x)-\b{q}_1^{\alpha\wedge\alpha'}(x)|.
		\end{split}
		\]
		In particular, if we define
		\[F_t(\alpha)=\log \int_{S_M^{\Om}} \exp \left(\sum_{x\in \Om}(\vec{Z}_x(\alpha),\b{u}(x))\omega(d\b{u})\right)\omega_{M,D,\b{h}}(d\b{u}),\]
		we see that $|\Gamma_1^M(\zeta_0,\b{\Phi}^0)-\Gamma_1^M(\zeta_1,\b{\Phi}^1)|$ is bounded above by
		\[\sum_{x\in \Om}\frac{\xi''(1)}{2|\Om|} \E \left[ \frac{\sum_{\alpha,\alpha'\in \N^r}v_{\alpha}v_{\alpha'} |\b{q}_0^{\alpha\wedge\alpha'}(x)-\b{q}_1^{\alpha\wedge\alpha'}(x))|\exp(F_t(\alpha)+F_t(\alpha'))}{\left(\sum_{\alpha\in \N^r}v_\alpha \exp(F_t(\alpha))\right)^2}\right].\label{eqn:ignore-1919-1}\]
		Using (\ref{eqn:invariance of RPC}) we see that (\ref{eqn:ignore-1919-1}) coincides with
		\[ \sum_{x\in \Om}\sum_{1\le k\le r}\frac{\xi''_x(1)}{2|\Om|}(t_k-t_{k-1}) |\b{q}_0^{k}(x)-\b{q}_1^{k}(x)|,\]
		which may easily be rearranged into the right-hand side of (\ref{eqn:ignore-1926}).
	\end{proof}
	
	With this we see that the family $\{\cal{A}_M\}_{M\ge 1}$ is continuous on its domain. In particular, we obtain Proposition \ref{prop:rpc-intro:infimum identification of B in limit} from this and Lemma \ref{lem:rpc-intro:identification of B in limit} by a standard argument.
	
	\pagebreak
	
	\section{The Ghirlanda-Guerra Identities, Synchronization, and Talagrand's Positivity Principle\label{section:RPC-gg-iden}}
	
	In further preparation for the proof of Theorem \ref{theorem:intro:bad parisi spherical}, in this section we will introduce a family of perturbations of our Hamiltonian. These perturbations are of insufficient strength to change the limiting value of any of the free energies we consider, but allow us to enforce a number of properties to asymptotically hold for the associated Gibbs measure. Depending on the property, these will asymptotically hold after averaging over the family or with non-zero probability.
	
	The property of key importance for the upper bound of Theorem \ref{theorem:spherical parisi continuum} is Talagrand's positivity principle. Roughly, this principle states that if we takes two samples from the Gibbs measure $\b{u},\b{u}'\in (S_N)^{\Om}$, then for each $x\in \Om$, we should have that $(\b{u}(x),\b{u}'(x))_N\ge -\epsilon$ with high probability when $N$ is large. A formal statement is given in Theorem \ref{theorem:talagrand's positivity} below. This property is of key importance as it makes the error terms appearing in an application of Guerra's interpolation scheme positive, which is otherwise only clear for models in which $\xi_x$ is convex for each $x\in \Om$.
	
	The lower bound however relies on a much stronger property. Roughly, this states that for specific choices of perturbation, the Gibbs measure may be approximated by a sequence of RPCs with ``synchronized" spins, at-least on the level of overlaps. Such a representation was originally discovered by Panchenko in the case of multi-species Ising spin glass models \cite{panchenkoms}, and have since appeared in a number of cases. These both generalize the case of $|\Om|=1$, where it is shown that such perturbations force the Gibbs measure to be approximated by a sequence of RPCs, again on the level of overlaps. This property was again originally demonstrated by Panchenko \cite{panchenko-Ultrametricity}.
	
	We will begin by recalling the family of perturbation Hamiltonians. Then we will give Talagrand's positivity principle. After this, we will recall the multi-species Ghirlanda-Guerra identities, and a formal version of the identification theorem associated with them, which we alluded to in the previous paragraph. Finally, we will show, for a quite general class of Hamiltonians, that there are choices of our perturbation term which lead to approximate satisfaction of the multi-species Ghirlanda-Guerra identities.
	
	For this section, we will fix a family of (random) continuous functions $H_N:S_N^{\Om}\to \R$. We now define the perturbation term, which based on the corresponding multi-species perturbation from \cite{panchenkoms} (see as well Chapter 3 or \cite{panchenko} and Appendix A of \cite{erikCS}). To begin, define for $\b{u},\b{u}'\in (S_N)^{\Om}$, let us define the overlap vector  $\b{R}_N(\b{u},\b{u}')\in [-1,1]^{\Om}$ by
	\[\b{R}_N(\b{u},\b{u}')(x)=(\b{u}(x),\b{u}'(x))_N.\]
	For any $\b{w}\in [0,1]^\Om$, we may also define the following inner-product:
	\[\b{R}_N^{\b{w}}(\b{u},\b{u}')=\frac{1}{|\Om|}\sum_{x\in \Om}\b{w}(x)\b{R}_N(\b{u},\b{u}')(x)=\frac{1}{|\Om|}\sum_{x\in \Om}\b{w}(x)(\b{u}(x),\b{u}'(x))_N.\]
	Let $\cal{W}=\{\b{w}_1,\b{w}_2,\dots \}$ be a countable dense subset of $[0,1]^\Om$, all of which we assume are non-zero. We will assume in addition that $\cal{W}$ contains each standard unit vector in $[0,1]^{\Om}$. To each such choice, we may associate (see (24) of \cite{panchenkoms}) a centered Gaussian random function, $h_{N,p,q}$, on $S_N^{\Om}$ with covariance function given by
	\[\E[ h_{N,p,q}(\b{u})h_{N,p,q}(\b{u}')]=\b{R}_N^{\b{w}_q}(\b{u},\b{u}')^p.\]
	We independently take such a choice for each $(p,q)$. Now fix in addition double-indexed sequence $\frak{X}=(x_{p,q})_{p,q\ge 1}\in [1,2]^{\N\times\N}$ and form the perturbation Hamiltonian
	\[h_N(\b{u})=\sum_{p,q\ge 1}2^{-(p+q)}x_{p,q}h_{N,p,q}(\b{u}).\]
	This is a centered Gaussian process with covariance
	\[\sum_{p,q\ge 1}x_{p,q}^24^{-(p+q)}\b{R}_N^{\b{w}_q}(\b{u},\b{u}')^p.\label{eqn:lower bound:covariance of perturbation term}\] 
	Note that in particular that as $|\b{R}_N^{\b{w}}(\b{u},\b{u}')|\le \frac{1}{|\Om|}\sum_{x\in \Om}|\b{w}(x)|\le 1$, the variance of $h_N(\b{u})$ is bounded by $\sup_{p,q}x_{p,q}^2\le 4$.
	We now introduce the perturbed Hamiltonian
	\[H^{\mathfrak{X}}_N(\b{u})=H_{N}(\b{u})+s_Nh_N(\b{u}),\label{eqn:ignore-38443}\]
	where $s_N=N^{\gamma}$ for any $1/4<\gamma<1/2$.
	
	Let us now fix any sequence of (deterministic) finite positive measures on $S_N^{\Om}$, $m_N$, with continuous bounded density with respect to the uniform measure $\omega$. We denote the partition function over $S_N^{\Om}$ with respect to $(H_N,m_N)$ as $Z_N$, and the partition function with respect to $(H_N^{\frak{X}},m_N)$ as $Z_N^{\frak{X}}$, we note that by Jensen's inequality,
	\[N^{-1}\E\log Z_N\le N^{-1}\E\log Z_N^{\frak{X}}\le N^{-1}\E\log Z_N+2s_N^2N^{-1},\label{eqn:ignore-1825}\]
	so that in particular, we have that
	\[\lim_{N\to \infty}\sup_{x\in [1,2]^{\N\times \N}}|N^{-1}\E\log Z_N-N^{-1}\E\log Z_N^{\frak{X}}|=0.\label{eqn:perturbation:perturbation has no sup effect}\]
	It will also at times be convenient to take an average over $\frak{X}=(x_{p,q})_{p,q\ge 1}\in [1,2]^{\N\times \N}$, where each $(x_{p,q})$ is thought of as an independent uniform r.v. on $[1,2]$. We will denote this average as $\E_{\mathfrak{X}}$, taken independently of all other randomness, and reserve the notation $\E$ to refer to the conditional expectation. The same application of Jensen's inequality shows that
	\[\lim_{N\to \infty}|N^{-1}\E\log Z_N-N^{-1}\E_{\mathfrak{X}}\E\log Z_N^{\frak{X}}|=0.\label{eqn:perturbation:perturbation has no average effect}\]

	We will use the notation $\<*\>_{\frak{X}}$ to denote the Gibbs measure with respect to $(H_N^{\frak{X}},m_N)$. We will need the following multi-species version of the positivity principle, which follows immediately from the deterministic Lemma 2.3 of \cite{erik}.
	\begin{theorem}
		\label{theorem:talagrand's positivity}
		Let us assume that $H_N:S_N^{\Om}\to \R$ is deterministic and satisfies
		\[\int_{S_N^{\Om}} \exp(|H_N(\b{u})|)m_N(d\b{u})<\infty.\label{eqn:ignore-1632}\]
		Fix any choice of $s_N$ such that $s_N\to \infty$ as $N\to \infty$. Then for any $\epsilon>0$, and $x\in \Om$, we have that
		\[\lim_{N\to \infty}\sup_{H_N(\b{u})}\E_{\mathfrak{X}} \<I((\b{u}(x),\b{u}'(x))_N\ge -\epsilon)\>_{\mathfrak{X}}=1,\]
		where here the supremum is taken over all such functions $H_N$ which satisfy (\ref{eqn:ignore-1632}).
	\end{theorem}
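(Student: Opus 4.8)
The plan is to read the statement off Lemma 2.3 of \cite{erik}, the deterministic positivity estimate for multi-site spherical models, by matching our perturbation to its hypotheses. The first thing to record is that, since $H_N$ (and the reference measure $m_N$) are deterministic, all of the randomness appearing in $\E_{\mathfrak{X}}\<\,\cdot\,\>_{\mathfrak{X}}$ comes from the perturbation $s_Nh_N$: the independent centered Gaussian fields $(h_{N,p,q})_{p,q\ge1}$ and the i.i.d.\ uniform weights $\mathfrak{X}=(x_{p,q})_{p,q\ge1}\in[1,2]^{\N\times\N}$. Hence $\E_{\mathfrak{X}}\<\,\cdot\,\>_{\mathfrak{X}}$ is an honest average over all the perturbation variables, and there is no conditioning subtlety to begin with.

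The structural point that makes the cited lemma apply is the assumption that $\cal{W}$ contains each standard unit vector $\b{e}_x$ of $[0,1]^{\Om}$. Fix $x\in\Om$ and let $q_x$ be the index with $\b{w}_{q_x}=\b{e}_x$; then $\b{R}_N^{\b{w}_{q_x}}(\b{u},\b{u}')=\tfrac1{|\Om|}(\b{u}(x),\b{u}'(x))_N$ depends only on the $x$-th block, so the subfamily $\{h_{N,p,q_x}\}_{p\ge1}$, entering $h_N$ with the fixed coefficients $2^{-(p+q_x)}x_{p,q_x}$, is exactly a single-site Ghirlanda--Guerra perturbation in the scalar overlap $\rho:=(\b{u}(x),\b{u}'(x))_N\in[-1,1]$, carrying all powers $\rho^{p}$, $p\ge1$, with strength diverging like $s_N$. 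This is precisely the situation Lemma 2.3 of \cite{erik} handles: for any deterministic $H_N$ satisfying (\ref{eqn:ignore-1632}), once such a perturbation is present and $s_N\to\infty$, averaging over its coefficients forces the perturbed Gibbs measure to charge $\{(\b{u}(x),\b{u}'(x))_N\ge-\epsilon\}$ with weight tending to $1$, and, crucially, the rate depends only on $\epsilon$, $|\Om|$ and the divergence of $s_N$, not on $H_N$. I would then quote this lemma after one routine check: the remaining terms of $h_N$, those indexed by $q$ with $\b{w}_q\ne\b{e}_x$, can be absorbed into the ``deterministic Hamiltonian'' slot of the cited lemma. Indeed, conditioning on those fields and their coefficients, $H_N$ together with the conditioned-out part of $s_Nh_N$ is, given the conditioning, a deterministic continuous function on the compact set $S_N^{\Om}$ (the excess Gaussian part is a.s.\ bounded on $S_N^{\Om}$), hence still obeys the integrability bound a.s.; since the conclusion of \cite{erik} is uniform over that slot, it survives the outer expectation, and this same uniformity is exactly what is needed for the extremum over $H_N$ in the statement as written.

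At the level of mechanism, Lemma 2.3 of \cite{erik} is a form of Talagrand's positivity principle for the multi-site spherical model: the single-site perturbation with all powers $\rho^p$ present --- the odd ones being what detects negativity, since $\rho^{p}\le-\epsilon^{p}<0$ on $\{\rho\le-\epsilon\}$ --- together with the Ghirlanda--Guerra-type rigidity it forces on the scalar overlap, is incompatible with the limiting overlap law charging $[-1,-\epsilon]$; the hypothesis $s_N\to\infty$ is what makes the $O(1/s_N)$ error terms in this forcing vanish. I do not expect a genuine conceptual obstacle here, as the theorem is essentially a restatement of the cited lemma in the present notation. The only real work is the bookkeeping just described: reconciling the two-index perturbation $(p,q)$ used here, with its weights $2^{-(p+q)}x_{p,q}$ and kernels $\b{R}_N^{\b{w}_q}$, with the single-index template of \cite{erik} after isolating the unit-vector index $q_x$, and verifying that conditioning away the other components preserves all the hypotheses as well as the uniformity over $H_N$.
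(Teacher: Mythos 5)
Your proposal matches the paper's approach: the paper proves this theorem simply by stating that it ``follows immediately from the deterministic Lemma 2.3 of \cite{erik},'' which is the same citation you rely on. Your additional bookkeeping --- isolating the unit-vector index $q_x$ and absorbing the remaining perturbation components into the deterministic Hamiltonian slot by conditioning --- is a detailed account of the reduction the paper takes for granted, and is sound.
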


	\subsection{The Multi-Species Ghirlanda-Guerra Identities and Their Consequences}
	
	In this subsection we will recall the multi-species Ghirlanda-Guerra identities and some of their important consequences. The non-multi-species version of these identities were first discovered in the titular work by Ghirlanda and Guerra \cite{GhirlandaGuerraOG}. This was followed by extensive work by Talagrand \cite{talagrandchallenge} and Panchenko \cite{panchenko,panchenko-Ultrametricity,panchenkoUnipartite} who found that these identities both contain a given random array to be quite close to the overlaps of a Ruelle probability cascade, and asymptotically hold (possibly after a slight perturbation) for the overlap obtained by taking multiple samples from the Gibbs measure, at least in the case of the one-site model. The multi-species version of these identities were found in the case of multi-site spin glasses by Pancheko \cite{panchenkoms}, who further found they imply a surprising synchronization property which significantly constrains the possible values over the overlap array even more.
	
	\begin{defin}[Multi-Species Ghirlanda-Guerra Identities]
		\label{defin:GG-identities}
		Let us consider a random array $\b{R}\in ([-1,1]^{\N\times \N})^{\Om}$. For any $\ell\ge 1$, we may consider the truncation of this array $\b{R}^{\ell}\in ([-1,1]^{\ell \times \ell})^{\Om}$ by taking only the first $\ell$-coordinates.
		
		Then for any continuous functions $f:([-1,1]^{\ell\times \ell})^{\Om}\to \R$ and $\varphi:[-1,1]^{\Omega}\to \R$, we define the multi-species Guerra-Ghirlanda difference by
		\[
		\Delta(\ell,f,\varphi):=
		\bigg|\ell \E[f(\b{R}^{\ell})\varphi(\b{R}_{1,\ell+1})]-\E[f(\b{R}^{\ell})]\E[\varphi([\b{R}_{1,2}])]
		-\sum_{k=2}^{\ell}\E[f(\b{R}^{\ell})\varphi(\b{R}_{1,k})]\bigg|.\label{eqn:def:Guerra-Ghirlanda difference}
		\]
		If $\Delta(\ell,f,\varphi)=0$ for all possible of choices of $(\ell,f,\varphi)$, we will say that $\b{R}$ satisfies the multi-species Guerra-Ghirlanda identities.
	\end{defin}
	
	The first major consequence of these identities is the following synchronization result due to Panchenko. 
	
	\begin{theorem}[Theorem 4, \cite{panchenkoms}]
		\label{theorem:perturbation:panchenko synch}
		If a random infinite array $\b{R}\in ([-1,1]^{\N\times \N})^{\Om}$ satisfies the multi-species Guerra-Ghirlanda identities then there exists, for each $x\in \Om$, a non-decreasing continuous function $\Phi_x:[0,1]\to [0,1]$, such that a.s. for each $\ell,\ell'\ge 1$, we have that $\b{R}_{\ell,\ell'}(x)=\Phi_x(R_{\ell,\ell'})$, where here
		\[R_{\ell,\ell'}=\frac{1}{|\Om|}\sum_{x\in \Om}\b{R}_{\ell,\ell'}(x).\label{eqn:perturbation:normalized array}\]
	\end{theorem}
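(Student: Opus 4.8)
This is Theorem~4 of \cite{panchenkoms}, so I will only describe the strategy of its proof. The plan has three layers: first, reduce to the classical (one-species) Ghirlanda--Guerra identities in each coordinate and for the normalized array; second, upgrade the resulting Parisi ultrametricity to a vector-valued, ``compatible'' statement; and third, read off the synchronizing functions $\Phi_x$ and their monotonicity and continuity from that vector ultrametricity.

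First I would note that $\b{R}$ is, as always in this setting, a symmetric, weakly exchangeable, Gram-type array with unit diagonal, so the Dovbysh--Sudakov representation applies both to the scalar array $R_{\ell,\ell'}$ and to each coordinate array $\b{R}_{\ell,\ell'}(x)$: each is realized as the inner-product array of i.i.d.\ samples from a random probability measure on a separable Hilbert space. Choosing the test function $f$ in Definition~\ref{defin:GG-identities} to depend only on the coordinate-$x$ entries and $\varphi$ to be a function of $\b{R}_{1,\ell+1}(x)$ shows that $\b{R}(x)$ satisfies the one-species Ghirlanda--Guerra identities; a suitable averaging shows the same for $R$. By Panchenko's ultrametricity theorem (in the form recalled in \cite{panchenko}), each of these arrays is ultrametric. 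Moreover, the \emph{joint} identities for the pair $(\b{R},R)$ force the ultrametric trees to be compatible: for any three replicas, the pair realizing the minimal overlap is the same for the scalar array and for every coordinate array simultaneously. Equivalently, among $\b{R}_{1,2},\b{R}_{1,3},\b{R}_{2,3}$ two of these vectors coincide and the third dominates them coordinate-wise.

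The heart of the argument is the synchronization itself, namely that $\b{R}_{1,2}(x)$ is a deterministic function of $R_{1,2}$. The key lemma is that $R_{1,2}=R_{1,3}$ forces $\b{R}_{1,2}(x)=\b{R}_{1,3}(x)$ for every $x$; this is where the full strength of the Ghirlanda--Guerra identities is used, through the ``stochastic stability''/resampling invariance they encode, to show that the conditional law of $\b{R}_{1,2}(x)$, given replica~$1$ and given the value $R_{1,2}$, is a point mass. Granting this, one passes from a shared replica to two disjoint pairs: if $R_{1,2}=R_{3,4}$, the identities make the support of the overlap law rich enough to produce, with positive probability, a further replica~$5$ with $R_{1,5}=R_{3,5}=R_{1,2}$, so that $\b{R}_{1,2}(x)=\b{R}_{1,5}(x)=\b{R}_{3,5}(x)=\b{R}_{3,4}(x)$. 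This defines $\Phi_x$ on the support of $\zeta$, the law of $R_{1,2}$, by $\b{R}_{1,2}(x)=\Phi_x(R_{1,2})$ a.s., and the identity then propagates to all $\b{R}_{\ell,\ell'}(x)$ by exchangeability.

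Finally, monotonicity and continuity of $\Phi_x$ come out of the compatible ultrametricity of the second step. If $R_{1,2}\le R_{1,3}$, then by scalar ultrametricity the pair $(1,2)$ (together with $(2,3)$) realizes the minimum, hence by compatibility $\b{R}_{1,2}(x)=\b{R}_{2,3}(x)\le \b{R}_{1,3}(x)$ in every coordinate, i.e.\ $\Phi_x$ is non-decreasing on $\supp(\zeta)$; since $R$ and each $\b{R}(x)$ have matching atoms, $\Phi_x$ extends to a non-decreasing continuous function $[0,1]\to[0,1]$ with $\Phi_x(1)=1$ by interpolation. I expect the genuinely hard step to be the vector-valued upgrade of ultrametricity together with the point-mass lemma $R_{1,2}=R_{1,3}\Rightarrow\b{R}_{1,2}(x)=\b{R}_{1,3}(x)$: everything preceding it is a bookkeeping reduction to the one-species theory, and everything following it is soft.
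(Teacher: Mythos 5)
The paper does not contain a proof of this statement: it is quoted verbatim as Theorem~4 of \cite{panchenkoms}, and the body of the paper never reproduces or re-derives it. So the comparison here is against Panchenko's original argument, which your outline does not match.

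Your proposal has a circularity problem in the two steps you yourself flag as ``the heart'' of the argument. The ``compatible ultrametricity'' claim in your second step (that for any three replicas the minimizing pair is the same across all coordinates) and the ``point-mass lemma'' in your third step ($R_{1,2}=R_{1,3}\Rightarrow\b{R}_{1,2}(x)=\b{R}_{1,3}(x)$ a.s.) are both, up to exchangeability, restatements of the theorem being proved: each one already implies that $\b{R}_{1,2}(x)$ is $\sigma(R_{1,2})$-measurable, which is the whole content of synchronization. Appealing to ``stochastic stability/resampling invariance'' names the source of the power but not the mechanism; nothing in your outline explains how the joint multi-species Ghirlanda--Guerra identities force the conditional law of $\b{R}_{1,2}(x)$ given $R_{1,2}$ to degenerate, which is precisely the gap a proof must fill. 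The ``replica~5'' step is also unsupported: the Ghirlanda--Guerra identities do not by themselves guarantee that for an arbitrary value in the support of $R_{1,2}$ a further replica realizing that same overlap with both $1$ and $3$ exists with positive probability, and in any case you have already assumed the point-mass lemma at that stage.

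Panchenko's actual proof does not pass through ultrametricity at all. It is a direct argument on the joint law of the pair $(R_{1,2},\b{R}_{1,2}(x))$: using the joint identities (applied to indicator-type test functions), he shows that the half-plane events $\{R_{1,2}\le q\}$ and $\{\b{R}_{1,2}(x)\le q'\}$ are always nested, i.e.\ for every $(q,q')$ either $\P(R_{1,2}<q,\ \b{R}_{1,2}(x)\ge q')=0$ or $\P(R_{1,2}\ge q,\ \b{R}_{1,2}(x)<q')=0$. This nesting forces the joint law to live on the graph of a non-decreasing function; $\Phi_x$ is then essentially the quantile coupling of the two marginal distribution functions, from which monotonicity and continuity (after matching atoms) are read off directly. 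Scalar ultrametricity of each coordinate array is a \emph{consequence} of the one-species Ghirlanda--Guerra identities, as you note, but it is not an ingredient in the synchronization argument; reorganizing the proof around it is what produces the circularity above.
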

	
	This reduces the study of the overlap to the study of array $R=[R_{\ell,\ell'}]_{\ell,\ell'\ge 1}\in [-1,1]^{\N\times \N}$. To do this we now recall the regular Ghirlanda-Guerra identities, which state that for continuous functions $\varphi:[-1,1]\to \R$ and $f:[-1,1]^{\ell\times \ell}\to \R$, we have that
	\[\ell \E[f(R^{\ell})\varphi(R)_{1,\ell+1}]=\E[f(R^{\ell})]\E[\varphi(R_{1,2})]
	+\sum_{k=2}^{\ell}\E[f(R^{\ell})\varphi(R_{1,k})],\label{eqn:def:Guerra-Ghirlanda-classical}\]
	where $R^{\ell}$ is the truncation of $R$ as before. It is clear if $\b{R}$ satisfies the multi-species Guerra-Ghirlanda identities, then $R$ satisfies the regular Ghirlanda-Guerra identities. These identities are still quite powerful, and further determine the law of the array $R$ off the diagonal in terms of the law of a single off-diagonal element. 
	
	To recall this result, we will first need to recall the following representation theorem. We will say an array is exchangeable if for any finitely supported permutation $\sigma:\N\to \N$, we have that
	\[R\disteq R_\sigma:=[[R]_{\sigma(i),\sigma(j)}]_{i,j\ge 1}.\]

	\begin{theorem}[Dovbysh–Sudakov \cite{DovbyshSudakov}]
		\label{theorem:perturbation:dovbysh}
		Let $R\in [-1,1]^{\N\times \N}$ be a symmetric, exchangeable, and positive semi-definite array, and fix a separable infinite Hilbert space $H$. Then there is a random measure, $\mu$ on $H$, such that if $(v^1,v^2,\dots )\in H^{\N}$ are an sequence of i.i.d samples from $\mu$, we have that
		\[[R_{\ell,\ell'}]_{\ell\neq \ell'\ge 1}\disteq [(v^\ell,v^{\ell'})_H]_{\ell\neq \ell'\ge 1},\]
		where $(,)_H$ denotes the inner product on the Hilbert space $H$.
	\end{theorem}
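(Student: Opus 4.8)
The plan is to deduce the statement from de Finetti's theorem for exchangeable sequences valued in a Polish space, by realizing the array as a Gram matrix in a Hilbert space in a way that is compatible with the permutation action on the indices. First I would note that positive semi-definiteness of $R$ means that for every $N$ the matrix $(R_{\ell,\ell'})_{\ell,\ell'\le N}$ is positive semi-definite; to get a single Hilbert space valid for all $N$ and canonically attached to $R$, I would work with the abstract space $\mathcal{H}_R$ obtained by equipping the space $c_{00}$ of finitely supported real sequences with the (random) semi-inner product $\langle a,b\rangle_R=\sum_{i,j}a_ib_jR_{i,j}$, quotienting by its null space, and completing. Writing $v^\ell$ for the image of the $\ell$-th coordinate vector, one has $\mathcal{H}_R$ separable a.s. and $(v^\ell,v^{\ell'})_{\mathcal{H}_R}=R_{\ell,\ell'}$ for all $\ell,\ell'$, with $\mathcal{H}_R$ and the family $(v^\ell)_{\ell\ge 1}$ realized as a fixed measurable function of $R$.

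The next, and main, step is to upgrade this to an honestly exchangeable Hilbert-space-valued sequence. A finitely supported permutation $\sigma$ of the indices acts on $c_{00}$ by permuting coordinates and induces a linear isometry $\mathcal{H}_R\to\mathcal{H}_{R_\sigma}$ sending $v^\ell$ to $v^{\sigma^{-1}(\ell)}$; combined with the exchangeability $R\disteq R_\sigma$ of the array, this shows $(v^\ell)_{\ell\ge1}$ is exchangeable \emph{up to a global isometry of the ambient Hilbert space}. The point where real work is needed is to remove this isometry: one embeds all the spaces $\mathcal{H}_R$ measurably into a single fixed separable infinite-dimensional Hilbert space $H$ and, by a measurable selection argument that respects the symmetry, arranges that the resulting $H$-valued sequence $(v^\ell)_{\ell\ge1}$ is genuinely invariant in law under all finite permutations. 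This equivariant measurable Gram representation is precisely the content of the theorem of Dovbysh and Sudakov \cite{DovbyshSudakov}, and I expect it to be the only genuinely delicate point; everything else is soft.

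Granting the exchangeable $H$-valued sequence $(v^\ell)_{\ell\ge1}$, I would apply de Finetti's theorem (in the Hewitt--Savage form for sequences valued in a Polish space): there is a random probability measure $\mu$ on $H$ such that, conditionally on $\mu$, the $v^\ell$ are i.i.d. with law $\mu$. Since $(v^\ell,v^{\ell'})_H=R_{\ell,\ell'}$ for all $\ell\ne\ell'$, this gives $[R_{\ell,\ell'}]_{\ell\ne\ell'}=[(v^\ell,v^{\ell'})_H]_{\ell\ne\ell'}$ as an almost sure identity, hence in particular in distribution, which is the assertion. (Only the off-diagonal entries are claimed, consistent with the fact that a diagonal contribution $a_\ell\delta_{\ell\ell'}$ cannot in general be absorbed into a Gram structure.)

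An alternative route, which I would keep as a fallback, is to start instead from the Aldous--Hoover representation $R_{\ell,\ell'}=\phi(w,u_\ell,u_{\ell'},u_{\{\ell,\ell'\}})$ of the jointly exchangeable array by i.i.d. uniform ``directing'', ``row'', and ``cell'' variables ($\phi$ symmetric in the two middle arguments), and then use positive semi-definiteness to show that $\phi$ cannot depend on the cell variable $u_{\{\ell,\ell'\}}$: if the residual $R_{\ell,\ell'}-\E[R_{\ell,\ell'}\mid w,(u_k)_k]$ had nonzero conditional variance, then the off-diagonal part of $(R_{\ell,\ell'})_{\ell,\ell'\le N}$ would be a Wigner-type perturbation whose extreme negative eigenvalue diverges with $N$, contradicting positive semi-definiteness; one then checks that $\E[(R_{\ell,\ell'})_{\ell,\ell'\le N}\mid w,(u_k)_k]\succeq 0$ forces the averaged kernel $\bar\phi(w,\cdot,\cdot)$ to be, for a.e.\ $w$, a positive-definite kernel on $[0,1]$, and reads off $\mu$ as the conditional law of $\Psi_w(U)$ given $w$, where $\bar\phi(w,s,t)=(\Psi_w(s),\Psi_w(t))_H$ is the Mercer-type factorization and $U$ is uniform. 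In this route the delicate step is the same in spirit — the elimination of the cell variable against the positivity constraint — and here too one may ultimately invoke \cite{DovbyshSudakov}.
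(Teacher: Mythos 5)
The paper does not prove this statement; it is quoted directly from Dovbysh and Sudakov \cite{DovbyshSudakov}, so your proposal is a from-scratch attempt rather than something to compare against an internal argument.

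Your primary route has a genuine gap, and it is not the measurable-selection step you flag but the step just before it. The Gram space $\mathcal{H}_R$ built from the \emph{whole} array $R$, diagonal included, gives vectors with $(v^\ell,v^{\ell'})_{\mathcal{H}_R}=R_{\ell,\ell'}$ for \emph{every} pair. If these could be realized as a genuinely exchangeable $H$-valued sequence, de Finetti would hand you a directing measure $\mu$ whose i.i.d.\ samples reproduce the \emph{entire} array in law, diagonal included --- which is strictly stronger than the theorem, and is false. Take $R$ deterministic with $R_{\ell,\ell}=1$ and $R_{\ell,\ell'}=q\in(0,1)$ for $\ell\neq\ell'$; this is symmetric, positive semi-definite, and exchangeable. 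Yet no probability measure $\mu$ on a separable $H$ has i.i.d.\ samples with $\|\tilde v^1\|_H=1$ and $(\tilde v^1,\tilde v^2)_H=q$ almost surely: if $\mu$ has an atom, two samples coincide with positive probability and give overlap $1\neq q$; if $\mu$ is non-atomic, the constraint $(u,u')=q$ for $\mu^{\otimes2}$-a.e.\ pair gives $\E[(u,u')]=\|m\|^2=q$ and $\E[(u,u')^2]=\|m\otimes m+C\|_{\mathrm{HS}}^2=q^2$ (with $m$ the mean and $C$ the covariance operator), which forces $\|C\|_{\mathrm{HS}}^2+2m^{\mathsf T}Cm=0$, hence $C=0$ and $\mu=\delta_m$ with $\|m\|^2=q<1$, contradicting $\|\tilde v^1\|=1$. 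The correct Dovbysh--Sudakov representation for this $R$ is precisely $\mu=\delta_m$; the defect $1-q$ is the diagonal ``dust'' that your final parenthetical acknowledges but that your Gram construction has already baked into the vectors $v^\ell$. The exchangeable embedding you postulate therefore cannot exist, and the first route cannot be repaired without first separating the dust from the Gram part --- that separation \emph{is} the theorem, so the argument as written is circular even where it is not wrong.

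Your fallback route --- Aldous--Hoover, then eliminate the cell variable using positive semi-definiteness --- is the standard correct approach and is not really a fallback; it is the proof you should develop. One caution on the random-matrix step: the conditionally averaged matrix $\bar M_N=\E[M_N\mid w,(u_k)]$ is itself a.s.\ positive semi-definite (average the quadratic form) with $\mathrm{tr}\,\bar M_N\le N$, but it can have operator norm of order $N$, so ``smallest eigenvalue of the residual $\sim -\sqrt N$'' does not by itself contradict $M_N\succeq 0$. You need to pair the Wigner-type edge estimate with the counting bound that a PSD matrix of trace $\le N$ has at most $N/\alpha$ eigenvalues exceeding $\alpha$; taking $\alpha$ of order $\sqrt N$ produces a large subspace on which $\bar M_N$ is small, which must intersect a macroscopic bottom eigenspace of the residual and supplies the negative test vector. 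I would encourage you to carry this out in full rather than ultimately circling back to citing \cite{DovbyshSudakov}.
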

	
	A remarkable fact is that RPCs not only satisfy the Ghirlanda-Guerra identities, but they are essentially dense in that class, which is the content of the following result.
	
	\begin{theorem}[Theorems 2.13 and 2.17 and Remark 2.1 of \cite{panchenko}]
		\label{theorem:perturbation:gg gives RPC}
		Let $\mu$ be a random measure on $H$, a.s. supported on the unit ball, and whose overlap array, $R$, satisfies (\ref{eqn:def:Guerra-Ghirlanda-classical}) for each choice of $(\ell,f,\varphi)$. Then the law of $R$ is uniquely determined by that law of $R_{12}$.
		
		Moreover, let us denote the law of $R_{12}$ as $\zeta$. Then $\zeta$ is supported on $[0,1]$, and $[R]_{\ell\neq \ell'\ge 1}$ coincides in law with the limiting law of the (off-diagonal) overlap array generated by the RPCs corresponding to any sequence of discrete probability measures $\zeta_n$ such that $\zeta_n\To \zeta$ in measure.
	\end{theorem}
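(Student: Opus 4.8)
This theorem is recalled rather than newly proved, so the plan is to assemble the relevant pieces of Panchenko's monograph; I sketch the logical skeleton and indicate where the genuine difficulty lies. There are four ingredients: (i) the Ghirlanda--Guerra identities pin down the full law of the array $R$ from the law of the single entry $R_{12}$; (ii) $\zeta:=\mathrm{law}(R_{12})$ is supported in $[0,1]$; (iii) Panchenko's ultrametricity theorem; and (iv) the explicit identification of an ultrametric, GG-satisfying overlap array with that of a Ruelle probability cascade, together with a continuity step in the driving measure.

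First I would establish (i). Since $R$ is the overlap array of i.i.d.\ samples from a random measure, it is symmetric, exchangeable and positive semidefinite, so by the Dovbysh--Sudakov representation (Theorem \ref{theorem:perturbation:dovbysh}) it suffices to determine the joint law of the off-diagonal entries. One argues by induction on $\ell$ that $\E\!\left[\prod_{1\le i<j\le\ell}\varphi_{ij}(R_{ij})\right]$ is determined by $\zeta$: vanishing of the GG difference $\Delta(\ell,f,\varphi)$ lets one rewrite an $(\ell{+}1)$-replica expectation in which replica $\ell{+}1$ enters only through $R_{1,\ell+1}$ as a combination of $\ell$-replica expectations and the one-dimensional integral $\int\varphi\,d\zeta$; iterating together with exchangeability peels off the replicas one at a time down to the base case. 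This is Theorem 2.13 of \cite{panchenko}. For (ii), the bound $R_{\ell,\ell'}\le 1$ is immediate from Cauchy--Schwarz since $\mu$ lives on the unit ball, and the lower bound $R_{\ell,\ell'}\ge 0$, needed in the concrete application, is Talagrand's positivity principle (Theorem \ref{theorem:talagrand's positivity}), which forces every coordinate overlap, hence the averaged overlap $R_{\ell,\ell'}$, to be asymptotically non-negative; thus $\zeta$ is supported on $[0,1]$.

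The heart of the argument is (iii): Panchenko's theorem that the GG identities force the support of $\mu$ to be a.s.\ ultrametric, i.e.\ $R_{1,3}\ge\min(R_{1,2},R_{2,3})$ a.s.\ This is the proof of the Parisi ultrametricity conjecture, and it is the step I expect to be the main obstacle were one to reprove the statement from scratch; here I would simply invoke it. Granting ultrametricity, an exchangeable, positive semidefinite, ultrametric array with a prescribed \emph{discrete} one-dimensional marginal $\zeta$ has precisely the hierarchical structure of the RPC weights $(v_\alpha)_{\alpha\in\N^r}$ on the points $(h_\alpha)$ with $(h_\alpha,h_\beta)_H=q^{\alpha\wedge\beta}$, which identifies the two arrays in law (Theorem 2.17 of \cite{panchenko}).

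Finally, for general $\zeta$ I would choose discrete $\zeta_n\To\zeta$, use continuity of the RPC construction in its driving measure (for instance through the recursion (\ref{eqn:property of RPC:recursion}) or a Gaussian comparison bounding the change in the overlap array law by the Wasserstein-type distance between the $\zeta_n$) to see that the RPC overlap arrays converge in law, check that the limiting array is exchangeable, positive semidefinite, satisfies the GG identities (\ref{eqn:def:Guerra-Ghirlanda-classical}) and has one-dimensional marginal $\zeta$, and conclude by the uniqueness established in (i) that it agrees in law with $[R]_{\ell\neq\ell'}$. Remark 2.1 of \cite{panchenko} records that this limit is independent of the approximating sequence, which closes the argument.
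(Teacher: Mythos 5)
This theorem is recalled, not proved, in the paper---it is cited directly from Panchenko's monograph---so there is no paper argument to compare against. Your sketch captures the logical architecture of the cited results (Dovbysh--Sudakov representation, uniqueness from GG, ultrametricity, identification with RPCs, and continuity in the driving measure) reasonably well, and you are right that the ultrametricity theorem is the deep ingredient.

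Two small misattributions are worth flagging. First, for the lower bound $R_{12}\ge 0$ you invoke Theorem \ref{theorem:talagrand's positivity}, but that result is about perturbed Gibbs measures averaged over $\mathfrak{X}$. The relevant statement here is the version of Talagrand's positivity principle in which nonnegativity of the overlap is derived \emph{as a consequence} of the Ghirlanda--Guerra identities for an arbitrary GG-satisfying random measure (Theorem 2.16 in \cite{panchenko}); that is what justifies ``$\zeta$ is supported on $[0,1]$'' under the hypotheses of the statement. Second, your description of the uniqueness step as a direct induction that ``peels off'' replica $\ell{+}1$ glosses over the fact that in a general moment $\E\prod_{i<j}\varphi_{ij}(R_{ij})$ the $(\ell{+}1)$-th replica is coupled to all earlier ones, not just to replica $1$, so the GG difference $\Delta(\ell,f,\varphi)$ as written cannot be applied verbatim. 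Panchenko's actual route to Theorem 2.13 is through the invariance principle (his Theorem 2.14) together with ultrametricity, which resolves precisely this coupling. Neither point is a gap in substance---you are summarizing cited material---but as stated the sketch misassigns the source of positivity and understates the subtlety of the uniqueness argument.
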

	
	\subsection{Effect of the Perturbation}
	
	Now that we have introduced the Ghirlanda-Guerra identities, and highlighted their power, we will show how they can be guaranteed in the limit through the use of the perturbation term given in (\ref{eqn:ignore-38443}). To explain this we will now make the assumption that our choice of Hamiltonian satisfies
	\[\E\int_{S_N^{\Om}}\exp(|H_N(\b{u})|)m_N(d\b{u})<\infty.\label{eqn:perturbation:H'-verify simple}\]
	
	We consider an infinite sequence of vectors sampled from the Gibbs measure $\<*\>_{\mathfrak{X}}$, which we denote $(\b{u}^1,\b{u}^2,\dots )$. Associated to these we form the infinite array $\b{R}_N$ given by
	\[[\b{R}_N]_{i,j}=\b{R}_N(\b{u}^{i},\b{u}^j)\in [-1,1]^{\Om}.\]
	
	Then any choice of $\ell\ge 1$, continuous functions $f:([-1,1]^{\ell\times \ell})^{\Om}\to \R$, $\b{w}\in [0,1]^{\Om}$, and $p\ge 1$, we define
	\[
	\begin{split}
		\Delta_N^{\mathfrak{X}}(\ell,f,\b{w},p):=\bigg|&\ell \E\<f(\b{R}_N^{\ell})(\b{R}_N^{\b{w}_q}(\b{u}^1,\b{u}^{\ell+1}))^p\>_{\mathfrak{X}}-\E\<f(\b{R}_N^{\ell})\>_{\mathfrak{X}}\E\<(\b{R}_N^{\b{w}}(\b{u}^1,\b{u}^{2}))^p\>_{\mathfrak{X}}\\
		&-\sum_{k=2}^{\ell}\E \<f(\b{R}_N^{\ell})(\b{R}_N^{\b{w}}(\b{u}^1,\b{u}^{2}))^p\>_{\mathfrak{X}}\bigg|.
	\end{split}
	\]
	
	Note that this coincides with a choice of Ghirlanda-Guerra difference defined in (\ref{eqn:def:Guerra-Ghirlanda difference}). Namely, if we understand the random measure in question in Definition \ref{defin:GG-identities} as the Gibbs measure of $H_N^{\mathfrak{X}}$, and define the function $\varphi_{\b{w},p}(\b{R})=(\b{R}^{\b{w}})^p$, we have that $\Delta_N^{\mathfrak{X}}(\ell,f,\b{w},p)=\Delta(\ell,f,\varphi_{\b{w},p})$. 
	
	We now show that $\Delta_N^{\mathfrak{X}}(\ell,f,\b{w},p)$ is reasonably small after averaging with respect to $x_{p,q}\in [1,2]$. In particular, we have the following special case of the quite general Theorem A.3 of \cite{erikCS}, which is a direct adaptation of Theorem 3.2 of \cite{panchenko}. Note that as our $\Delta$ differs from theirs by a factor of $\ell$, this factor is missing from the corresponding expression (A.11) in \cite{erikCS}.
	\begin{theorem}[Theorem A.3, \cite{erikCS}]
		\label{theorem:perturbation:eriks}
		Let us fix $p,q\ge 1$. Let us denote make dependance of $Z_N^{\mathfrak{X}}$ on $x_{p,q}$ explicit by writing $Z_N^{\mathfrak{X}}(x_{p,q})$, as well as writing $\Delta_N^{\mathfrak{X}}(\ell,f,\b{w}_q,p,x_{p,q})$. We then fix choices of $x_{p',q'}\in [0,3]$ for $(p',q')\neq (p,q)$. We then define the quantity
		\[v_{N,p,q}=\sup\{\E|\log Z_N^{\mathfrak{X}}(u)-\E \log Z_N^{\mathfrak{X}}(u)|: u\in [0,3]\},\]
		and denote by $\rho=4^{-(p+q)}(\frac{1}{|\Om|}\sum_{x\in \Om}\b{w}_q(x))^p$ the variance of $h_{N,p,q}$. Then for $N$ such that $2^{-1}s_N^{-2}v_{N,p,q}/\rho<1$, we have that 
		\[\int_1^2 \Delta_N(\ell,f,\b{w}_q,p,x)dx\le 24\|f\|_{\infty}\sq{\rho}s_N^{-1}(1+\sq{v_{N,p,q}}).\]
	\end{theorem}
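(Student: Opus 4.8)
The plan is to reproduce the argument of Theorem 3.2 of \cite{panchenko} in the multi-species form of \cite{erikCS}; since our perturbation $s_N\sum_{p,q}2^{-(p+q)}x_{p,q}h_{N,p,q}$ with $h_{N,p,q}$ of covariance $(\b{R}_N^{\b{w}_q}(\b{u},\b{u}'))^p$ is built to match the framework of \cite{panchenkoms,erikCS}, the real content is verifying that the normalizations line up and then tracking constants. First I would freeze every coupling except $x=x_{p,q}$ and regard $x\mapsto \varphi_N(x):=\E\log Z_N^{\mathfrak{X}}(x)$ and its random version $x\mapsto \log Z_N^{\mathfrak{X}}(x)$ as functions on $[0,3]$; by the integrability hypothesis (\ref{eqn:perturbation:H'-verify simple}) both are finite and smooth in $x$, and both are convex because the perturbation enters linearly. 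A Gaussian integration by parts in the field $h_{N,p,q}$ gives $\varphi_N'(x)$ as a constant multiple of $x\bigl(\rho-\E\langle \varphi_{\b{w}_q,p}(\b{R}_N(\b{u}^1,\b{u}^2))\rangle_{\mathfrak{X}}\bigr)$ (here $\varphi_{\b{w}_q,p}(\b{R})=(\b{R}^{\b{w}_q})^p$ and $\rho$ as in the statement), and $\varphi_N''(x)$ as a nonnegative multiple of the in-Gibbs fluctuation of $\varphi_{\b{w}_q,p}$; so controlling the two derivatives controls respectively the mean and the self-averaging of the overlap monomial whose GG difference we want to bound.

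Next I would extract quantitative self-averaging of $\varphi_{\b{w}_q,p}$ after averaging over $x\in[1,2]$. The telescoping identity $\int_1^2 \varphi_N''(x)\,dx=\varphi_N'(2)-\varphi_N'(1)$ together with the a priori bound $|\varphi_N'|=O(s_N^2\rho)$ on $[0,3]$ bounds the total in-Gibbs fluctuation; and the elementary convexity lemma — if two convex functions are $\delta$-close in sup-norm on an interval then their derivatives are $O(\sqrt{\delta\cdot(\text{oscillation of the derivative})})$-close in $L^1$ — applied with $\delta=v_{N,p,q}$ bounds the $L^1([1,2])$ distance between $\varphi_N'$ and the random $(\log Z_N^{\mathfrak{X}})'$, hence the disorder fluctuation of the Gibbs average $\langle h_{N,p,q}\rangle_{\mathfrak{X}}$. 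Combining these two inputs, one obtains $\int_1^2 \E\big\langle |\varphi_{\b{w}_q,p}(\b{R}_N(\b{u}^1,\b{u}^2))-\E\langle \varphi_{\b{w}_q,p}\rangle_{\mathfrak{X}}|\big\rangle_{\mathfrak{X}}\,dx \lesssim \sqrt{\rho}\,s_N^{-1}(1+\sqrt{v_{N,p,q}})$, which is exactly the self-averaging estimate that will feed the last step (and the side condition $\tfrac12 s_N^{-2}v_{N,p,q}/\rho<1$ is what makes the convexity-lemma bookkeeping close up with the clean power of $\rho$).

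Finally I would pass from self-averaging to the Ghirlanda--Guerra difference. For an arbitrary bounded continuous $f$ of the overlaps of the first $\ell$ replicas, a Gaussian integration by parts in $h_{N,p,q}$ applied to $\E\langle f(\b{R}_N^\ell)\,h_{N,p,q}(\b{u}^1)\rangle_{\mathfrak{X}}$ produces exactly the three-term combination in the definition of $\Delta_N^{\mathfrak{X}}(\ell,f,\b{w}_q,p,x)$ — the extra replica $\b{u}^{\ell+1}$ entering through the $-\ell$ coefficient and the $C_{11}=(\tfrac1{|\Om|}\sum_x\b{w}_q(x))^p$ term cancelling against the $\E\langle f\rangle_{\mathfrak{X}}$-piece — so that $\Delta_N^{\mathfrak{X}}$ equals, up to the perturbation coupling, the disorder-plus-Gibbs covariance of $\langle f\rangle_{\mathfrak{X}}$ with $\langle h_{N,p,q}(\b{u}^1)\rangle_{\mathfrak{X}}$; bounding the in-disorder part by $\|f\|_\infty$ times the disorder fluctuation of $\langle h_{N,p,q}\rangle_{\mathfrak{X}}$ and the within-disorder Gibbs part by $\|f\|_\infty$ times the square root of the in-Gibbs fluctuation, integrating in $x$ over $[1,2]$, and inserting the estimate of the previous paragraph yields the claimed inequality with a numerical constant which one checks is at most $24$. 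The main obstacle is precisely this last step: the replica bookkeeping in the integration by parts and, more delicately, making the convexity lemma and the two square-root estimates fit together so that the final bound carries the correct powers $\sqrt{\rho}$ and $s_N^{-1}$ rather than a cruder combination; but because our setup is term-for-term that of \cite{panchenkoms} and Appendix A of \cite{erikCS}, the cleanest route is to perform the notational identification (our $\varphi_{\b{w}_q,p}$, our $h_{N,p,q}$, our $v_{N,p,q}$ and $\rho$) and then invoke Theorem A.3 of \cite{erikCS} verbatim.
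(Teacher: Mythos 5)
Your proposal ends exactly where the paper does: the paper states Theorem \ref{theorem:perturbation:eriks} as a direct citation of Theorem A.3 of \cite{erikCS} (noting only the $\ell$-normalization mismatch between their $\Delta$ and ours) and provides no independent proof, so invoking the external result after the notational identification is precisely the route taken. Your preliminary sketch of the underlying Panchenko-style argument (freeze $x_{p,q}$, use convexity of $x\mapsto\log Z_N^{\mathfrak{X}}(x)$, Gaussian integration by parts, and the convexity lemma bounding $L^1$-distance of derivatives of nearby convex functions) is the correct skeleton of the Bates--Sohn proof, but is not needed for the purpose the paper uses it for.
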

	
	To apply this, let us now finally set our final condition on $H_N$. We assume that it is a centered Gaussian random function with variance bounded by $N\sigma$ for some fixed $\sigma$. In this case, we may apply Theorem \ref{theorem:Borell-TIS} to verify (\ref{eqn:perturbation:H'-verify simple}). Similarly, we recall that $\E[h_{p,q}(\b{u})^2]\le 1$ so with notation Theorem \ref{theorem:perturbation:eriks}, we may apply Proposition \ref{prop:Borell-TIS-free-energy} to conclude that
	\[\P(|\log Z_N^{\mathfrak{X}}(u)-\E \log Z_N^{\mathfrak{X}}(u)|\ge \alpha)\le 2\exp(-\alpha^2/4(N\sigma+9s_N^2)),\]
	so that
	\[\E|\log Z_N^{\mathfrak{X}}(u)-\E \log Z_N^{\mathfrak{X}}(u)|\le \int_0^\infty  2\exp(-\alpha^2/4(N+9s_N^2))d\alpha=\sq{\pi 8(N\sigma+9s_N^2)}.\]
	In particular, as $s_N=N^{\gamma}$, with $\gamma<1/2$, we for sufficiently large $N$ that  $v_{N,p,q}\le \sq{\pi 9 N\sigma}\le 6\sq{N\sigma}$. It is important to note that while $v_{N,p,q}$ is random quantity depandant on the choice of $\mathfrak{X}\setminus\{x_{p,q}\}$, is is always bounded by $6\sq{N\sigma}$. In particular, as $\gamma>1/4$, we see that $2^{-1}s_N^{-2}v_{N,p,q}/\rho\to 0$. From all these observations, and noting that $\rho\le 1$, we see that Theorem \ref{theorem:perturbation:eriks} may be applied to obtain the following.
	
	\begin{lem}
		\label{lem:perturbation:general-bound}
		Fix $p,q\ge 1$. Then with notation as in Theorem \ref{theorem:perturbation:eriks}, and $N$ sufficiently large,
		\[\E_{\mathfrak{X}}\Delta_N^{\mathfrak{X}}(\ell,f,\b{w}_q,p)\le 24\|f\|_{\infty}\sq{\rho}N^{-\gamma}(1+N^{1/4}\sq{6\sigma}).\]
		In particular
		\[\lim_{N\to\infty}\E_{\mathfrak{X}}\Delta_N^{\mathfrak{X}}(\ell,f,\b{w}_q,p)=0.\]
	\end{lem}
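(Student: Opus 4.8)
The plan is to simply feed the estimates accumulated in the paragraph preceding the lemma into Theorem \ref{theorem:perturbation:eriks}. First I would fix $p,q\ge 1$ and condition on the coordinates $(x_{p',q'})_{(p',q')\neq(p,q)}$, each of which lies in $[1,2]\subseteq[0,3]$, so that Theorem \ref{theorem:perturbation:eriks} is applicable once we check its hypothesis $2^{-1}s_N^{-2}v_{N,p,q}/\rho<1$. Recall from the discussion above that $H_N$ is centered Gaussian with variance at most $N\sigma$, and the perturbation $s_N h_N$ has variance at most $9 s_N^2$ (using $\sup_{p',q'}x_{p',q'}^2\le 9$ over $[0,3]$ and $|\b{R}_N^{\b{w}}|\le 1$), so Proposition \ref{prop:Borell-TIS-free-energy} gives the concentration bound
\[\P(|\log Z_N^{\mathfrak{X}}(u)-\E\log Z_N^{\mathfrak{X}}(u)|\ge\alpha)\le 2\exp\!\left(-\frac{\alpha^2}{4(N\sigma+9s_N^2)}\right),\]
uniformly in $u\in[0,3]$. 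Integrating this tail bound yields $v_{N,p,q}\le\sqrt{8\pi(N\sigma+9s_N^2)}$, and since $s_N=N^\gamma$ with $\gamma<1/2$, for $N$ large this is at most $6\sqrt{N\sigma}$ — crucially a bound that does not depend on the conditioned coordinates $(x_{p',q'})_{(p',q')\neq(p,q)}$.

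Next I would verify the hypothesis of Theorem \ref{theorem:perturbation:eriks}: since $\rho=4^{-(p+q)}\big(\tfrac{1}{|\Om|}\sum_x\b{w}_q(x)\big)^p$ is a fixed positive number and $v_{N,p,q}\le 6\sqrt{N\sigma}$ while $s_N^{-2}=N^{-2\gamma}$, we get $2^{-1}s_N^{-2}v_{N,p,q}/\rho\le 3\sqrt{\sigma}N^{1/2-2\gamma}/\rho$, which tends to $0$ because $\gamma>1/4$; in particular it is $<1$ for $N$ large. Applying Theorem \ref{theorem:perturbation:eriks} with $u=x_{p,q}$ and integrating over $x_{p,q}\in[1,2]$ (recalling $\Delta_N^{\mathfrak{X}}(\ell,f,\b{w}_q,p)=\Delta_N(\ell,f,\b{w}_q,p,x_{p,q})$ in the notation there) gives
\[\int_1^2\Delta_N(\ell,f,\b{w}_q,p,x)\,dx\le 24\|f\|_\infty\sqrt{\rho}\,s_N^{-1}(1+\sqrt{v_{N,p,q}})\le 24\|f\|_\infty\sqrt{\rho}\,N^{-\gamma}(1+N^{1/4}\sqrt{6\sigma}),\]
using $\sqrt{v_{N,p,q}}\le\sqrt{6\sqrt{N\sigma}}\le N^{1/4}\sqrt{6\sigma}$ (valid for $\sigma\ge 1$, or after adjusting constants). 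Since the conditioned coordinates were arbitrary and the final bound is deterministic, taking the outer expectation $\E_{\mathfrak{X}}$ over all of $\mathfrak{X}$ preserves the inequality, giving the first displayed bound of the lemma. The limit statement is then immediate: since $\rho\le 1$ is a fixed constant and $\gamma>1/4$, we have $N^{-\gamma}(1+N^{1/4}\sqrt{6\sigma})=N^{-\gamma}+\sqrt{6\sigma}\,N^{1/4-\gamma}\to 0$ as $N\to\infty$.

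This argument is essentially bookkeeping; there is no serious obstacle, since all the hard analytic work (the concentration inequality in Proposition \ref{prop:Borell-TIS-free-energy} and the Ghirlanda--Guerra perturbation estimate in Theorem \ref{theorem:perturbation:eriks}) is already available. The one point requiring a little care is the uniformity: one must ensure the bound on $v_{N,p,q}$ does not depend on the other coordinates $(x_{p',q'})_{(p',q')\neq(p,q)}$ — this holds because the variance bound $N\sigma+9s_N^2$ for the perturbed Hamiltonian is uniform over $x_{p',q'}\in[0,3]$ — so that after applying Theorem \ref{theorem:perturbation:eriks} pointwise in those coordinates and integrating in $x_{p,q}$, one may freely take the full expectation $\E_{\mathfrak{X}}$ without the estimate degrading.
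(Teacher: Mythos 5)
Your proposal is correct and follows exactly the approach the paper takes (the paper's "proof" is simply the paragraph of estimates immediately preceding the lemma, followed by the assertion that Theorem \ref{theorem:perturbation:eriks} applies). You correctly identified the one subtle point — that the bound on $v_{N,p,q}$ must be uniform in the conditioned coordinates $(x_{p',q'})_{(p',q')\neq(p,q)}$ so that the estimate from Theorem \ref{theorem:perturbation:eriks} can be averaged over all of $\mathfrak{X}$ — and you also caught the small constant-level discrepancy that $\sqrt{v_{N,p,q}}\le\sqrt{6}(N\sigma)^{1/4}=\sqrt{6}\,N^{1/4}\sigma^{1/4}$ matches the stated $N^{1/4}\sqrt{6\sigma}$ only when $\sigma\ge 1$, which is harmless for the limit statement but worth flagging.
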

	Putting these together we will be able to obtain the following result used crucially in the lower bound.
	
	\begin{prop}
		\label{prop:perturbation:gg-forcing-perturbation}
		Let $H_N^{\mathfrak{X}}$ be as above and the associated overlap array as $\b{R}_N^{\mathfrak{X}}\in ([-1,1]^{\N\times \N})^{\Om}$. Then we may make, for each $N\ge 1$, a choice $\mathfrak{X}^N$, such that for any subsequence $N_n$, such that $\b{R}_{N_n}^{\mathfrak{X}^{N_n}}$ converges in law to a some random array $\b{R}\in ([-1,1]^{\N\times \N})^{\Om}$, one has that $\b{R}$ satisfies the multi-species Ghirlanda-Guerra identities.
	\end{prop}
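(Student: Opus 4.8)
The plan is to follow the by-now-standard route for forcing the Ghirlanda--Guerra identities through a perturbation (cf.\ Chapter~3 of \cite{panchenko}, \cite{panchenkoms}, and Appendix~A of \cite{erikCS}): reduce the identities to a countable family of scalar quantities $\Delta_N^{\mathfrak{X}}(\ell,f,\b{w}_q,p)$, invoke Lemma \ref{lem:perturbation:general-bound} together with an averaging argument to select a \emph{single deterministic} sequence $\mathfrak{X}^N$ along which every member of this family tends to $0$, and then pass to the limit along the given subsequence, upgrading at the end from the special test functions to arbitrary continuous ones.

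Concretely, I would first fix, for each $\ell\ge1$, a countable $\|\cdot\|_\infty$-dense subset $\mathscr{F}_\ell$ of $C\bigl(([-1,1]^{\ell\times\ell})^{\Om}\bigr)$ (e.g.\ rational-coefficient polynomials) and enumerate all quadruples $(\ell,f,q,p)$ with $f\in\mathscr{F}_\ell$ as a sequence $(\ell_j,f_j,q_j,p_j)_{j\ge1}$. Writing $\varphi_{\b{w},p}(\b{R})=(\b{R}^{\b{w}})^p$, one has $\Delta_N^{\mathfrak{X}}(\ell_j,f_j,\b{w}_{q_j},p_j)=\Delta(\ell_j,f_j,\varphi_{\b{w}_{q_j},p_j})$ for the Gibbs measure of $H_N^{\mathfrak{X}}$, in the sense of Definition \ref{defin:GG-identities}. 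I would then set
\[
S_N(\mathfrak{X})=\sum_{j\ge1}2^{-j}\min\bigl(1,\Delta_N^{\mathfrak{X}}(\ell_j,f_j,\b{w}_{q_j},p_j)\bigr),
\]
which is a deterministic function of $\mathfrak{X}\in[1,2]^{\N\times\N}$, since the disorder and Gibbs averages are already integrated out inside $\Delta_N^{\mathfrak{X}}$. By Lemma \ref{lem:perturbation:general-bound} each summand has $\E_{\mathfrak{X}}$-average tending to $0$, so dominated convergence gives $\E_{\mathfrak{X}}S_N(\mathfrak{X})\to0$; Markov's inequality then lets me pick, for each $N$, a point $\mathfrak{X}^N$ with $S_N(\mathfrak{X}^N)\le 2\,\E_{\mathfrak{X}}S_N\to0$, hence $\Delta_N^{\mathfrak{X}^N}(\ell_j,f_j,\b{w}_{q_j},p_j)\to0$ for every $j$.

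Finally, given a subsequence $N_n$ along which $\b{R}_{N_n}^{\mathfrak{X}^{N_n}}\To\b{R}$, I would observe that each quantity entering $\Delta_N^{\mathfrak{X}}(\ell,f,\b{w}_q,p)$ is the expectation of a bounded continuous function of finitely many coordinates of the overlap array, which lives in the compact space $([-1,1]^{\N\times\N})^{\Om}$; hence convergence in law yields $\Delta(\ell_j,f_j,\varphi_{\b{w}_{q_j},p_j})=\lim_n\Delta_{N_n}^{\mathfrak{X}^{N_n}}(\ell_j,f_j,\b{w}_{q_j},p_j)=0$ for the limiting array $\b{R}$, for all $j$. It then remains to reach all continuous $(f,\varphi)$ as in Definition \ref{defin:GG-identities}: since $\Delta(\ell,\cdot,\varphi)$ is bounded by a constant times $\|f\|_\infty$, it vanishes for all $f\in C(([-1,1]^{\ell\times\ell})^{\Om})$ by density of $\mathscr{F}_\ell$; and since $\Delta(\ell,f,\cdot)$ is linear and bounded by a constant times $\|\varphi\|_\infty$, it suffices to handle a family of $\varphi$'s whose linear span is $\|\cdot\|_\infty$-dense in $C([-1,1]^{\Om})$. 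Monomials in $(\b{R}(x))_{x\in\Om}$ are dense by Stone--Weierstrass, and every degree-$p$ monomial is a fixed linear combination of the values $\varphi_{\b{w},p}$ for finitely many $\b{w}$ (a polarization/Vandermonde argument), all of which are available because $\cal{W}$ is dense in $[0,1]^{\Om}$ and contains the coordinate unit vectors. This gives $\Delta(\ell,f,\varphi)=0$ for all $\ell,f,\varphi$, i.e.\ $\b{R}$ satisfies the multi-species Ghirlanda--Guerra identities.

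I expect the main obstacle to be precisely this last upgrade step — in particular the polarization argument reconstructing arbitrary monomials from the weighted powers $\varphi_{\b{w},p}$, which is exactly where the density of $\cal{W}$ (and its containing the standard basis vectors) is used — together with checking that the Ghirlanda--Guerra difference depends continuously (uniformly, in the relevant sup-norms) on both of its test-function slots so that vanishing on dense families propagates. Everything else is an averaging-and-limit bookkeeping argument built directly on Lemma \ref{lem:perturbation:general-bound} and the compactness of the overlap space.
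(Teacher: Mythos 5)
Your proposal follows essentially the same route as the paper's proof: form a weighted sum of Ghirlanda--Guerra differences over a countable set of test functions, use Lemma \ref{lem:perturbation:general-bound} with dominated convergence and Markov (the paper uses Chebyshev) to extract a deterministic sequence $\mathfrak{X}^N$ along which the sum vanishes, and then pass to the limit via boundedness, continuity, and convergence in law. The one place you are actually more careful than the paper is the final upgrade from the special $\varphi_{\b{w},p}$'s to arbitrary continuous $\varphi$: the paper simply asserts that vanishing of the weighted sum is equivalent to the full identities, whereas you correctly flag that one needs a Stone--Weierstrass/polarization argument (using density of $\cal{W}$) to span all polynomials in $(\b{R}(x))_{x\in\Om}$ — that step goes through and is worth spelling out.
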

	
	\begin{proof}
		We will begin by using Lemma \ref{lem:perturbation:general-bound} to show how we may guarantee that the asymptotic Gibbs measure satisfies the multi-species Ghirlanda-Guerra identities for some value of $\mathfrak{X}$. For each $\ell\ge 1$, let us choose a countable dense subset of continuous functions $f_{\ell,k}:[-1,1]^{\ell\times \ell}\to [-1,1]$. We form the sum
		\[\Delta_{N}^{\mathfrak{X}}=\sum_{p,q,\ell,k\ge 1}\frac{\Delta_{N}^{\mathfrak{X}}(\ell,f_{\ell,k},\b{w}_q,p,x)}{2\ell 2^{\ell+k+p+q}}.\]
		We observe that as $\|f_{\ell,k}\|_{\infty}\le 1$, we have that $\Delta_N^{\mathfrak{X}}(\ell,f_{\ell,k},\b{w}_q,p,x)\le 2\ell $, so that $0\le \Delta_{N}^{\mathfrak{X}}\le 1$. In particular, by Tonelli's theorem and the dominated convergence theorem we have that
		\[\lim_{N\to \infty}\E_{\mathfrak{X}} \Delta_N^{\mathfrak{X}}=\sum_{p,q,\ell,k\ge 1}\lim_{N\to \infty}\frac{\E_{\mathfrak{X}}\Delta_N^{\mathfrak{X}}(\ell,f_{\ell,k},\b{w}_q,p)}{2\ell 2^{\ell+k+p+q}}=0.\]
		We note that by Chebyshev's inequality
		\[\E_{\mathfrak{X}}[I(\Delta_N^{\mathfrak{X}}(x)\le 2\E_{\mathfrak{X}} \Delta_N^{\mathfrak{X}}(x))]\ge 1/2.\]
		Thus for each $N\ge 1$, we may fix a choice of $\mathfrak{X}^{N}$ such that $\Delta_N^{\mathfrak{X}^N}\le 2\E_{\mathfrak{X}}\Delta_N^{\mathfrak{X}}$. In particular, for such a choice, we have that $\lim_{N\to \infty} \Delta_N^{\mathfrak{X}^N}=0$.
		
		Now we first note that the expression $\Delta(\ell,f,\varphi)$ depends continuously on the law of the array $\b{R}$. So if we denote by $\Delta_{\b{R}}(\ell,f,\varphi)$ the Ghirlanda-Guerra difference with respect to $\b{R}$, we have that
		\[\lim_{N\to \infty}|\Delta^{\mathfrak{X}^{N_n}}_N(\ell,f,\varphi)-\Delta_{\b{R}}(\ell,f,\varphi)|=0.\]
		
		In addition, the expressions $\Delta(\ell,f,\varphi)$ from Definition \ref{defin:GG-identities} are continuous in the choice of $(f,\varphi)$ (with respect to the $L^{\infty}$-norm). In particular, we note that a random array $\b{R}$ satisfies the multi-species Ghirlanda-Guerra identities if and only if
		\[\sum_{p,q,\ell,k\ge 1}\frac{\Delta(\ell,f_{\ell,k},\b{w}_q,p)}{2\ell 2^{\ell+k+p+q}}=0.\]
		Thus we see that $\b{R}^M$ satisfies the multi-species Ghirlanda-Guerra identities.
		As we have that $\lim_{N\to \infty} \Delta_N^{\mathfrak{X}^N}=0$, we conclude that $\b{R}$ must satisfy the multi-species Ghirlanda-Guerra identities.
	\end{proof}
	
	\pagebreak
	
	\section{Proof of Theorem 
		\ref{theorem:spherical parisi continuum}\label{section:prop:spherical parisi}}
	
	The focus of this section is to finally complete the proof of Theorem \ref{theorem:spherical parisi continuum}. The first subsection concerns the upper bound, which is an application of the Guerra interpolation method \cite{guerraOG} combined with Talagrand's positivity principle (Theorem \ref{theorem:talagrand's positivity}), generalizing the proof in the $|\Om|=1$ case.
	
	The next subsection serves to establish some preliminaries for the proof of the lower bound, which is more involved. To begin, we employ a variant of the cavity method known as the Aizenman–Sims–Starr scheme, with the modifications to treat the case of spherical models used by \cite{weikuo} where one takes $M$-cavities coordinates and sends $M\to \infty$ after $N\to \infty$. Much of this involves Taylor expanding the Hamiltonian in terms of the cavity coordinates, as well as approximating the sphere by certain product measures.
	
	The final subsection treats the resulting quantity, which may be understood as the difference of two expected averages over a certain Gibbs measure, and may be identified with the functional considered in Section \ref{subsection:Abstract Functional}. We then apply Proposition \ref{prop:perturbation:gg-forcing-perturbation} to force the Gibbs measure to asymptotically satisfy the multi-species Ghirlanda-Guerra identities. We are able to pass to a subsequence and approximate this lower bound with the average over the Gibbs measure instead replaced with the average over some synchronized RPC.
	
	We will also apply results of the prior section with the choice $H_N=\sum_{x\in \Om}H_{\xi_x}(\b{u}(x))$, and in particular recall the perturbation Hamiltonian $h_N$, and let 
	\[H_N^{\mathfrak{X}}(\b{u})=\sum_{x\in \Om}H_{\xi_x}(\b{u}(x))+s_Nh_N(\b{u}).\]
	Using the notation $\omega_{N,D,\b{h}}$ (\ref{def:omega-measure-D}) as before, we will denote the partition function and Gibbs measure with respect to $(H_N^{\mathfrak{X}}$, $\omega_{N,D,\b{h}}$) as $Z^{Sph,\mathfrak{X}}_N$ and $\<*\>^{{\mathfrak{X}}}$.
	
	\subsection{Proof of the Upper Bound}
	
	By Proposition \ref{prop:rpc-intro:infimum identification of B in limit}, to show the upper bound for Theorem \ref{theorem:spherical parisi continuum}, we see that it suffices to show the following result.
	
	\begin{lem}
		\label{lem:upper bound:guerra-top}
		We have that
		\[\limsup_{N\to \infty}\l(\E f_{N,\mathrm{Sph}}-\inf_{r,\vec{\b{q}},\vec{t}}\cal{A}_{N}(\vec{\b{q}},\vec{t})\r)\le 0.\]
	\end{lem}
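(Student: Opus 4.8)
The plan is a Guerra-type interpolation against a Ruelle probability cascade, with Talagrand's positivity principle supplying the sign of the error term.

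\emph{Reductions.} Since $\cal{A}_N(\vec{\b{q}},\vec{t})=\Gamma^N_1-\Gamma^N_2$, it suffices to establish, for each pair $(\vec{\b{q}},\vec{t})$, an estimate
\[\E f_{N,\mathrm{Sph}}\le \cal{A}_N(\vec{\b{q}},\vec{t})+\eta(\epsilon)+\delta_N,\]
in which $\delta_N\to 0$ and the modulus $\eta(\epsilon)\to 0$ are \emph{uniform} over $(\vec{\b{q}},\vec{t})$: then $\E f_{N,\mathrm{Sph}}-\inf_{\vec{\b{q}},\vec{t}}\cal{A}_N(\vec{\b{q}},\vec{t})=\sup_{\vec{\b{q}},\vec{t}}\big(\E f_{N,\mathrm{Sph}}-\cal{A}_N(\vec{\b{q}},\vec{t})\big)\le \eta(\epsilon)+\delta_N$, and letting $N\to\infty$ and then $\epsilon\to 0$ gives the lemma. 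Using \eqref{eqn:perturbation:perturbation has no average effect} I would first replace $\E f_{N,\mathrm{Sph}}$ by $\E_{\mathfrak X}\E f^{\mathfrak X}_{N,\mathrm{Sph}}$ up to $o(1)$, so that the perturbed Hamiltonian $H^{\mathfrak X}_N$, and hence Theorem~\ref{theorem:talagrand's positivity}, is available; this input is genuinely needed here because the mixtures $\xi_x$ (arising e.g. as $B_q(r)=B(2q(1-r))$) need not be even, so the interpolation error is not automatically signed.

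\emph{The interpolation.} Fix $(\vec{\b q},\vec t)$, let $(\zeta,\b{\Phi})\in\mathscr{Y}^0_{fin}$ be the associated pair and $G_\zeta$ the corresponding RPC on $H$. On configurations $(\b u,\alpha)\in S_N^{\Om}\times\N^r$ weighted by $\omega_{N,D,\b{h}}(d\b u)\,v_\alpha$, and with an auxiliary radial multiplier $\b b\in\R^{\Om}$ (used, exactly as in the evaluation of $\Gamma^N_1$ in Section~\ref{section:RPC-intro}, to trade the sphere constraint for a Gaussian integral over $\R^N$), I would run an interpolation $\varphi_{\b b}(t)$ in which the spin-glass part $\sum_x H_{\xi_x}(\b u(x))$ and the field $\sqrt N\,Y(\alpha)$, with $\E Y(\alpha)Y(\alpha')=\sum_x\theta_x(\b q^{\alpha\wedge\alpha'}(x))$, both carry weight $\sqrt t$; an independent cavity field $\sum_x(\vec Z_x(\alpha),\b u(x))$ built from $G_\zeta$ and $\b R^{\b\Phi}$ carries weight $\sqrt{1-t}$; and the perturbation $s_N h_N(\b u)$ is kept at full strength for all $t$ (so Theorem~\ref{theorem:talagrand's positivity} applies along the whole path). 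The confining interaction $D$ and field $\b h$ sit inertly inside $\omega_{N,D,\b{h}}$. At $t=1$ the $\b u$- and $\alpha$-parts decouple and $\varphi_{\b b}(1)=\E_{\mathfrak X}\E f^{\mathfrak X}_{N,\mathrm{Sph}}+\Gamma^N_2$ (using \eqref{eqn:ignore-713}); at $t=0$ one is left with the RPC cavity model plus the free-energy-neutral perturbation, which after optimizing over $\b b$ equals $\Gamma^N_1+o(1)$ by Lemmas~\ref{lem:thickening of WN} and~\ref{lem:upper bound:replacement by b}. Hence $\varphi_{\b b}(1)-\inf_{\b b}\varphi_{\b b}(0)=\E f_{N,\mathrm{Sph}}-\cal{A}_N(\vec{\b q},\vec t)+o(1)$, the $\b b$-dependent boundary terms cancelling upon optimization.

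\emph{Differentiation and the sign.} Gaussian integration by parts (Proposition~\ref{prop:Guerra Gaussian integration}) applied to the three $t$-dependent fields gives, after the radial-multiplier bookkeeping — which converts the $\xi_x$-structure on the sphere into the $\xi_x'$ Taylor remainder and is precisely the content of the reductions of Section~\ref{section:RPC-intro} —
\[\varphi_{\b b}'(t)=-\frac{1}{2|\Om|}\sum_{x\in\Om}\E_{\mathfrak X}\E\<\,\xi_x(\b R_N(x))-\xi_x(\b q^{\alpha\wedge\alpha'}(x))-\xi_x'(\b q^{\alpha\wedge\alpha'}(x))\big(\b R_N(x)-\b q^{\alpha\wedge\alpha'}(x)\big)\,\>_t,\]
where $\b R_N(x):=\b R_N(\b u,\b u')(x)=(\b u(x),\b u'(x))_N$ and $\<\cdot\>_t$ denotes the interpolated two-replica Gibbs average. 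Since $\b q^{\alpha\wedge\alpha'}(x)\in[0,1]$ and $\xi_x''\ge 0$ on $[0,1]$, the integrand is a nonnegative Taylor remainder whenever $\b R_N(x)\ge 0$; on $\{\b R_N(x)\in[-\epsilon,0]\}$ it exceeds $-C\epsilon$ with $C$ depending only on $\sup_x\|\xi_x''\|_{L^\infty[-1,1]}$, hence independent of $(\vec{\b q},\vec t)$; and on $\{\b R_N(x)<-\epsilon\}$ it is bounded by $C$ while, conditionally on the remaining Gaussians, the interpolated Hamiltonian carries the perturbation and Theorem~\ref{theorem:talagrand's positivity} forces $\E_{\mathfrak X}\<I(\b R_N(x)<-\epsilon)\>_t\le\delta_N\to 0$ uniformly. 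Therefore $\varphi_{\b b}'(t)\le C(\epsilon+\delta_N)$ for all $t$ and $\b b$, so integrating and optimizing over $\b b$ yields $\E f_{N,\mathrm{Sph}}\le\cal{A}_N(\vec{\b q},\vec t)+C(\epsilon+\delta_N)+o(1)$ with all errors uniform in $(\vec{\b q},\vec t)$, which is what was required. I expect the main obstacle to be the bookkeeping identifying the two endpoints with $\E_{\mathfrak X}\E f^{\mathfrak X}_{N,\mathrm{Sph}}+\Gamma^N_2$ and $\Gamma^N_1$ — managing the sphere constraint through $\b b$ and confirming that the $\theta_x$ correction in $\Gamma^N_2$ produces exactly the $\xi_x'$ remainder above — rather than any conceptual difficulty, the positivity principle being the one genuinely new ingredient beyond the convex (even) case.
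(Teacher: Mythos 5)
Your strategy is the same as the paper's: a Guerra interpolation on $S_N^{\Om}\times\N^r$ against the RPC cavity fields $\vec{Z}_x(\alpha)$, $\sqrt{N}Y(\alpha)$, with the perturbation along the whole path and Talagrand's positivity (Theorem \ref{theorem:talagrand's positivity}) supplying the sign of the Taylor remainder $\rho_x(R,q)=\xi_x(R)-\xi_x(q)-\xi_x'(q)(R-q)$. Your computation of the derivative and the three-regime estimate ($R\ge 0$ nonnegative by convexity, $R\in[-\epsilon,0]$ gives $-C\epsilon$, $R<-\epsilon$ has vanishing mass) is exactly the paper's, and your emphasis on uniformity of the error in $(\vec{\b q},\vec t)$ — needed so that the pointwise inequality may be turned into one against $\inf_{\vec{\b q},\vec t}\cal{A}_N$ — is a point the paper leaves implicit but is indeed essential and is satisfied (the Taylor constant depends only on $\sup_x\|\xi_x''\|_{L^\infty}$, the positivity bound is uniform over Hamiltonians, and the Jensen control on the perturbation is $O(s_N^2/N)$).

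The one place you go off course is the auxiliary multiplier $\b b$. For this lemma there is no $\b b$: the target $\cal{A}_N(\vec{\b q},\vec t)=\Gamma_1^N-\Gamma_2^N$ is by definition (\ref{def:RPC:B-Gamma}) the $N$-level spherical functional, and $\Gamma_1^N$ is the $t=0$ endpoint of the interpolation as it stands (up to the uniform $O(s_N^2/N)$ cost of the perturbation in the cavity side). Introducing $\b b$ into $\varphi_{\b b}(t)$ buys you nothing here and in fact breaks: on $S_N^\Om$ with base measure $\omega_{N,D,\b h}$, adding the radial term $-\frac12\sum_x\b b(x)\|\b u(x)\|_N^2$ is a constant shift by $-\frac12\sum_x\b b(x)$, so $\inf_{\b b}\varphi_{\b b}(0)=-\infty$ unless you first thicken the sphere — and the thickening estimates of Lemmas \ref{lem:thickening of WN} and \ref{lem:upper bound:replacement by b} are proven for fixed $(\vec{\b q},\vec t)$ with bounds depending on $\|\b b^*\|$, so the resulting $o(1)$ is not manifestly uniform, which would undercut the very reduction you correctly insisted on. The $\b b$-machinery belongs to the separate identification $\lim_{M\to\infty}\cal{A}_M(\vec{\b q},\vec t)=\cal{A}(\vec{\b q},\vec t)$ of Proposition \ref{prop:rpc-intro:infimum identification of B in limit}, not to this upper bound. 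Dropping $\b b$ entirely (equivalently, setting $\b b=0$ throughout your argument) recovers the paper's proof verbatim and removes the issue.
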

	\begin{proof}
		By (\ref{eqn:perturbation:perturbation has no average effect}) we see it suffices to show that
		\[\limsup_{N\to \infty}\l(|\Om|^{-1}N^{-1}\E_{\mathfrak{X}}\E\log Z^{Sph,\mathfrak{X}}_N-\inf_{r,\vec{\b{q}},\vec{t}}\cal{A}_{N}(\vec{\b{q}},\vec{t})\r)\le 0.\]
		Now fix a choice of $(\vec{\b{q}},\vec{t})$ and let $G_{\zeta}$ denote the RPC corresponding to the measure $\zeta$ associated to $(\vec{\b{q}},\vec{t})$ by Remarks \ref{remark:associated to measure} and \ref{remark:associated measure and functions} with weights given by $(v_{\alpha})_{\alpha\in \N^r}$. Moreover, let $Z_x(\alpha)$ and $Y(\alpha)$ be the centered Gaussian fields on $\N^r$ by (\ref{def:RPC:Z_x}) and (\ref{def:RPC:Y}) (for existence (\ref{eqn:property of RPC:gaussians})).
		
		We want to apply Proposition \ref{prop:Guerra Gaussian integration}, with the choices
		\[V^{\mathfrak{X}}(\b{u},\alpha)=H_N^{\mathfrak{X}}(\b{u})+\sq{N}Y(\alpha),\; W^{\mathfrak{X}}(\b{u},\alpha)=\sum_{x\in \Om}(\vec{Z}_x(\alpha),\b{u}(x))+s_Nh_N(\b{u}).\]
		For this, define for $0\le t\le 1$ and $(\b{u},\alpha)\in S_N^{\Om}\times \N^r$
		\[H_t^{\mathfrak{X}}(\b{u})=\sq{1-t}V^{\mathfrak{X}}(\b{u},\alpha)+\sq{t}W^{\mathfrak{X}}(\b{u},\alpha),\]
		and denote the Gibbs measure associated to $(H_t^{\mathfrak{X}},\; \omega_{N,D,\b{h}}\otimes G_{\zeta})$ as $\<*\>_t^{\mathfrak{X}}$. Finally denote
		\[\delta(\b{u},\alpha,\b{u}',\alpha')=\E V^{\mathfrak{X}}(\b{u},\alpha)V^{\mathfrak{X}}(\b{u}',\alpha')-\E W^{\mathfrak{X}}(\b{u},\alpha) W^{\mathfrak{X}}(\b{u}',\alpha').\]
		Note that the contribution coming from $h_N(\b{u})$ (the only term dependent on the choice $\mathfrak{X}$) cancel, so that $\delta$ is independent of $\mathfrak{X}$ as the notation suggests. Then Proposition \ref{prop:Guerra Gaussian integration} states that
		\[
		\begin{split}
			&\E\log Z_N^{Sph,\mathfrak{X}}(\b{\xi})+\E \log \sum_{\al\in \N^r}v_\al e^{\sq{N}Y(\alpha)}-\E \log\l(\sum_{\al\in \N^r}v_\al\int_{S_N^\Om} \exp \bigg(V^{\mathfrak{X}}(\b{u},\alpha)\bigg)\omega_{N,D,\b{h}}(d\b{u})\r)=\\
			&\frac{1}{2}\int_0^1\bigg(\E\left\<\delta(\b{u},\alpha,\b{u},\alpha)\right\>_t^{\mathfrak{X}}-\E\left\<\delta(\b{u},\alpha,\b{u}',\alpha')\right\>_t^\mathfrak{X}\bigg)dt.\label{eqn:lem:ignore-1503}
		\end{split}
		\]
		By applying Jensen's inequality to $h_N$, we see in particular that
		\[
		\begin{split}
			&|\Om|^{-1}N^{-1}\E_{\mathfrak{X}}\E\log Z_N^{Sph,\mathfrak{X}}-\cal{A}_{N}(\vec{\b{q}},\vec{t})\le \\
			& 2|\Om|^{-1}N^{-1}s_N^2+\frac{1}{2}\E_{\mathfrak{X}}\int_0^1\bigg(\E\left\<\delta(\b{u},\alpha,\b{u},\alpha)\right\>_t^{\mathfrak{X}}-\E\left\<\delta(\b{u},\alpha,\b{u}',\alpha')\right\>_t^\mathfrak{X}\bigg)dt
		\end{split}
		\]
		As the first term on the right hand side goes to zero, we study the second term. We define for $x\in \Om$,
		\[\rho_x(u,v)=\left(v-u\right)\xi'_x(v)+\xi_x(u)-\xi_x(v).\]
		We then record
		\[
		\begin{split}
			N^{-1}\delta(\b{u},\alpha,\b{u}',\alpha')=&\sum_{x\in \Om}\left(\b{q}^{\alpha\wedge \alpha'}(x)-(\b{u}(x),\b{u}'(x))_N\right)\xi'_x(\b{q}^{\alpha\wedge\alpha'}(x))\\
			&+\xi_x((\b{u}(x),\b{u}'(x))_N)-\xi_x(\b{q}^{\alpha\wedge\alpha'})+\xi_x(0)\\
			=&\sum_{x\in \Om}\l(\rho_x(\b{q}^{\alpha\wedge\alpha'}(x),(\b{u}(x),\b{u}'(x))_N)+\xi_x(0)\r).
		\end{split}
		\]
		Thus we see that the first term on the right-hand side of (\ref{eqn:lem:ignore-1503}) coincides with $\frac{1}{2}\sum_{x\in \Om}\xi_x(0)$, which cancels a similar term in the second term. In particular, we see that
		\[
		\begin{split}
			\frac{1}{2N}\E_{\mathfrak{X}}\int_0^1\bigg(\E\left\<\delta(\b{u},\alpha,\b{u},\alpha)\right\>_t^{\mathfrak{X}}-\E\left\<\delta(\b{u},\alpha,\b{u}',\alpha')\right\>_t^\mathfrak{X}\bigg)dt=\\
			-\frac{1}{2}\int_0^1 \sum_{x\in \Om} \E_{\mathfrak{X}}\E\l\<\rho_x((\b{u}(x),\b{u}'(x))_N,\b{q}^{\alpha\wedge \alpha'}(x))\r\>_t^{\mathfrak{X}}dt.\label{eqn:ignore-321}
		\end{split}
		\]
		By Taylor's theorem, and convexity of $\xi_x$ on $[0,1]$, we see that for $u,v\in [0,1]$, $\rho_x(u,v)\ge 0$. In particular, we may conclude that there is $C$ (independent of $\epsilon$) such that $u,v\in [-\epsilon,1]$ and $x\in \Om$, we have that $\rho_x(u,v)\ge -C\epsilon$. Potentially enlarging $C$ we may also guarantee that for $u,v\in [-1,1]$, $\rho_x(u,v)\ge -C$. With these facts noted, we see that (\ref{eqn:ignore-321}) is less than
		\[\frac{C}{2}\int_0^1 \bigg(\epsilon+\E_{\mathfrak{X}}\<I((\b{u}(x),\b{u}'(x))_N\le -\epsilon)\>_t^{\mathfrak{X}}\bigg)dt\]
		Applying Theorem \ref{theorem:talagrand's positivity} and taking $\epsilon\to 0$ yields the desired result.
	\end{proof}
	
	\subsection{Proof of the Lower Bound Part I: The Aizenman-Sims-Star Scheme\label{section:lowerbound:cavity}}
	
	To being with the lower bound, note that for any sequence of positive numbers $z_{N}>0$ and $M\ge 1$, we have the following the cavity inequality (see \cite{weikuo})
	\[\liminf_{N\to \infty} N^{-1} \log z_N\ge M^{-1}\liminf_{N\to \infty} \l(\log z_{N+M}- \log z_{N}\r).\]
	Using this and the above results we see that
	\[\liminf_{N\to \infty}N^{-1}\E \log \ZNxi\ge \limsup_{M\to \infty}\liminf_{N\to \infty}\sup_{\mathfrak{X}\in [1,2]^{\N\times \N}}M^{-1}\left(\E\log Z_{N+M}^{Sph,\mathfrak{X}}-\E\log Z_N^{Sph,\mathfrak{X}}\right).\label{eqn:lower bound:enhanced-cavity method}\]
	Next, we will need to give a cavity decomposition for $H_{N+M}$ and $H_N$, which will follow from the $|\Om|=1$ case. Specifically, let us define for each $x\in \Om$, the following centered Gaussian functions on $S_N$, specified giving their covariance functions: For $u,u'\in S_N$
	\[
	\begin{split}
		\E[Z_{N,x}(u)Z_{N,x}(u')]&=\xi_x'\left((u,u')_N\right),\label{def:eqn:ZN}\\
		\E[Y_{N,x}(u)Y_{N,x}(u')]&=\theta_x((u,u')_N),\\
		\E[H_{N,M,x}(u)H_{N,M,x}(u')]&=(N+M)\left(\xi_x\left((u,u')_{N+M}\right)-\xi_x(0)\right).
	\end{split}
	\]
	We define the core Hamiltonian as
	\[\cal{H}_{N,M}(\b{u})=\frac{1}{2}\sum_{x,y\in \Om}D_{x,y}(\b{u}(x),\b{u}(y))+\sum_{x\in \Om}H_{N,M,x}(\b{u}(x))+\sq{N}\sum_{x\in \Om}\b{h}(x)\b{u}_1(x),\]
	and let $\cal{H}_{N,M}^{\mathfrak{X}}(\b{u})=\cal{H}_{N,M}(\b{u})+s_Nh_N(\b{u})$.
	We first define a function $\cal{Y}_N$ on $S_N^\Om$ by
	\[\cal{Y}_N(\b{u})=\sum_{x\in \Om}Y_{N,x}(\b{u}(x)),\]
	and then define an associated partition function on $S_N^\Om$ by
	\[\hat{Z}_{N,M}^{\mathfrak{X}}=\int_{S_{N}^{\Om}}\exp (-\cal{H}_{N,M}^{\mathfrak{X}}(\b{u})-\sq{M}\cal{Y}_N(\b{u}))\omega(d\b{u}).\]
	Next, let us further consider a sequence of $N$ i.i.d copies of $Z_{N,x}$ which we denote $\vec{Z}_{N,x}\in \R^N$. We then define a function on $(\b{u},\b{v})\in S_{N+M}^{\Om}$ (with $\b{u}\in (\R^{N})^{\Om}$ and $\b{v}\in (\R^M)^{\Om}$) by
	\[\cal{Z}_{N,M}(\b{u},\b{v})=\frac{1}{2}\sum_{x,y\in \Om}D_{x,y}(\b{v}(x),\b{v}(y))+\sum_{x\in \Om}(\vec{Z}_{N,x}(\b{u}(x)),\b{v}(x))+\sq{M}\sum_{x\in \Om}\b{h}(x)\b{v}_1(x).\]
	Then for any $\delta>0$, we define the set
	\[S_{N,M}(\delta)=\{(u,v)\in \R^N\times \R^M \cap S_{N+M}: \|v\|_M^2\le 1+\delta\}=S_{N+M}\cap \R^M\times S_{M}(0,1+\delta),\]
	and the partition function
	\[\bar{Z}_{N,M}^{\mathfrak{X},\delta}=\int_{S_{N,M}(\delta)^{\Om} } \exp(-\cal{H}_{N,M}^{\mathfrak{X}}(\b{u})-\cal{Z}_{N,M}(\b{u},\b{v}))\omega(d\b{u}d\b{v}).\]
	
	Our first result shows that for our lower bound, it will essentially suffice to deal with these partition functions instead.
	
	\begin{lem}
		\label{lem:lowerbound-guerra}
		For any $\delta>0$ we have that
		\[
		\begin{split}
			\E\log Z_{N+M}^{Sph,\mathfrak{X}}-\E\log \bar{Z}_{N,M}^{\mathfrak{X}}&\ge -\left(\frac{M^2\delta}{N}+\frac{M}{N}\right)\sum_{x\in \Om}\xi_x''(1),\\
			\bigg|\E\log Z_{N}^{Sph,\mathfrak{X}}-\E\log \hat{Z}_{N,M}^{\mathfrak{X}}\bigg|&\le \frac{M^2}{N}\sum_{x\in \Om}\xi_x''(1).
		\end{split}
		\]
	\end{lem}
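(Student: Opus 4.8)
The plan is to derive both estimates from the Gaussian comparison identity of Proposition~\ref{prop:Guerra Gaussian integration}, applied exactly as in the proof of the upper bound (Lemma~\ref{lem:upper bound:guerra-top}); the only new input is a careful accounting of covariances, so I will describe the structure rather than grind through the expansions.

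For the first estimate I would write a point of $S_{N,M}(\delta)^{\Om}$ as $(\b u,\b v)$ with $\b u\in(\R^N)^{\Om}$ and $\b v\in(\R^M)^{\Om}$, and observe that for two such points, setting $a_x=(\b u(x),\b u'(x))_N$, $b_x=(\b v(x),\b v'(x))_M$ and $t=\tfrac{M}{N+M}$, the overlap of the $x$-blocks inside $S_{N+M}$ is $(1-t)a_x+tb_x$. Thus the per-site covariance of the field driving $Z_{N+M}^{Sph,\mathfrak{X}}$ is $(N+M)\xi_x((1-t)a_x+tb_x)$, while that of the field driving $\bar Z_{N,M}^{\mathfrak{X},\delta}$ — namely $H_{N,M,x}(\b u(x))+(\vec Z_{N,x}(\b u(x)),\b v(x))$ — is $(N+M)(\xi_x((1-t)a_x)-\xi_x(0))+Mb_x\xi_x'(a_x)$. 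Taylor-expanding the former in $t$ about $t=0$, and expanding $(N+M)\xi_x((1-t)a_x)$ the same way, one sees these agree through first order (this is the point of the linear cavity field $\vec Z_{N,x}$): their difference is $-(N+M)\xi_x(0)$, a constant which is harmless for the interpolation, plus a second-order remainder of size $\tfrac{M^2}{2(N+M)}\xi_x''(\eta_x)(\,\cdot\,)$ with $\eta_x$ within $O(M/N)$ of $[-1,1]$ once $\delta$ is small and $N$ large, hence bounded by $\tfrac{M^2}{N+M}\xi_x''(1)$ up to lower-order terms. Inserting this into Proposition~\ref{prop:Guerra Gaussian integration} and collecting the extra contributions — the factors $(1+\delta)$ carried by $b_x$ on the thickened slice, which yield the $\tfrac{M^2\delta}{N}$ term, and the fact that on the slice the $\b u$-block has squared radius $1+\tfrac{M}{N}(1-\|\b v(x)\|^2_M)$ rather than exactly $1$, which yields the $\tfrac{M}{N}$ term — produces the stated lower bound. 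The passage from $\bar Z_{N,M}^{\mathfrak{X},\delta}$, an integral over $S_{N,M}(\delta)^{\Om}$, to $Z_{N+M}^{Sph,\mathfrak{X}}$, an integral over the full product sphere, is by monotonicity of $\E\log Z$ in the domain of integration, which is exactly what makes this a one-sided inequality.

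The second estimate is the same computation with the $\b v$-block absent: one compares the level-$N$ field $\sum_{x}H_{\xi_x}(\b u(x))$, with per-site covariance $N\xi_x(r)$ for $r=(\b u(x),\b u'(x))_N$, against the field driving $\hat Z_{N,M}^{\mathfrak{X}}$, namely $\sum_{x}(H_{N,M,x}(\b u(x))+\sq M\,Y_{N,x}(\b u(x)))$, with per-site covariance $(N+M)(\xi_x(\tfrac{N}{N+M}r)-\xi_x(0))+M\theta_x(r)$. Expanding $(N+M)\xi_x(\tfrac{N}{N+M}r)$ to second order in $\tfrac{M}{N+M}$ and using the defining relation $\theta_x'(r)=r\xi_x''(r)$ of the reaction potential $\theta_x$, the zeroth- and first-order terms combine to $N\xi_x(r)$ up to an additive constant, so the covariance difference is once more a constant plus an $O(M^2/N)$ remainder controlled by $\xi_x''(1)$; feeding this into Proposition~\ref{prop:Guerra Gaussian integration} gives the two-sided bound $\tfrac{M^2}{N}\sum_{x}\xi_x''(1)$. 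In both estimates the perturbation $s_Nh_N$ and the deterministic quadratic and external-field pieces appear identically in the two Hamiltonians being compared, hence cancel in the covariance difference that drives the interpolation.

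The one delicate point I anticipate is the uniform control of the second-order Taylor remainders: on the thickened slice $S_{N,M}(\delta)^{\Om}$ the cavity block $\b v$ is not exactly on $S_M$ and so $\b u$ is not exactly on $S_N$, and one must verify that all relevant overlaps and all intermediate Taylor arguments stay in a fixed neighbourhood of $[-1,1]$ — where $|\xi_x''|\le\xi_x''(1)+o(1)$ by positivity of the coefficients of $\xi_x$ — uniformly in $N$, in $\mathfrak{X}$, and in the replicas. This bookkeeping is what produces the precise error terms $\tfrac{M^2\delta}{N}$ and $\tfrac{M}{N}$ rather than a cruder $O(M^2/N)$; conceptually, though, the argument is a routine interpolation estimate with no essential obstacle.
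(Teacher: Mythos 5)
Your approach matches the paper's: compare the two Hamiltonians via Guerra's covariance-comparison bound (Corollary \ref{corr:Guerra-bound}, a direct consequence of Proposition \ref{prop:Guerra Gaussian integration}), bound the covariance discrepancy by second-order Taylor expansion of $\xi_x$, and use the inclusion $S_{N,M}(\delta)^{\Om}\subseteq S_{N+M}^{\Om}$ to upgrade the resulting two-sided bound to the one-sided inequality in the first claim. The computation of the per-block covariances is also as in the paper.

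There is, however, a genuine gap in your claim that ``the deterministic quadratic and external-field pieces appear identically in the two Hamiltonians being compared, hence cancel.'' This is true for the quadratic $D$-term (since $(\b{w}(x),\b{w}(y))=(\b{u}(x),\b{u}(y))+(\b{v}(x),\b{v}(y))$ under $\b{w}=(\b{u},\b{v})$), but it is \emph{false} as stated for the external field: the field term in $\cal{H}_{N+M,\mathrm{Sph}}$ is $\sqrt{N+M}\,\b{h}(x)\b{w}_1(x)$, i.e.\ a single coordinate of $\b{w}(x)\in\R^{N+M}$, while $\bar{Z}_{N,M}^{\mathfrak{X},\delta}$ carries $\sqrt{N}\,\b{h}(x)\b{u}_1(x)+\sqrt{M}\,\b{h}(x)\b{v}_1(x)$. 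These are different linear functionals of $\b{w}$, and they do not cancel. The paper handles this by a preliminary rotation of each copy of $\R^{N+M}$ sending $e_1\mapsto\sqrt{\tfrac{N}{N+M}}\,e_1+\sqrt{\tfrac{M}{N+M}}\,e_{N+1}$; since the isotropic Gaussian field, the sphere, and the overlap-based perturbation $h_N$ are all rotation-invariant, this leaves $\E\log Z_{N+M}^{Sph,\mathfrak{X}}$ unchanged while aligning the external field with the decomposition $\b{w}=(\b{u},\b{v})$. Without this step the comparison you write down is between Hamiltonians that do not differ by a mean-zero Gaussian, and Corollary \ref{corr:Guerra-bound} does not apply.

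A secondary caveat: your claim that the $(1+\delta)$ factor on the thickened slice ``yields the $M^2\delta/N$ term'' does not withstand scrutiny. The second-order Taylor remainder is of size $\frac{(v,v')^2}{2(N+M)}\xi_x''\le\frac{M^2(1+\delta)^2}{2(N+M)}\xi_x''$, whose leading contribution is $O(M^2/N)$ already at $\delta=0$, not $O(M^2\delta/N)$. The precise coefficient in the lemma statement appears to be a slip, and what both the paper's computation and yours actually deliver is a bound of order $M^2/N$, which is what the downstream argument (limits $N\to\infty$, then $M\to\infty$, then $\delta\to 0$) requires; but claiming to ``produce the stated error terms'' is overpromising.

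Finally, a small point of rigor you should be aware of for the second estimate: the per-site covariance of $H_{\xi_x}$ is $N\xi_x(r)$ including the constant $N\xi_x(0)$, whereas $H_{N,M,x}$ has the constant subtracted off; the paper handles this by noting that replacing $\xi_x$ by $\xi_x-\xi_x(0)$ is a shift by an independent Gaussian constant which does not change $\E\log Z_N^{Sph,\mathfrak{X}}$. You implicitly absorb this into ``an additive constant,'' which is acceptable but should be stated.
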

	\begin{proof}
		We note that by applying a suitable rotation, we may replace the external field term in $\cal{H}_{N+M}^{\mathfrak{X}}$ as
		\[\sq{N+M}\sum_{x\in \Om}\b{h}(x)\b{u}_1(x)\mapsto \sq{N}\sum_{x\in \Om}\b{h}(x)\b{u}_1(x)+\sq{M}\sum_{x\in \Om}\b{h}(x)\b{v}_1(x),\]
		without affecting the value of $\E \log Z_{N+M}^{Sph,\mathfrak{X}}$. After this replacement is done, we denote by $Z_{N,M}^{Sph,\mathfrak{X},\delta}$ the restriction of the partition function $Z_{N+M}^{Sph,\mathfrak{X}}$ to the subset $S_{N,M}(\delta)^{\Om}\subset S_{N+M}^{\Om}$. As $Z_{N+M}^{Sph,\mathfrak{X}}\ge Z_{N,M}^{Sph,\mathfrak{X},\delta}$, we see to establish the first claim, it suffices to show that 
		\[|\E\log Z_{N,M}^{Sph,\mathfrak{X},\delta}-\E\log \bar{Z}_{N,M}^{\mathfrak{X},\delta}|\le \left(\frac{M^2\delta}{N}+\frac{M}{N}\right)\sum_{x\in \Om}\xi_x''(1).\label{eqn:ignore-2398429385}\]
		For this, note that the deterministic components of both Hamiltonians coincide. For the centered Gaussian components, we apply Corollary \ref{corr:Guerra-bound} to see that
		\[
		\begin{split}
			|\E\log Z_{N+M}^{Sph,\mathfrak{X},\delta}-\E\log \bar{Z}_{N,M}^{\mathfrak{X}}|\le&\sum_{x\in \Om}\sup_{(u,v),(u',v')\in S_{N,M}(\delta)}\bigg|(N+M)\xi_x((u,u')_{N+M}+(v,v')_{N+M})\\
			&-(N+M)\xi_x((u,u')_{N+M})-(v,v')\xi_x'((u,u')_N)\bigg|.\label{eqn:ignore-238948293}
		\end{split}
		\]
		It is easily checked using Taylor's theorem that for $(u,v), (u',v')\in S_{N+M}$
		\[
		\begin{split}
			|(N+M)\xi_x((u,u')_{N+M}+(v,v')_{N+M})-(N+M)\xi_x((u,u')_{N+M})&\\-
			(v,v')\xi_x'((u,u')_{N+M})|&\le \frac{(v,v')^2}{2(N+M)}\xi_x''(1).
		\end{split}
		\]
		Similarly one may check that
		\[|\xi_x'((u,u')_{N+M})-\xi_x'((u,u')_N)|\le \frac{M}{N}|(u,u')_{N+M}|\xi_x''(1)\le \frac{M}{N}\xi_x''(1).\]
		Combining these observations with (\ref{eqn:ignore-238948293}) yields (\ref{eqn:ignore-2398429385}), establishing the first claim. For the second claim, we first note that one may routinely show that for $u,u'\in S_N$,
		\[|N\xi_x((u,u')_N)-(N+M)\xi_x \left((u,u')_{N+M}\right)-M\theta_x((u,u')_N)-M\xi_x(0)|\le \frac{M^2}{N} \xi_x''(1).\label{eqn:ignore-23984729375}\]
		Noting that making the changes $\xi_x(r)\mapsto \xi_x(r)-\xi_x(0)$ does not affect the value of $\E\log Z_{N}^{Sph,\mathfrak{X}}$, we see that (\ref{eqn:ignore-23984729375}) and Corollary \ref{corr:Guerra-bound} demonstrates the second claim.
	\end{proof}
	
	For our next step, we want to relate the measure spaces $(S_{N+M}^{\Om},\omega_{N+M})$ and $(S_N^{\Om} \times (S_M)^{\Om}, c_{N,M}^{|\Om|}\omega_N^{\otimes \Om} \otimes \gamma_M^{\otimes \Om})$ where $\gamma_M$ denotes the standard Gaussian measure on $\R^M$ and $c_{N,M}$ denotes the constant
	\[c_{N,M}=\frac{\vol(S_{N+M})}{\vol(S_{N})}.\]  
	This will be adapted from \cite{weikuo} in the $|\Om|=1$ case, and will allow us to treat $S_{N+M}$ as a product measure when $N$ is large. For this, we recall from \cite{weikuo} the set
	\[A_{M,N}=\prod_{j=1}^{M}[-\sq{(M+N+1-j)},\sq{(M+N+1-j)}],\]
	Next, we define functions of $v\in A_{M,N}$ by
	\[
	\begin{split}
		a_{\ell}(v)&=\prod_{j=1}^{\ell-1}\sq{1+\frac{1-v^2_j}{M+N-j}},\;\;2\le \ell\le M+1,\;\;\ a_1(v)=1.\\
		\phi_{M}(v)&=(v_1a_1(v),\dots,v_Ma_M(v)),\;\; \psi_{M}(v)=a_{M+1}(v).
	\end{split}
	\]
	Finally, we define a measure on $\gamma_{N,M}$ on $A_{M,N}$ by 
	\[
	\begin{split}
		\gamma_{N,M}(dv)&=b_{M,N}\prod_{j=1}^{M}\left(1-\frac{v_j^2}{M+N+1-j}\right)^{\frac{M+N-j-2}{2}}dv,\\
		b_{M,N}&=\prod_{j=1}^{M}\frac{\Gamma((N+j)/2)}{\Gamma((N+j-1)/2)\sq{\pi(N+j)}}.
	\end{split}
	\]
	With this notation set, and writing elements of $w\in \R^{N+M}$ as $w=(u,v)$ with $u\in \R^N$ and $v\in \R^M$, we recall the following result.
	\begin{lem}[Lemma 3,\cite{weikuo}]
		For a non-negative function $f$ on $S_{N+M}$ we have that
		\[\int_{S_{N+M}}f(w)\omega(dw)=c_{N,M}\int_{A_{M,N}}\left(\int_{S_N}f(\psi_{M}(v)u,\phi_{M}(v))\omega(du)\right)\gamma_{M,N}(dv).\]
	\end{lem}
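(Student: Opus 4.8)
The plan is to follow Lemma~3 of \cite{weikuo}: the identity is proved by induction on $M$, peeling off one coordinate of $v$ at a time, so we only need to recall the structure. The engine of each step is the classical cylindrical slicing formula for the round sphere: if $S^{K-1}(\rho)\subseteq \R^{K}$ is the sphere of radius $\rho$ and a point is written $(t,y)$ with $t\in\R$, $y\in\R^{K-1}$, then
\[
\int_{S^{K-1}(\rho)} g(w)\,\omega(dw)=\int_{-\rho}^{\rho}\frac{\rho}{\sqrt{\rho^2-t^2}}\left(\int_{S^{K-2}(\sqrt{\rho^2-t^2})} g(t,y)\,\omega(dy)\right)dt,
\]
which comes from the coarea formula applied to the first coordinate together with the dilation law $\omega_{cS}(A)=c^{\dim}\omega_S(c^{-1}A)$ for surface measure.

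First I would record the purely algebraic identity $\psi_M(v)^2 N+\|\phi_M(v)\|^2=N+M$ for $v\in A_{M,N}$ (proved by induction from the relation $a_{\ell+1}(v)^2=a_{\ell}(v)^2(1+\tfrac{1-v_\ell^2}{M+N-\ell})$), which is what makes the map $(u,v)\mapsto(\psi_M(v)u,\phi_M(v))$ actually land on $S_{N+M}$ and hence makes the right-hand side meaningful. Then, for the inductive step, I would slice $S_{N+M}=S^{N+M-1}(\sqrt{N+M})$ along its first coordinate $w_1=v_1$ (here $a_1\equiv1$, so $w_1$ is literally $v_1$): the slice at $w_1=v_1$ is $S^{N+M-2}(\sqrt{N+M-v_1^2})$, a sphere inside $\R^{N+(M-1)}$ whose radius equals $\sqrt{N+M-1}\,a_2(v)$ with $a_2(v)=\sqrt{1+\tfrac{1-v_1^2}{M+N-1}}$. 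Rescaling this slice by $a_2(v)^{-1}$ identifies it with $S_{N+M-1}$ at the cost of a homogeneity factor $a_2(v)^{N+M-2}$, after which the induction hypothesis for the pair $(N,M-1)$ applies; here one checks that, under the relabelling $j\mapsto j+1$, the data $a_\ell,\phi,\psi,A_{M-1,N}$ of the $(M-1)$-problem are exactly the tails of those of the $M$-problem, and that the measures and functions factor compatibly.

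The main obstacle, and essentially the only genuine computation, will be verifying that all the scalar factors accumulated in the inductive step — the coarea factor $\sqrt{(N+M)/(N+M-v_1^2)}$, the homogeneity factor $a_2(v)^{N+M-2}$, the constant $c_{N,M-1}$, and the inductive density $\gamma_{N,M-1}$ — recombine exactly into $c_{N,M}$ and the density $\gamma_{N,M}$. This is a bookkeeping exercise with the Gamma-function normalisations $b_{M,N}$ and the exponents $\tfrac{M+N-j-2}{2}$; indeed, the precise forms of the box $A_{M,N}$ and of $\gamma_{N,M}$ are exactly what one is forced to in order to make this telescoping work, which is why the most economical write-up is simply to quote \cite{weikuo} and carry the verification in one displayed line per accumulated factor.
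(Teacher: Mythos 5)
The paper itself does not prove this lemma; it is quoted verbatim as Lemma 3 of Chen's paper \cite{weikuo}, so there is no in-text proof to compare against. Your reconstruction of Chen's inductive argument is correct: the conservation identity $N\psi_M(v)^2+\|\phi_M(v)\|^2=N+M$ does follow by a one-line telescoping computation from $a_{\ell+1}^2=a_\ell^2\bigl(1+\tfrac{1-v_\ell^2}{M+N-\ell}\bigr)$, the coarea slicing along $w_{N+1}=v_1$ produces the factor $\sqrt{N+M}/\sqrt{N+M-v_1^2}$, the dilation by $a_2(v)^{-1}$ contributes $a_2(v)^{N+M-2}$, and the tails of $a_\ell,\phi,\psi,A$ for the $M$-problem do coincide with the data of the $(M-1)$-problem after the index shift $j\mapsto j+1$; the accumulated scalars then reduce to the identity $c_{N,M}b_{M,N}=\bigl(\tfrac{N+M}{N+M-1}\bigr)^{(N+M-2)/2}c_{N,M-1}b_{M-1,N}$, which is exactly what the quotient of Gamma functions in $b_{M,N}$ and the surface-volume ratio $c_{N,M}$ are designed to satisfy.
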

	A standard argument then shows the following.
	\begin{lem}
		\label{lem:lowerbound:disintegration}
		For a non-negative function $f$ on $S_{N+M}^{\Om}$ we have that
		\[\int_{S_{N+M}^{\Om}}f(w)\omega(d\b{w})=c_{N,M}^{|\Om|}\int_{A_{M,N}^{\Om}}\left(\int_{S_N^\Om}f(\b{\psi}_{M}(\b{v})\b{u},\b{\phi}_{M}(\b{v}))\omega(d\b{u})\right)\gamma_{M,N}(d\b{v}),\]
		where here $\b{\psi}_{M}(\b{v})(x)=\psi_{M}(\b{v}(x))$, and similarly for $\b{\phi}_M(\b{v})$, $(\b{\psi}_{M}(\b{v})\b{u})(x)=\psi_{M}(\b{v}(x))\b{u}(x)$ and finally
		\[\gamma_{M,N}(d\b{v})=\prod_{x\in \Om}\gamma_{M,N}(\b{v}(x))\]
	\end{lem}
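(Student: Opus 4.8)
The plan is to reduce the identity to the one-site statement, Lemma 3 of \cite{weikuo}, by iterating it once over each site $x\in\Om$ and using that every measure and map in sight factors over $\Om$. The key structural observation is that $S_{N+M}^{\Om}=\prod_{x\in\Om}S_{N+M}$ carries the product Riemannian metric, so its surface measure factorizes as $\omega(d\b{w})=\prod_{x\in\Om}\omega(dw_x)$, where we write $\b{w}=(w_x)_{x\in\Om}$ with each $w_x\in S_{N+M}$; similarly $\gamma_{M,N}(d\b{v})=\prod_{x\in\Om}\gamma_{M,N}(\b{v}(x))$ by definition, and the maps $\b{\psi}_M,\b{\phi}_M$ act site-by-site (the $x$-th output depends only on $\b{v}(x)$). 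Since $f\ge 0$, Tonelli's theorem lets us write $\int_{S_{N+M}^{\Om}}f\,\omega(d\b{w})$ as an iterated integral over an enumeration $x_1,\dots,x_{|\Om|}$ of $\Om$.

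Next I would apply Lemma 3 of \cite{weikuo} to the innermost integral, in the variable $w_{x_1}$, with all other coordinates held fixed: this replaces $\int_{S_{N+M}}f\,\omega(dw_{x_1})$ by $c_{N,M}\int_{A_{M,N}}\bigl(\int_{S_N}f(\psi_M(v_{x_1})u_{x_1},\phi_M(v_{x_1}),w_{x_2},\dots)\,\omega(du_{x_1})\bigr)\gamma_{M,N}(dv_{x_1})$. One only needs that, for fixed values of the other coordinates, the integrand is still a non-negative measurable function of $w_{x_1}$, which is immediate. Repeating for $x_2,\dots,x_{|\Om|}$ in turn, each step contributes one factor $c_{N,M}$, introduces one factor $\gamma_{M,N}(dv_{x_i})$ together with $\omega(du_{x_i})$, and substitutes the $x_i$-th coordinate of the argument of $f$ by $(\psi_M(v_{x_i})u_{x_i},\phi_M(v_{x_i}))$, without disturbing the already-treated or not-yet-treated factors. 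After $|\Om|$ steps, reassembling the iterated integrals (again by Tonelli) into integrals over $A_{M,N}^{\Om}$ and $S_N^{\Om}$, and recognizing $\prod_{x\in\Om}\gamma_{M,N}(\b{v}(x))=\gamma_{M,N}(d\b{v})$ and $\prod_{x\in\Om}\omega(du_x)=\omega(d\b{u})$, yields exactly the claimed identity with constant $c_{N,M}^{|\Om|}$.

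The points to verify are purely bookkeeping rather than substantive: that the surface measure on the product sphere is indeed the product of the factor surface measures (the volume form of a Riemannian product is the product of the volume forms); that each intermediate integrand remains jointly measurable so that Tonelli and the one-site change of variables genuinely apply; and that the substitution in the $x_i$-th factor commutes with the integrations over the remaining factors, which holds since $\psi_M,\phi_M$ depend only on the corresponding single coordinate. I do not expect any genuine obstacle here — the iteration is mechanical — which is precisely why the statement is flagged in the text as following from ``a standard argument.''
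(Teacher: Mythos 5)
Your proposal is correct and fills in exactly the ``standard argument'' the paper leaves implicit: iterate the one-site Lemma 3 of \cite{weikuo} across $\Om$, using Tonelli and the factorization of the product surface measure $\omega(d\b{w})=\prod_{x\in\Om}\omega(dw_x)$ on $S_{N+M}^{\Om}$. This matches the paper's intended approach.
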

	
	Now we set $A_{N,M}(\delta):=\phi_M^{-1}\left(S_{M}(0,1+\delta)\right)\cap A_{N,M}(\delta)$. Then by Lemma \ref{lem:lowerbound:disintegration} we have
	\[\bar{Z}_{N,M}^{\mathfrak{X},\delta}=c_{N,M}^{|\Om|}\int_{S_N^{\Om} \times  A_{N,M}(\delta)^{\Om}} \exp(-\cal{H}_{N,M}^{\mathfrak{X}}(\b{\psi}_{M}(\b{v})\b{u}))-\cal{Z}_{N,M}(\b{\psi}(\b{v})\b{u},\b{\phi}(\b{v}))\omega(d\b{u})\gamma_{M,N}(d\b{v}).\]
	In the next lemma, we compare this to the following partition function
	\[\bar{Z}_{N,M}^{\mathfrak{X},\delta,I}=c_{N,M}^{|\Om|}\int_{S_N^{\Om} \times (S_M^{\delta})^{\Om}} \exp(-\cal{H}_{N,M}^{\mathfrak{X}}(\b{u})-\cal{Z}_{N,M}(\b{u},\b{v}))\omega(d\b{u})\gamma_{M,N}(d\b{v}).\]
	\begin{lem}
		\label{lem:lowerbound:removing Fn}
		Let us choose $M\ge 10\delta>0$. Then we have that 
		\[\liminf_{N\to \infty}M^{-1}\left(\E \log \bar{Z}_{N,M}^{\mathfrak{X},2\delta}-\E \log Z_{N,M}^{\mathfrak{X},\delta}\right)\ge -4\delta\sum_{x\in \Om}\left(\sum_{y\in \Om}|D_{xy}|+|\b{h}(x)|+\xi'_x(1)\right).\]
	\end{lem}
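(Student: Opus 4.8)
The plan is to rewrite $\bar Z_{N,M}^{\mathfrak X,2\delta}$ in its disintegrated form via Lemma \ref{lem:lowerbound:disintegration}, restrict the $\b v$-integral to the thin region on which each cavity block has (normalized) squared radius within $\delta$ of $1$, and then compare the resulting integrand, block by block, with that of $Z_{N,M}^{\mathfrak X,\delta}$. Lemma \ref{lem:lowerbound:disintegration} gives
\[
\bar Z_{N,M}^{\mathfrak X,2\delta}=c_{N,M}^{|\Om|}\int_{S_N^\Om\times A_{N,M}(2\delta)^\Om}\exp\big(-\cal H_{N,M}^{\mathfrak X}(\b\psi_M(\b v)\b u)-\cal Z_{N,M}(\b\psi_M(\b v)\b u,\b\phi_M(\b v))\big)\,\omega(d\b u)\gamma_{M,N}(d\b v),
\]
and the first step is to record the elementary estimates $\psi_M(v)^2=1+\tfrac{M-\|\phi_M(v)\|^2}{N}$ and $\|\phi_M(v)\|^2=\|v\|^2(1+O(M^2/N))$, uniform over $v\in A_{N,M}$ with $\|v\|^2$ bounded. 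These show that for $N$ large (depending on $M$ and $\delta$) the integration domain of $Z_{N,M}^{\mathfrak X,\delta}$ sits inside $S_N^\Om\times A_{N,M}(2\delta)^\Om$, and that on that sub-domain $|\psi_M(\b v(x))^2-1|\le\tfrac{\delta M}{N}(1+o(1))$ and $\|\phi_M(\b v(x))-\b v(x)\|=o(1)$ uniformly; restricting the integral above to this sub-domain therefore gives a lower bound for $\bar Z_{N,M}^{\mathfrak X,2\delta}$ over exactly the measure space on which $Z_{N,M}^{\mathfrak X,\delta}$ is defined.

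It then remains to control, uniformly on this region, the change in the exponent caused by replacing $\b\psi_M(\b v)\b u$ by $\b u$ and $\b\phi_M(\b v)$ by $\b v$, which I would split into deterministic and Gaussian parts. For the deterministic parts -- the $D$-quadratic form and the $\b h$-linear term inside $\cal H_{N,M}$ and $\cal Z_{N,M}$ -- one uses $|(\b u(x),\b u(y))|\le N$, $|\b u_1(x)|\le\sqrt N$, the identity $\phi_M(v)_1=v_1$, and the estimates above to see that each changes by at most $(\tfrac\delta2+o(1))M\sum_y|D_{xy}|$, resp. $(\tfrac\delta2+o(1))M|\b h(x)|$, per site; these enter as pointwise bounds in the exponent. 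For the Gaussian parts -- $\sum_x H_{N,M,x}$, $\sum_x(\vec Z_{N,x},\cdot)$, and $s_Nh_N$ -- one invokes isotropy: composing each field with the maps $\b\psi_M(\b v)$ and $\b\phi_M(\b v)$ only multiplies the argument of each $\xi_x$ in the relevant covariance by a factor $1+O(\delta M/N)$, uniformly, so a Guerra-type interpolation (Corollary \ref{corr:Guerra-bound}, or Proposition \ref{prop:Guerra Gaussian integration} applied conditionally on $\b v$) bounds the induced change of $\E\log$ coming from the $H_{N,M,x}$-part by $(\delta+o(1))M\sum_x\xi_x'(1)$, and from the $\vec Z_{N,x}$- and $h_N$-parts by $o(M)$ (the last using $s_N=N^\gamma$ with $\gamma<1/2$). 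Combining the deterministic and Gaussian contributions, dividing by $M$, and passing to $\liminf_{N\to\infty}$ with $M$ fixed yields a loss of at most $(\delta+o(1))\sum_x\big(\sum_y|D_{xy}|+|\b h(x)|+\xi_x'(1)\big)$, which is dominated by $4\delta\sum_x\big(\sum_y|D_{xy}|+|\b h(x)|+\xi_x'(1)\big)$; the generous constant and the hypothesis $M\ge 10\delta$ absorb the $o(1)$'s and the slack between the $2\delta$- and $\delta$-thickenings needed for the domain inclusion.

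The step I expect to be the real obstacle is making these errors come out genuinely of order $\delta$ rather than of order $1$. In general $\psi_M(\b v(x))^2-1$ is only of size $M/N$, so a careless comparison would lose a term of order $\sum_{x,y}|D_{xy}|$; it is of size $\delta M/N$ only after restricting to the thin shell where the cavity coordinates lie near the sphere, which is exactly why the statement pairs a $2\delta$-thickened curved partition function with a $\delta$-thickened flat one, and why the atypical (too-small-cavity-norm) region must be discarded first. Verifying that the discarded region is harmless, and that the shell estimates remain uniform in $\b u$ after the $\b u$-integral is carried out, is the technical heart of the argument; the rest is bookkeeping of covariance differences in a standard interpolation.
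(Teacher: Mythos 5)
Your proposal is correct and follows essentially the same route as the paper: disintegrate via Lemma \ref{lem:lowerbound:disintegration}, restrict to the thin shell $S_M^\delta \subseteq A_{N,M}(2\delta)$ where the key estimate $0 \le N(\psi_M(v)^2 - 1) \le 2\delta M$ gives errors of genuine order $\delta$ rather than order $1$, bound the deterministic shifts pointwise and the Gaussian shifts via Corollary \ref{corr:Guerra-bound}, and treat the perturbation term separately. Your stated per-site constants are a bit more optimistic than the paper's (which gets $2\delta M\sum_{x,y}|D_{xy}|$, $2\delta M|\b h(x)|$, and roughly $3\delta M\xi_x'(1)$), but both land comfortably inside the generous $4\delta$ budget, and you correctly flag the thin-shell restriction as the crux of why the bound scales with $\delta$.
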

	\begin{proof}
		To begin we will need an estimate. By construction, for any $v\in A_{M,N}$
		\[\|\phi_M(v)\|^2+N\psi_M(v)^2=N+M.\label{eqn:ignore-1020203}\]
		Note moreover that $1\le a_{\ell}(v)\le a_{M+1}(v)$, so that $\|v\|^2\le \|\phi_M(v)\|^2\le \psi_M(v)^2\|v\|^2$. Combined with (\ref{eqn:ignore-1020203}), this implies that if $v\in S^\delta_M\cap A_{M,N}$
		\[0\le N(\psi_M(v)^2-1)\le \psi_M(v)^2\|v\|^2-M\le \psi_M(v)^2M(\delta+1)-M.\]
		Thus if we assume that $M/\delta,N/M>10$, one derives routinely that
		\[0\le N\left(\psi_M(v)^2-1\right)\le 2\delta M.\]
		From this we conclude that for $v,v'\in S_{N}^\delta$, we have that
		\[
		\begin{split}
			N|(\phi_M(v),\phi_M(v'))-(v,v')|&\le 4\delta N^{-1}M^2,\;\;\; N|\psi_M(v')\psi_M(v)-1|\le 2\delta N^{-1}M,\label{eqn:ignore-luna-11}\\
			N|\psi_M(v)-1|&\le 2\delta N^{-1}M.
		\end{split}
		\]
		
		Now we will fix $M,\delta>0$, assume that $N\ge 10M$. We now restrict this integration in $\bar{Z}_{N,M}^{\mathfrak{X},2\delta}$. For this note by the above estimate, for sufficiently large $N$ we have that $\phi_M(S_{M}^\delta)\subseteq S_{M}(0,1+2\delta)$. Moreover, as $100 M\delta\le N$, we have that $S_M^\delta\subseteq A_{N,M}$. Thus if we define 
		\[\bar{Z}_{N,M}^{\mathfrak{X},\delta,res}=c_{N,M}^{|\Om|}\int_{S_N^{\Om} \times S_M(\delta)^{\Om}} \exp(-\cal{H}_{N,M}^{\mathfrak{X}}(\b{\psi}_{M}(\b{v})\b{u}))-\cal{Z}_{N,M}(\b{\psi}(\b{v})\b{u},\b{\phi}(\b{v}))\omega(d\b{u})\gamma_{M,N}(d\b{v}),\]
		it suffices for us to show that
		\[\liminf_{N\to \infty}M^{-1}\left(\E \log \bar{Z}_{N,M}^{\mathfrak{X},\delta,res}-\E \log Z_{N,M}^{\mathfrak{X},\delta}\right)\ge -4\delta\sum_{x\in \Om}\left(\sum_{y\in \Om}|D_{xy}|+|\b{h}(x)|+\xi'_x(1)\right) \label{eqn:ignore-123934}\]
		
		Returning to (\ref{eqn:ignore-123934}), we note that to bound the difference, we must we must deal with the effect of both the deterministic and random components of
		\[\cal{H}_{N,M}^{\mathfrak{X}}(\b{\psi}_{M}(\b{v})\b{u})+\cal{Z}_{N,M}(\b{\psi}_{M}(\b{v})\b{u},\b{\phi}_{M}(\b{v})) \text{ and }\cal{H}_{N,M}^{\mathfrak{X}}(\b{u})+\cal{Z}_{N,M}(\b{u},\b{v}).\label{eqn:lowerbound:compare-in-dis}\]
		To begin we will deal with the deterministic pieces, for which we just bound the differences. For this, note that for $N$ sufficiently large and $(\b{u},\b{v}),(\b{u}',\b{v}')\in (S_{N}\times S_M^{\delta})^{\Om}$
		\[
		\begin{split}
			\bigg|\sum_{x,y\in \Om}D_{x,y}(\psi_{M}(\b{v}(x))\b{u}(x),\psi_{M}(\b{v'}(x))\b{u'}(x))-\sum_{x,y\in \Om}D_{x,y}(\b{u}(x),\b{u'}(x))\bigg|\le \\
			\sum_{x,y\in \Om}|D_{x,y}||(\psi_{M}(\b{v}(x))\psi_{M}(\b{v'}(x))-1)(\b{u}(x),\b{u'}(x))|\le 2\delta M\sum_{x,y\in \Om}|D_{x,y}|.
		\end{split}
		\]
		A similar argument shows that
		\[\bigg|\sum_{x,y\in \Om}D_{x,y}(\phi_M(\b{v}(x)),\phi_{M}(\b{v'}(x)))-\sum_{x,y\in \Om}D_{x,y}(\b{v}(x),\b{v'}(x))\bigg|\le 4N^{-1}M^2\delta \sum_{x,y\in \Om}|D_{x,y}|.\]
		Finally, to contend with the external field, we note that
		\[\l|\sq{N}\b{h}(x)\b{\psi}_M(\b{v}(x))\b{u}_1(x)-\sq{N}\b{h}(x)\b{u}_1(x)\r|\le |\b{h}(x)||\le N|\b{\psi}_M(\b{v}(x))-1|\le 2\delta M|\b{h}(x)|.\]
		This completes the analysis of the deterministic terms. The random components will be dealt with by employing Corollary \ref{corr:Guerra-bound}. To do so, we note that the covariance of $\cal{H}_{N,M}(\b{u})+\cal{Z}_{N,M}(\b{u},\b{v})$ at points $(\b{u},\b{v})$ and $(\b{u}',\b{v}')$ is given by
		\[\sum_{x\in \Om}\bigg((N+M)\xi_x((\b{u}(x),\b{u}'(x))_{N+M})+\xi'_x((\b{u}(x),\b{u}'(x))_N)(\b{v}(x),\b{v}'(x))\bigg).\]
		In particular, the difference between the variance functions of the Hamiltonians in (\ref{eqn:lowerbound:compare-in-dis}),  excluding the perturbation terms, may be bounded above by
		\[\sum_{x\in \Om}\left(3\delta M\xi_x'(1)+3\delta N^{-1}M(1+\delta)+4M^2N^{-1}\xi_x''(1)\delta(1+\delta)\right).\]
		Finally for the perturbation term, we recall that the covariance of $h_N$ is given by (\ref{eqn:lower bound:covariance of perturbation term}). We note that for any $\b{w}\in [0,1]^{\Om}$, and $(\b{u},\b{v})\in S_N^{\Om}\times (S_N^\delta)^{\Om}$
		\[|\b{R}^{\b{w}}_{N+M}(\b{\psi}_{M}(\b{v})\b{u},\b{\phi}_M(\b{v}))-\b{R}^{\b{w}}_{N+M}(\b{u},\b{v})|\le \left(\frac{1}{|\Om|}\sum_{x\in \Om}\b{w}(x)\right)(4M^2N^{-1}(1+\delta)+2MN^{-1}\delta).\]
		Using this, it is easily shown that the perturbation term is negligible as well.
	\end{proof}
	
	We next make the replacement $\gamma_{M,N}\mapsto \gamma_M$ and introduce 	\[\bar{Z}_{N,M}^{\mathfrak{X},\delta,II}=c_{N,M}^{|\Om|}\int_{S_N^{\Om} \times (S_M^{\delta})^{\Om}} \exp(-\cal{H}_{N,M}^{\mathfrak{X}}(\b{u})-\cal{Z}_{N,M}(\b{u},\b{v}))\omega(d\b{u})\gamma_{M}(d\b{v}).\]
	We then have the following result.
	\begin{lem}
		For any $M\ge 1$ and $\delta>0$ we have that
		\[\liminf_{N\to \infty} \E \log \frac{\bar{Z}_{N,M}^{\mathfrak{X},\delta,II}}{\bar{Z}_{N,M}^{\mathfrak{X},\delta,I}}\ge 0.\]
	\end{lem}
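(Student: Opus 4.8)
The plan is to observe that $\bar{Z}_{N,M}^{\mathfrak{X},\delta,I}$ and $\bar{Z}_{N,M}^{\mathfrak{X},\delta,II}$ have exactly the same nonnegative integrand $\exp(-\cal{H}_{N,M}^{\mathfrak{X}}(\b{u})-\cal{Z}_{N,M}(\b{u},\b{v}))$ over the same domain $S_N^{\Om}\times (S_M^{\delta})^{\Om}$, and differ only in that the $\b{v}$-variable is integrated against $\gamma_{M,N}(d\b{v})=\prod_{x\in\Om}\gamma_{M,N}(\b{v}(x))$ in the former and against the standard Gaussian product $\gamma_M(d\b{v})=\prod_{x\in\Om}\gamma_M(\b{v}(x))$ in the latter. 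Hence the ratio is governed entirely by a comparison of these two measures on the region of integration. Since on $(S_M^{\delta})^{\Om}$ each block satisfies $\|\b{v}(x)\|^2\le M(1+\delta)$, i.e. $|\b{v}(x)_j|\le \sqrt{M(1+\delta)}$, and since $\sqrt{M(1+\delta)}<\sqrt{M+N+1-j}$ once $N$ is large, for all sufficiently large $N$ we have $(S_M^{\delta})^{\Om}\subseteq A_{M,N}^{\Om}$, so both $\gamma_{M,N}$ and $\gamma_M$ are mutually absolutely continuous with respect to Lebesgue measure there.

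The key step is the uniform density estimate $\frac{d\gamma_M}{d\gamma_{M,N}}(\b{v})\ge 1-\epsilon_N$ on $(S_M^{\delta})^{\Om}$, with $\epsilon_N\to 0$ as $N\to\infty$ for $M,\delta$ fixed. To obtain it, first use $(1-x/n)^{n/2}\le e^{-x/2}$ to write, for each coordinate,
\[
\left(1-\frac{v_j^2}{M+N+1-j}\right)^{\frac{M+N-j-2}{2}}\le e^{-v_j^2/2}\left(1-\frac{v_j^2}{M+N+1-j}\right)^{-3/2},
\]
and note that on the bounded range $|v_j|\le\sqrt{M(1+\delta)}$ the correction factor is $1+o(1)$ uniformly as $N\to\infty$. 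Second, apply Stirling's formula to $b_{M,N}=\prod_{j=1}^{M}\frac{\Gamma((N+j)/2)}{\Gamma((N+j-1)/2)\sqrt{\pi(N+j)}}$, using $\Gamma(x+\tfrac12)/\Gamma(x)\sim\sqrt{x}$, to conclude $b_{M,N}=(2\pi)^{-M/2}(1+o(1))$. Combining the two, $\gamma_{M,N}(v)\le (1+o(1))(2\pi)^{-M/2}e^{-\|v\|^2/2}=(1+o(1))\gamma_M(v)$ uniformly over $v$ in the ball of radius $\sqrt{M(1+\delta)}$, which after taking products over $x\in\Om$ is the claimed bound.

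With this in hand the conclusion is immediate: deterministically in the Gaussian disorder and in $\mathfrak{X}$ one gets $\bar{Z}_{N,M}^{\mathfrak{X},\delta,II}\ge (1-\epsilon_N)^{|\Om|}\bar{Z}_{N,M}^{\mathfrak{X},\delta,I}$, hence $\log\big(\bar{Z}_{N,M}^{\mathfrak{X},\delta,II}/\bar{Z}_{N,M}^{\mathfrak{X},\delta,I}\big)\ge |\Om|\log(1-\epsilon_N)$, and taking $\E$ and then $\liminf_{N\to\infty}$ yields $\liminf_{N\to\infty}\E\log\big(\bar{Z}_{N,M}^{\mathfrak{X},\delta,II}/\bar{Z}_{N,M}^{\mathfrak{X},\delta,I}\big)\ge 0$. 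There is no substantive obstacle beyond bookkeeping of the $N\to\infty$ asymptotics of $\gamma_{M,N}$ toward $\gamma_M$; the only points requiring care are the uniformity of the density estimate over the (compact, since $M,\delta$ are fixed) $\b{v}$-region and the support inclusion $(S_M^{\delta})^{\Om}\subseteq A_{M,N}^{\Om}$, both valid for all large $N$.
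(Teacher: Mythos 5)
Your proposal is correct and takes essentially the same approach as the paper, which simply cites Lemma 5 of Chen \cite{weikuo} for the uniform density estimate $\gamma_{M,N}\le(1+o(1))\gamma_M$ on the relevant compact $\b{v}$-region; you supply the details of that estimate directly (the $(1-x/n)^{n/2}\le e^{-x/2}$ bound plus Stirling for $b_{M,N}$), and the rest of the argument is identical.
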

	\begin{proof}
		The only change between these integrals is the replacement $\gamma_{M,N}\mapsto \gamma_M$. Both of these measures have a density, and are products of the measures in the $|\Om|=1$ case. As Lemma 5 of \cite{weikuo} shows this ratio is bounded below by a constant going to $1$, from which this result immediately follows.
	\end{proof}
	
	Now finally, we will relate $\bar{Z}_{N,M}^{\mathfrak{X},\delta,II}$ to our final product partition function
	\[\tilde{Z}_{N,M}^{\mathfrak{X}}=\int_{S_N^{\Om}\times S_M^\Om}\exp(-\cal{H}_{N,M}^{\mathfrak{X}}(\b{u})-\cal{Z}_{N,M}(\b{u},\b{v})\omega(d\b{u})\omega(d\b{v}).\]
	\begin{lem}
		For fixed $M\ge 1$ and $\delta>0$, we have that
		\[\E \log \bar{Z}_{N,M}^{\mathfrak{X},\delta,II}-\E \log \tilde{Z}_{N,M}^{\mathfrak{X}}\ge -\frac{M\delta}{2}\sum_{x,y\in \Om}|D_{x,y}|+ |\Om|\log \frac{c_{N,M} \gamma_M(S^\delta_M)}{\vol(S_M)},\]
		
	\end{lem}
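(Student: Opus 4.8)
The plan is to reduce everything to a comparison of the inner integrals over $\b{v}$: since the prefactor $c_{N,M}^{|\Om|}$, the term $\cal{H}_{N,M}^{\mathfrak{X}}(\b{u})$, and the outer $\omega(d\b{u})$–integral are common to $\bar{Z}_{N,M}^{\mathfrak{X},\delta,II}$ and $\tilde{Z}_{N,M}^{\mathfrak{X}}$, it is enough to establish, for each fixed $\b{u}\in S_N^{\Om}$,
\[
\int_{(S_M^{\delta})^{\Om}}e^{-\cal{Z}_{N,M}(\b{u},\b{v})}\,\gamma_M(d\b{v})\ \ge\ e^{-\frac{M\delta}{2}\sum_{x,y}|D_{x,y}|}\Big(\tfrac{\gamma_M(S_M^{\delta})}{\vol(S_M)}\Big)^{|\Om|}\int_{S_M^{\Om}}e^{-\cal{Z}_{N,M}(\b{u},\b{v})}\,\omega(d\b{v});
\]
taking logarithms, then $\E$, and multiplying by $c_{N,M}^{|\Om|}$ then gives the lemma. (Should the pointwise form be awkward, the same computation goes through after first integrating in $\b{u}$ and using Jensen's inequality to pull the quadratic loss inside.) The main tool is a polar disintegration of $\gamma_M$ restricted to the thin annulus $S_M^{\delta}$: writing each coordinate as $\b{v}(x)=\sqrt{t_x}\,\b{w}(x)$ with $t_x=\|\b{v}(x)\|_M^2\in[1,1+\delta]$ and $\b{w}(x)\in S_M$, one has $\gamma_M(d\b{v}(x))=\rho_M(t_x)\,dt_x\,\omega(d\b{w}(x))$ for an explicit radial density $\rho_M$ with $\int_1^{1+\delta}\rho_M(t)\,dt=\gamma_M(S_M^{\delta})/\vol(S_M)$. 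This produces the geometric factor $(\gamma_M(S_M^{\delta})/\vol(S_M))^{|\Om|}$ and reduces matters to showing, uniformly over $\b{t}\in[1,1+\delta]^{\Om}$,
\[
\int_{S_M^{\Om}}e^{-\cal{Z}_{N,M}(\b{u},\b{t}\b{w})}\,\omega(d\b{w})\ \ge\ e^{-\frac{M\delta}{2}\sum_{x,y}|D_{x,y}|}\int_{S_M^{\Om}}e^{-\cal{Z}_{N,M}(\b{u},\b{w})}\,\omega(d\b{w}),
\]
where $\b{t}\b{w}$ denotes the configuration with $x$–component $\sqrt{t_x}\,\b{w}(x)$ (matching the paper's product convention), so that relative to $\cal{Z}_{N,M}(\b{u},\b{w})$ the quadratic form is multiplied by $\sqrt{t_xt_y}$ on the $(x,y)$ block and each linear term by $\sqrt{t_x}$.

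The quadratic part is immediate: since $0\le\sqrt{t_xt_y}-1\le\delta$ and $|(\b{w}(x),\b{w}(y))|\le M$ on $S_M^{\Om}$, the change in $\tfrac12\sum_{x,y}D_{x,y}(\b{v}(x),\b{v}(y))$ is at most $\tfrac{M\delta}{2}\sum_{x,y}|D_{x,y}|$ uniformly, which is exactly the stated exponential loss. What remains are the two linear-in-$\b{v}$ pieces of $\cal{Z}_{N,M}$, the cavity field $\sum_x(\vec{Z}_{N,x}(\b{u}(x)),\b{v}(x))$ and the external field $\sqrt M\sum_x\b{h}(x)\b{v}_1(x)$, whose rescaling by $\sqrt{t_x}\ge1$ must be shown \emph{not} to decrease the spherical integral; note the target bound contains no $\b{h}$– or $\xi'$–terms, so these contributions must be shown genuinely harmless rather than merely bounded. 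For the cavity term I would use a Gaussian comparison: for fixed $\b{u}$, inflating the centered Gaussian process $\b{v}\mapsto(\vec{Z}_{N,x}(\b{u}(x)),\b{v}(x))$ by the factor $\sqrt{t_x}$ amounts to replacing its covariance kernel $\xi'_x(1)(\b{v}(x),\b{v}'(x))$ by $t_x\xi'_x(1)(\b{v}(x),\b{v}'(x))$, and since $\xi'_x\ge0$ the difference is again a legitimate kernel (Schur's product theorem), so a Guerra-style interpolation (as in Corollary~\ref{corr:Guerra-bound}, using convexity of $G\mapsto\log\int e^{H+G}$) shows the inflation can only raise the expected free energy. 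For the deterministic external-field term I would exploit the reflection $\b{v}\mapsto-\b{v}$, which preserves both $\omega$ and the $D$–quadratic form, to rewrite the $\b{v}$–integral as an integral of $e^{-\frac12\sum D_{x,y}(\b{v}(x),\b{v}(y))}$ against a $\cosh$ of the linear form, and then argue — symmetrizing and integrating the radii against $\rho_M$ — that increasing the field strengths by factors $\ge1$ does not lower it.

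\textbf{Main obstacle.} The quadratic estimate and the polar disintegration are routine; the crux is making the two linear terms provably harmless (not just boundedly small), i.e.\ ruling out a loss depending on $\b{h}$ or $\xi'_x$ that would not fit under $-\tfrac{M\delta}{2}\sum_{x,y}|D_{x,y}|$. The delicacy is that the relevant spherical functional, viewed as a function of the several field strengths, is convex and jointly even but not obviously monotone in each strength separately, so the reflection/convexity argument for the external field — and the interplay between it and the Schur-kernel comparison for the cavity field — is the step that requires the most care.
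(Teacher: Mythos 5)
Your approach is the same as the paper's: polar disintegration of $\gamma_M$ restricted to the thin shell, a pointwise quadratic estimate costing $-\frac{M\delta}{2}\sum|D_{x,y}|$, and an appeal to the fact that inflating the linear fields cannot decrease the spherical integral. The paper's proof is tersely worded: after replacing the quadratic part by its value at $\hat{\b{v}}$, it simply invokes the one--dimensional monotonicity $\lam\mapsto\int_{S_M}\cosh(\lam(v,u))\omega(du)$ (Lemma 1 of \cite{weikuo}) to dismiss the two linear terms, without distinguishing the cavity field from the external field and without a separate Gaussian comparison for the cavity part. In that sense your treatment is somewhat more complicated than necessary.

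Your ``main obstacle'' is, however, exactly the right thing to flag, and in fact the naive coordinate-wise use of the $\cosh$-monotonicity \emph{does} fail when $D$ has nonzero off-diagonal entries: conditioning on $(\b{w}(y))_{y\neq x}$, the $\b{w}(x)$-integral is an increasing function of $\|B_x+r_xL_x\|$ with $B_x=-\sum_{y\neq x}D_{xy}\b{w}(y)$, and this norm need not be monotone in $r_x\ge 1$ if $B_x$ has a large component opposing $L_x$ (one can exhibit $\vec{r},\b{w},L,D$ with $D\succeq 0$ for which the spherical integral decreases in $r_x$). So the pointwise-in-$\vec{r}$ inequality does not hold, and the paper's appeal to the one-variable lemma is a real gap, not just terseness.

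The fix is the convexity/evenness route you sketch, but applied jointly rather than by symmetrizing coordinate by coordinate. Write $G(\vec{r})=\int_{S_M^\Om}e^{-\frac12D(\b{w})-\sum_x r_x\ell_x(\b{w}(x))}\omega(d\b{w})$, where $\ell_x$ is the \emph{total} linear form at $x$ (cavity field plus external field). Then (i) $G$ is convex on $\R^\Om$ (positive mixture of exponentials of linear functions of $\vec{r}$), (ii) $G(-\vec{r})=G(\vec{r})$ by the global reflection $\b{w}\mapsto-\b{w}$, which preserves $\omega$ and $D(\b{w})$, so the restriction $t\mapsto G(t\vec{1})$ is even, convex, hence nondecreasing on $[0,\infty)$, and (iii) the normalized radial law $\bar{\rho}_M^{\otimes\Om}$ on $[1,\sqrt{1+\delta}]^\Om$ is exchangeable, so $\E_{\bar\rho}\vec{r}=\bar r\,\vec{1}$ with $\bar r\ge1$. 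Jensen then gives $\E_{\bar\rho}G(\vec{r})\ge G(\bar r\,\vec{1})\ge G(\vec{1})$, and unwinding the normalization produces exactly the $\l(\gamma_M(S_M^\delta)/\vol(S_M)\r)^{|\Om|}$ factor. This handles both linear terms simultaneously, pointwise in the disorder, so no separate Schur/Guerra comparison for the cavity field is needed. The lemma statement is correct; the step you singled out is the one that needs this extra argument.
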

	\begin{proof}
		Consider the mapping $\hat{\b{v}}(x)=\|\b{v}(x)\|^{-1}\b{v}(x)$ defined on $(S_M^\delta)^{\Om}$. We will try to replacement the factor of $\cal{Z}_{N,M}(\b{u},\b{v})$ with $\cal{Z}_{N,M}(\b{u},\hat{\b{v}})$ in $\bar{Z}_{N,M}^{\mathfrak{X},\delta,II}$. For the first term, note that for $\b{v},\b{v}'\in (S_M^\delta)^{\Om}$, we have that
		\[\bigg|\sum_{x,y\in \Om}D_{x,y}(\b{v},\b{v}')-\sum_{x,y\in \Om}D_{x,y}(\hat{\b{v}},\hat{\b{v}}')\bigg|\le M\delta \sum_{x,y\in \Om}|D_{x,y}|.\]
		The remaining terms in $\cal{Z}_{N,M}(\b{u},\b{v})$ are linear in $\b{v}$. To deal with them we recall that for any vector $v\in \R^M$, the function
		\[\lam \mapsto \int_{S_M}\exp(\lam(v,u))\omega(du)=\int_{S_M}\cosh(\lam(v,u))\omega(du),\]
		is increasing in $\lam>0$ (see Lemma 1 of \cite{weikuo}). In particular, we see that $\log(Z_{N,M}^{\mathfrak{X},\delta})$ is bounded below by
		\[-\frac{M\delta}{2}\sum_{x,y\in \Om}|D_{x,y}|+\log \left(c_{N,M}^{|\Om|}\int_{S_N^\Om\times (S_M^{\delta})^\Om}\exp(-\cal{H}_{N,M}^{\mathfrak{X}}(\b{u})-\cal{Z}_{N,M}(\b{u},\hat{\b{v}}))\omega(d\b{u})\gamma_M(d\b{v})\right).\]
		By the co-area formula the final term may be written as
		\[|\Om|\log \frac{c_{N,M} \gamma_M(S^\delta_M)}{\vol(S_M)}+\log(\tilde{Z}_{N,M}^{\mathfrak{X}}).\]
		Taking expectations then completes the proof.
	\end{proof}
	By Stirling's approximation we see that \[\lim_{M\to \infty}M^{-1}\log\gamma_M(S_M^{\delta})=0, \;\; \lim_{M\to \infty}\lim_{N\to \infty}M^{-1}\log c_{N,M}/\vol(S_M)=0.\]
	In addition, all of the above bounds are independent of $(x_{p,q})_{p,q\ge 1}\in [1,2]^{\N\times \N}$, and go to zero if the limits $N\to \infty$, $M\to \infty$, and then $\delta\to 0$ are taken. Thus in total, we have shown the following result.
	\begin{lem}
		\label{lem:lowerbound:main-lem}
		We have that 
		\[\liminf_{N\to \infty}|\Om|^{-1}N^{-1}\E \log Z_{N,Sph}\ge \limsup_{M\to \infty}\liminf_{N\to \infty}\sup_{x\in [1,2]^{\N\times \N}}|\Om|^{-1}M^{-1}\bigg(\E \log \tilde{Z}_{N,M}^{\mathfrak{X}}-\E\log \hat{Z}_{N,M}^{\mathfrak{X}}\bigg).\label{eqn:lem:lowerbound:main-lem}\]
	\end{lem}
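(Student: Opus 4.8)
The plan is to simply assemble the chain of comparisons already proved in this subsection and then pass to the limit; there is no genuinely new analytic content, as all of it has been front-loaded into Lemmas \ref{lem:lowerbound-guerra}--\ref{lem:lowerbound:removing Fn} and the two (unlabelled) ratio/projection lemmas following them. The starting point is the enhanced cavity inequality (\ref{eqn:lower bound:enhanced-cavity method}), so it suffices to lower-bound $M^{-1}\bigl(\E\log Z_{N+M}^{Sph,\mathfrak X}-\E\log Z_N^{Sph,\mathfrak X}\bigr)$ by $M^{-1}\bigl(\E\log\tilde Z_{N,M}^{\mathfrak X}-\E\log\hat Z_{N,M}^{\mathfrak X}\bigr)$ up to an error that, for each fixed $\delta>0$, vanishes upon taking $\liminf_{N\to\infty}$ and then $\limsup_{M\to\infty}$, and is $O(\delta)$ in the remaining limit — all uniformly over $\mathfrak X\in[1,2]^{\N\times\N}$.

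For the subtracted term I would invoke the second estimate of Lemma \ref{lem:lowerbound-guerra} to replace $\E\log Z_N^{Sph,\mathfrak X}$ by $\E\log\hat Z_{N,M}^{\mathfrak X}$ at the price of $O(M^2/N)\sum_x\xi_x''(1)$. For $\E\log Z_{N+M}^{Sph,\mathfrak X}$ I would run the telescope: the first estimate of Lemma \ref{lem:lowerbound-guerra}, applied with parameter $2\delta$, gives $\E\log Z_{N+M}^{Sph,\mathfrak X}\ge\E\log\bar Z_{N,M}^{\mathfrak X,2\delta}-O\bigl((M^2\delta+M)/N\bigr)\sum_x\xi_x''(1)$; the Wei-Kuo disintegration of the sphere measure (Lemma \ref{lem:lowerbound:disintegration}) rewrites $\bar Z_{N,M}^{\mathfrak X,2\delta}$ as an integral over $S_N^\Om\times A_{N,M}(2\delta)^\Om$ carrying the distortion maps $\b{\psi}_M,\b{\phi}_M$; Lemma \ref{lem:lowerbound:removing Fn} then strips these off and restricts the radial variable, reaching $\bar Z_{N,M}^{\mathfrak X,\delta,I}$ at the price of $4M\delta\sum_x\bigl(\sum_y|D_{xy}|+|\b{h}(x)|+\xi_x'(1)\bigr)$; the $\gamma_{M,N}\mapsto\gamma_M$ ratio lemma (adapted from Lemma 5 of \cite{weikuo}) produces $\bar Z_{N,M}^{\mathfrak X,\delta,II}$ for free in the limit; and the final comparison lemma projects the thickened spheres onto $S_M^\Om$, losing $\tfrac{M\delta}{2}\sum_{x,y}|D_{xy}|$ together with $|\Om|\log\bigl(c_{N,M}\gamma_M(S_M^\delta)/\vol(S_M)\bigr)$, to land at $\tilde Z_{N,M}^{\mathfrak X}$.

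Collecting these inequalities, dividing by $M|\Om|$, and taking $\liminf_{N\to\infty}$ kills every term carrying a factor $N^{-1}$; taking $\limsup_{M\to\infty}$ kills the Stirling-type terms $M^{-1}\log\gamma_M(S_M^\delta)$ and $M^{-1}\log\bigl(c_{N,M}/\vol(S_M)\bigr)$, both of which tend to $0$; and, since the residual error is $O(\delta)$ while the left-hand side of (\ref{eqn:lower bound:enhanced-cavity method}) does not depend on $\delta$, letting $\delta\to0$ yields exactly (\ref{eqn:lem:lowerbound:main-lem}). The main point to be careful about — and the only real obstacle — is the uniformity in $\mathfrak X$: every bound in the cited lemmas is stated independently of $\mathfrak X\in[1,2]^{\N\times\N}$, so $\sup_{\mathfrak X}$ commutes with subtracting the error terms and passes through the whole telescope unchanged, which is what lets the supremum survive on the right-hand side. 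The secondary, purely clerical, issue is to keep the auxiliary partition functions $\bar Z^{\mathfrak X,2\delta}$, $\bar Z^{\mathfrak X,\delta,I}$, $\bar Z^{\mathfrak X,\delta,II}$, $\tilde Z^{\mathfrak X}$ straight so that each comparison lemma is invoked in the direction in which it was proved.
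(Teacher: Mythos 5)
Your proposal is correct and follows exactly the paper's own argument: equation (\ref{eqn:lower bound:enhanced-cavity method}) plus the telescope through Lemmas \ref{lem:lowerbound-guerra}, \ref{lem:lowerbound:disintegration}, \ref{lem:lowerbound:removing Fn} and the two subsequent unlabelled comparison lemmas, with the $\mathfrak X$-uniformity of every error term and the Stirling estimates on $\gamma_M(S_M^\delta)$ and $c_{N,M}/\vol(S_M)$ justifying the final passage to the limit in $N$, then $M$, then $\delta$. The paper presents this assembly informally after the last unlabelled lemma rather than as a displayed proof, but the content is identical.
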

	
	\subsection{Proof of the Lower Bound Part II: Synchronization and Perturbation}
	
	The next step is to understand the quantity
	\[|\Om|^{-1}M^{-1}\bigg(\E \log \tilde{Z}_{N,M}^{\mathfrak{X}}-\E\log \hat{Z}_{N,M}^{\mathfrak{X}}\bigg).\label{eqn:lowerbound:2:first bound}\]
	Let us denote the Gibbs measure on $S_N^{\Om}$ with respect to $(\cal{H}_{N,M}^{\mathfrak{X}},\omega_{N})$  as $\<*\>_{N,M}^{\mathfrak{X}}$. Subtracting a factor of the partition function of $(\cal{H}_{N,M}^{\mathfrak{X}},\omega_{N})$ from both terms shows that (\ref{eqn:lowerbound:2:first bound}) is equal to
	\[|\Om|^{-1}M^{-1}\l(\E \log \l\<\int_{S_M^{\Om}}\exp(\cal{Z}_{N,M}(\b{u},\b{v}))\omega(d\b{v})\r\>_{N,M}^{\mathfrak{X}}-\E\log \l\<\exp(\sqrt{M}\cal{Y}_N(\b{u}))\r\>_{N,M}^{\mathfrak{X}}\r).\label{eqn:lower bound:II:main-term}\]
	
	Thus the analysis of the right-hand side of Lemma \ref{lem:lowerbound:main-lem} comes down to understanding the Gibbs measure $\<*\>_{N,M}^{\mathfrak{X}}$.
	
	This is where the importance of the perturbation term comes in, as we may apply Proposition \ref{prop:perturbation:gg-forcing-perturbation} to make, a sequences of choices of $\mathfrak{X}^{N,M}$ which force the results of Section \ref{section:RPC-gg-iden} to hold. We summarize these results in the following corollary.
	
	\begin{corr}
		\label{corr:lowerbound:mmgg convergence}
		Let us denote by $\b{R}_{N,M}^{\mathfrak{X}}\in ([-1,1]^{\N\times \N})^{\Om}$ the infinite overlap array associated to an infinite sequence of i.i.d samples from the Gibbs measure $\<*\>_{N,M}^{\mathfrak{X}}$. There is a choice of $\mathfrak{X}^{N,M}$, for each $N\ge M$, such that for any subsequence $N_n$ of $N$, such that $\b{R}_{N_n,M}^{\mathfrak{X}^{N_n,M}}$ converges in law to some $\b{R}$, one has that $\b{R}$ satisfies the multi-species Ghirlanda-Guerra identities.
	\end{corr}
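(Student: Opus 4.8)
The plan is to deduce this directly from Proposition \ref{prop:perturbation:gg-forcing-perturbation}, which is a general statement about perturbed Gibbs measures built from a centered Gaussian \emph{core} Hamiltonian against a reference measure with continuous bounded density. The only preliminary step is to recast $\<*\>_{N,M}^{\mathfrak{X}}$ into that framework. Recall that $\cal{H}_{N,M}^{\mathfrak{X}}(\b{u})=\cal{H}_{N,M}(\b{u})+s_Nh_N(\b{u})$ and that $\cal{H}_{N,M}$ splits into the deterministic part $\tfrac12\sum_{x,y}D_{xy}(\b{u}(x),\b{u}(y))+\sq{N}\sum_x\b{h}(x)\b{u}_1(x)$ plus the centered Gaussian piece $\sum_x H_{N,M,x}(\b{u}(x))$. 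Absorbing the deterministic part into the base measure, the Gibbs measure $\<*\>_{N,M}^{\mathfrak{X}}$ coincides with the one associated to the pair $\big({-\sum_{x\in\Om}H_{N,M,x}(\b{u}(x))}+s_Nh_N,\ \omega_{N,D,-\b{h}}\big)$ in the sense of Section \ref{section:RPC-gg-iden}; the sign of the external field plays no role in what follows. Throughout, $M\ge 1$ is held fixed and the construction is performed separately for each such $M$.

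It then remains only to verify the hypotheses under which Proposition \ref{prop:perturbation:gg-forcing-perturbation} was established. First, the core Hamiltonian $H_N(\b{u}):=-\sum_{x\in\Om}H_{N,M,x}(\b{u}(x))$ is a centered Gaussian field on $S_N^{\Om}$ with $\E H_N(\b{u})^2\le (N+M)\sum_{x\in\Om}\xi_x(1)\le 2N\sum_{x\in\Om}\xi_x(1)$ for $N\ge M$, i.e.\ variance bounded by $N\sigma$ with $\sigma:=2\sum_{x\in\Om}\xi_x(1)$; this is precisely the bound used in Section \ref{section:RPC-gg-iden} to verify (\ref{eqn:perturbation:H'-verify simple}) via Borell--TIS and to control the fluctuation quantity $v_{N,p,q}$. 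Second, $\omega_{N,D,-\b{h}}$ is a finite positive measure on the compact set $S_N^{\Om}$ whose density against $\omega$ is the exponential of a continuous function, hence continuous and bounded. Third, replacing $s_Nh_N$ by $-s_Nh_N$ does not change the law of the overlap array, since the joint law of the centered Gaussian family $(h_{N,p,q})_{p,q\ge1}$ is invariant under negation; so it is harmless to work with the convention $+s_Nh_N$ of Section \ref{section:RPC-gg-iden}.

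With these checks in place, Proposition \ref{prop:perturbation:gg-forcing-perturbation} applied with $M$ fixed yields, for every $N\ge M$, a choice $\mathfrak{X}^{N,M}\in[1,2]^{\N\times\N}$ such that along any subsequence $N_n$ for which $\b{R}_{N_n,M}^{\mathfrak{X}^{N_n,M}}$ converges in law, the limiting array satisfies the multi-species Ghirlanda--Guerra identities, which is the assertion of the corollary. There is no genuine obstacle here: the argument is bookkeeping, and the only point that truly requires attention is that the deterministic part of $\cal{H}_{N,M}$ be moved into the reference measure so that the core Hamiltonian is a bona fide centered Gaussian field obeying the $N\sigma$ variance bound; once this is arranged, the machinery of Section \ref{section:RPC-gg-iden} applies verbatim.
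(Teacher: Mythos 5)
Your proof is correct and follows exactly the route the paper intends: the corollary is stated as a direct application of Proposition \ref{prop:perturbation:gg-forcing-perturbation} (with $M$ held fixed), and you verify the two hypotheses that the paper leaves implicit — that the centered Gaussian core $\sum_{x}H_{N,M,x}$ has variance $\le N\sigma$ with $\sigma=2\sum_x\xi_x(1)$ uniformly for $N\ge M$, and that the deterministic part of $\cal{H}_{N,M}$ is absorbed into a reference measure with continuous bounded density on $S_N^{\Om}$. The observations about the sign of the external field and of the perturbation $s_Nh_N$ are correct and indeed immaterial.
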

	
	We are now ready to establish the lower bound, and complete the proof of Theorem \ref{theorem:spherical parisi continuum}.
	\begin{proof}[Proof of Theorem \ref{theorem:spherical parisi continuum}]
		As noted above, the upper bound follows from Proposition \ref{prop:rpc-intro:infimum identification of B in limit} and Lemma \ref{lem:upper bound:guerra-top} above.
		
		To establish the lower bound, let us take the set-up of Corollary \ref{corr:lowerbound:mmgg convergence}. Denote the quantity (\ref{eqn:lower bound:II:main-term}) with the choice $\cal{X}^{N,M}$ coming from Corollary \ref{corr:lowerbound:mmgg convergence} as $A_{N,M}$. Then the above results show that
		\[\liminf_{N\to \infty}|\Om|^{-1}N^{-1}\E \log \ZNxi\ge \limsup_{M\to \infty} \liminf_{N\to \infty} A_{N,M}.\]
		
		We now recast $A_{N,M}$ in terms of the abstract functional introduced in Section \ref{subsection:Abstract Functional}. For this, fix a family of isometric embeddings $(\R^N)^{\Om}\subseteq H$, which for notational convenience we will regard as genuine inclusions. Denote the orthogonal projection onto this subspace as $\Pi_N$. We define a continuous function $\b{R}_N^H:H\times H\to [-1,1]^{\Om}$ by letting
		$\b{R}_N'(\b{u},\b{u}')(x)=\max(\min((\Pi_N\b{u}(x),\Pi_N\b{u}'(x))_N,1),-1)$. Importantly, this extends the function $\b{R}_N:S_N^{\Om}\times S_N^{\Om}\to [-1,1]^{\Om}$ considered above. Let us write the induced extension of $\<*\>^{\mathfrak{X}}_{N,M}$ from $S_N^{\Om}$ to $H$ as $G^H_{N,M}$. It is then clear that
		\[A_{N,M}=\Gamma^M_{\omega_{M,D,\b{h}},\b{\xi}}(G^H_{N,M},\b{R}^H_N).\]
		Now choose $M\ge 1$ and a subsequence $N_n$ of $N$, such that
		\[\liminf_{N\to \infty}A_{N,M}=\lim_{n\to \infty}A_{N_n,M},\]
		and such that $\b{R}_{N_n,M}^{\mathfrak{X}^{N_n,M}}$ converges in some law to some $\b{R}$. Let $R\in [-1,1]^{\N\times \N}$ denote the array given by $R_{\ell,\ell'}=\frac{1}{|\Om|}\sum_{x\in \Om}\b{R}_{\ell,\ell'}$. By Theorem \ref{theorem:perturbation:panchenko synch}, we see that there is a collection of non-decreasing continuous functions $\Phi_x:[0,1]\to [0,1]$, such that a.s. for all $\ell,\ell'\ge 1$,
		\[\b{R}_{\ell,\ell'}(x)=\Phi_x([R]_{\ell,\ell'}).\]
		Next, let us choose as a sequence of discrete measures, $\zeta_n$, supported on $[0,1]$ such that $\zeta_{n}\To R_{12}$ in law. By applying Theorems \ref{theorem:perturbation:dovbysh} and \ref{theorem:perturbation:gg gives RPC}, we see that if we denote by $R_{\zeta_n}$ for the array sampled from the RPC corresponding to $\zeta_n$, then as $n\to \infty$ we have in law \[[R_{\zeta_n}]_{\ell\neq \ell'\ge 1}\To R_{\ell\neq \ell'\ge 1}.\]
		Moreover, note that we must have that $R_{\ell,\ell}=1$ as this is true for each finite $N$, which combined with $[R_{\zeta_n}]_{\ell,\ell}=1$, implies that in law $R_{\zeta_n}\To R^M$ as $n\to \infty$. In particular, both $\b{\Phi}(R_{\zeta_n})$ and $\b{R}_{N_n,M}^{\mathfrak{X}^{N_n,M}}$ converge to the same law as $n\to \infty$. In particular, extending $\b{\Phi}$ to a function $\b{R}^{\b{\Phi}}:H\times H\to [-1,1]^{\Om}$ as in (\ref{eqn:rpc-1:R^Phi}) we see by Lemma \ref{lem:perturbation:approximation of Parisi formula} that
		\[\lim_{n\to \infty} |\Gamma^M_{\omega_{M,D,\b{h}},\b{\xi}}(G_{N_n,M}^H,\b{R}_N^{H})-\Gamma^M_{\omega_{M,D,\b{h}},\b{\xi}}(G_{\zeta_n},\b{R}^{\b{\Phi}})|=0.\]
		Recalling the definition of $\cal{A}_{M}$, we thus see that
		\[\liminf_{N\to \infty}A_{N,M}\ge \inf_{r,\vec{\b{q}},\vec{t}}\cal{A}_M(\vec{\b{q}},\vec{t}).\]
		Combined with Proposition \ref{prop:rpc-intro:infimum identification of B in limit}, this establishes the lower bound and so completes the proof.
	\end{proof}
	
	With this, we have completed the proof of Theorem \ref{theorem:spherical parisi continuum}, and by extension, the proofs of Theorems \ref{theorem:intro:bad main euclidean} and \ref{theorem:intro:bad parisi spherical} as well.
	
	\pagebreak
	
	\begin{appendices}
		
	\section{Concentration and Interpolation Results for Gaussian Functions \label{section:appendix:gaussian}}
		
		This appendix consists of some results about Gaussian random functions we need throughout the text. The purpose is to recall and derive these in the generality required above, as well as to provide a convenient reference for the main text. To begin, we recall the Borell-Tsirelson-Ibragimov-Sudakov (commonly abbreviated to Borell-TIS) inequality (see Theorem 2.1.1 of \cite{taylor})
		
		\begin{theorem}
			\label{theorem:Borell-TIS}[Borell-TIS]
			Let $h$ be some a.s. bounded, centered continuous Gaussian function on some topological space $M$, such that $\sup_{s\in M}\E[h(s)^2]\le \sigma^2$. Then $\sup_{s\in M}h(s)$ possesses a finite first moment, and for any $c>0$
			\[\P\left( |\sup_{s\in M}h(s)-\E \sup_{s\in M}h(s)|>c \right)\le 2\exp(-c^2/4\sigma^2).\]
			Identical statements hold for $\sup_{s\in M}|h(s)|$.
		\end{theorem}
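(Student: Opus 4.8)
The plan is to deduce this from the finite-dimensional Gaussian concentration inequality for Lipschitz functions, combined with a routine monotone approximation. In all the applications in this paper $M$ is a separable metrizable space (spheres and finite products thereof), so I will assume $M$ separable; fix a countable dense subset $\{s_n\}_{n\ge 1}$ and set $T_n=\max_{1\le k\le n}h(s_k)$. By continuity of $h$ and density of $\{s_n\}$ one has $T_n\uparrow T:=\sup_{s\in M}h(s)$ a.s., with $T<\infty$ a.s. by hypothesis. The heart of the argument is to prove the concentration bound for each $T_n$ with a constant uniform in $n$, and then to pass to the limit.

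For the finite-dimensional step, write $(h(s_1),\dots,h(s_n))=Ag$ for a suitable matrix $A$ and a standard Gaussian vector $g$. Then $T_n=F(g)$ with $F(x)=\max_{1\le k\le n}(Ax)_k$, and $F$ is Lipschitz with constant $\max_k\|A^\top e_k\|_2=\max_k\bigl(\E[h(s_k)^2]\bigr)^{1/2}\le\sigma$. The Gaussian concentration inequality for Lipschitz functions (which itself follows from the Gaussian isoperimetric inequality, or from the Gaussian log-Sobolev inequality via the Herbst argument) then yields, for every $c>0$ and every $n$,
\[
\P\bigl(|T_n-\E T_n|>c\bigr)\le 2\exp\!\bigl(-c^2/2\sigma^2\bigr),
\]
and in particular each $T_n$ has finite mean.

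It remains to pass to the limit. First, finiteness of $\E T$: choose $c_0$ with $2\exp(-c_0^2/2\sigma^2)=\tfrac12$, so that $\P(T_n>\E T_n-c_0)\ge\tfrac12$; if $\sup_n\E T_n=\infty$ this would force $\P(T>R)\ge\tfrac12$ for every $R$, contradicting $T<\infty$ a.s. Hence $\mu:=\sup_n\E T_n<\infty$, and since $T_n\uparrow T$ monotone convergence gives $\E T_n\uparrow\E T\le\mu<\infty$, proving the first assertion. Because $T_n\le T$ and $\E T_n\le\E T$, we have $\{T>\E T+c\}\subseteq\{T_n>\E T_n+c\}$ for all $n$, and, for each $\varepsilon>0$ and all large $n$, $\{T<\E T-c\}\subseteq\{T_n<\E T_n-(c-\varepsilon)\}$; letting $n\to\infty$ and then $\varepsilon\to 0$ in the bound above gives $\P(|T-\E T|>c)\le 2\exp(-c^2/2\sigma^2)\le 2\exp(-c^2/4\sigma^2)$, which is the claimed inequality. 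The statement for $\sup_{s\in M}|h(s)|$ follows by the identical argument, since $x\mapsto\max_k|(Ax)_k|$ is again $\sigma$-Lipschitz.

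The only real obstacle, in the generality of an arbitrary topological space $M$, is the measurability/separability bookkeeping needed to reduce $\sup_{s\in M}h(s)$ to a supremum over a countable set and to obtain $\E T<\infty$ before concentration is available; in the separable metrizable setting used throughout this paper it is immediate from continuity, and the fully general case is handled by passing to a separable version of the process (as in \cite{taylor}). Everything else is the citation of the finite-dimensional Gaussian concentration inequality together with the two elementary monotone-limit arguments above.
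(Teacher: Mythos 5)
The paper does not prove this statement: it is recalled verbatim from Adler--Taylor (Theorem 2.1.1 of \cite{taylor}), so there is no ``paper's proof'' to compare against. That said, your reduction -- finite maxima via a dense countable subset, Gaussian concentration for the $\sigma$-Lipschitz functional $x\mapsto\max_k(Ax)_k$, finiteness of $\E T$ by a median/contradiction argument, then a limit passage -- is the standard route and is essentially sound; it in fact yields the sharper constant $2\sigma^2$ in the exponent, which dominates the $4\sigma^2$ the paper states. It is also structurally parallel to the paper's actual proof of the free-energy analogue, Proposition \ref{prop:Borell-TIS-free-energy}, which uses Karhunen--Lo\`{e}ve $+$ Lipschitz-continuity $+$ log-Sobolev/Herbst in exactly the same spirit.

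One bookkeeping slip worth fixing: the claimed inclusion $\{T>\E T+c\}\subseteq\{T_n>\E T_n+c\}$ is false as stated (from $T>\E T+c$, $T_n\le T$, $\E T_n\le\E T$ one cannot conclude $T_n>\E T_n+c$; take, say, $T=10$, $\E T=5$, $c=2$, $T_n=6$, $\E T_n=5$). The correct statement for the upper tail is $\{T_n>\E T+c\}\subseteq\{T_n>\E T_n+c\}$ (valid since $\E T_n\le\E T$), combined with $\P(T>\E T+c)=\lim_n\P(T_n>\E T+c)$ since $T_n\uparrow T$ forces $\{T>\E T+c\}=\bigcup_n\{T_n>\E T+c\}$. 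Alternatively, use that $T_n-\E T_n\to T-\E T$ a.s.\ and Fatou on the indicators of $\{|T_n-\E T_n|>c-\varepsilon\}$, then send $\varepsilon\downarrow0$ -- which is what your lower-tail clause already does. With that correction the argument closes cleanly.
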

		
		Our second claim follows simply from the concentration of Lipshitz statistics for Gaussians, and is a generalization of Theorem 1.2 of \cite{panchenko}, which treats the case where $M$ is a finite set. It may also be thought of as a positive temperature version of Theorem \ref{theorem:Borell-TIS}.
		\begin{prop}
			\label{prop:Borell-TIS-free-energy}
			Let $h$ be some continuous centered Gaussian function on some compact metric space $T$, with finite positive Borel measure $m$, and such that $\sup_{s\in T}\E[h(s)^2]\le \sigma^2$. Let us define the partition function $Z=\int_{T}\exp(h(s))m(ds)$. Then $\log Z$ possesses a finite first moment and for any $c>0$
			\[\P\left(|\log Z-\E \log Z|>c\right)\le 2\exp(-c^2/4\sigma^2).\]
		\end{prop}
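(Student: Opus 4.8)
The plan is to establish integrability directly and then reduce the tail bound to the finite-dimensional case, which is Theorem~1.2 of \cite{panchenko}. For integrability, write $G_0(ds)=m(T)^{-1}m(ds)$; then Jensen's inequality gives $\log Z=\log m(T)+\log\int_T e^{h(s)}G_0(ds)\ge \log m(T)+\int_T h(s)\,G_0(ds)\ge \log m(T)+\inf_{s\in T}h(s)$, while trivially $\log Z\le \log m(T)+\sup_{s\in T}h(s)$. Hence $|\log Z-\log m(T)|\le \sup_{s\in T}|h(s)|$, and the latter has finite expectation by Theorem~\ref{theorem:Borell-TIS}; in particular $\E\log Z$ is well-defined and finite.

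For the concentration estimate I would approximate $m$ by finitely supported measures. Since $T$ is a compact metric space, for each $n$ there is a partition $T=\bigcup_{j=1}^{k_n}A_j^n$ into disjoint measurable sets with $\mathrm{diam}(A_j^n)\le 1/n$, together with points $s_j^n\in A_j^n$; set $m_n=\sum_j m(A_j^n)\delta_{s_j^n}$, so that $m_n(T)=m(T)$, and let $Z_n=\int_T e^{h(s)}m_n(ds)=\sum_j m(A_j^n)e^{h(s_j^n)}$. Then $\log Z_n$ is a function of the finite Gaussian vector $(h(s_1^n),\dots,h(s_{k_n}^n))$, whose marginal variances are at most $\sigma^2$; equivalently, as a function of the underlying standard Gaussian coordinates it is $\sigma$-Lipschitz, since its gradient is the barycentre of the values $h(s_j^n)$ under the Gibbs weights and the corresponding squared norm is bounded by $\sup_{s}\E[h(s)^2]\le \sigma^2$. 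Hence Theorem~1.2 of \cite{panchenko} yields, for every $n$ and every $c>0$, the inequality $\P(|\log Z_n-\E\log Z_n|>c)\le 2\exp(-c^2/4\sigma^2)$, the deterministic constant $\log m(T)$ playing no role in the fluctuations.

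Finally I would pass to the limit $n\to\infty$. Because $h$ is continuous and $T$ compact, $e^{h(\cdot)}$ is uniformly continuous, so $Z_n\to Z$ and thus $\log Z_n\to \log Z$ almost surely; moreover the bound $|\log Z_n-\log m(T)|\le \sup_{s\in T}|h(s)|$ holds uniformly in $n$, so $\{\log Z_n\}_n$ is uniformly integrable and $\E\log Z_n\to \E\log Z$. Therefore $\log Z_n-\E\log Z_n\to \log Z-\E\log Z$ in distribution, and since $\{x:|x|>c\}$ is open the portmanteau theorem gives $\P(|\log Z-\E\log Z|>c)\le \liminf_n\P(|\log Z_n-\E\log Z_n|>c)\le 2\exp(-c^2/4\sigma^2)$, as claimed. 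The only genuinely non-routine point is the legitimacy of these two limits, which rests on continuity of $h$, compactness of $T$, and the uniform domination by $\sup_s|h(s)|$ together with its integrability from Borell--TIS; the finite-dimensional input is quoted verbatim from \cite{panchenko} and requires no new computation.
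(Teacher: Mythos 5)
Your proof is correct, and it takes a genuinely different route from the paper's. You discretize the base measure by finitely supported approximations $m_n$, invoke the finite-dimensional concentration inequality of Theorem~1.2 of \cite{panchenko} for each $Z_n=\int_T e^{h}\,dm_n$, and then pass to the limit via almost sure convergence $Z_n\to Z$, the uniform domination $|\log Z_n-\log m(T)|\le\sup_T|h|$ together with Theorem~\ref{theorem:Borell-TIS} for uniform integrability, and the portmanteau theorem for the tail bound. The paper instead works directly in infinite dimensions: it expands $h(s)=\sum_k Z_k\varphi_k(s)$ by Karhunen--Lo\`eve with $\sum_k\varphi_k(s)^2\le\sigma^2$, shows $(v_k)_{k\ge 1}\mapsto\log\int_T\exp(\sum_k v_k\varphi_k(s))\,m(ds)$ is $\ell^2$-Lipschitz with constant $\sigma$, and then applies the log-Sobolev inequality and Herbst's argument to the i.i.d.\ sequence $(Z_k)$, finishing with Markov's inequality. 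Both arguments hinge on the same Lipschitz-in-Gaussian estimate (yours is hidden inside the proof of Panchenko's Theorem~1.2, the paper's is made explicit via the KL expansion); the trade-off is between a discretize-and-limit step with the finite-dimensional result quoted as a black box, versus working directly with the infinite Gaussian sequence and its LSI. Your integrability argument, bounding $|\log Z-\log m(T)|\le\sup_T|h|$ and applying Borell--TIS, is cleaner and more explicit than what the paper offers, which leaves the finiteness of $\E\log Z$ somewhat implicit in the Herbst step.
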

		\begin{proof} 
			Without loss of generality, we may assume that $m$ is a probability measure.
			Employing the Karhunen–Loève expansion (see \cite{KL}), we see that there exists a sequence of random variables $(Z_k)_{k\ge 1}$, which are i.i.d. centered Gaussians, as well as a deterministic sequence $\varphi_k\in L^2(T,m)$, such that we may write a.s. $h(s)=\sum_{k=1}^{\infty}Z_k\varphi_k(s)$ and where $\sum_{k=1}^{\infty}\varphi_k(s)^2=\E[h(s)^2]\le \sigma^2$. By the Cauchy-Schwarz inequality
			\[
			\begin{split}
				&\log\int_{T}\exp\left(\sum_{k=1}^{\infty}v_k\varphi_k(s)\right)m(ds)\\
				&\le \log\int_{T}\exp\left(\sigma\l(\sum_{k=1}^{\infty}v_k^2\r)^{1/2}\right)m(ds)\le \sigma\left(\sum_{k=1}^{\infty}v_k^2\right)^{1/2}.
			\end{split}
			\]
			In particular, we see that $(v_k)_{k\ge 1}\mapsto \log\int_{T}\exp\left(\sum_{k=1}^{\infty}v_k\varphi_k(s)\right)m(ds)$ is $\ell^2$-Lipshitz with constant less than $\sigma$.
			
			We recall that a standard Gaussian random variable satisfies the log-Sobolev inequality with constant $1$, by the Bakry-Emery criterion \cite{ledoux}. In particular, so does the sequence $(Z_k)_{k\ge 1}$. Thus by Herbst's argument \cite{ledoux} we have for any $\alpha>0$ that
			\[\log\E Z^{\alpha}\le \alpha\E\log Z+\alpha^2 \sigma^2/2.\]
			Thus the claim now follows from Markov's inequality.
		\end{proof} 
		
		Next we will need a slightly variant of this result which removes the compactness assumption.
		
		\begin{corr}
			\label{cor:Borell-TIS-free-energy-noncompact}
			Let $(h,T,m)$ be as in Proposition \ref{prop:Borell-TIS-free-energy} except that the assumption that $T$ is compact is weakened to the assumption $T$ is $\sigma$-compact. Then if we have that $\E \log Z\le \log \E Z<\infty$, we have that
			\[\P\left(|\log Z-\E \log Z|>c\right)\le 2\exp(-c^2/4\sigma^2).\]
			Moreover, let us fix an increasing sequence of compact $T_n\subseteq T$ such that $\bigcup_{n\ge 1}^{\infty}T_n=T$, and denote $Z_n=\int_{T_n} \exp(h(s))m(ds)$. Then we have that
			\[
			\label{eqn:lemma:free energy non-compact exhaustion by compact}
			\lim_{n\to \infty} \E \log Z_n=\E \log Z.
			\]
		\end{corr}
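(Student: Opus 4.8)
The plan is to derive both claims from Proposition~\ref{prop:Borell-TIS-free-energy}, applied to the restricted partition functions $Z_n=\int_{T_n}\exp(h(s))m(ds)$ over the compact spaces $T_n$, together with a monotone convergence argument. After discarding finitely many initial indices we may assume $m(T_1)>0$: indeed $m(T)=\lim_n m(T_n)$ by continuity from below, so $m(T_n)>0$ for $n$ large, and altering the sequence on an initial segment affects neither of the two limits in question. Note also that $\log\E Z<\infty$ forces $\E Z<\infty$, hence $Z<\infty$ almost surely.

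First I would record the monotone structure. Since $T_n\subseteq T_{n+1}$ and the integrand $\exp(h(s))$ is nonnegative, $Z_n$ is nondecreasing in $n$, and for each realization $Z_n\uparrow Z$ by the monotone convergence theorem; combined with $Z<\infty$ a.s.\ and $Z_1>0$ a.s.\ this gives $\log Z_n\uparrow\log Z$ a.s.\ with $-\infty<\log Z<\infty$. Proposition~\ref{prop:Borell-TIS-free-energy}, applied to the triple $(h,T_n,m|_{T_n})$, shows that each $\log Z_n$ has a finite first moment and that
\[
\P\l(|\log Z_n-\E\log Z_n|>c\r)\le 2\exp(-c^2/4\sigma^2),
\]
with a constant $\sigma^2$ independent of $n$, since $\sup_{s\in T_n}\E[h(s)^2]\le\sup_{s\in T}\E[h(s)^2]\le\sigma^2$.

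Next I would prove the ``moreover'' statement. By Jensen's inequality $\E\log Z_n\le\log\E Z_n\le\log\E Z<\infty$, while monotonicity gives $\E\log Z_n\ge\E\log Z_1>-\infty$; hence the sequence $(\E\log Z_n)_n$ is bounded. Applying the monotone convergence theorem to the nonnegative nondecreasing sequence $\log Z_n-\log Z_1$ yields $\E\log Z_n-\E\log Z_1\to\E\log Z-\E\log Z_1$, i.e.\ $\E\log Z_n\to\E\log Z$, and in particular $\E\log Z$ is finite; one also checks $\E|\log Z|<\infty$ using $\log^+Z\le Z$ and $\log^-Z\le\log^-Z_1$.

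Finally, the concentration bound for $\log Z$ follows by a routine limiting argument. Fix $c>c'>0$. Since $\log Z_n-\E\log Z_n\to\log Z-\E\log Z$ a.s., on the event $\{|\log Z-\E\log Z|>c\}$ one has $|\log Z_n-\E\log Z_n|>c'$ for all large $n$, so
\[
I\l(|\log Z-\E\log Z|>c\r)\le\liminf_{n\to\infty}I\l(|\log Z_n-\E\log Z_n|>c'\r).
\]
Taking expectations and applying Fatou's lemma together with the uniform bound above gives $\P(|\log Z-\E\log Z|>c)\le 2\exp(-c'^2/4\sigma^2)$, and letting $c'\uparrow c$ completes the proof. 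The argument is essentially mechanical; the only ingredient that is not automatic is the uniform upper bound on $\E\log Z_n$ needed to invoke monotone convergence, and this is exactly where the hypothesis $\log\E Z<\infty$ is used.
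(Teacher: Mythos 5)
Your proof is correct, and its overall structure matches the paper's: invoke Proposition~\ref{prop:Borell-TIS-free-energy} on each compact $T_n$, pass to the limit via a monotone convergence argument, and then close with Fatou's lemma applied to the concentration inequalities. There is, however, one genuine technical variation worth flagging. To get $\E\log Z_n\to\E\log Z$, you apply the monotone convergence theorem directly to the nonnegative nondecreasing sequence $\log Z_n-\log Z_1$, using $\E|\log Z_1|<\infty$ (from the Proposition) together with the uniform Jensen bound $\E\log Z_n\le\log\E Z$. The paper instead first establishes $\E|\log Z|<\infty$ through a conditional Jensen computation ($\E[\log Z\mid Z\ge 1]\le\log\E[Z\mid Z\ge 1]$, then the boundedness of $-x\log x$) and only afterwards invokes dominated convergence. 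Your route sidesteps the integrability lemma entirely for the limit statement — you only mention $\E|\log Z|<\infty$ as an aside, via the cruder but perfectly adequate bounds $\log^+ Z\le Z$ and $\log^- Z\le\log^- Z_1$. This is a modest simplification that buys nothing conceptually new, but it is the cleaner way to organize the argument. One other small point: your $c'\uparrow c$ device in the Fatou step is not strictly necessary (convergence $\log Z_n-\E\log Z_n\to\log Z-\E\log Z$ to a finite limit already forces eventual membership of the open event $\{|\cdot|>c\}$ itself), but it is harmless and avoids the paper's somewhat awkward appeal to $\P(|\log Z-\E\log Z|=\alpha)=0$.
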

		\begin{proof}
			By Jensen's inequality we have that $\E \log Z\le \log \E Z<\infty$. We next show that $\E |\log Z|<\infty$. As $\E \log Z < \infty$, we see that it suffices to show that $\E \log Z I(Z\ge 1) <\infty$. For this, we may assume without loss of generality that $\P(Z\ge 1)>0$. Then
			\begin{align}
				\E \log Z I(Z\ge 1)&=\E[\log Z|Z\ge 1]\P(Z\ge 1)\le \log \l(\E[Z|Z\ge 1]\r)\P(Z\ge 1)\\
				&=\log \l(\frac{\E[ZI(Z\ge 1)]}{\P(Z\ge 1)}\r)\P(Z\ge 1)\le \log \E Z-\P(Z\ge 1)\log \P(Z\ge 1).
			\end{align}
			As $-x\log(x)$ is bounded for $x\in (0,1)$, we see $\E \log Z I(Z\ge 1) <\infty$ so $\E |\log Z|<\infty$.
			
			As $\E |\log Z|<\infty$ we see that $\log Z$ is a.s. finite, and by the monotone convergence theorem that $\lim_{n\to \infty}\log Z_n=\log Z$. By the dominated convergence theorem, we thus obtain (\ref{eqn:lemma:free energy non-compact exhaustion by compact}). From this we have a.s. that
			\[\lim_{n\to \infty}|\log Z_n-\E \log Z_n|=|\log Z-\E \log Z|.\]
			Thus we see that a.s.
			\[
			\liminf_{n\to \infty}I(|\log Z_n-\E \log Z_n|>\alpha)\ge I(|\log Z-\E \log Z|\ge \alpha).
			\]
			As $\P(|\log Z-\E \log Z|=\alpha)=0$, we see by Fatou's lemma that
			\[\P(\log Z-\E \log Z|\ge \alpha)\le \liminf_{n\to \infty}\P(|\log Z_n-\E \log Z_n|>\alpha)\le 2\exp(-c^2/4\sigma^2).\]
		\end{proof}
		
		Next we will need the following consequence of Gaussian integration by parts. The only difficulty is  up to the verification of convergence conditions to allow the interchange of integration and differentiation.
		
		\begin{prop}
			\label{prop:Guerra Gaussian integration}
			Let $V$ and $W$ two independent centered Gaussian processes, defined on some topological space $A$ with finite positive Borel measure $m$, with covariance functions given by $C_V$ and $C_W$, respectively. Let us assume that both $C_V$ and $C_W$ are bounded. Define the interpolating function
			\[V_t(x)=\sq{(1-t)}V(x)+\sq{t}W(x).\]
			Then we have that
			\[\frac{d}{dt}\left(\E\log\int_Ae^{V_t(x)}m(dx)\right)=\frac{1}{2}\bigg(\E\left\<C_W(x,x)-C_V(x,x)\right\>_t-\E\left\<C_W(x,x')-C_V(x,x')\right\>_t\bigg),\]
			where here $\<*\>_t$ denotes the Gibbs measure with respect to $(V_t,m)$.
		\end{prop}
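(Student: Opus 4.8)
The plan is to differentiate $\varphi(t):=\E\log Z_t$, with $Z_t:=\int_A e^{V_t(x)}m(dx)$, directly in $t$ and then apply Gaussian integration by parts; the identity itself is an exact finite-dimensional computation, and essentially all the work lies in justifying the interchanges of $d/dt$ with $\E$ and with $\int_A\cdots m(dx)$.

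First I would record the elementary ingredients. For $t\in(0,1)$ set $\dot V_t(x)=-\tfrac{1}{2\sqrt{1-t}}V(x)+\tfrac{1}{2\sqrt{t}}W(x)$, so $\partial_tV_t(x)=\dot V_t(x)$. Since $V$ and $W$ are independent and centered, the cross-covariance is
\[
\E\bigl[\dot V_t(x)\,V_t(y)\bigr]=\tfrac12\bigl(C_W(x,y)-C_V(x,y)\bigr).
\]
Formally $\varphi'(t)=\E\bigl\langle \dot V_t(x)\bigr\rangle_t$, where $\langle\cdot\rangle_t$ denotes the Gibbs average for $(V_t,m)$. Applying the Gaussian integration-by-parts (Stein) identity to the centered Gaussian $\dot V_t(x)$ against the bounded functional $\Psi_x:=e^{V_t(x)}/Z_t$ of the field $V_t$ — differentiating $\Psi_x$ in the direction $V_t(y)$ produces a Dirac contribution $\delta(x-y)\Psi_x$ from the numerator together with a replica-type contribution from $Z_t^{-1}$ — and then integrating the resulting pointwise-in-$x$ identity against $m(dx)$, yields
\[
\varphi'(t)=\tfrac12\E\bigl\langle C_W(x,x)-C_V(x,x)\bigr\rangle_t-\tfrac12\E\bigl\langle C_W(x,x')-C_V(x,x')\bigr\rangle_t,
\]
which is the claim ($x'$ being an independent replica).

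To make the integration-by-parts step rigorous I would pass through a finite approximation: partitioning $A$ and replacing $V_t$ by a step version (equivalently, approximating $m$ by finitely supported measures), the statement reduces to the case of a finite Gaussian vector $(V_t(x_i))_i$, where Stein's lemma and the computation above are elementary and exact. Passing to the limit uses boundedness of $C_V$ and $C_W$ together with the Gaussian concentration of $\log Z_t$ from Proposition \ref{prop:Borell-TIS-free-energy} and Corollary \ref{cor:Borell-TIS-free-energy-noncompact}, which supplies the uniform integrability needed to move limits through $\E$.

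The genuinely delicate point — and the step I expect to be the main obstacle — is the differentiation itself: showing that a.s.\ $t\mapsto\log Z_t$ is differentiable on $(0,1)$ with derivative $\langle\dot V_t(x)\rangle_t$, and that the difference quotients $h^{-1}(\log Z_{t+h}-\log Z_t)$ are dominated, uniformly for $t$ in compact subintervals of $(0,1)$, by an integrable random variable; here the $1/\sqrt t$ and $1/\sqrt{1-t}$ factors in $\dot V_t$ force one to work on intervals $[\varepsilon,1-\varepsilon]$ and then recover the endpoints by showing $\varphi$ is continuous on all of $[0,1]$ (which follows from a.s.\ continuity of the path $t\mapsto V_t$ in the appropriate topology, again combined with boundedness of the covariances and dominated convergence). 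Once differentiability and the domination bound are in place, Fubini and dominated convergence let me commute $d/dt$ with $\E$ and with $\int_A\cdots m(dx)$, and the finite-dimensional Gaussian integration by parts completes the proof.
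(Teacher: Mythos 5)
Your plan and the paper's proof share the same skeleton — differentiate, apply Gaussian integration by parts, identify the cross-covariance $\E[\dot V_t(x)V_t(y)]=\tfrac12(C_W-C_V)(x,y)$, and then justify the two interchanges ($d/dt$ with $\int_A m(dx)$ and $d/dt$ with $\E$) — but you propose a genuinely different route for the justifications, via finite (discretized-$m$) approximation plus Borell--TIS, whereas the paper differentiates directly and supplies explicit dominating bounds.

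There is a concrete gap in your justification of the $d/dt \leftrightarrow \E$ interchange. You assert that the Gaussian concentration of $\log Z_t$ (Proposition~\ref{prop:Borell-TIS-free-energy}, Corollary~\ref{cor:Borell-TIS-free-energy-noncompact}) ``supplies the uniform integrability needed to move limits through~$\E$.'' But concentration controls moments of $\log Z_t$ itself, not of the derivative $\langle\dot V_t(x)\rangle_t$, and it is the latter that must be dominated in $L^1(\P)$ uniformly in $t$ on compacts of $(0,1)$ in order to apply the Leibniz rule to $\E\log Z_t$. This quantity carries the $t^{-1/2}$, $(1-t)^{-1/2}$ prefactors you flag, and because it is a Gibbs average it is not a priori bounded by anything concentration-type. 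The paper handles precisely this point with a second Gaussian integration by parts together with a small self-improving (quadratic) inequality: it shows that $\E\langle W(x)^2\rangle_t$ can be bounded by $2C(1+\E\langle|W(x)|\rangle_t)$ with $C$ depending only on the covariance bound, so $\E\langle|W(x)|\rangle_t$ is bounded uniformly in~$t$; the same bound for $V$ then dominates $\E\langle|\dot V_t(x)|\rangle_t$ on $[\varepsilon,1-\varepsilon]$. Your finite-approximation framework could in principle be made to work, but you would still need an analogous uniform bound (in $n$ and $t$) to pass to the limit in the formula for $\varphi_n'(t)$, so the discretization does not avoid the issue — it just moves it. For the a.s.\ differentiation inside $\int_A m(dx)$, the bound $|\tfrac{d}{dt}e^{V_t(x)}|\le \tfrac{2}{\sqrt{t(1-t)}}\cosh(V(x))\cosh(W(x))$ (which the paper records) is what you want; that piece of your sketch matches.
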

		\begin{proof}
			Let $C$ be a constant bounding both the variance of $V$ and $W$. We may assume that $m$ is a probability measure. Then by Jensen's inequality we have that
			\[0\le \E \log\int_Ae^{V_t(x)}m(dx)\le \log\E \int_Ae^{V_t(x)}m(dx)\le C/2\]
			so that $\log\int_Ae^{V_t(x)}m(dx)$ has a first moment and is a.s. finite. Next, note the inequality
			\[\l|\frac{d}{dt}e^{V_t(x)}\r|\le \frac{2}{\sq{t(1-t)}}\cosh(V(x))\cosh(W(x)).\]
			Thus a similar application of Jensen's inequality shows that $|\frac{d}{dt}e^{V_t(x)}|$ is a.s. uniformly integrable with respect to $(A,m)$, so by the Leibniz rule we a.s. have that
			\[\frac{d}{dt}\log\int_Ae^{V_t(x)}m(dx)=\frac{\int_A\frac{d}{dt}V_t(x)e^{V_t(x)}m(dx)}{\int_Ae^{V_t(x)}m(dx)}=\left\<\frac{d}{dt}V_t(x)\right\>_t.\]
			An easy computation using functional Gaussian integration by parts yields that
			\[\E \left\<\frac{d}{dt}V_t(x)\right\>_t=\frac{1}{2}\bigg(\E\left\<C_W(x,x)-C_V(x,x)\right\>_t-\E\left\<C_W(x,x')-C_V(x,x')\right\>_t\bigg).\]
			The final verification requires us to show that 
			\[\frac{d}{dt}\E \log\int_Ae^{V_t(x)}m(dx)=\E \frac{d}{dt}\log\int_Ae^{V_t(x)}m(dx)=\E\left\<\frac{d}{dt}V_t(x)\right\>_t.\]
			Again we will apply the Leibniz rule, for which we verify that $|\left\<\frac{d}{dt}V_t(x)\right\>_t|$ has a uniform $1$st moment with respect to $t$. For this, we note that 
			\[\left|\left\<\frac{d}{dt}V_t(x)\right\>_t\right|\le \sqrt{t}\<|W(x)|\>_t+\sq{1-t}\<|V(x)|\>_t.\]
			We show that a uniform bound on $\<|W(x)|\>_t$, the other being identical. For this, we note that employing again functional Gaussian integration by parts we have that 
			\[
			\begin{split}
				\E\<|W(x)|\>_t^2\le \E\<W(x)^2\>_t=\frac{1}{2}\bigg(&\E\<C_W(x,x)\>_t+\sq{t}\E \< C_W(x,x)W(x)\>_t\\
				&-\sq{t}\E \< C_W(x,y)W(x)\>_t\bigg)\le 
				2C(1+\E\<|W(x)|\>_t).
			\end{split}
			\]
			This implies uniform finiteness of $\E\<|W(x)|\>_t$ by the quadratic formula.
		\end{proof}
		
		As an elementary consequence of Proposition \ref{prop:Guerra Gaussian integration} we may obtain continuity of both the free energy of the spherical models in the choice of mixing function.
		
		\begin{corr}
			\label{corr:overview:continuity of free energy in xi}
			For any choice of valid mixing functions $\b{\xi}^0$ and $\b{\xi}^1$, and let us denote the coefficients with respect to $\b{\xi}^i$ as $\b{\beta}_p^i(x)$. We have that
			\[\lim_{N\to \infty}|\Om|^{-1}N^{-1}|\E \log \ZNxi(\b{\xi}^0)-\E \log \ZNxi(\b{\xi}^1)|\le \frac{1}{|\Om|}\sum_{x\in \Om,p\ge 0}\l|\b{\beta}^0_p(x)^2-\b{\beta}^1_p(x)^2\r|.\]
		\end{corr}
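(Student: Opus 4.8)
The plan is to use a Gaussian interpolation between the two spherical Hamiltonians and appeal to Proposition~\ref{prop:Guerra Gaussian integration}. Write $H_N^i(\b{u})=\sum_{x\in\Om}H_{N,x}^i(\b{u}(x))$ for the random part of the spherical Hamiltonian attached to $\b{\xi}^i$, $i=0,1$, realized so that $H_N^0$ and $H_N^1$ are independent. Since the deterministic part $\tfrac12\sum_{x,y\in\Om}D_{xy}(\b{u}(x),\b{u}(y))-\sqrt{N}\sum_{x\in\Om}\b{h}(x)\b{u}_1(x)$ does not depend on $\b{\xi}$, I would absorb it into the base measure by taking $m=\omega_{N,D,\b{h}}$ as in (\ref{def:omega-measure-D}); this is a finite positive Borel measure on the compact space $S_N^{\Om}$, and $\ZNxi(\b{\xi}^i)=\int_{S_N^{\Om}}e^{-H_N^i(\b{u})}m(d\b{u})$. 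The covariance of $-H_N^i$ at $\b{u},\b{u}'\in S_N^{\Om}$ is
\[C_i(\b{u},\b{u}')=N\sum_{x\in\Om}\xi_x^i\big((\b{u}(x),\b{u}'(x))_N\big),\]
which is a bounded continuous function on $S_N^{\Om}\times S_N^{\Om}$, so Proposition~\ref{prop:Guerra Gaussian integration} applies to the interpolation $-H_{N,t}=\sqrt{1-t}\,(-H_N^0)+\sqrt{t}\,(-H_N^1)$, giving $\tfrac{d}{dt}\E\log Z_{N,t}=\tfrac12\big(\E\<C_1(\b{u},\b{u})-C_0(\b{u},\b{u})\>_t-\E\<C_1(\b{u},\b{u}')-C_0(\b{u},\b{u}')\>_t\big)$.

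The next step is to bound the two covariance differences using that we are on a product of spheres. On $S_N^{\Om}$ one has $(\b{u}(x),\b{u}(x))_N=1$, so the diagonal term is the deterministic constant
\[C_1(\b{u},\b{u})-C_0(\b{u},\b{u})=N\sum_{x\in\Om}\big(\xi_x^1(1)-\xi_x^0(1)\big)=N\sum_{x\in\Om}\sum_{p\ge0}\big(\b{\beta}_p^1(x)^2-\b{\beta}_p^0(x)^2\big)\]
by (\ref{eqn:xi-decomposition}). For the off-diagonal term, Cauchy--Schwarz gives $|(\b{u}(x),\b{u}'(x))_N|\le\|\b{u}(x)\|_N\|\b{u}'(x)\|_N=1$, hence $|((\b{u}(x),\b{u}'(x))_N)^p|\le1$ and
\[\big|C_1(\b{u},\b{u}')-C_0(\b{u},\b{u}')\big|=N\Big|\sum_{x\in\Om}\sum_{p\ge0}\big(\b{\beta}_p^1(x)^2-\b{\beta}_p^0(x)^2\big)((\b{u}(x),\b{u}'(x))_N)^p\Big|\le N\sum_{x\in\Om}\sum_{p\ge0}\big|\b{\beta}_p^1(x)^2-\b{\beta}_p^0(x)^2\big|,\]
which is finite since $\sum_{p}\b{\beta}_p^i(x)^2=\xi_x^i(1)<\infty$. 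Feeding these bounds into the derivative formula yields $\big|\tfrac{d}{dt}\E\log Z_{N,t}\big|\le N\sum_{x\in\Om}\sum_{p\ge0}|\b{\beta}_p^1(x)^2-\b{\beta}_p^0(x)^2|$ uniformly in $t\in[0,1]$.

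Finally I would integrate this over $t\in[0,1]$ to get $\big|\E\log\ZNxi(\b{\xi}^0)-\E\log\ZNxi(\b{\xi}^1)\big|\le N\sum_{x\in\Om}\sum_{p\ge0}|\b{\beta}_p^1(x)^2-\b{\beta}_p^0(x)^2|$, then divide by $|\Om|N$ and take $N\to\infty$ to obtain the stated estimate. (One may note, though it is not needed for the bound, that the $p=0$ contributions to the two covariance differences coincide and so cancel in the derivative, so only $p\ge1$ actually enters.) I do not expect a substantial obstacle: the only points requiring care are checking that $\omega_{N,D,\b{h}}$ and the covariances $C_i$ satisfy the compactness/boundedness hypotheses of Proposition~\ref{prop:Guerra Gaussian integration}, and the elementary estimate $|(\b{u}(x),\b{u}'(x))_N|\le1$ valid on $S_N^{\Om}$.
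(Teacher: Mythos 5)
Your proposal is correct and is exactly the argument the paper has in mind — the paper gives no explicit proof, merely stating the corollary is an elementary consequence of Proposition~\ref{prop:Guerra Gaussian integration}, and your Guerra interpolation between the two Hamiltonians, with the diagonal/off-diagonal covariance bounds on $S_N^{\Om}$ and integration over $t$, is the obvious way to carry that out. The only remark worth adding is that Corollary~\ref{corr:Guerra-bound} (stated immediately afterward in the appendix, and itself derived from Proposition~\ref{prop:Guerra Gaussian integration} by the same integration-over-$t$ step you perform) would let you shortcut directly to $|\E\log\ZNxi(\b{\xi}^0)-\E\log\ZNxi(\b{\xi}^1)|\le\sup_{\b{u},\b{u}'}|C_0(\b{u},\b{u}')-C_1(\b{u},\b{u}')|$; and your bound actually holds for every finite $N$, so stating it with $\lim_{N\to\infty}$ is harmless slack.
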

		
		Finally, another incredibly useful corollary of this result is the following.
		
		\begin{corr}
			\label{corr:Guerra-bound}
			Let $V$, $W$, and $(A,m)$ be as in Proposition \ref{prop:Guerra Gaussian integration}. Then we have that 
			\[\bigg|\E\log\int_Ae^{V(x)}m(dx)-\E\log\int_Ae^{W(x)}m(dx)\bigg|\le \sup_{x,x'\in A}|C_V(x,x')-C_W(x,x')|.\]
		\end{corr}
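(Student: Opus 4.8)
The plan is to deduce this directly from Proposition~\ref{prop:Guerra Gaussian integration}. Since $V$ and $W$ are independent centered Gaussian processes on $A$ with bounded covariances, I would apply that proposition to the interpolation $V_t(x)=\sqrt{1-t}\,V(x)+\sqrt{t}\,W(x)$ and set $F(t)=\E\log\int_A e^{V_t(x)}m(dx)$, so that $F(0)$ and $F(1)$ are precisely the two free energies whose difference we must bound. Rescaling $m$ to a probability measure changes neither side of the claimed inequality, so I would assume $m(A)=1$; then, as in the proof of Proposition~\ref{prop:Guerra Gaussian integration}, $F(t)$ is finite for all $t$.

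First I would note that $F$ is continuous on $[0,1]$: this follows from the a.s.\ continuity of $t\mapsto V_t(x)$, the uniform bound $0\le F(t)\le \tfrac12\bigl((1-t)\sup_x C_V(x,x)+t\sup_x C_W(x,x)\bigr)$, and dominated convergence. On the open interval $(0,1)$, Proposition~\ref{prop:Guerra Gaussian integration} gives
\[
F'(t)=\tfrac12\Big(\E\<C_W(x,x)-C_V(x,x)\>_t-\E\<C_W(x,x')-C_V(x,x')\>_t\Big),
\]
where $\<\cdot\>_t$ is the Gibbs measure associated with $(V_t,m)$ and the second term is the average over two i.i.d.\ samples from $\<\cdot\>_t$.

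Writing $S:=\sup_{x,x'\in A}|C_V(x,x')-C_W(x,x')|$, the key estimate is simply that each of the two expectations in $F'(t)$ is bounded in absolute value by $S$: indeed $|C_W-C_V|\le S$ pointwise and $\<\cdot\>_t$ (respectively its twofold product) is a probability measure, so $\<\cdot\>_t$-averaging and the outer expectation preserve the bound. Hence $|F'(t)|\le \tfrac12(S+S)=S$ for every $t\in(0,1)$.

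Finally, combining the continuity of $F$ on $[0,1]$ with the uniform bound $|F'|\le S$ on $(0,1)$ and the fundamental theorem of calculus (applied on $[\epsilon,1-\epsilon]$, then letting $\epsilon\to0$) yields $|F(1)-F(0)|\le\int_0^1|F'(t)|\,dt\le S$, which is exactly the assertion. There is no genuine obstacle here: the only point needing a word of care is the passage from the pointwise derivative identity, valid on the open interval, to the integrated bound on the closed interval, which the continuity of $F$ at the endpoints resolves.
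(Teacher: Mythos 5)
Your proof is correct and is precisely the intended deduction: the paper states this as a direct corollary of Proposition \ref{prop:Guerra Gaussian integration} without giving a proof, and the argument you give — interpolating, bounding each of the two Gibbs expectations in the derivative formula by $S$ via the triangle inequality and the fact that $\langle\cdot\rangle_t$ is a probability measure, then integrating — is the standard route. Your care about the endpoints, where $\tfrac{d}{dt}V_t$ is singular, is well placed and correctly resolved by continuity of $F$ on $[0,1]$ together with the Lipschitz bound on the open interval.
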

		
		A more complicated consequence is the following result, which follows from a mixture of using Corollary \ref{corr:Guerra-bound} to deal with the Gaussian component, Taylor's theorem for the deterministic part, and then the co-area formula. For the context, let us fix some $\epsilon>0$, some finite index set $\Om$, and some $\b{q}\in (2\epsilon,\infty)^{\Om}$. We now give a simple result which allows us to compare the effect of an $\epsilon$-thickening on the partition function.
		
		\begin{prop}
			\label{prop:Guerra-disintegration-new-appendix}
			Let $(S,\mu)$ denote some finite Borel measure space, and furthermore, let us fix a choice of $N\ge 1$, deterministic differentiable function $g:S_N^{\epsilon}(\b{q})\to \R$ and a family $(V_x)_{x\in \Om}$ of a centered continuous Gaussian functions $V_x:S_N^{\epsilon}(\b{q}(x))\times S\to \R$ such that for $(u,\alpha),(u',\alpha')\in S_N^{\epsilon}(\b{q}(x))\times S$ we have that
			\[\E V_x(u,\alpha)V_x(u',\alpha')=Nf_{x,\alpha,\alpha'}(\|u\|^2_N,\|u'\|^2_N,(u,u')_N),\]
			where $f_{x,\alpha,\alpha'}$ are some fixed functions. Let us define the partition functions:
			\[Z=\int_{S_N(\b{q})\times S} e^{\sum_{x\in \Om}V_x(\b{u}(x),\alpha)+g(\b{u})}\omega(d\b{u})\mu(d\alpha),\;\; Z^{\epsilon}=\int_{S_N^{\epsilon}(\b{q})\times S} e^{\sum_{x\in \Om}V_x(\b{u}(x),\alpha)+g(\b{u})}d\b{u}\mu(d\alpha).\]
			Then we have that
			\[
			\begin{split}
				N^{-1}|\E \log Z-\E \log Z^{\epsilon}|\le&
				\sqrt{3}\epsilon\sum_{x\in \Om}\left(\sup_{\alpha,\alpha'\in S}\|\|\D f_{x,\alpha,\alpha'}\|\|_{L^\infty}\right)+N^{-1/2}\|\|\D g\|\|_{L^{\infty}}\sqrt{|\Om|\epsilon}\\
				&
				+\sum_{x\in \Om}\left(\frac{\epsilon}{2\b{q}(x)}+\frac{1}{2N}|\log(N\epsilon/\b{q}(x))|+\frac{1}{2N}|\log(N^{-1}\b{q}(x))|\right). \label{eqn:appendix:lem:e-thick}
			\end{split}
			\]
		\end{prop}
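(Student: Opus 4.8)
The plan is to eliminate the mismatch between $Z$ (an integral over the product of spheres $S_N(\b{q})$ against surface measure) and $Z^\epsilon$ (an integral over the product of annuli against Lebesgue measure) by absorbing the annulus radii into an enlarged ``site'' space, so that both become partition functions over one common measured space; this is exactly what makes Corollary \ref{corr:Guerra-bound} applicable, and it avoids ever having to compare $\log\int(\cdots)\,d\b{s}$ with $\int\log(\cdots)\,d\b{s}$. Write $\mathcal{J}=\prod_{x\in\Om}[\b{q}(x),\b{q}(x)+\epsilon]$. Applying the co-area formula in each coordinate, $Z^\epsilon=\int_{\mathcal{J}}Z_{\b{s}}\prod_{x\in\Om}\frac{\sqrt{N}}{2\sqrt{\b{s}(x)}}\,d\b{s}$, where $Z_{\b{s}}=\int_{S_N(\b{s})\times S}e^{\sum_x V_x(\b{u}(x),\alpha)+g(\b{u})}\,\omega(d\b{u})\mu(d\alpha)$ and $Z=Z_{\b{q}}$. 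Rescaling coordinate $x$ via $\b{u}(x)\mapsto\sqrt{\b{s}(x)/\b{q}(x)}\,\b{w}(x)$ maps $S_N(\b{s}(x))$ onto $S_N(\b{q}(x))$ with surface-measure Jacobian $(\b{s}(x)/\b{q}(x))^{(N-1)/2}$. Collecting the co-area and Jacobian factors into the finite positive measure $\nu(d\b{s})=\prod_x[(\b{s}(x)/\b{q}(x))^{(N-1)/2}\frac{\sqrt{N}}{2\sqrt{\b{s}(x)}}]\,d\b{s}$ on $\mathcal{J}$ and putting $\mu'=\mu\otimes\nu$ on $S\times\mathcal{J}$, one then has $Z^\epsilon=\int_{S_N(\b{q})\times(S\times\mathcal{J})}\exp(\sum_x V'_x(\b{w}(x),(\alpha,\b{s}))+g'(\b{w},\b{s}))\,\omega(d\b{w})\mu'(d(\alpha,\b{s}))$, with $V'_x(\b{w}(x),(\alpha,\b{s}))=V_x(\sqrt{\b{s}(x)/\b{q}(x)}\,\b{w}(x),\alpha)$ a continuous centered Gaussian field on $S_N(\b{q}(x))\times(S\times\mathcal{J})$ (independent across $x$) and $g'(\b{w},\b{s})=g((\sqrt{\b{s}(x)/\b{q}(x)}\,\b{w}(x))_{x\in\Om})$ deterministic.

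Next I would compare $Z^\epsilon$ with $\bar{Z}=\int_{S_N(\b{q})\times(S\times\mathcal{J})}\exp(\sum_x V_x(\b{w}(x),\alpha)+g(\b{w}))\,\omega(d\b{w})\mu'(d(\alpha,\b{s}))=\nu(\mathcal{J})\,Z$, a partition function over the same measured space differing only in the covariance of the Gaussian part and in the deterministic function $g'$ versus $g$. For the deterministic part I would use $|\log\int e^{F+g_1}-\log\int e^{F+g_2}|\le\|g_1-g_2\|_\infty$ for fixed $F$: since the segment joining $\b{w}$ to $(\sqrt{\b{s}(x)/\b{q}(x)}\,\b{w}(x))_x$ stays in $S_N^\epsilon(\b{q})$, the mean value theorem and $\b{q}(x)>2\epsilon$ give $\sup_{\b{w},\b{s}}|g'(\b{w},\b{s})-g(\b{w})|\le\|\|\D g\|\|_{L^\infty}(\sum_x(\sqrt{\b{s}(x)/\b{q}(x)}-1)^2 N\b{q}(x))^{1/2}\le\|\|\D g\|\|_{L^\infty}\sqrt{N|\Om|\epsilon}$, which after dividing by $N$ is the second term of (\ref{eqn:appendix:lem:e-thick}). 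For the Gaussian part, Corollary \ref{corr:Guerra-bound} (applied after replacing $\sum_x V_x(\b{w}(x),\alpha)$ in $\bar{Z}$ by an independent copy, which does not affect $\E\log\bar{Z}$) bounds the remaining difference by $\sup\sum_x|C_{V'_x}-C_{V_x}|$; here $C_{V'_x}$ at $(\b{w},\alpha,\b{s}),(\b{w}',\alpha',\b{s}')$ equals $Nf_{x,\alpha,\alpha'}(\b{s}(x),\b{s}'(x),\frac{\sqrt{\b{s}(x)\b{s}'(x)}}{\b{q}(x)}(\b{w}(x),\b{w}'(x))_N)$ while $C_{V_x}=Nf_{x,\alpha,\alpha'}(\b{q}(x),\b{q}(x),(\b{w}(x),\b{w}'(x))_N)$, and the two argument triples differ by a vector of Euclidean norm $\le\sqrt{3}\,\epsilon$ (the first two entries by $\le\epsilon$, the last by $\le|(\b{w}(x),\b{w}'(x))_N|\,\epsilon/\b{q}(x)\le\epsilon$, using $|(\b{w}(x),\b{w}'(x))_N|\le\b{q}(x)$), so the mean value theorem applied to $f_{x,\alpha,\alpha'}$ bounds this per coordinate by $N\sqrt{3}\,\epsilon\sup_{\alpha,\alpha'}\|\|\D f_{x,\alpha,\alpha'}\|\|_{L^\infty}$, which is the first term of (\ref{eqn:appendix:lem:e-thick}).

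Finally I would estimate $\log\nu(\mathcal{J})=\sum_x\log\int_{\b{q}(x)}^{\b{q}(x)+\epsilon}(s/\b{q}(x))^{(N-1)/2}\frac{\sqrt{N}}{2\sqrt{s}}\,ds$ by elementary means: from $1\le(s/\b{q}(x))^{(N-1)/2}\le e^{(N-1)\epsilon/(2\b{q}(x))}$ and $\sqrt{N}\frac{\epsilon}{2\sqrt{2\b{q}(x)}}\le\int_{\b{q}(x)}^{\b{q}(x)+\epsilon}\frac{\sqrt{N}}{2\sqrt{s}}\,ds=\sqrt{N}(\sqrt{\b{q}(x)+\epsilon}-\sqrt{\b{q}(x)})\le\sqrt{N}\frac{\epsilon}{2\sqrt{\b{q}(x)}}$ one gets $\frac1N|\log\nu(\mathcal{J})|\le\sum_x(\frac{\epsilon}{2\b{q}(x)}+\frac1{2N}|\log(N\epsilon/\b{q}(x))|+\frac1{2N}|\log(N^{-1}\b{q}(x))|)$. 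Combining this with the bound on $|\E\log Z^\epsilon-\E\log\bar{Z}|$ through $|\E\log Z^\epsilon-\E\log Z|\le|\E\log Z^\epsilon-\E\log\bar{Z}|+|\log\nu(\mathcal{J})|$ and dividing by $N$ yields (\ref{eqn:appendix:lem:e-thick}). I expect the only genuinely delicate point to be setting up the reduction correctly: the radial integration must be folded into the base measure $\mu'$ rather than left as an outer integral (otherwise the precision demanded by the claimed bound cannot be reached, since $\log$-of-integral and integral-of-$\log$ are not comparable well enough), and one must check that the segments appearing in the two mean value estimates remain inside the domains of $g$ and of the functions $f_{x,\alpha,\alpha'}$; granting that, the Gaussian comparison is a direct application of Corollary \ref{corr:Guerra-bound} and everything else is bookkeeping.
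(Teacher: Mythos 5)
Your proposal is correct and, up to reorganization, is the same argument as the paper's: the paper projects the annulus to the sphere via the normalization $\b{u}_{\b{q}}(x)=\sqrt{\b{q}(x)}\b{u}(x)/\|\b{u}(x)\|_N$, splits $N^{-1}|\E\log Z-\E\log Z^\epsilon|\le I+II+III$ where $I$ is the Guerra covariance comparison (Corollary \ref{corr:Guerra-bound}), $II$ is the sup-norm bound on $g(\b{u})-g(\b{u}_{\b{q}})$, and $III$ is the radial Jacobian $\nu(\mathcal{J})=\prod_x\sqrt{N\b{q}(x)}\int_1^{\sqrt{1+\epsilon/\b{q}(x)}}r^{N-1}dr$, which are precisely your Gaussian term, deterministic term, and $\log\nu(\mathcal{J})$ term. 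Your preliminary step of folding the radii into an enlarged base space $S\times\mathcal{J}$ and then comparing $Z^\epsilon$ with $\bar{Z}=\nu(\mathcal{J})Z$ over that common space is a valid repackaging of the same three estimates, and all the bookkeeping (the $\sqrt{3}\epsilon$ argument-difference bound, the $\sqrt{N|\Om|\epsilon}$ distance bound using $\sqrt{a+b}-\sqrt{a}\le\sqrt{b}$, and the elementary bound on $\log\nu(\mathcal{J})$) checks out.
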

		\begin{proof}
			We begin by introducing the normalization function $S_N^{\epsilon}(\b{q})\to S_N(\b{q})$ given for $x\in \Om$ by
			\[\b{u}_{\b{q}}(x)=\frac{\sqrt{\b{q}(x)}}{\|\b{u}(x)\|_N}\b{u}(x),\]
			and for convenience we define  $V(\b{u},\alpha)=\sum_{x\in \Omega}V_{x}(\b{u}(x),\alpha)$. With this notation, we bound our quantity as the sum of three differences,
			\[N^{-1}|\E \log Z-\E \log Z^{\epsilon}|\le I+II+III,\]
			which are given by
			\[
			\begin{split}
				I&=N^{-1}\l|\E \log \int_{S_N^{\epsilon}(\b{q})\times S} e^{V(\b{u},\alpha)+g(\b{u})}d\b{u}\mu(d\alpha)-\E\log \int_{S_N^{\epsilon}(\b{q})\times S} e^{V(\b{u}_{\b{q}},\alpha)+g(\b{u})}d\b{u}\mu(d\alpha)\r|,\\
				II&=N^{-1}\l|\E \log \int_{S_N^{\epsilon}(\b{q})\times S} e^{V(\b{u}_{\b{q}},\alpha)+g(\b{u})}d\b{u}\mu(d\alpha)-\E\log \int_{S_N^{\epsilon}(\b{q})\times S} e^{V(\b{u}_{\b{q}},\alpha)+g(\b{u}_{\b{q}})}d\b{u}\mu(d\alpha)\r|,\\
				III&=N^{-1}\l|\E\log \int_{S_N^{\epsilon}(\b{q})\times S} e^{V(\b{u}_{\b{q}},\alpha)+g(\b{u}_{\b{q}})}d\b{u}\mu(d\alpha)-\E\log \int_{S_N(\b{q})\times S} e^{V(\b{u}_{\b{q}},\alpha)+g(\b{u}_{\b{q}})}\omega(d\b{u})\mu(d\alpha)\r|.
			\end{split}
			\]
			We begin by treating $I$. By Corollary \ref{corr:Guerra-bound}, we have that
			\[
			\begin{split}
				I\le \sup_{(\b{u},\alpha),(\b{u}',\alpha')\in S_N^{\epsilon}(\b{q})\times S}\bigg|\sum_{x\in \Om}&\bigg(f_{x,\alpha,\alpha'}\left(\|\b{u}(x)\|^2_N,\|\b{u}'(x)\|^2_N,(\b{u}(x),\b{u}'(x))_N\right)\\
				&-f_{x,\alpha,\alpha'}\left(\b{q}(x),\b{q}(x),(\b{u}_{\b{q}}(x),\b{u}'_{\b{q}}(x))_N\right)\bigg)\bigg|.
			\end{split}
			\]
			This may further be bounded by
			\[\sup_{\alpha,\alpha'\in S}\sum_{x\in \Om}\left(\sup_{ r,r'\in \left[\sqrt{\b{q}(x)},\sq{\b{q}(x)+\epsilon}\right],\rho\in [-1,1]}\bigg|f_{x,\alpha,\alpha'} \left(r^2,(r')^2, rr'\rho \right)-f_{x,\alpha,\alpha'} \left(\b{q}(x),\b{q}(x),\b{q}(x)\rho\right)\bigg|\right).\]
			We note that 
			\[\left\|\l(r^2-\b{q}(x),(r')^2-\b{q}(x),rr'\rho-\b{q}(x)\rho\r)\right\|\le \sqrt{3}\epsilon.\]
			So applying Taylor's theorem, we see that
			\[I\le \sqrt{3}\epsilon\sum_{x\in \Om}\sup_{\alpha,\alpha'\in S}\|\|\D f_{x,\alpha,\alpha'}\|\|_{L^\infty}.\]
			For $II$, we use the bound
			\[II\le N^{-1}\sup_{\b{u}\in S_N^{\epsilon}(\b{q})}|g(\b{u})-g(\b{u}_{\b{q}})|.\]
			Combining this with Taylor's theorem we conclude that
			\[II\le N^{-1/2}\|\|\D g\|\|_{L^{\infty}}\sup_{\b{u}\in S_N^{\epsilon}(\b{q})}\sqrt{\sum_{x\in \Om}\|\b{u}(x)-\b{q}(x)\|^2_N}\le N^{-1/2}\|\|\D g\|\|_{L^{\infty}}\sqrt{|\Om|\epsilon}.\]
			For $III$, by expanding the first integral using the co-area formula with respect to $\b{u}\mapsto \b{u}_{\b{q}}$ we have that
			\[III=N^{-1}\left|\log \left(\prod_{x\in \Om}\sq{N\b{q}(x)}\int_{1}^{\sq{1+\epsilon/\b{q}(x)}}r^{N-1}dr\right)\right|.\]
			We note that
			\[\sq{N\b{q}(x)}\int_{1}^{\sq{1+\epsilon/\b{q}(x)}}r^{N-1}dr=\sq{N^{-1}\b{q}(x)}\left((1+\epsilon/\b{q}(x))^{N/2}-1\right).\]
			Using that for $x>0$, we have that
			\[Nx\le \left((1+x)^{N}-1\right)\le \exp(Nx),\]
			we see that
			\[\frac{1}{2}\log(N\epsilon/\b{q}(x))\le \log\left(\sq{N^{-1}\b{q}(x)}\left((1+\epsilon/\b{q}(x))^{N/2}-1\right)\right)\le \frac{N\epsilon}{2\b{q}(x)}+\frac{1}{2}\log(N^{-1}\b{q}(x))\]
			and so we obtain the bound
			\[III\le \sum_{x\in \Om}\left(\frac{\epsilon}{2\b{q}(x)}+\frac{1}{2N}|\log(N\epsilon/\b{q}(x))|+\frac{1}{2N}|\log(N^{-1}\b{q}(x))|\right).\]
			Combining all of these bounds completes the proof.
		\end{proof}
		
		\pagebreak
		
		\section{Analytic Properties of the Functionals $\cal{A}$ and $\cal{B}$ \label{section:appendix:B appendix}}
		
		In this appendix we will prove some results concerning the functionals $\cal{B}$ and $\cal{A}$ introduced in (\ref{eqn:def:hat-A}) (with $\beta=1$) and (\ref{eqn:def:A true def}) above. We first begin with a result that shows that the infimum of $\cal{B}$ over $\cal{Y}$ and its subset $\cal{Y}_{fin}$ coincide.
		
		\begin{prop}
			\label{prop:A and B continuity}
			We have that
			\[
			\inf_{(\zeta,\b{\Phi})\in \mathscr{Y}_{fin}}\cal{B}(\zeta,\b{\Phi})=\inf_{(\zeta,\b{\Phi})\in \mathscr{Y}}\cal{B}(\zeta,\b{\Phi}).
			\label{eqn:A continuity result}
			\]
		\end{prop}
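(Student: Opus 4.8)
Since $\mathscr{Y}_{fin}\subseteq\mathscr{Y}$ we get $\inf_{(\zeta,\b{\Phi})\in\mathscr{Y}_{fin}}\cal{B}\ge\inf_{(\zeta,\b{\Phi})\in\mathscr{Y}}\cal{B}$ for free, so the content of the statement is the reverse inequality. The plan is a direct approximation argument: I would show that every $(\zeta,\b{\Phi})\in\mathscr{Y}$ admits a sequence $(\zeta_n,\b{\Phi})\in\mathscr{Y}_{fin}$, obtained by keeping $\b{\Phi}$ fixed and discretizing $\zeta$, along which $\cal{B}(\zeta_n,\b{\Phi})\to\cal{B}(\zeta,\b{\Phi})$; taking infima over $(\zeta,\b{\Phi})\in\mathscr{Y}$ then yields $\inf_{\mathscr{Y}_{fin}}\cal{B}\le\inf_{\mathscr{Y}}\cal{B}$. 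One preliminary observation makes the limits painless: specializing (\ref{eq:condition for Psi}) to $\b{q}=\b{1}$ and $q_t=1$ shows $\frac{1}{|\Om|}\sum_{x}\Phi_x$ is the identity on $[0,1]$, and since each $\Phi_x$ is non-decreasing this forces $0\le\Phi'_x\le|\Om|$ a.e.\ In particular every $\Phi_x$ is Lipschitz with $\Phi_x(0)=0$ and $\Phi_x(1)=1$, and each $\delta_x(q)=\int_q^1\zeta([0,u])\Phi'_x(u)\,du$ is a non-increasing Lipschitz function of $q$.

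Fix $(\zeta,\b{\Phi})\in\mathscr{Y}$ and let $q_*$ be the point entering the definition of $\cal{B}$; since $\cal{B}(\zeta,\b{\Phi})$ is defined we have $D+\b{\delta}(q_*)>0$, and by monotonicity $\b{\delta}(q)\ge\b{\delta}(q_*)>0$ coordinatewise for all $q\in[0,q_*]$. I would take $\zeta_n$ to be finitely supported probability measures with $\supp(\zeta_n)\subseteq[\inf\supp(\zeta),\sup\supp(\zeta)]$ and $\zeta_n\To\zeta$ weakly — for instance obtained by partitioning $[\inf\supp(\zeta),\sup\supp(\zeta)]$ into $n$ intervals and moving the $\zeta$-mass of each onto one of its points. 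Since $\supp(\zeta_n)$ lies inside $\supp(\zeta)$ and $\b{\Phi}$ is unchanged, $(\zeta_n,\b{\Phi})$ still satisfies (\ref{eq:condition for Psi}) and inherits the defining inequalities, so $(\zeta_n,\b{\Phi})\in\mathscr{Y}_{fin}$. Moreover $\zeta_n([0,u])\to\zeta([0,u])$ at every continuity point of the (monotone) limit distribution function, hence for a.e.\ $u$, and since $0\le\zeta_n([0,\cdot])\le1$ and $\Phi'_x\in L^\infty$, bounded convergence gives $\sup_{q\in[0,1]}|\delta^n_x(q)-\delta_x(q)|\le|\Om|\int_0^1|\zeta_n([0,u])-\zeta([0,u])|\,du\to0$. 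In particular $\b{\delta}^n(q_*)\to\b{\delta}(q_*)>0$, so for $n$ large the set $\{\b{\delta}^n(q):q\in[0,q_*]\}$ lies in a fixed compact subset of the open domain $\{\b{u}\in\R^{\Om}:D+\b{u}>0\}$ and $\cal{B}(\zeta_n,\b{\Phi})$ is well defined.

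It then remains to pass to the limit term by term in the formula (\ref{eqn:def:hat-A}). The quantities $\Lambda^D(\b{\delta}^n(q_*))$ and $[(D+\b{K}^D(\b{\delta}^n(0)))^{-1}]_{xy}$ converge because $\Lambda^D$ and $\b{K}^D$ are continuous on $\{\b{u}:D+\b{u}>0\}$ and their arguments converge within a compact subset of it; the integral $\int_0^{q_*}K^D_x(\b{\delta}^n(q))\Phi'_x(q)\,dq$ converges by dominated convergence, using $\b{\delta}^n\to\b{\delta}$ uniformly together with the boundedness of $K^D_x$ near the compact range of $\b{\delta}$ and of $\Phi'_x$; and $\int_0^{1}\zeta_n([0,q])\xi'_x(\Phi_x(q))\Phi'_x(q)\,dq\to\int_0^{1}\zeta([0,q])\xi'_x(\Phi_x(q))\Phi'_x(q)\,dq$ by dominated convergence, with dominating function $\xi'_x(1)|\Om|\in L^1([0,1])$ and a.e.\ convergence of $\zeta_n([0,\cdot])$. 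Summing, $\cal{B}(\zeta_n,\b{\Phi})\to\cal{B}(\zeta,\b{\Phi})$. The only genuine point of care — the ``main obstacle'' such as it is — is the bookkeeping near the boundary of the domain of $\cal{B}$: one must choose the approximating measures so that the $\b{\delta}^n(q_*)$ stay bounded away from the locus where $D+\b{u}$ degenerates, which is exactly why we keep $\supp(\zeta_n)$ inside $\supp(\zeta)$ and exploit the monotonicity of $\b{\delta}$, rather than attempting to prove continuity of $\cal{B}$ on all of $\mathscr{Y}$.
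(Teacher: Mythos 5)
Your proof is correct and follows the same strategy as the paper's: approximate $\zeta$ by finitely supported measures while holding $\b{\Phi}$ fixed, and keep $\supp(\zeta_n)$ inside $[\inf\supp\zeta,\sup\supp\zeta]$ so that each $\b{\delta}^n$ stays coordinatewise bounded away from $0$ on $[0,q_*]$, which is exactly what keeps $K^D$ and $\Lambda^D$ from blowing up. The technical difference is that the paper first proves Lemma~\ref{lem:appendix:continuity for the same phi}, a quantitative Lipschitz estimate $|\cal{B}(\zeta_0,\b{\Phi})-\cal{B}(\zeta_1,\b{\Phi})|\le C\sup_{s}|\zeta_0([0,s])-\zeta_1([0,s])|$ uniform over measures satisfying the same support constraints, and then approximates $\zeta$ in this Kolmogorov--Smirnov sense; you bypass the quantitative lemma and argue term-by-term convergence via dominated convergence directly. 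Both routes are valid; the paper's lemma is a sharper, reusable tool, while your argument is slightly more economical. One small imprecision to flag: the open set in which the range of $\b{\delta}^n$ must sit compactly is $(0,\infty)^{\Om}$, the actual domain of $K^D$ and $\Lambda^D$ (cf. Proposition~\ref{prop:appendix:K and Lambda}, where $K^D$ blows up as $\b{u}\to\partial(0,\infty)^{\Om}$), not $\{\b{u}:D+\b{u}>0\}$, which is the range of $K^D$. This does not affect your argument, since what you actually use is the coordinatewise lower bound $\delta^n_x(q)\ge\delta^n_x(q_*)\to\delta_x(q_*)>0$, which is the correct condition.
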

		
		The main tool we use to prove this is the following explicit bound.
		
		\begin{lem}
			\label{lem:appendix:continuity for the same phi}
			Let us fix some small $c,c'>0$. Then there is some large $C:=C(c',c)>0$ such that the following holds: Let $\zeta_0$ and $\zeta_1$ be probability measures on $[0,1]$ such that $\zeta_i([0,1-c])=1$, and choose some monotone increasing $\b{\Phi}:[0,1]\to [0,1]^{\Om}$ such that $\Phi_x(1-c)\ge c'$ and $(\zeta^i,\b{\Phi})\in \mathscr{Y}$. Then we have that
			\[
			|\cal{B}(\zeta_0,\b{\Phi})-\cal{B}(\zeta_1,\b{\Phi})|\le C\sup_{s\in [0,1]} \l|\zeta_0([0,s])-\zeta_1([0,s])\r|.\label{eqn:ignore-93494}
			\]
		\end{lem}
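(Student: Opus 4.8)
The plan is to separate the two ways in which $\cal{B}(\zeta,\b{\Phi})$ depends on $\zeta$. The first is \emph{explicit}: $\zeta([0,\cdot])$ appears only in the term $\frac{1}{|\Om|}\sum_{x}\int_0^1\zeta([0,u])\xi'_x(\Phi_x(u))\Phi'_x(u)\,du$ of (\ref{eqn:def:hat-A}). The second is \emph{implicit}, through $\b{\delta}$ --- which depends linearly on the function $u\mapsto\zeta([0,u])$ --- and which enters inside $\Lambda^D$, inside $\b{K}^D$, and inside the matrix inverse $[D+\b{K}^D(\b{\delta}(0))]^{-1}$. Writing $\eta:=\sup_{s\in[0,1]}|\zeta_0([0,s])-\zeta_1([0,s])|$, the goal (\ref{eqn:ignore-93494}) is to show $|\cal{B}(\zeta_0,\b{\Phi})-\cal{B}(\zeta_1,\b{\Phi})|\le C(c,c')\,\eta$.

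First I would normalize the cutoff. Since $\zeta_i([0,1-c])=1$ and both $\cal{B}(\zeta_i,\b{\Phi})$ are finite, one can choose a single admissible cutoff $q_*\le 1-c$ with $\zeta_i([0,q_*])=1$ and $\Phi_x(q_*)<1$ for all $x$ and both $i$; by the independence of $\cal{B}$ on the choice of cutoff (cf. Remark \ref{remark:beginning point doesn't matter part}) I may use this common $q_*$ in both functionals. The gain is that $\zeta_i([0,u])=1$ for $u\ge q_*$, so $\delta^0_x(s)=\delta^1_x(s)=1-\Phi_x(s)$ for $s\ge q_*$; in particular $\b{\delta}^0(q_*)=\b{\delta}^1(q_*)$, whence the $\Lambda^D(\b{\delta}(q_*))$ term (and the constant term) coincide in $\cal{B}(\zeta_0,\b{\Phi})$ and $\cal{B}(\zeta_1,\b{\Phi})$ and drop out of the difference, leaving only three contributions: the explicit integral above, $\frac1{|\Om|}\sum_x\int_0^{q_*}K^D_x(\b{\delta}(q))\Phi'_x(q)\,dq$, and $\frac1{|\Om|}\sum_{x,y}[D+\b{K}^D(\b{\delta}(0))]^{-1}_{xy}\b{h}(x)\b{h}(y)$. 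Two elementary bounds then handle the first and prepare the rest: using $\Phi_x(0)=0$ (which is (\ref{eq:condition for Psi}) at $s=0$) and $\Phi_x(1)=1$, one gets $|\delta^0_x(q)-\delta^1_x(q)|=|\int_q^1(\zeta_0([0,u])-\zeta_1([0,u]))\Phi'_x(u)\,du|\le\eta\int_q^1\Phi'_x(u)\,du\le\eta$ for every $q$ and $x$, and the same computation bounds the explicit-integral difference $\frac1{|\Om|}\sum_x\int_0^1(\zeta_0-\zeta_1)([0,u])\xi'_x(\Phi_x(u))\Phi'_x(u)\,du$ by $(\max_x\sup_{[0,1]}|\xi'_x|)\,\eta$.

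The substantive step is to control the two $\b{K}^D$-terms via Lipschitz estimates, and this is where the hypotheses enter. I would show that there is a compact $\mathcal{K}\subset(0,\infty)^{\Om}$, depending only on $c,c'$ (and the fixed data $D,\b{\xi},\Om$), with $\b{\delta}^i(q)\in\mathcal{K}$ for all $q\in[0,q_*]$ and $i\in\{0,1\}$: the upper bound $\delta^i_x(q)\le\delta^i_x(0)=\int_0^1(1-\Phi_x(t))\,\zeta_i(dt)\le1$ is immediate, while the lower bound $\delta^i_x(q)\ge\delta^i_x(q_*)=1-\Phi_x(q_*)\ge1-\Phi_x(1-c)>0$ for $q\le q_*$ must be turned into a uniform $\delta^i_x(q)\ge c_0(c,c')>0$ using the hypotheses $\zeta_i([0,1-c])=1$, $\Phi_x(1-c)\ge c'$, monotonicity of $\b{\Phi}$, and (\ref{eq:condition for Psi}). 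Granting this, on $\mathcal{K}$ the map $\b{K}^D$ is $C^1$ by Proposition \ref{prop:appendix:K and Lambda}, hence Lipschitz with some $L=L(c,c')$, and $D+\b{K}^D(\mathcal{K})\subseteq\{M:M\ge\lambda I\}$ for some $\lambda=\lambda(c,c')>0$, on which matrix inversion is $\lambda^{-2}$-Lipschitz; combined with $\|\b{\delta}^0(q)-\b{\delta}^1(q)\|\le\eta$ this gives $|\int_0^{q_*}(K^D_x(\b{\delta}^0(q))-K^D_x(\b{\delta}^1(q)))\Phi'_x(q)\,dq|\le L\eta\,\Phi_x(q_*)\le L\eta$ and $|[D+\b{K}^D(\b{\delta}^0(0))]^{-1}_{xy}-[D+\b{K}^D(\b{\delta}^1(0))]^{-1}_{xy}|\le\lambda^{-2}L\eta$. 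Summing the three surviving contributions and collecting constants yields (\ref{eqn:ignore-93494}) with $C=C(c,c')$.

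The main obstacle is exactly the uniform lower bound $\delta^i_x(q)\ge c_0(c,c')$, i.e. keeping $\b{\delta}^i$ inside a fixed compact subset of the open orthant $(0,\infty)^{\Om}$ on which $\b{K}^D$ (and, had one not cancelled it, $\Lambda^D$) has a Lipschitz constant depending only on $c,c'$; the required control of $\b{K}^D$ and $\Lambda^D$ near the boundary of the orthant is the content of the appendix results on these functions (Proposition \ref{prop:appendix:K and Lambda} and Theorem \ref{theorem:appendix:K lemma hard}). Once this confinement is established, every remaining estimate above is routine.
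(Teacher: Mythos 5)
Your proposal is essentially the paper's argument: bound the explicit $\int\zeta([0,u])\xi_x'(\Phi_x(u))\Phi_x'(u)\,du$ term directly, show $|\delta_x^0(q)-\delta_x^1(q)|\le\eta$ for all $q,x$, confine $\b\delta^i$ to a compact box inside the open orthant $(0,\infty)^\Om$, and then use the $C^1$ regularity of $\b K^D$ (Proposition \ref{prop:appendix:K and Lambda}) on that box to get Lipschitz control of the $K^D$-integral and the $(D+\b K^D(\b\delta(0)))^{-1}$ term. Your explicit observation that $\b\delta^0(q_*)=\b\delta^1(q_*)$ (since $\zeta_i([0,u])=1$ for $u\ge q_*$, so $\delta_x^i(s)=1-\Phi_x(s)$ there), hence $\Lambda^D(\b\delta(q_*))$ cancels from the difference, is correct and is a cleaner way of dealing with that term than the paper, which glosses over it.

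The one piece you flag as unresolved --- the uniform lower bound $\delta_x^i(q)\ge c_0(c,c')$ --- is in fact immediate, and what you hit is a sign typo in the lemma statement. The hypothesis should read $\Phi_x(1-c)\le 1-c'$ (equivalently $1-\Phi_x(1-c)\ge c'$); with $q_*=1-c$ (not $1-c'$, another typo in the paper's proof), one then has, for all $q\le q_*$ and both $i$,
\[
\delta_x^i(q)\;\ge\;\delta_x^i(q_*)\;=\;1-\Phi_x(q_*)\;\ge\;c',
\]
so $\b\delta^i([0,q_*])\subseteq[c',1]^\Om$, which is exactly the compact set you need. The paper's own proof displays the corrected inequality ``$\delta_x^i(1-c)=1-\Phi_x(1-c)\ge c'$,'' confirming that the hypothesis as printed ($\Phi_x(1-c)\ge c'$) is a misprint. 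With that correction the confinement step you call the ``main obstacle'' is a one-line consequence of monotonicity, and your proof is complete and matches the paper's.

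One tiny imprecision worth fixing: you write $\|\b\delta^0(q)-\b\delta^1(q)\|\le\eta$, but what your computation gives is the sup-norm bound $\max_x|\delta_x^0(q)-\delta_x^1(q)|\le\eta$; for the Euclidean norm you only get $\sqrt{|\Om|}\,\eta$, which of course just changes the constant.
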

		\begin{proof}
			For convenience, let us define $\epsilon:=\sup_{s\in [0,1]} \l|\zeta_0([0,s])-\zeta_1([0,s])\r|$. To treat one term of $\cal{B}$, we see that
			\[
			\begin{split}
				\l|\int_{0}^1\zeta_0([0,s])\xi'_x(\Phi_x(s))\Phi_x'(s)ds-\int_0^1\zeta_1([0,s])\xi'_x(\Phi_x(s))\Phi_x'(s)ds\r|\le&\\
				\int_{0}^1\l|\zeta_0([0,s])\xi'_x(\Phi_x(s))\Phi_x'(s)-\zeta_1([0,s])\xi'_x(\Phi_x(s))\Phi_x'(s)\r|ds\le&\\
				\epsilon \int_0^1 \xi'_x(\Phi_x(s))\Phi'_x(s)ds=\epsilon(\xi'_x(1)-\xi'_x(\b{\Phi}_x(0)))\le & \epsilon\xi_x(1).
			\end{split}
			\label{eqn:ignore-239473987}
			\]
			Further, let us write $\b{\delta}^i$ for the function $\b{\delta}$ corresponding to the choice $(\zeta_i,\b{\Phi})$. Then as in (\ref{eqn:ignore-239473987}) we see that
			\[|\delta_x^0(u)-\delta_x^1(u)|\le \epsilon.\]
			We also that we may take $q_*=1-c'$ for both endpoints, and we have that $\delta_x^i(1-c)=1-\Phi_x(1-c)\ge c'$. Thus we have that $\b{\delta}:[0,1]\to [c',1]^{\Om}$, and as $\b{K}^D$ is uniformly differentiable on $[c',1]^{\Om}$, for some large $C_0:=C_0(c')>0$ we have that 
			\[\sup_{s\in [0,1],x\in \Om}|K^D_x(\b{\delta}^0(s))-K^D_x(\b{\delta}^1(s))|\le C_0\epsilon.\]
			Similarly we may find some $C_1>0$ such that
			\[\|(D+\b{K}^D(\b{\delta}^0(0)))^{-1}-(D+\b{K}^D(\b{\delta}^1(0)))^{-1}\|_{F}\le C_1\epsilon.\]
			Together, these show that
			\[
			|\cal{B}(\zeta_0,\b{\Phi})-\cal{B}(\zeta_1,\b{\Phi})|\le \frac{\epsilon}{2}\left(\frac{1}{|\Om|}\sum_{x\in \Om}\xi_x(1)+C_0+\frac{C_1}{|\Om|}\sum_{x,y\in \Om}|\b{h}(x)\b{h}(y)|\right),
			\]
			which completes the proof.
		\end{proof}
		
		\begin{proof}[Proof of Proposition \ref{prop:A and B continuity}]
			Let us take some $(\zeta,\b{\Phi})\in \mathscr{Y}$. Note that this pair satisfies the conditions of Lemma \ref{lem:appendix:continuity for the same phi} for some $c,c'>0$. Moreover, note that the right-hand side of (\ref{eqn:ignore-93494}) is proportional to the Wasserstein metric, and so it is well known that we may find a sequence (for example by taking suitably small quantiles approximation of $\zeta$) of probability measures $\zeta_n$, such that $\zeta_n([0,1-c])=\zeta([0,1-c])=0$, and such that
			\[\limsup_{n\to \infty}|\sup_{s\in [0,1]}|\zeta([0,s])-\zeta_n([0,s])|=0,\]
			So that in particular, we have that
			\[\limsup_{n\to \infty}|\cal{B}(\zeta,\b{\Phi})-\cal{B}(\zeta_n,\b{\Phi})|=0.\]
			Thus we have that $\cal{B}(\zeta,\b{\Phi})\ge  \inf_{(\zeta,\b{\Phi})\in \mathscr{Y}_{fin}}\cal{B}(\zeta,\b{\Phi})$, which suffices to complete the proof.
		\end{proof}
		
		The next result we need shows that we have that $\cal{B}(\zeta,\b{Phi})=\cal{B}(\zeta,\b{\Phi}')$ when $\b{\Phi}(s)=\b{\Phi}'(s)$ for $s\in \supp(\zeta)$. For this we need a technical lemma.
		
		\begin{lem}
			\label{lem:appendix:basic integration by parts}
			Let $\phi:[0,1]\to [0,1]$ be a non-decreasing and absolutely continuous function, $\mu$ a finite measure on $[0,1]$, and $f:[0,1]\to \R$ integrable. Then we have that
			\[\int_0^1 \mu([0,x])f(\phi(x))\phi'(x)dx=\int_{\phi(0)}^{\phi (1)}\phi_*(\mu)([0,x])f(x)dx.\]
		\end{lem}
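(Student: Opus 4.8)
The statement is a change-of-variables formula for a Lebesgue--Stieltjes type integral against the pushforward measure $\phi_*(\mu)$. The plan is to prove it by first establishing the identity for indicator functions $f = \mathbf{1}_{[0,a]}$ (equivalently, for $f$ a monotone step function) and then extending to general integrable $f$ by a standard density/monotone class argument, using that both sides are linear and continuous in $f$ in an appropriate sense.

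For the base case, fix $a \in [\phi(0), \phi(1)]$ and take $f = \mathbf{1}_{[0,a]}$. The right-hand side becomes $\int_{\phi(0)}^{\phi(1)} \phi_*(\mu)([0,x]) \mathbf{1}_{[0,a]}(x)\,dx = \int_{\phi(0)}^a \phi_*(\mu)([0,x])\,dx$. On the left-hand side, $f(\phi(x)) = \mathbf{1}_{[0,a]}(\phi(x)) = \mathbf{1}\{\phi(x) \le a\}$, and since $\phi$ is non-decreasing and absolutely continuous, the set $\{x : \phi(x) \le a\}$ is an interval of the form $[0, b_a]$ (up to a null set at the endpoint, which does not affect the integral), where $b_a = \sup\{x : \phi(x) \le a\}$; note $\phi(b_a) = a$ when $a$ is in the interior of the range by continuity. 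Thus the left-hand side is $\int_0^{b_a} \mu([0,x]) \phi'(x)\,dx$. Now I apply Fubini/Tonelli to both: writing $\mu([0,x]) = \int_{[0,x]} \mu(dt) = \int_{[0,1]} \mathbf{1}\{t \le x\}\,\mu(dt)$, the left-hand side equals $\int_{[0,1]} \left(\int_t^{b_a} \phi'(x)\,dx\right) \mu(dt) = \int_{[0,1]} (\phi(b_a) - \phi(t))_+ \,\mu(dt) = \int_{[0,1]} (a - \phi(t))_+\,\mu(dt)$, using absolute continuity of $\phi$ for the inner integral and $\phi(b_a) = a$. Similarly, $\phi_*(\mu)([0,x]) = \mu(\{\phi \le x\})$, and the right-hand side becomes $\int_{\phi(0)}^a \mu(\{\phi \le x\})\,dx = \int_{[0,1]} |\{x \in [\phi(0), a] : \phi(t) \le x\}|\,\mu(dt) = \int_{[0,1]} (a - \phi(t))_+\,\mu(dt)$ (since $\phi(t) \ge \phi(0)$). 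The two agree.

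To pass to general integrable $f$: by linearity the identity holds for all finite linear combinations of indicators $\mathbf{1}_{[0,a]}$, hence for all differences of monotone step functions, which are dense; alternatively, observe that both sides, as functionals of $f$, are of the form $\int f\,d\nu_1$ and $\int f\,d\nu_2$ for two finite (signed, if needed, but here positive) measures $\nu_1, \nu_2$ on $[0,1]$ --- namely $\nu_1$ is the pushforward under $\phi$ of the measure $\mu([0,x])\phi'(x)\,dx$ and $\nu_2 = \phi_*(\mu)([0,x])\,dx$ restricted to $[\phi(0),\phi(1)]$ --- and we have just shown $\nu_1([0,a]) = \nu_2([0,a])$ for all $a$, so $\nu_1 = \nu_2$ and the integrals coincide for every integrable $f$. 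One only needs to double check the reduction to $f$ of the form $\mathbf{1}_{[0,a]}$ is legitimate, i.e.\ that $\int f\,d\nu_i$ makes sense; this is where the integrability hypothesis on $f$ enters, together with finiteness of $\mu$ and the bound $\phi' \in L^1$ implied by absolute continuity.

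The only genuine subtlety --- and the step I'd be most careful with --- is the handling of the level sets $\{\phi \le a\}$ and in particular the points $a$ where $\phi$ is constant on a nondegenerate interval or has a flat, and the endpoint behavior $a = \phi(0)$ or $a = \phi(1)$. These produce at most countably many ``bad'' values of $a$ and sets of Lebesgue measure zero in $x$, so they are harmless for the integral identity, but the cleanest route is to phrase everything via the two measures $\nu_1, \nu_2$ and the Fubini computation above, which sidesteps having to track $b_a$ precisely: both $\nu_1([0,a])$ and $\nu_2([0,a])$ reduce transparently to $\int_{[0,1]}(a - \phi(t))_+\,\mu(dt)$, and no case analysis is needed. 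Thus the main work is simply the two Fubini interchanges and the identity $\int_t^{b} \phi'(x)\,dx = \phi(b) - \phi(t)$, both of which are justified by the stated hypotheses.
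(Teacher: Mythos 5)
Your proof is correct, and it takes a genuinely different route from the paper's. The paper invokes the change-of-variables formula for absolutely continuous maps directly, writing
\[
\int_{\phi(0)}^{\phi(1)}\mu(\phi^{-1}([0,x]))f(x)\,dx=\int_0^1\mu\bigl(\phi^{-1}([0,\phi(x)])\bigr)f(\phi(x))\phi'(x)\,dx,
\]
and then observes that $\mu(\phi^{-1}([0,\phi(x)]))$ can differ from $\mu([0,x])$ only at points $x$ where $\phi$ is flat, i.e.\ precisely where $\phi'(x)=0$; since $\phi'$ exists a.e., the factor $\phi'(x)$ kills this discrepancy and the two integrands agree almost everywhere. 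You instead reduce to indicators $f=\mathbf{1}_{[0,a]}$ by a measure-uniqueness (monotone class) argument and verify the indicator case by applying Tonelli to both sides, recognizing each as $\int_{[0,1]}(a-\phi(t))_+\,\mu(dt)$. The paper's route is shorter but relies on the change-of-variables theorem for absolutely continuous substitutions as a black box and must reason carefully about level sets of $\phi$; yours is more elementary --- only Tonelli plus $\int_t^b\phi' = \phi(b)-\phi(t)$ --- and, as you note, the Fubini computation absorbs the flats of $\phi$ automatically with no case analysis on $b_a$. The one thing I'd tighten in your write-up is the density step: it is cleanest to phrase it, as you do in your alternative, purely as equality of the two finite Borel measures $\nu_1,\nu_2$ on $[\phi(0),\phi(1)]$, determined by their distribution functions $a\mapsto\nu_i([0,a])$, rather than appealing loosely to ``differences of monotone step functions.''
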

		\begin{proof}
			By definition $\phi_*(\mu)([0,x])=\mu(\phi^{-1}([0,x]))$, and as this is monotone, and thus integrable, we see by employing a change of variables with respect to function $\phi$, we have that
			\[
			\int_{\phi(0)}^{\phi (1)}\mu(\phi^{-1}([0,x]))f(x)dx=\int_{0}^{1}\mu(\phi(\phi^{-1}([0,x])))f(\phi(x))\phi'(x)dx.\label{eqn:ignore-21398348}
			\]
			To compare this to the desired integral, we note that if $\phi$ is strictly increasing in a neighborhood of $x$, we have that 
			\[\phi(\phi^{-1}([0,x]))=\{y\in [0,1]:\phi(y)\le \phi(x)\}=[0,x].\]
			In particular, if $\phi'(x)$ exists and is positive, this holds. However, as $\phi'$ must be either positive or zero, and exists a.e, we see that a.e. we have that
			\[\mu(\phi(\phi^{-1}([0,x])))\phi'(x)=\mu([0,x])\phi'(x).\]
			Using this to replace the integrand of (\ref{eqn:ignore-21398348}) completes the proof.
		\end{proof}
		
		We can now give the second lemma needed to prove Proposition \ref{prop:A and B continuity}.
		
		\begin{prop}
			\label{prop:general:parisi doesn't depend away from support}
			Let us fix $(\zeta,\b{\Phi}),(\zeta,\b{\Phi}')\in\mathscr{Y}$. Let us assume that for any $s\in \supp(\zeta)$, we have that $\b{\Phi}(s)=\b{\Phi}'(s)$. Then 
			\[\cal{B}(\zeta,\b{\Phi})=\cal{B}(\zeta,\b{\Phi}').\label{eqn:diagonal:ignore-1842}\]
		\end{prop}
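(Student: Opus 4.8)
The plan is to prove something slightly stronger than the stated equality: that the value $\cal{B}(\zeta,\b{\Phi})$ depends on $\b{\Phi}$ only through the restriction $\b{\Phi}|_{\supp(\zeta)}$, with the remaining data $\zeta,D,\b{h},\b{\xi}$ held fixed — which is precisely what distinguishes $\b{\Phi}$ from $\b{\Phi}'$. As a preliminary I would record that $\cal{B}$ is independent of the admissible choice of $q_*$ (the analogue of Remark \ref{remark:beginning point doesn't matter part}), and that since $\b{\Phi}$ and $\b{\Phi}'$ agree at $\sup\supp(\zeta)$, an admissible $q_*$ exists for one functional exactly when it does for the other. Then I would handle, one at a time, the four non-trivial pieces of $\cal{B}$: $\Lambda^D(\b{\delta}(q_*))$, $\frac{1}{|\Om|}\sum_x\int_0^{q_*}K_x^D(\b{\delta}(q))\Phi_x'(q)dq$, $\frac{1}{|\Om|}\sum_x\int_0^1\zeta([0,q])\xi_x'(\Phi_x(q))\Phi_x'(q)dq$, and $\frac{1}{|\Om|}\sum_{x,y}[(D+\b{K}^D(\b{\delta}(0)))^{-1}]_{xy}\b{h}(x)\b{h}(y)$.

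The key elementary observation is that for $s\in\supp(\zeta)$ the vector $\b{\delta}(s)$ depends only on $\zeta$ and $\b{\Phi}|_{\supp(\zeta)}$. Indeed, writing $\delta_x(s)=\int_{(s,1]}\zeta([0,u])\,d\Phi_x(u)$ and integrating by parts (legitimate since $\Phi_x$ is continuous, with $\Phi_x(1)=\zeta([0,1])=1$), one gets
\[\delta_x(s)=1-\zeta([0,s])\Phi_x(s)-\int_{(s,1]}\Phi_x(u)\,\zeta(du),\]
and once $s\in\supp(\zeta)$ every term on the right is visibly determined by $\zeta$ and $\b{\Phi}|_{\supp(\zeta)}$. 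Since $\b{\delta}$ is continuous and constant on $[0,\inf\supp(\zeta)]$, this also shows $\b{\delta}(0)=\b{\delta}(\inf\supp(\zeta))$ is of the required form, so the $\b{h}$-term is fine. The $\b{\xi}$-term is treated by Lemma \ref{lem:appendix:basic integration by parts} with $\mu=\zeta$, $\phi=\Phi_x$, $f=\xi_x'$, which rewrites it as $\frac{1}{|\Om|}\sum_x\int_0^1((\Phi_x)_*\zeta)([0,t])\xi_x'(t)dt$; and $(\Phi_x)_*\zeta$ depends only on $\Phi_x|_{\supp(\zeta)}$ because $\zeta$ is carried by $\supp(\zeta)$.

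The remaining pair is the delicate one, since $\b{\delta}(q)$ and $\Phi_x'(q)$ for $q$ in a gap of $\supp(\zeta)$ genuinely depend on $\b{\Phi}$ off the support, and only their specific combination in $\cal{B}$ is insensitive to this. Here I would use the Legendre duality $|\Om|\nabla\Lambda^D=\b{K}^D$ from Definition \ref{defin:K and Lam for general}: from $\delta_x'(q)=-\zeta([0,q])\Phi_x'(q)$ one obtains $\frac{d}{dq}\Lambda^D(\b{\delta}(q))=-\frac{\zeta([0,q])}{|\Om|}\sum_x K_x^D(\b{\delta}(q))\Phi_x'(q)$, hence
\[\Lambda^D(\b{\delta}(q_*))+\frac{1}{|\Om|}\sum_x\int_0^{q_*}K_x^D(\b{\delta}(q))\Phi_x'(q)dq=\Lambda^D(\b{\delta}(0))+\frac{1}{|\Om|}\sum_x\int_0^{q_*}K_x^D(\b{\delta}(q))\,\zeta((q,1])\,\Phi_x'(q)dq,\]
and by Tonelli (and $\supp(\zeta)\subseteq[0,q_*]$) the last integral equals $\int_{[0,1]}(\sum_x\int_0^r K_x^D(\b{\delta}(s))\Phi_x'(s)ds)\,\zeta(dr)$. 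It then remains to check that $\sum_x\int_0^r K_x^D(\b{\delta}(s))\Phi_x'(s)ds$ is determined by $\zeta$ and $\b{\Phi}|_{\supp(\zeta)}$ for $r\in\supp(\zeta)$. I would split $[0,r]$ into $\supp(\zeta)\cap[0,r]$ and the complementary gaps: on the support part, $\b{\delta}$ is of the required form by the identity above, and the derivatives $\Phi_x'$ coming from $\b{\Phi}$ and from $\b{\Phi}'$ agree a.e.\ on $\supp(\zeta)$; on a gap $(a,b)$ with $c:=\zeta([0,a])$ one has $\zeta([0,q])\equiv c$ there, so $\delta_x'(q)=-c\Phi_x'(q)$ and therefore $\sum_x\int_a^b K_x^D(\b{\delta}(q))\Phi_x'(q)dq=-\frac{|\Om|}{c}(\Lambda^D(\b{\delta}(b))-\Lambda^D(\b{\delta}(a)))$ when $c>0$, while on the initial gap $c=0$, $\b{\delta}$ is constant and the contribution is $\sum_x K_x^D(\b{\delta}(a))(\Phi_x(b)-\Phi_x(a))$; in every case the endpoints lie in $\supp(\zeta)$, so all contributions are admissible. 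Collecting the four pieces yields $\cal{B}(\zeta,\b{\Phi})=\cal{B}(\zeta,\b{\Phi}')$.

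I expect the final step to be the main obstacle: neither $\b{\delta}(q_*)$ nor the integral $\int_0^{q_*}K_x^D(\b{\delta}(q))\Phi_x'(q)dq$ is separately a function of $\b{\Phi}|_{\supp(\zeta)}$, and extracting the cancellation requires the exact-differential structure supplied by the Legendre relation between $\Lambda^D$ and $\b{K}^D$, together with some care over the gaps and atoms of $\zeta$ and over a.e.\ differentiability on $\supp(\zeta)$ — the point where one invokes that two absolutely continuous functions agreeing on a measurable set have equal derivatives at its density points.
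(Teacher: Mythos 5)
Your proposal is correct and follows essentially the same route as the paper's proof: Lemma \ref{lem:appendix:basic integration by parts} handles the $\b{\xi}$-term, and the Legendre relation $|\Om|\D\Lambda^D=\b{K}^D$ telescopes the $\b{K}^D$-integral across the gaps of $\supp(\zeta)$, with the gap-endpoint values of $\b{\delta}$ pinned down by the integration-by-parts formula you derive. You are somewhat more careful than the paper — which asserts $\b{\delta}=\b{\delta}'$ without qualification even though the equality only holds on $\supp(\zeta)\cup[0,\inf\supp(\zeta)]$, which is all that is actually used — and your Tonelli reformulation of $\Lambda^D(\b{\delta}(q_*))$ plus the $\b{K}^D$-integral as $\Lambda^D(\b{\delta}(0))$ plus an integral against $\zeta$ is a tidy way to avoid any separate treatment of $q_*$.
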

		\begin{proof}
			To begin we note that our assumptions imply that $(\b{\Phi}_*)(\zeta)\disteq (\b{\Phi}'_*)(\zeta)$. In particular by applying Lemma \ref{lem:appendix:basic integration by parts} we see that
			\[
			\begin{split}
				\int_0^1\zeta([0,q])\xi'_x(\Phi_x(q))\Phi_x(dq)&=\int_0^1(\Phi_x)_*(\zeta)([0,q])\xi'_x(q)dq,\\
				\delta_x(s)&=\int_{\Phi_x(s)}^1(\Phi_x)_*(\zeta)([0,s])ds.\label{eqn:diagonal:ignore-1916}
			\end{split}
			\]
			In particular, if we write $\b{\delta}$ and $\b{\delta}'$ for the functions corresponding to $\b{\Phi}$ and $\b{\Phi}'$, then $\b{\delta}=\b{\delta}'$. With these observations, the only term that is not clearly invariant under $\b{\Phi}\mapsto \b{\Phi}'$ is
			\[\frac{1}{2|\Om|}\sum_{x\in \Om}\int_0^{q_*}\b{K}_x^D(\b{\delta}(s))\Phi_x'(s)ds.\]
			It is clear that the integrands coincide on $\supp(\zeta)$, so let us consider the integral over some open connected component of $\supp(\zeta)^c\cap (0,q_*)$, say $(a,b)$. We note that $\b{\Phi}(s)=\b{\Phi}'(s)$ for $s\in \{a,b\}$. Let us write $\zeta_0=\zeta([0,a])=\zeta([0,b))>0$ for simplicity. Then for $s\in (a,b)$,
			\[\delta_x(s)=\delta_x(b)+\zeta_0(\Phi_x(b)-\Phi_x(s)),\]
			so that recalling $|\Om|\D \Lambda^D=\b{K}^D$, we see that
			\[\frac{1}{|\Om|}\sum_{x\in \Om}\int_a^{b}\b{K}_x^D(\b{\delta}(s))\Phi_x'(s)ds=-\frac{1}{\zeta_0}\left(\Lambda^D(\b{\delta}(b))-\Lambda^D(\b{\delta}(a))\right).\]
			As $\b{\delta}=\b{\delta}'$, we see this is invariant, which completes the proof of (\ref{eqn:diagonal:ignore-1842}).
		\end{proof}
		
		Before proceeding, we note that the analogous result for $\cal{A}$ follows from a similar proof, which is omitted.
		
		\begin{prop}
			\label{prop:general: for B, parisi doesn't depend away from support}
			Let us fix $(\zeta,\b{\Phi}),(\zeta,\b{\Phi}')\in\mathscr{Y}$. Let us assume that for any $s\in \supp(\zeta)$, we have that $\b{\Phi}(s)=\b{\Phi}'(s)$. Then 
			\[\inf_{\b{b}}\cal{A}(\zeta,\b{\Phi},\b{b})=\inf_{\b{b}}\cal{A}(\zeta,\b{\Phi}',\b{b}).\label{eqn:diagonal:ignore-1842-b}\]
		\end{prop}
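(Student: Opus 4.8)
The plan is to mirror the proof of Proposition \ref{prop:general:parisi doesn't depend away from support} almost verbatim, tracking how each term of $\cal{A}(\zeta,\b{\Phi},\b{b})$ in (\ref{eqn:def:A continuum}) transforms under the replacement $\b{\Phi}\mapsto \b{\Phi}'$, and then taking the infimum over $\b{b}$ at the end. First I would record that the hypothesis $\b{\Phi}(s)=\b{\Phi}'(s)$ for $s\in \supp(\zeta)$ gives $(\b{\Phi})_*(\zeta)\disteq(\b{\Phi}')_*(\zeta)$, hence (via Lemma \ref{lem:appendix:basic integration by parts}) that the function $\b{d}$ defined by $\b{d}(u)=\int_u^1 \zeta([0,s])\xi''_x(\Phi_x(s))\Phi'_x(s)ds$ is unchanged: indeed $d_x(u)=\int_{\Phi_x(u)}^1 (\Phi_x)_*(\zeta)([0,s])\xi''_x(s)\,ds$ depends only on $\Phi_x(u)$ and the pushforward measure, and on $\supp(\zeta)$ the values $\Phi_x(u)$ agree, while off $\supp(\zeta)$ the integrand of $\cal{A}$ touching $\b{d}$ is weighted by $\zeta([0,s])$ in exactly the way needed. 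So $\b{d}(0)$ is unchanged and the domain $\{\b{b}:D+\b{b}-\b{d}(0)>0\}$ is the same for both $\b{\Phi}$ and $\b{\Phi}'$; thus it suffices to show $\cal{A}(\zeta,\b{\Phi},\b{b})=\cal{A}(\zeta,\b{\Phi}',\b{b})$ for each fixed admissible $\b{b}$, after which taking $\inf_{\b{b}}$ yields (\ref{eqn:diagonal:ignore-1842-b}).

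Next I would go line by line through (\ref{eqn:def:A continuum}). The last line, $\sum_{x,y}[(D+\b{b}-\b{d}(0))^{-1}]_{xy}(\b{h}(x)\b{h}(y)+\delta_{xy}\xi'_x(0))$, involves only $\b{d}(0)$, already shown invariant. For the term $\int_0^1 \zeta([0,s])\Phi_x(s)\xi''_x(\Phi_x(s))\Phi'_x(s)ds$ in the second line, apply Lemma \ref{lem:appendix:basic integration by parts} with $\phi=\Phi_x$, $\mu=\zeta$, $f(r)=r\xi''_x(r)$ to rewrite it as $\int_{\Phi_x(0)}^{\Phi_x(1)}(\Phi_x)_*(\zeta)([0,r])\,r\xi''_x(r)\,dr$, which depends only on the pushforward and on the endpoints $\Phi_x(0),\Phi_x(1)$ — but $0,1\in\supp(\zeta)$ so these endpoints agree. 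Hence this term is invariant, and so is the bare $\b{b}(x)$. The only genuinely nontrivial term is the first-line integral $\sum_x\int_0^1 [(D+\b{b}-\b{d}(s))^{-1}]_{xx}\xi''_x(\Phi_x(s))\Phi'_x(s)ds$, and here the argument of Proposition \ref{prop:general:parisi doesn't depend away from support} applies: on $\supp(\zeta)$ the integrands already coincide, while on a maximal open component $(a,b)$ of $\supp(\zeta)^c\cap(0,1)$ one has, for $s\in(a,b)$, $d_x(s)=d_x(b)+\zeta_0(\xi'_x(\Phi_x(b))-\xi'_x(\Phi_x(s)))$ where $\zeta_0=\zeta([0,a])$, so that $\frac{d}{ds}[(D+\b{b}-\b{d}(s))^{-1}]$ integrates against $\Phi'_x$ to a telescoping expression in $[(D+\b{b}-\b{d}(a))^{-1}]$ and $[(D+\b{b}-\b{d}(b))^{-1}]$ alone — both of which are determined by $\b{\Phi}(a)=\b{\Phi}'(a)$ and $\b{\Phi}(b)=\b{\Phi}'(b)$. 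One must separately handle components abutting $0$ or $1$, where $\Phi_x$ is pinned at $0$ or the top value, but these cases are easier since one endpoint value is forced.

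The main obstacle — really a bookkeeping hazard rather than a conceptual one — is carefully extracting the telescoping identity on each component $(a,b)$: one needs the chain rule $\frac{d}{ds}[(D+\b{b}-\b{d}(s))^{-1}]_{xx} = \sum_{y}[(D+\b{b}-\b{d}(s))^{-1}]_{xy}^2\, d_y'(s)$ together with $d_y'(s) = -\zeta_0\,\xi''_y(\Phi_y(s))\Phi'_y(s)$ on $(a,b)$, and then recognize the resulting $s$-integral as the exact differential of $-\zeta_0^{-1}\log\det(D+\b{b}-\b{d}(s))$, so that the integral over $(a,b)$ collapses to boundary values depending only on $\b{d}(a),\b{d}(b)$, hence only on $\b{\Phi}(a),\b{\Phi}(b)$. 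Since $\b{d}$ is globally invariant and all boundary data agree, summing the contributions over $\supp(\zeta)$ and over all complementary components gives equality of the first-line term, completing the proof. I would then remark that the whole argument is the obvious analogue of Proposition \ref{prop:general:parisi doesn't depend away from support} with $\b{K}^D$ and $\Lambda^D$ replaced by $[(D+\b{b}-\b{d})^{-1}]$ and $\log\det(D+\b{b}-\b{d})$, which is why the paper states it can be omitted.
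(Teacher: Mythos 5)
Your overall plan is right and is exactly the mirror of the proof of Proposition~\ref{prop:general:parisi doesn't depend away from support} that the paper has in mind: show the non-first-line terms are invariant by pushing forward through Lemma~\ref{lem:appendix:basic integration by parts} (using that $\Phi_x(0)=0$ and $\Phi_x(1)=1$ are forced by (\ref{eq:condition for Psi})), and reduce the first-line integral over each complementary component $(a,b)$ of $\supp(\zeta)$ to a boundary term. The bookkeeping of which quantities actually carry over, however, has two slips worth fixing.

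First, the identity you invoke is not the one that does the work. You cite $\frac{d}{ds}[(D+\b{b}-\b{d}(s))^{-1}]_{xx}=\sum_y [(D+\b{b}-\b{d}(s))^{-1}]_{xy}^2\,d_y'(s)$, which is a valid formula for the derivative of a diagonal entry of the resolvent, but it is never used. What you actually need, after substituting $\xi_x''(\Phi_x(s))\Phi_x'(s)=-\zeta_0^{-1}d_x'(s)$ on $(a,b)$, is the trace identity
$\sum_x [(D+\b{b}-\b{d}(s))^{-1}]_{xx}\,d_x'(s)=-\tfrac{d}{ds}\log\det(D+\b{b}-\b{d}(s))$,
so that $\sum_x\int_a^b[(D+\b{b}-\b{d}(s))^{-1}]_{xx}\xi_x''(\Phi_x(s))\Phi_x'(s)\,ds=\zeta_0^{-1}\bigl(\log\det(D+\b{b}-\b{d}(b))-\log\det(D+\b{b}-\b{d}(a))\bigr)$. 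This is what collapses the integral to boundary data; the $\log\det$ here plays the role of $\Lambda^D$ in the $\cal{B}$ proof.

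Second, your statement that ``the function $\b{d}$ is unchanged'' is too strong: $d_x(u)$ depends on $\Phi_x(u)$, so off $\supp(\zeta)$ the two functions $\b{d}$ and $\b{d}'$ genuinely differ. What is true (and all you need) is that $\b{d}(s)=\b{d}'(s)$ for $s\in\supp(\zeta)$ and at $s=0,1$. Finally, the component $(0,\inf\supp(\zeta))$ is not merely ``easier because endpoints are pinned''; the formula with $\zeta_0^{-1}$ is undefined there because $\zeta_0=\zeta([0,a])=0$. The right observation is that $\b{d}$ is \emph{constant} on $[0,\inf\supp(\zeta)]$, equal to $\b{d}(q_*)$, so the first-line integrand has the resolvent factor frozen and the integral evaluates directly to $[(D+\b{b}-\b{d}(q_*))^{-1}]_{xx}\,(\xi_x'(\Phi_x(q_*))-\xi_x'(0))$, which is manifestly invariant. (The symmetric component $(\sup\supp(\zeta),1)$, if present, has $\zeta_0=1$ and is covered by the generic formula.) With these corrections the argument goes through, and then, as you note, the domain $\{\b{b}:D+\b{b}-\b{d}(0)>0\}$ is unchanged since $\b{d}(0)=\b{d}'(0)$, so the equality of the functionals at each $\b{b}$ passes to the infimum.
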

		
		Finally we end this section with a continuity result for $\cal{B}$, which is similar to Corollary \ref{corr:overview:continuity of free energy in xi} above.
		
		\begin{prop}
			\label{prop:overview:continuity of parisi in xi}
			For any choice of valid mixing functions $\b{\xi}^0$ and $\b{\xi}^1$ let us denote the coefficients of $\xi^i_x$ as $\b{\beta}^i_p(x)$, and let us write $\cal{B}_{\b{\xi}}$ to indicate the choice of mixing function in $\cal{B}$. Then we have that
			\[\sup_{(\zeta,\b{\Phi})\in \mathscr{Y}}|\cal{B}_{\b{\xi}^0}(\zeta,\b{\Phi})-\cal{B}_{\b{\xi}^1}(\zeta,\b{\Phi})|\le \frac{1}{2|\Om|}\sum_{x\in \Om,p\ge 0}\l|\b{\beta}^0_p(x)^2-\b{\beta}^1_p(x)^2\r|.\]
			In particular,
			\[\left|\inf_{(\zeta,\b{\Phi})\in \mathscr{Y}}\cal{B}_{\b{\xi}^0}(\zeta,\b{\Phi})-\inf_{(\zeta,\b{\Phi})\in \mathscr{Y}}\cal{B}_{\b{\xi}^1}(\zeta,\b{\Phi})\right|\le \frac{1}{2|\Om|}\sum_{x\in \Om,p\ge 0}\l|\b{\beta}^0_p(x)^2-\b{\beta}^1_p(x)^2\r|.\]
		\end{prop}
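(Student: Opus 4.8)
The plan is to observe that the mixing function enters $\cal{B}$ through only one term, and then to bound the variation of that term uniformly over the domain. Inspecting the definition (\ref{eqn:def:hat-A}) of $\cal{B}$ (with $\beta=1$), the function $\b{\delta}$, the functionals $\Lam^D$ and $\b{K}^D$, and the external field term are all built only from $\zeta$, $\b{\Phi}$, the matrix $D$, and $\b{h}$; none of them involves $\b{\xi}$. The only $\b{\xi}$-dependent summand is the $\xi'$-integral, so for every $(\zeta,\b{\Phi})\in\mathscr{Y}$,
\[
\cal{B}_{\b{\xi}^0}(\zeta,\b{\Phi})-\cal{B}_{\b{\xi}^1}(\zeta,\b{\Phi})=\frac{1}{2|\Om|}\sum_{x\in\Om}\int_0^1\zeta([0,u])\left((\xi^0_x)'(\Phi_x(u))-(\xi^1_x)'(\Phi_x(u))\right)\Phi'_x(u)\,du.
\]

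Next I would estimate this difference. Fix $x\in\Om$ and set $g_x=(\xi^0_x)'-(\xi^1_x)'$, a continuous function on $[0,1]$. Since $0\le\zeta([0,u])\le1$ and $\Phi'_x\ge0$ almost everywhere, the triangle inequality gives $\left|\int_0^1\zeta([0,u])g_x(\Phi_x(u))\Phi'_x(u)\,du\right|\le\int_0^1|g_x(\Phi_x(u))|\Phi'_x(u)\,du$. Each $\Phi_x$ is non-decreasing and absolutely continuous (the integrability remark following (\ref{eq:condition for Psi})), so the change-of-variables formula for absolutely continuous maps — equivalently Lemma \ref{lem:appendix:basic integration by parts} with $\mu$ the Lebesgue measure — gives $\int_0^1|g_x(\Phi_x(u))|\Phi'_x(u)\,du=\int_{\Phi_x(0)}^{\Phi_x(1)}|g_x(t)|\,dt\le\int_0^1|g_x(t)|\,dt$. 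Finally, from $\xi^i_x(t)=\sum_{p\ge0}\b{\beta}^i_p(x)^2t^p$ we get $g_x(t)=\sum_{p\ge1}p\left(\b{\beta}^0_p(x)^2-\b{\beta}^1_p(x)^2\right)t^{p-1}$, and since $\int_0^1 t^{p-1}\,dt=1/p$,
\[
\int_0^1|g_x(t)|\,dt\le\sum_{p\ge1}\left|\b{\beta}^0_p(x)^2-\b{\beta}^1_p(x)^2\right|\le\sum_{p\ge0}\left|\b{\beta}^0_p(x)^2-\b{\beta}^1_p(x)^2\right|.
\]
Summing over $x\in\Om$ and dividing by $2|\Om|$ yields the first displayed inequality, uniformly in $(\zeta,\b{\Phi})\in\mathscr{Y}$.

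The ``in particular'' clause is then immediate from the elementary fact that $|\inf_y f(y)-\inf_y g(y)|\le\sup_y|f(y)-g(y)|$, applied to $f=\cal{B}_{\b{\xi}^0}$ and $g=\cal{B}_{\b{\xi}^1}$ on $\mathscr{Y}$ together with the uniform bound just proved. I do not expect any genuine obstacle: the only two points that call for a little care are checking that no term of $\cal{B}$ besides the $\xi'$-integral depends on $\b{\xi}$, and justifying the change of variables when $\Phi_x$ is merely non-decreasing rather than strictly increasing, both dispatched above. (The analogous continuity statement for $\cal{A}$ used in the proof of Proposition \ref{prop: easy Parisi=CS-minor} is not obtained this directly, since $\cal{A}$ depends on $\b{\xi}$ through several terms; there one instead combines Theorem \ref{theorem:spherical parisi continuum} with Corollary \ref{corr:overview:continuity of free energy in xi}.)
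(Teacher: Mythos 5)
Your proposal is correct and follows essentially the same route as the paper's (terse, two-line) proof: isolate the single $\b{\xi}$-dependent summand, bound $\zeta([0,u])$ by $1$, change variables via $\Phi_x$ to reduce to $\int_0^1|(\xi^0_x)'-(\xi^1_x)'|$, and then bound that by the coefficient sum. You simply fill in the intermediate justifications (absolute continuity of $\Phi_x$ and the termwise $\int_0^1 t^{p-1}dt=1/p$ computation) that the paper leaves implicit, and add the one-line observation for the ``in particular'' clause.
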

		\begin{proof}
			We may assume that $\beta^i_p\ge 0$ for each $(i,p)$. We compute then that
			\[
			\begin{split}
				|\cal{B}_{\b{\xi}^0}(\zeta,\b{\Phi})-\cal{B}_{\b{\xi}^1}(\zeta,\b{\Phi})|= &\\
				\frac{1}{2|\Om|}\sum_{x\in \Om}\l|\int_0^1\zeta([0,u])\l((\xi^0_x)'(\Phi_x(u))-(\xi^1_x)'(\Phi_x(u))\r)\Phi'_x(u)du\r|\le & \\
				\frac{1}{2|\Om|}\sum_{x\in \Om}\int_0^1|(\xi^0_x)'(u)-(\xi^1_x)'(u)|du
				\le &\\
				\frac{1}{2|\Om|}\sum_{x\in \Om,p\ge  0}\l|\b{\beta}^0_p(x)^2-\b{\beta}^1_p(x)^2\r|.
			\end{split}
			\]
		\end{proof}
		
		\pagebreak
		
		\section{Properties of Functions $\bold{K}^D$ and $\Lambda^D$ \label{section:appendix:functional}}
		
		Let us fix a positive semi-definite matrix $D\in \R^{\Om\times \Om}$, where $\Omega$ is some finite set, and recall the functions $\b{K}^D$ and $\Lambda^D$ defined by (\ref{eqn:def:KD}) and (\ref{eqn:def:Lambda}). The purpose of this appendix will be to prove the analytic results we need about these functions above.
		
		To begin, let use the notation $\odot$ to denote the Hadamard product (i.e. $[A\odot B]_{ij}=A_{ij}B_{ij}$). By the Schur Product Theorem, this defines an operation that takes two positive definite matrices to another positive definite matrix. Moreover, for $n$-by-$n$ positive semi-definite $A$ and $B$, we have that 
		\[s_1(A\odot B)\le s_1(A)s_1(B),\;\;\;\; s_{|\Om|}(A\odot B)\ge s_{|\Om|}(A)s_{|\Om|}(B).\]
		We will additionally employ the shorthand $A^{\odot 2}=A\odot A$. The following proposition comprises the facts about $\Lambda^D$ and $\b{K}^{D}$ we will require.
		
		\begin{prop}
			\label{prop:appendix:K and Lambda}
			$\Lambda^D$ is a strictly concave function on $(0,\infty)^{\Om}$, occurring as the Legendre transform of the function strictly concave function $\frac{1}{|\Om|}\log \det(D+\b{u})$. The function $\b{K}^D$ is a well-defined homeomorphism of $(0,\infty)^{\Om}$ onto $\{\b{u}\in \R^{\Om}:D+\b{u}>0\}$. Moreover, $|\Om|\D \Lambda^D(\b{u})=\b{K}^D(\b{u})$ and
			\[\D \b{K}^D(\b{u})=-\bigg(\big((D+\b{K}^D(\b{u}))^{-1}\big)^{\odot 2}\bigg)^{-1}.\label{eqn:appendix:DK-computation}\]
			Finally, both $\|\b{K}^D(\b{u})\|$ and $-\Lambda^D(\b{u})$ grow to $\infty$ as $\b{u}\to \partial (0,\infty)^{\Om}$.
		\end{prop}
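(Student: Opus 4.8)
The plan is to build everything from the strict concavity of $g(\b{u}) := \frac{1}{|\Om|}\log\det(D+\b{u})$ on $\{\b{u}:D+\b{u}>0\}$, and then specialize to $(0,\infty)^{\Om}$. First I would record the first two derivatives: for a diagonal perturbation $\text{di}(\b{v})$ one has $\partial_{\b{v}} g(\b{u}) = \frac{1}{|\Om|}\big([(D+\b{u})^{-1}]_{xx}\big)_{x\in\Om}$, so $|\Om|\,\D g(\b{u})$ is exactly the map $\b{u}\mapsto([(D+\b{u})^{-1}]_{xx})_{x}$ whose inverse defines $\b{K}^D$ by (\ref{eqn:def:KD}). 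The Hessian is $\partial^2_{\b{v}} g(\b{u}) = -\frac{1}{|\Om|}\big((D+\b{u})^{-1}\big)^{\odot 2}$ (the $xy$ entry being $-[(D+\b{u})^{-1}]_{xy}^2/|\Om|$); by the Schur product theorem applied to the positive definite matrix $(D+\b{u})^{-1}$, this Hadamard square is positive definite, so $g$ is \emph{strictly} concave. Hence $\D g$ is injective on $\{\b{u}:D+\b{u}>0\}$, and restricting attention to $(0,\infty)^{\Om}\subseteq\{\b{u}:D+\b{u}>0\}$ (valid since $D\ge 0$), we get that $\b{u}\mapsto([(D+\b{u})^{-1}]_{xx})_x$ is injective there; that its image is exactly $(0,\infty)^{\Om}$ follows because each diagonal entry of a positive definite matrix is positive, while surjectivity onto $(0,\infty)^{\Om}$ I would get from the boundary-blowup analysis below together with degree theory or a monotonicity/Brouwer argument. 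This establishes that $\b{K}^D$ is a well-defined bijection $(0,\infty)^{\Om}\to\{\b{u}:D+\b{u}>0\}$, and continuity of $\b{K}^D$ and its inverse follows from continuity of matrix inversion plus invariance of domain (both sets are open in $\R^{\Om}$), giving the homeomorphism claim.

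Next I would identify $\Lambda^D$ as the Legendre transform of $g$. By definition (\ref{eqn:def:Lambda}), $\Lambda^D(\b{u}) = \frac{1}{|\Om|}\big(\sum_x K^D_x(\b{u})\b{u}(x) - \log\det(D+\b{K}^D(\b{u}))\big) = (\b{u},\b{K}^D(\b{u}))/|\Om| - g(\b{K}^D(\b{u}))$, which is precisely $\sup_{\b{w}:D+\b{w}>0}\big((\b{u},\b{w})/|\Om| - g(\b{w})\big)$ since the supremum is attained where $|\Om|\D g(\b{w}) = \b{u}$, i.e.\ at $\b{w}=\b{K}^D(\b{u})$ by definition of $\b{K}^D$. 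As the Legendre transform of a strictly concave smooth function with the appropriate domain, $\Lambda^D$ is strictly concave (convexity/concavity transfers under Legendre transform, strictness because $g$ is smooth with nondegenerate Hessian), and the standard duality identity gives $|\Om|\D\Lambda^D(\b{u}) = \b{K}^D(\b{u})$. Differentiating this relation once more, $|\Om|\D^2\Lambda^D(\b{u}) = \D\b{K}^D(\b{u})$, and since $\b{K}^D$ is the inverse map of $|\Om|\D g$, the chain rule gives $\D\b{K}^D(\b{u}) = \big(|\Om|\D^2 g(\b{K}^D(\b{u}))\big)^{-1} = \big(-\big((D+\b{K}^D(\b{u}))^{-1}\big)^{\odot 2}\big)^{-1}$, which is (\ref{eqn:appendix:DK-computation}).

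Finally, the boundary behavior. Suppose $\b{u}_n\to\partial(0,\infty)^{\Om}$, so some coordinate $\b{u}_n(x_0)\to 0$ (or the sequence escapes to infinity, handled separately and more easily). Writing $M_n = D + \b{K}^D(\b{u}_n)$, the defining relation gives $[M_n^{-1}]_{x_0 x_0} = \b{u}_n(x_0)\to 0$. Since $M_n^{-1}$ is positive definite with a diagonal entry tending to $0$, either $\|M_n^{-1}\|$ stays bounded — in which case $[M_n^{-1}]_{x_0 x_0}\to 0$ forces the $x_0$-th row and column of $M_n^{-1}$ to vanish in the limit, making the limiting $M^{-1}$ singular, hence $\|M_n\|=\|\b{K}^D(\b{u}_n)\|\to\infty$ — or $\|M_n^{-1}\|\to\infty$ directly, which since $M_n^{-1}>0$ means its smallest eigenvalue $\to\infty$ is impossible given $[M_n^{-1}]_{x_0x_0}\to 0$, so in fact we are always in the first case and $\|\b{K}^D(\b{u}_n)\|\to\infty$. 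For $-\Lambda^D$, I would use the Legendre representation: picking the test point $\b{w}$ to be a large multiple of a suitable direction, or more cleanly using $-\Lambda^D(\b{u}) = g(\b{K}^D(\b{u})) - (\b{u},\b{K}^D(\b{u}))/|\Om|$ and noting $g(M) \sim \frac{1}{|\Om|}\log\det M$ can be controlled against $\text{tr}$ and diagonal entries of $M$; since $\b{u}(x_0)\to 0$ while $[M^{-1}]_{x_0x_0}\to 0$ means $M$ has a large eigenvalue, $\log\det M$ blows up, and one checks the subtracted bilinear term $(\b{u},\b{K}^D(\b{u}))/|\Om|$ does not cancel this divergence because it pairs the \emph{small} coordinate $\b{u}(x_0)$ against $\b{K}^D(\b{u})(x_0)$. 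The main obstacle I anticipate is precisely this last step — making the blowup of $-\Lambda^D$ quantitative enough that the positive term $\frac{1}{|\Om|}\log\det(D+\b{K}^D(\b{u}))$ genuinely dominates the subtracted term $\frac{1}{|\Om|}(\b{u},\b{K}^D(\b{u}))$ uniformly as $\b{u}$ approaches an arbitrary boundary point; I would handle it by reducing to a block/Schur-complement estimate relating $[M^{-1}]_{x_0x_0}$ to the corresponding minor of $M$, which cleanly exhibits the offending eigenvalue.
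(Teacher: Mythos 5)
Your overall strategy — strict concavity from the Hadamard square via Schur's product theorem, Legendre duality, and boundary blow-up — is the right one, but the execution has several genuine gaps that the paper sidesteps. First, your Legendre representation has the wrong extremum: you write $\Lambda^D(\b{u}) = \sup_{\b{w}}\bigl((\b{u},\b{w})/|\Om| - g(\b{w})\bigr)$, but since $g$ is \emph{concave}, the map $\b{w}\mapsto(\b{u},\b{w})/|\Om| - g(\b{w})$ is convex, so the critical point $\b{w}=\b{K}^D(\b{u})$ (where $|\Om|\D g(\b{w})=\b{u}$) is a \emph{minimizer}, and $\Lambda^D$ is the infimum, i.e.\ the concave conjugate. (Your formula would make $\Lambda^D$ a supremum of affine functions, hence convex, contradicting the very concavity you invoke afterwards.) Second, your surjectivity argument is only gestured at ("degree theory or a monotonicity/Brouwer argument"), and your case analysis for $\|\b{K}^D\|\to\infty$ has a sign confusion: $\|M_n^{-1}\|\to\infty$ means the \emph{largest} eigenvalue of $M_n^{-1}$ diverges, not the smallest, so the claimed contradiction does not arise. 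The clean argument, which the paper uses in its proof of Theorem \ref{theorem:appendix:K lemma hard}, is a one-line Schur complement bound: $\b{u}(x_0)^{-1}=([M_n^{-1}]_{x_0x_0})^{-1}\le D_{x_0x_0}+K_{x_0}(\b{u}_n)$, so $K_{x_0}(\b{u}_n)\to\infty$ as $\b{u}_n(x_0)\to 0$. Third, you explicitly leave the divergence of $-\Lambda^D$ incomplete.

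On the comparison with the paper: rather than assembling injectivity, surjectivity, continuity, and the two boundary blow-ups piecemeal, the paper verifies that $g(\b{u})=\frac{1}{|\Om|}\log\det(D+\b{u})$ is a closed, essentially smooth, strictly concave function (a function of "Legendre type" in Rockafellar's sense) — the only nontrivial check being essential smoothness, handled by $\tr\bigl((D+\b{u}_n)^{-1}\bigr)\ge s_{|\Om|}(D+\b{u}_n)^{-1}\to\infty$ since $\det(D+\b{u}_n)\to 0$ — and then identifies $\mathrm{dom}(g^*)=(0,\infty)^{\Om}$ by a direct estimate. Theorem 26.5 of Rockafellar then delivers the homeomorphism $\b{K}^D$, the gradient and Hessian identities, and both boundary statements (from essential smoothness and closedness of $\Lambda^D$) as a package. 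If you wish to retain your hands-on route, the fixes you need are: replace $\sup$ by $\inf$; replace the eigenvalue case analysis by the Schur-complement inequality above; establish properness of $\b{w}\mapsto([(D+\b{w})^{-1}]_{xx})_x$ to make the Brouwer/degree argument for surjectivity rigorous; and actually carry out the estimate showing $\log\det(D+\b{K}^D(\b{u}))$ dominates $\sum_x K_x(\b{u})\b{u}(x)$ at the boundary.
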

		\begin{proof}
			In the course of this proof we denote $\cal{X}=\{\b{u}\in \R^{\Om}:D+\b{u}>0\}$. We first compute that for $x,y\in \Om$, we have that
			\[\D_x\log \det(D+\b{u})=[(D+\b{u})^{-1}]_{xx},\;\;\; \D_{xy}^2\log \det(D+\b{u})=-[(D+\b{u})^{-1}]_{xy}^2.\]
			In particular $\D^2\log \det(D+\b{u})=-[(D+\b{u})^{-1}]^{\odot 2}$, so for $\b{u}\in \cal{X}$, we see that $\log \det(D+\b{u})$ is strictly concave. This and our computation of $\D\log \det(D+\b{u})$ makes it clear that $\Lambda^D(\b{u})$ coincides with the Legendre transform of $\frac{1}{|\Om|}\log \det(D+\b{u})$ when $\b{K}^D$ is well-defined.
			
			We next show that $\log \det(D+\b{u})$ is essentially smooth in the sense of \cite{convexanalysis}: if $\b{u}_n$ is a sequence in $\cal{X}$, such that $\b{u}_n$ converges to some $\b{u}\in \partial\cal{X}$, we have that $\|\D\log \det(D+\b{u}_n)\|\to \infty$. As all entries of $\D\log \det(D+\b{u}_n)$ are positive, we may equivalently show that
			\[\frac{1}{|\Om|}\sum_{x\in \Om}\D_x\log \det(D+\b{u}_n)=\tr((D+\b{u}_n)^{-1})\to \infty.\]
			For this note that $\partial\cal{X}$ consists of non-invertible positive semi-definite matrices, so in particular, $\det(D+\b{u}_n)\to \det(D+\b{u})=0$, so that $s_{|\Om|}(D+\b{u}_n)\to 0$. In particular, as $\tr((D+\b{u}_n)^{-1})\ge s_{|\Om|}(D+\b{u}_n)^{-1}$, we conclude that $\tr((D+\b{u}_n)^{-1})\to \infty$ as desired. Note this also shows that $\log \det(D+\b{u}_n)\to -\infty$. In particular, this shows that for each $\alpha\in \R$, the level set $\{\b{v}:\log \det(D+\b{v})\ge \alpha\}$ is closed, so that $\log \det(D+\b{u})$ is a closed concave function. 
			
			Finally, as $\log \det(D+\b{v})$ is strictly concave, we see that $\log \det(D+\b{v})$ is a concave function of Legendre-type in the sense of \cite{convexanalysis}. In particular, we may apply Theorem 26.5 of \cite{convexanalysis} which shows a few things. First it shows that the Legendre and Fenchel transforms of $\log \det(D+\b{u})$ coincide. Next, it shows that $\b{K}^D$ is a well-defined homeomorphism from the domain of the Fenchel conjugate of $\log \det(D+\b{u})$, i.e.
			\[\cal{Y}:=\bigg\{\b{u}\in \R^{\Om}:\inf_{\b{v}\in \cal{X}}\left(\sum_{x\in \Om}\b{v}(x)\b{u}(x)-\log \det(D+\b{v})\right)>-\infty\bigg\},\]
			onto $\cal{X}$. On this domain, the Legendre transform of $\frac{1}{|\Om|}\log \det(D+\b{v})$, which coincides with $\Lambda^D$, is itself an essentially smooth and closed strictly concave function. Finally, we have that $|\Om|\D \Lambda^D(\b{u})=\b{K}^D(\b{u})$ and $|\Om|\D^2 \Lambda^D(\b{u})=\D\b{K}^D(\b{u})=(\D^2 \log(D+\b{K}^D(\b{u})))^{-1}$. In particular, if we verify that $\cal{Y}=(0,\infty)^{\Om}$, all of the remaining claims immediately follow from these statements. 
			
			For this note that as $[(D+\b{u})^{-1}]_{xx}>0$ for $\b{u}\in \cal{X}$ and $x\in \Om$, we have that $\cal{Y}\subseteq (0,\infty)^{\Om}$. On the other hand, for $\b{v}\in \cal{X}$, we have that 
			\[\log\det(D+\b{v})\le \sum_{x\in \Om}\log(s_1(D)+\b{v}(x)),\]
			so that
			\[\inf_{\b{v}\in \cal{X}}\sum_{x\in \Om}\b{v}(x)\b{u}(x)-\log(D+\b{v})\ge \inf_{\b{v}\in \cal{X}}\sum_{x\in \Om}(\b{v}(x)\b{u}(x)-\log(s_1(D)+\b{v}(x)))>-\infty.\]
			as the function $x\mapsto ax-\log(b+x)$ is bounded below for any $a,b>0$. This shows that $(0,\infty)^{\Om}\subseteq\cal{Y}$, so $(0,\infty)^{\Om}=\cal{Y}$,  completing the proof.
		\end{proof}
		
		We will also need the following more technical result, which shows that the Jacobian of $\b{K}^D$ is dominated by its entries along the diagonal when one approaches a point on the boundary of its domain.
		
		\begin{theorem}
			\label{theorem:appendix:K lemma hard}
			Fix $K>0$. Then there is small $\epsilon>0$ such that for any $\b{u}\in (0,K)^{\Om}$ and $x\in \Om$, we have that $\D_x \b{K}^D_x(\b{u})\le - \epsilon \b{u}(x)^{-2}$. Moreover, for $y\in \Om\setminus \{x\}$ we have that $|\D_x \b{K}^D_y(\b{u})|\le \epsilon^{-1}$.
		\end{theorem}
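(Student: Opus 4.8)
The plan is to read everything off the Jacobian formula in Proposition~\ref{prop:appendix:K and Lambda}. Write $\b{v}=\b{K}^D(\b{u})$, $A=D+\b{v}$, $M=A^{-1}$; by the defining relation for $\b{K}^D$ one has $M_{xx}=\b{u}(x)$, and Proposition~\ref{prop:appendix:K and Lambda} gives $\D\b{K}^D(\b{u})=-(M^{\odot 2})^{-1}$, a symmetric negative definite matrix, with $M^{\odot 2}$ positive definite and $(M^{\odot 2})_{xx}=\b{u}(x)^2$. The first assertion is then immediate: the elementary bound $[N^{-1}]_{xx}\ge 1/N_{xx}$ for positive definite $N$ (Cauchy--Schwarz) gives $\D_x\b{K}^D_x(\b{u})=-[(M^{\odot 2})^{-1}]_{xx}\le-\b{u}(x)^{-2}$, so the first inequality holds with any $\epsilon\le 1$. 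It therefore remains only to bound $|[(M^{\odot 2})^{-1}]_{xy}|$ for $x\ne y$ uniformly over $\b{u}\in(0,K)^{\Om}$; this is the substantive part, the difficulty being that the naive estimates degenerate as $\b{u}$ approaches the boundary of $(0,K)^{\Om}$, so one must keep track of both factors $\b{u}(x)\b{u}(y)$ throughout.

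The first step is a resolvent-type estimate: $|M_{xy}|\le c_1\,\b{u}(x)\b{u}(y)$ for $x\ne y$, with $c_1$ depending only on $D$ and $K$. Indeed $AM=I$ gives $A_{xx}M_{xy}=-\sum_{k\ne x}D_{xk}M_{ky}$ (using $A_{xk}=D_{xk}$ for $k\ne x$), and $\b{u}(x)=[A^{-1}]_{xx}\ge A_{xx}^{-1}$, so $|M_{xy}|\le\b{u}(x)\sum_{k\ne x}|D_{xk}|\,|M_{ky}|$; bounding $|M_{ky}|\le\sqrt{\b{u}(k)\b{u}(y)}$ (positive semidefiniteness of $M$) and using $\b{u}(k)\le K$ gives $|M_{xy}|\le c\,\b{u}(x)\sqrt{\b{u}(y)}$, and feeding this improved bound back into the same identity (with $y$ as the outer index) upgrades $\sqrt{\b{u}(y)}$ to $\b{u}(y)$. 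Now renormalize: $M^{\odot 2}=\text{di}(\b{u})(I+\tilde E)\text{di}(\b{u})$, where $\tilde E_{xx}=0$ and $\tilde E_{xy}=M_{xy}^2/(\b{u}(x)\b{u}(y))$, so that $0\le\tilde E_{xy}\le\min\!\big(1,\;c_1^2\b{u}(x)\b{u}(y)\big)$ and, writing $H=(I+\tilde E)^{-1}$, $[(M^{\odot 2})^{-1}]_{xy}=H_{xy}/(\b{u}(x)\b{u}(y))$; it thus suffices to prove $|H_{xy}|\le c''\,\b{u}(x)\b{u}(y)$ for $x\ne y$. For this we first note a uniform spectral bound: from $D\preceq s_1(D)I$ we get $A\preceq\text{di}(s_1(D)+\b{v}(z))$, hence $M\succeq\text{di}\big((s_1(D)+\b{v}(z))^{-1}\big)$, and Hadamard-square monotonicity of the Loewner order ($0\preceq P\preceq Q\Rightarrow P^{\odot 2}\preceq Q^{\odot 2}$, via the Schur product theorem) gives $M^{\odot 2}\succeq\text{di}\big((s_1(D)+\b{v}(z))^{-2}\big)$; combined with $\b{v}(z)\le 1/\b{u}(z)$ (from $M\preceq\text{di}(\b{v})^{-1}$) and $\b{u}(z)\le K$, this yields $\lambda_{\min}(I+\tilde E)\ge(Ks_1(D)+1)^{-2}$, so $\|H\|_{\mathrm{op}}\le c_2:=(Ks_1(D)+1)^2$.

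The conclusion is a two-step bootstrap using $H=I-H\tilde E=I-\tilde E H$. For $x\ne y$, $H_{xy}=-\sum_k H_{xk}\tilde E_{ky}$; using $|H_{xk}|\le c_2$, $\tilde E_{ky}\le c_1^2\b{u}(k)\b{u}(y)$ and $\sum_k\b{u}(k)\le|\Om|K$ gives $|H_{xy}|\le c_3\,\b{u}(y)$ with $c_3:=c_2 c_1^2|\Om|K$, and the symmetric identity $H_{xy}=-\sum_k\tilde E_{xk}H_{ky}$ gives likewise $|H_{xy}|\le c_3\,\b{u}(x)$. Inserting these into $H_{xy}=-H_{xx}\tilde E_{xy}-\sum_{k\ne x,y}H_{xk}\tilde E_{ky}$ and bounding the first term by $c_2 c_1^2\b{u}(x)\b{u}(y)$ and the sum by $c_3 c_1^2\b{u}(x)\b{u}(y)\sum_k\b{u}(k)\le c_3 c_1^2|\Om|K\,\b{u}(x)\b{u}(y)$, we obtain $|H_{xy}|\le c''\,\b{u}(x)\b{u}(y)$, hence $|\D_x\b{K}^D_y(\b{u})|=|[(M^{\odot 2})^{-1}]_{xy}|\le c''$ for all $\b{u}\in(0,K)^{\Om}$ and $x\ne y$. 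Taking $\epsilon=\min(1,1/c'')$ finishes the proof. The main obstacle is exactly this off-diagonal bound: a single application of any of the ingredients (positive semidefiniteness, the resolvent identity, the spectral bound) leaves one factor of $\b{u}$ too few, and it is the interplay of the resolvent estimate $|M_{xy}|\lesssim\b{u}(x)\b{u}(y)$ with the spectral lower bound on $I+\tilde E$ that makes the final bootstrap close.
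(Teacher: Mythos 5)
Your argument is correct, and it takes a genuinely different route from the paper's. The paper proves Theorem~\ref{theorem:appendix:K lemma hard} by first rescaling the matrix $A=D+\b{K}^D(\b{u})$ itself by $\text{di}\bigl((D_{zz}+K_z(\b{u}))^{1/2}\bigr)$ to get a unit--diagonal matrix $I+G_{\b{u}}$, and then invoking a standalone technical result (Lemma~\ref{lem:appendix: technical lemma}) about $\bigl(((I+G)^{-1})^{\odot 2}\bigr)^{-1}$; the proof of that lemma proceeds through an explicit $2\times 2$ Schur--complement block decomposition of $(I+M(\b{u}))^{-1}$ and a further Schur complement at the level of the Hadamard square, with several rounds of estimates on the off-diagonal blocks $R,S,T,Q$. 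You instead keep the inverse where it is, write $M^{\odot 2}=\text{di}(\b{u})(I+\tilde E)\text{di}(\b{u})$ with $\tilde E$ the off-diagonal rescaling of $M^{\odot 2}$, and close the off-diagonal bound on $H=(I+\tilde E)^{-1}$ by a Neumann-type bootstrap $H=I-\tilde E H=I-H\tilde E$, using the resolvent estimate $|M_{xy}|\le c\,\b{u}(x)\b{u}(y)$ (obtained by iterating $AM=I$) together with the uniform operator bound on $H$. This avoids the Schur complement machinery entirely, and you also get the diagonal inequality $\D_x K_x^D(\b{u})\le -\b{u}(x)^{-2}$ with sharp constant directly from $[N^{-1}]_{xx}\ge 1/N_{xx}$, rather than as a byproduct of the general lemma. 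What the paper's approach buys is a reusable lemma of independent structure (and working in the ``inverse'' variable $\b{a}(z)=D_{zz}+K_z(\b{u})\in[K^{-1},\infty)$ rather than $\b{u}(z)\in(0,K)$, which may be more natural elsewhere); what your approach buys is a shorter, more elementary argument confined to the statement at hand. Both are correct, and the key insight --- that one must track both factors of $\b{u}(x)\b{u}(y)$ through the estimate since the naive bound degenerates near the boundary --- is shared.
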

		
		Before proving Theorem \ref{theorem:appendix:K lemma hard}, we will first prove two technical results required for its proof. The first studies the somewhat obscure matrix transform which occurs in the Jacobian of $\b{K}^D$.
		
		\begin{lem}
			\label{lem:appendix: technical lemma}
			Fix a symmetric matrix $M\in \R^{\Om\times \Om}$, such that $M_{xx}=0$ for $x\in \Om$. Additionally, fix small $\epsilon>0$. Then for $\b{u}\in [\epsilon,\infty)^\Om$, let $M(\b{u})\in \R^{\Om\times\Om}$ be the matrix defined by $M(\b{u})_{xy}=\b{u}(x)^{-1/2}\b{u}(y)^{-1/2}M_{xy}$. Consider the function $F:[\epsilon,\infty)^\Om\to \R^{\Om\times \Om}$ given by
			\[F_M(\b{u})=\bigg(\big((I+M(\b{u}))^{-1}\big)^{\odot 2}\bigg)^{-1}.\]
			Then there is large $C>0$, such that for any $\b{u}\in [\epsilon,\infty)^\Om$, with $I+M(\b{u})\ge \epsilon I$, we have for any $x,y\in \Om$, with $x\neq y$ that
			\[|[F_M(\b{u})]_{xy}|\le C \b{u}(x)^{-1}\b{u}(y)^{-1} \text{ and  } [F_{M}(\b{u})]_{xx}\ge C^{-1}.\label{eqn:ignore-1123}\]
		\end{lem}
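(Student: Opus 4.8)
The plan is to reduce the statement to estimates on the single matrix $P:=(I+M(\b{u}))^{-1}$, for which $F_M(\b{u})=(P^{\odot 2})^{-1}$, and then to prove the off-diagonal estimate in (\ref{eqn:ignore-1123}) by induction on $|\Om|$ via Schur complements. First I would record the elementary inputs. Writing $M(\b{u})=\diag(\b{u})^{-1/2}M\diag(\b{u})^{-1/2}$ and using $\b{u}\geq \epsilon$ gives $\|M(\b{u})\|\leq \epsilon^{-1}\|M\|$; combined with the hypothesis $I+M(\b{u})\geq \epsilon I$ this yields $\|P\|\leq \epsilon^{-1}$, $\|P^{-1}\|\leq L:=1+\epsilon^{-1}\|M\|$, and in particular $[P]_{xx}\geq s_{|\Om|}(P)\geq L^{-1}$ for every $x$. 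The crucial input is the off-diagonal decay
\[
|[P]_{xy}|\leq \gamma\, \b{u}(x)^{-1/2}\b{u}(y)^{-1/2}\qquad (x\neq y),
\]
with $\gamma$ depending only on $\epsilon$ and $M$. I would obtain this from the identity $P-I=-M(\b{u})-M(\b{u})(P-I)$ (and its transpose $P-I=-M(\b{u})-(P-I)M(\b{u})$): a first pass, bounding $|[P-I]_{zy}|$ by $|[P]_{zy}|+1\leq\epsilon^{-1}+1$, gives $|[P-I]_{xy}|\leq C_6\b{u}(x)^{-1/2}$ and, symmetrically, $\leq C_6\b{u}(y)^{-1/2}$; substituting the latter back into $[P-I]_{xy}=-M(\b{u})_{xy}-\sum_z M(\b{u})_{xz}[P-I]_{zy}$, the first term already carries the factor $\b{u}(x)^{-1/2}\b{u}(y)^{-1/2}$ precisely because $M$ has vanishing diagonal, and the sum is controlled by $\b{u}(x)^{-1/2}\b{u}(y)^{-1/2}\sum_z\b{u}(z)^{-1/2}|M_{xz}|\leq \epsilon^{-1/2}\big(\max_{x}\sum_z|M_{xz}|\big)\b{u}(x)^{-1/2}\b{u}(y)^{-1/2}$.

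The diagonal bound in (\ref{eqn:ignore-1123}) is then immediate: by the Schur product inequality $s_1(P^{\odot 2})\leq s_1(P)^2\leq \epsilon^{-2}$, so $[F_M(\b{u})]_{xx}=[(P^{\odot 2})^{-1}]_{xx}\geq s_1(P^{\odot 2})^{-1}\geq \epsilon^2$. For the off-diagonal bound I would prove, by induction on $n=|\Om|$, the abstract statement: if $P\in\R^{\Om\times\Om}$ is symmetric positive definite with $\|P^{-1}\|\leq L$ and $|[P]_{xy}|\leq\gamma\b{u}(x)^{-1/2}\b{u}(y)^{-1/2}$ for $x\neq y$, where $\b{u}\geq\epsilon$, then $|[(P^{\odot 2})^{-1}]_{xy}|\leq C_n\,\b{u}(x)^{-1}\b{u}(y)^{-1}$ for $x\neq y$, with $C_n$ depending only on $n,L,\gamma,\epsilon$. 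The base case $n=1$ is vacuous. For the step I fix $x\neq y$ and apply the block-inversion formula relative to the splitting $\{x\}\sqcup(\Om\setminus\{x\})$, which gives
\[
[(P^{\odot 2})^{-1}]_{xy}=-[(P^{\odot 2})^{-1}]_{xx}\sum_{z\neq x}[P^{\odot 2}]_{xz}\,[(P_{\widehat x}^{\odot 2})^{-1}]_{zy},
\]
where $P_{\widehat x}$ is the principal submatrix on $\Om\setminus\{x\}$ (note $P_{\widehat x}^{\odot 2}=[P^{\odot 2}]_{\widehat x\widehat x}$). Eigenvalue interlacing for principal submatrices shows $\|(P_{\widehat x})^{-1}\|\leq\|P^{-1}\|\leq L$, and the off-diagonal decay is inherited with $\b{u}$ restricted, so $P_{\widehat x}$ again satisfies the hypotheses in dimension $n-1$. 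Hence the inductive hypothesis bounds $|[(P_{\widehat x}^{\odot 2})^{-1}]_{zy}|$ by $C_{n-1}\b{u}(z)^{-1}\b{u}(y)^{-1}$ when $z\neq y$ and by $L^2$ when $z=y$; using also $[(P^{\odot 2})^{-1}]_{xx}\leq s_{|\Om|}(P)^{-2}\leq L^2$, $|[P^{\odot 2}]_{xz}|\leq \gamma^2\b{u}(x)^{-1}\b{u}(z)^{-1}$, and $\sum_z\b{u}(z)^{-2}\leq n\epsilon^{-2}$, the sum closes to $|[(P^{\odot 2})^{-1}]_{xy}|\leq L^2\gamma^2(L^2+C_{n-1}n\epsilon^{-2})\b{u}(x)^{-1}\b{u}(y)^{-1}$, which defines $C_n$ and completes the induction. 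Applying this with the $P,L,\gamma$ above yields (\ref{eqn:ignore-1123}).

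The main obstacle is exactly what forces the induction on dimension rather than a direct perturbative argument. Writing $P^{\odot 2}=\Delta+E$ with $\Delta$ its diagonal (entries in $[L^{-2},\epsilon^{-2}]$) and $E$ the off-diagonal part with $|E_{xy}|\leq\gamma^2\b{u}(x)^{-1}\b{u}(y)^{-1}$, a Neumann-type expansion of $(\Delta+E)^{-1}$ produces off-diagonal corrections in which each internal summation variable $z$ contributes one factor $\b{u}(z)^{-1}$ but not the matching $\b{u}(y)^{-1}$; the resulting self-consistent inequality has the shape $C_\ast\geq L^2\gamma^2(L^2+C_\ast\, n\epsilon^{-2})$, which closes only when $\gamma^2 n\epsilon^{-2}<1$ — false for the (small) values of $\epsilon$ of interest, since $\gamma\sim\epsilon^{-3/2}\|M\|^{2}$. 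The Schur-complement induction circumvents this because at each stage the summation runs over a strictly smaller index set and the inductive bound already supplies both decay factors $\b{u}(z)^{-1}$ and $\b{u}(y)^{-1}$; the cost is only that $C_n$ grows (very rapidly) with $n$, which is harmless since $\Om$ is a fixed finite set.
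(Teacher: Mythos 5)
Your proof is correct, but it takes a genuinely different route from the paper's. The paper performs a single $\{x,y\}$-by-$\Om\setminus\{x,y\}$ block decomposition of $(I+M(\b{u}))^{-1}$, writes the $\{x,y\}$-block of $F_M(\b{u})$ as $Q^{-1}$ where $Q=R^{\odot 2}-(S^{\odot 2})^t(T^{\odot 2})^{-1}S^{\odot 2}$, and then bounds $[Q]_{xy}$ and $\det(Q)$ directly; the off-diagonal decay of $R$ and $S$ is read off from the explicit Schur-complement formulas rather than from a resolvent identity. You instead first isolate, as a separate ingredient, the entrywise decay $|[P]_{xy}|\lesssim \b{u}(x)^{-1/2}\b{u}(y)^{-1/2}$ of $P=(I+M(\b{u}))^{-1}$ via a two-pass bootstrap of the identity $P-I=-M(\b{u})-M(\b{u})(P-I)$, and then run an induction on $|\Om|$ with $\{x\}$-by-rest Schur complements applied to $P^{\odot 2}$ itself, using Cauchy interlacing to propagate the conditioning bound down to principal submatrices. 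Your approach buys a cleaner separation of the two mechanisms at work (decay of $P$, then inversion of $P^{\odot 2}$) and nicely explains why a direct Neumann expansion of $(P^{\odot 2})^{-1}$ fails (the self-consistent inequality does not close for small $\epsilon$), which is a real insight; the paper's single-decomposition argument avoids the induction and so gives a constant that does not blow up iteratedly in $|\Om|$, but requires more explicit bookkeeping of the intermediate matrices $R,S,T$. One small inaccuracy to fix: in the second pass you attribute the presence of both factors in $-M(\b{u})_{xy}$ to the vanishing diagonal of $M$, but for $x\neq y$ that term automatically carries $\b{u}(x)^{-1/2}\b{u}(y)^{-1/2}$ regardless; the hypothesis $M_{xx}=0$ is not actually load-bearing in your argument (the $z=x$ term of the sum would still close, just with an extra $\epsilon^{-1/2}$).
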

		
		\begin{proof}
			In the course of this proof, $C>0$ will be a large constant, allowed to change line by line, but always independent of $\b{u}$. For convenience, we will write $F_M(\b{u})=F(\b{u})$ and $n=|\Om|$.
			
			To begin we note that 
			\[[F(\b{u})]_{xx}\ge s_n(F(\b{u}))=(s_1((I+M(\b{u}))^{-1})^{\odot 2})^{-1}\ge s_n(I+M(\b{u}))^2\ge \epsilon^2,\]
			which demonstrates the second claim of (\ref{eqn:ignore-1123}). We now focus our attention on the first claim of (\ref{eqn:ignore-1123}). To begin, let us notate the terms of the square $\{x,y\}$-by-$\Omega\setminus\{x,y\}$ block decomposition of $M$ as
			\[M=\begin{bmatrix}
				A & B^t\\
				B & D
			\end{bmatrix}.\]
			We will use the notations $A(\b{u})$, $B(\b{u})$, and $D(\b{u})$ to denote the corresponding submatrices of $M(\b{u})$. We will now compute $(I+M(\b{u}))^{-1}$ in block form using the Schur complement formula, but for this, we will need to establish some notation. Namely let:
			\[Z(\b{u}(x),\b{u}(y))=
			\begin{bmatrix}
				\b{u}(x)^{-1}& (\b{u}(x)\b{u}(y))^{-1/2}\\
				(\b{u}(x)\b{u}(y))^{-1/2} & \b{u}(y)^{-1}\\
			\end{bmatrix}, \; L(\b{u})=A-B^t(I+D(\b{u}))^{-1}B.
			\]
			\[R(\b{u})=(I+Z(\b{u}(x),\b{u}(y))\odot L(\b{u}))^{-1}, \; \; \; S(\b{u})=-(I+D(\b{u}))^{-1}B(\b{u})R(\b{u}),\]
			\[T(\b{u})=(I+D(\b{u}))^{-1}+(I+D(\b{u}))^{-1}B(\b{u})R(\b{u})B(\b{u})^t(I+D(\b{u}))^{-1}.\]
			Then we have the following square $\{x,y\}$-by-$\Omega\setminus\{x,y\}$ block decomposition:
			\[(I+M(\b{u}))^{-1}=\begin{bmatrix}
				R(\b{u}) & S(\b{u})^t\\
				S(\b{u}) & T(\b{u})
			\end{bmatrix}.\]
			We now define the following $\{x,y\}$-by-$\{x,y\}$ matrix:
			\[Q(\b{u})=R(\b{u})^{\odot 2}-(S(\b{u})^{\odot 2})^t(T(\b{u})^{\odot 2})^{-1}(S(\b{u})^{\odot 2}).\label{eqn:misc:Q-def}\]
			By employing Schur's complement formula again, we see that the $\{x,y\}$-by-$\{x,y\}$ block of $F(\b{u})$ is given by $Q(\b{u})^{-1}$. From this we see that \[[F(\b{u})]_{xy}=\det(Q(\b{u}))^{-1}[Q(\b{u})]_{xy},\] 
			which shows that the first claim of (\ref{eqn:ignore-1123}) follows from the following two claims:
			\[|[Q(\b{u})]_{xy}|\le C\b{u}(x)^{-1}\b{u}(y)^{-1},\;\;\; \det(Q(\b{u}))\ge C^{-1}.\label{eqn:appendix:lemma:reduced}\]
			We first show the second claim, which is equivalent to showing that $\det(Q(\b{u})^{-1})\le C$. We observe the crude bound: $\det(Q(\b{u})^{-1})\le [F(\b{u})]_{xx}[F(\b{u})]_{yy}\le s_1(F(\b{u}))^2$. We compute that 
			\[s_1(F(\b{u}))=s_n\bigg(\big((I+M(\b{u}))^{-1}\big)^{\odot 2}\bigg)^{-1}\le s_n\big((I+M(\b{u}))^{-1}\big)^{-2}=s_1(I+M(\b{u}))^2.\]
			Combined with the trivial bound $s_1(I+M(\b{u}))\le n\tr(I+M(\b{u}))=n$, establishes the second claim of (\ref{eqn:appendix:lemma:reduced}). For the first claim, we observing the form (\ref{eqn:misc:Q-def}), than it suffices to show the following claims
			\[
			\begin{split}
				[R(\b{u})]_{xy}&\le C\b{u}(x)^{-1/2}\b{u}(y)^{-1/2},\\
				[(S(\b{u})^{\odot 2})^t(T(\b{u})^{\odot 2})^{-1}(S(\b{u})^{\odot 2})]_{xy}&\le C\b{u}(x)^{-1}\b{u}(y)^{-1}.\label{eqn:appendix:lemma:reduced 2}
			\end{split}
			\]
			We first focus on the first claim. As $R(\b{u})$ is a submatrix of $(I+M(\b{u}))^{-1}$, a similar argument shows that $\det(R(\b{u}))\ge C^{-1}$. Moreover, we may explicitly write $R(\b{u})^{-1}$ as
			\[R(\b{u})^{-1}=\begin{bmatrix}
				1+\b{u}(y)^{-1}L_{yy}(\b{u})& \b{u}(x)^{-1/2}\b{u}(y)^{-1/2}L_{xy}(\b{u})\\
				\b{u}(x)^{-1/2}\b{u}(y)^{-1/2}L_{xy}(\b{u})	& 1+\b{u}(x)^{-1}L_{xx}(\b{u})
			\end{bmatrix}.\]
			It is clear that the entries of $L(\b{u})$ are uniformly bounded, which together with the bound $\det(R(\b{u}))\ge C^{-1}$ shows the first claim of (\ref{eqn:appendix:lemma:reduced 2}).
			
			To show the second claim, we will first study $(T(\b{u})^{\odot 2})^{-1}$. For this, note that the second term in the definition of $T(\b{u})$ is positive definite, so that
			\[s_1((T(\b{u})^{\odot 2})^{-1})\le s_n(T(\b{u}))^{-2}\le s_n((I+D(\b{u}))^{-1})^{-2}= s_1(I+D(\b{u}))^{2},\]
			which we showed before was bounded. In particular, this shows that the entries of $(T(\b{u})^{\odot 2})^{-1}$ are uniformly bounded. We now note that if we define \[W(\b{u})=-(I+D(\b{u}))^{-1}B(\b{u}),\]
			it is easily checked that $\|W(\b{u})e_x\|\le C\b{u}(x)^{-1/2}$ and $\|W(\b{u})e_y\|\le C\b{u}(y)^{-1/2}$. On the other hand, we have that
			\[S(\b{u})e_1=[R(\b{u})]_{xx}W(\b{u})e_x+[R(\b{u})]_{xy}W(\b{u})e_y.\]
			So using that $|[R(\b{u})]_{xy}|\le C\b{u}(x)^{-1/2}\b{u}(y)^{-1/2}$, we see that $\|S(\b{u})e_x\|\le C\b{u}(x)^{-1/2}$. Similarly we see that $\|S(\b{u})e_y\|\le C\b{u}(y)^{-1/2}$. Combining this with the boundedness of $(T(\b{u})^{\odot 2})^{-1}$, we conclude the second claim of (\ref{eqn:appendix:lemma:reduced 2}).
		\end{proof}
		
		We also need the following bound, which will allow us to control the size of diagonal terms of a positive definite matrix $A$, purely in terms of the behavior of the diagonal entries of $A^{-1}$ and the off-diagonal entries of $A$.
		
		\begin{lem}
			\label{lem:appendix:sum lemma}
			Fix positive definite $A\in \R^{\Om\times \Om}$. Then
			\[\sum_{x\in \Om}A_{xx}[A^{-1}]_{xx}\le |\Om|+\l(\sum_{x,y\in \Om, x\neq y}A_{xy}^2\r)\left(|\Om|\tr(A^{-1})\right)^2.\]
		\end{lem}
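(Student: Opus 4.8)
The plan is to reduce the problem to bounding the off-diagonal ``overlap'' between $A$ and $A^{-1}$. First I would record the elementary identity $\sum_{x,y\in\Om}A_{xy}[A^{-1}]_{xy}=\tr(AA^{-1})=|\Om|$, which follows from the symmetry of $A$ and of $A^{-1}$ together with $(AA^{-1})_{xx}=\sum_{y\in\Om}A_{xy}[A^{-1}]_{yx}=\sum_{y\in\Om}A_{xy}[A^{-1}]_{xy}$. Rearranging gives
\[\sum_{x\in\Om}A_{xx}[A^{-1}]_{xx}=|\Om|-\sum_{x\neq y}A_{xy}[A^{-1}]_{xy},\]
so it suffices to prove that $\bigl|\sum_{x\neq y}A_{xy}[A^{-1}]_{xy}\bigr|\le\bigl(\sum_{x\neq y}A_{xy}^2\bigr)\bigl(|\Om|\tr(A^{-1})\bigr)^2$.

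Next I would write $A=D+M$, where $D=\diag((A_{xx})_{x\in\Om})$ (a matrix with strictly positive diagonal, since $A$ is positive definite) and $M$ is the off-diagonal part of $A$; thus $M$ is symmetric with vanishing diagonal and $\|M\|_F^2=\sum_{x\neq y}A_{xy}^2$. From $A^{-1}A=I$ one obtains $A^{-1}=D^{-1}-A^{-1}MD^{-1}$, and reading off the $(x,y)$-entry for $x\neq y$ (using $[D^{-1}]_{xy}=0$) gives $[A^{-1}]_{xy}=-A_{yy}^{-1}[A^{-1}M]_{xy}$. The two estimates I would invoke are: $A_{yy}^{-1}\le[A^{-1}]_{yy}\le\tr(A^{-1})$ --- the first being Cauchy--Schwarz in the form $A_{yy}[A^{-1}]_{yy}\ge1$, the second because $A^{-1}$ is positive definite --- and $\|A^{-1}M\|_F\le s_1(A^{-1})\|M\|_F\le\tr(A^{-1})\|M\|_F$. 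Combining them yields
\[\sum_{x\neq y}[A^{-1}]_{xy}^2\le\tr(A^{-1})^2\,\|A^{-1}M\|_F^2\le\tr(A^{-1})^4\sum_{x\neq y}A_{xy}^2.\]

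Finally, Cauchy--Schwarz over the index set $\{(x,y):x\neq y\}$ gives
\[\Bigl|\sum_{x\neq y}A_{xy}[A^{-1}]_{xy}\Bigr|\le\|M\|_F\Bigl(\sum_{x\neq y}[A^{-1}]_{xy}^2\Bigr)^{1/2}\le\Bigl(\sum_{x\neq y}A_{xy}^2\Bigr)\tr(A^{-1})^2,\]
which is at most $\bigl(\sum_{x\neq y}A_{xy}^2\bigr)\bigl(|\Om|\tr(A^{-1})\bigr)^2$ since $|\Om|\ge1$. Combined with the first paragraph this proves the lemma. No step is a genuine obstacle; the only point requiring care is obtaining the power $\tr(A^{-1})^4$ (rather than a single power of $\tr(A^{-1})$) in the bound on the off-diagonal Frobenius norm of $A^{-1}$, and this is exactly what the decomposition $A^{-1}=D^{-1}-A^{-1}MD^{-1}$ delivers, since it expresses the off-diagonal entries of $A^{-1}$ through $M$ itself.
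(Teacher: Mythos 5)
Your proof is correct, and it takes a genuinely different route from the paper's. The paper argues entry by entry: fixing $z\in\Om$ and applying the Schur complement formula $[A^{-1}]_{zz}=(A_{zz}-A_z^t(A^z)^{-1}A_z)^{-1}$ together with the algebraic identity $x(x-y)^{-1}=1+y(x-y)^{-1}$, it obtains $A_{zz}[A^{-1}]_{zz}=1+A_z^t(A^z)^{-1}A_z\cdot[A^{-1}]_{zz}$, bounds $s_1((A^z)^{-1})\le s_1(A^{-1})\le|\Om|\tr(A^{-1})$ and $[A^{-1}]_{zz}\le|\Om|\tr(A^{-1})$, and sums over $z$. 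Your argument instead starts from the global trace identity $\sum_{x,y}A_{xy}[A^{-1}]_{xy}=\tr(AA^{-1})=|\Om|$, which reduces the problem to bounding the off-diagonal overlap $\sum_{x\neq y}A_{xy}[A^{-1}]_{xy}$, and then exploits the resolvent-type expansion $A^{-1}=D^{-1}-A^{-1}MD^{-1}$ to control the off-diagonal entries of $A^{-1}$ through $M$ itself, closing with Cauchy--Schwarz. Both are correct; the Schur-complement route is more ``local'' and fits the pattern of block computations recurring elsewhere in the appendix, while yours makes transparent where each factor of $\tr(A^{-1})$ enters (two from $D^{-1}$ and the operator norm, two from the inner Cauchy--Schwarz) and in fact gives the sharper bound $|\Om|+\bigl(\sum_{x\neq y}A_{xy}^2\bigr)\tr(A^{-1})^2$, saving the factor $|\Om|^2$ in the statement, which you correctly note is harmless here.
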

		\begin{proof}
			It is sufficient to show that for all fixed $z\in \Om$
			\[A_{zz}[A^{-1}]_{zz}\le 1+\l(\sum_{x\in \Om\setminus\{z\}}A_{xz}^2 \r)\left(|\Om|\tr(A^{-1})\right)^2.\]
			Let us denote the terms in the block decomposition of $A$ with square $\{z\}$ and $\Om\setminus\{z\}$
			\[A=\begin{bmatrix}
				A_{zz}& A_z^t\\
				A_z& A^z
			\end{bmatrix}.\label{eqn:ignore-551}\]
			By the Shur's complement formula we have that $[A^{-1}]_{zz}=(A_{zz}-A_z^t(A^z)^{-1}A_z)^{-1}$, so using the formula $x(x-y)^{-1}=1+y(x-y)^{-1}$, we may write
			\[A_{zz}[A^{-1}]_{zz}=1+A_z^t(A^z)^{-1}A_z(A_{zz}-A_z^t(A^z)^{-1}A_z)^{-1}.\label{eqn:appendix:ignore-454}\]
			Using that $s_1((A^z)^{-1})\le s_1(A^{-1})\le |\Om|\tr(A^{-1})$, we see that
			\[A_z^t(A^z)^{-1}A_z\le |\Om|\tr(A^{-1})\sum_{x\in \Om\setminus\{z\}}A_{xz}^2.\]
			Combined with the bound $(A_{zz}-A_z^t(A^z)^{-1}A_z)^{-1}=[A^{-1}]_{zz}\le |\Om|\tr(A^{-1})$, we see that (\ref{eqn:appendix:ignore-454}) yields the desired result.
		\end{proof}
		
		\begin{proof}[Proof of Theorem \ref{theorem:appendix:K lemma hard}]
			For $\b{u}\in (0,\infty)^{\Om}$, we define a symmetric matrix $G_{\b{u}}\in\R^{\Om\times \Om}$, given for $z,w\in \Om$ by 
			\[[G_{\b{u}}]_{zw}=(D_{zz}+K_z(\b{u}))^{-1/2}(D_{ww}+K_w(\b{u}))^{-1/2}D_{zw}(1-\delta_{zw}).\label{eqn:appendix:ignore-621}\]
			To explain the importance of this matrix, let us further define $G\in \R^{\Om\times \Om}$ by $[G]_{zw}=D_{zw}(1-\delta_{zw})$, and $\b{d}\in \R^{\Om}$ by $\b{d}(z)=D_{zz}$. Then in the notation of Lemma \ref{lem:appendix: technical lemma}, we have that $G_{\b{u}}=F_G(\b{d}+\b{K}^D(\b{u}))$. We also note that by Proposition \ref{prop:appendix:K and Lambda}, and again in the notation of Lemma \ref{lem:appendix: technical lemma},
			\[\D_z K_w(\b{u})=-(D_{zz}+K_z(\b{u}))(D_{ww}+K_w(\b{u}))F_{G}(\b{d}+\b{K}^D(\b{u})).\label{eqn:appendix:ignore-1344}\]
			We seek to apply Lemma \ref{lem:appendix: technical lemma} to (\ref{eqn:appendix:ignore-1344}), so we begin by verifying its conditions. The first statement we will show is that for any $\b{u}\in (0,K)^{\Om}$, we that $\b{d}+\b{K}^D(\b{u})\in (K^{-1},\infty)^{\Om}$.
			
			For this take $z\in \Om$, and let us notate the terms in the block decomposition of $D$ with square $\{z\}$-by-$\Om\setminus\{z\}$ blocks as
			\[D=\begin{bmatrix}
				D_{zz}& D_z^t\\
				D_z& D^z
			\end{bmatrix}.\]
			Similarly, let $\b{K}^z\in \R^{\Om\setminus \{z\}}$ denote the vector obtained from $\b{K}$ by omitting the entry $K_z$. We note by Schur's complement formula that
			\[([(D+\b{K}^D(\b{u}))^{-1}]_{zz})^{-1}=D_{zz}+K_z(\b{u})-D_z^t(D^z+\b{K}^z(\b{u}))^{-1}D_z,\]
			This implies, in particular, that
			\[\b{u}(z)^{-1}=([(D+\b{K}^D(\b{u}))^{-1}]_{zz})^{-1}\le D_{zz}+K_z(\b{u}).\label{eqn:appendix:u minus lower bound}\]
			This implies the desired claim that $\b{d}+\b{K}^D(\b{u})\in (K^{-1},\infty)^{\Om}$ if $\b{u}\in (0,K)^{\Om}$. Next we will show that there is $\epsilon>0$ such that $I+G_{\b{u}}\ge \epsilon I$. We first note that as $I+G_{\b{u}}$ is congruent to $D+\b{K}^D(u)$, it is positive definite, leaving us to provide a lower bound for $s_{|\Om|}(I+G_{\b{u}})$. We note that
			\[s_{|\Om|}(I+G_{\b{u}})^{-1}=s_{1}((I+G_{\b{u}})^{-1})\le |\Om|\tr((I+G_{\b{u}})^{-1})\le \sum_{z\in \Om}\b{u}(z)[D_{zz}+K_z(\b{u})].\]
			Recalling that $\b{u}(z)=[(D+\b{K}^D(\b{u}))^{-1}]_{zz}$, we see by Lemma \ref{lem:appendix:sum lemma}, that the right-hand side is further less than
			\[|\Om|^2\tr((D+\b{K}^D(\b{u}))^{-1})^2\sum_{z,w\in \Om,z\neq w}D_{zw}^2\le |\Om|^2K^2\sum_{z,w\in \Om,z\neq w}D_{zw}^2.\]
			In particular, this shows that \[s_{|\Om|}(I+G_{\b{u}})\ge \l(|\Om|^2K^2\sum_{z,w\in \Om,z\neq w}D_{zw}^2\r)^{-1}.\] 
			Thus we may now apply Lemma \ref{lem:appendix: technical lemma} to conclude there is large $C>0$ such that for $\b{u}\in (0,K)^{\Om}$, we have that
			\[|[F_{G}(\b{d}+\b{K}^D(\b{u}))]_{xy}|\le C(D_{xx}+K^D_x(\b{u}))^{-1}(D_{yy}+K^D_y(\b{u}))^{-1},\;\;\;\; [F_{G}(\b{d}+\b{K}^D(\b{u}))]_{xx}\ge C^{-1}.\]
			These statements, combined with (\ref{eqn:appendix:ignore-1344}) and (\ref{eqn:appendix:u minus lower bound}), yield the desired claims.
		\end{proof}
		
	\end{appendices}
	
	\pagebreak

\end{document}